\numberwithin{equation}{section}
\theoremstyle{plain}
\newtheorem{theorem}{Theorem}[section]
\newtheorem{proposition}[theorem]{Proposition}
\newtheorem{lemma}[theorem]{Lemma}
\theoremstyle{definition}
\newtheorem{definition}[theorem]{Definition}
\newtheorem{remark}[theorem]{Remark}
\newtheorem*{theorem*}{Theorem}
\def\th@plain{%
  \thm@notefont{}
  \itshape 
}
\def\th@definition{%
  \thm@notefont{}
  \normalfont 
}
\newcommand\R{\mathbb R}
\newcommand\N{\mathbb N}
\renewcommand\S{\mathbb S}
\renewcommand{\d}{\mathrm{d}}
\newcommand{\restr}[1]{|_{#1}}
\def\XXint#1#2#3{{\setbox0=\hbox{$#1{#2#3}{\int}$ }
\vcenter{\hbox{$#2#3$ }}\kern-.6\wd0}}
\newcommand{\tht}{\vartheta}
\newcommand{\loc}{\mathrm{loc}}
\newcommand{\tr}{\mathrm{tr}}
\newcommand{\cof}{\mathrm{cof}}
\newcommand{\adj}{\mathrm{adj}}
\newcommand{\sym}{\mathrm{sym}}
\renewcommand{\deg}{\mathrm{deg}}
\renewcommand{\div}{\mathrm{div}}
\newcommand{\dist}{\mathrm{dist}}
\newcommand\wk{\rightharpoonup}
\newcommand{\rt}{\R^3}
\newcommand{\rtt}{\R^{3\times 3}}
\newcommand{\rtwtw}{\R^{2 \times 2}}
\newcommand*\closure[1]{\overline{#1}}
\newcommand{\lebt}{\mathscr{L}^2}
\newcommand{\leb}{\mathscr{L}^3}
\definecolor{darkred}{rgb}{0.55, 0.0, 0.0}
\definecolor{cerulean}{rgb}{0.0, 0.48, 0.65}
\newcommand\MMM{\color{black}}
\title [A reduced model for plates in nonlinear magnetoelasticity]
{A reduced model for plates arising as low-energy $\boldsymbol{\Gamma}$-limit in nonlinear magnetoelasticity}
\author[M. Bresciani]{Marco Bresciani}
\address[M. Bresciani]{
		Department of Mathematics,
		Friedrich-Alexander Universit\"{a}t Erlangen-N\"{u}rnberg,
		Cauerstrasse 11, 91058 Erlangen, Germany.
		Email: \href{mailto:marco.bresciani@fau.de}{\tt marco.bresciani@fau.de}.
}
\author[M. Kru\v{z}\'ik]{Martin Kru\v{z}\'ik}
\address[M. Kru\v{z}\'ik]{Institute of Information Theory and Automation, Czech Academy of Sciences, 
Pod Vod\'arenskou V\v{e}\v{z}\'i 4, 182 00 Prague, Czechia and Faculty of Civil Engineering, Czech Technical University, Th\'{a}kurova 7, 166 29 Prague, Czechia.
		Email: \href{mailto: kruzik@utia.cas.cz}{\tt kruzik@utia.cas.cz}.
}
\begin{document}
\normalem

\setlength\parindent{0pt}

\date{\today}

\keywords{magnetoelasticity, Eulerian-Lagrangian energies, dimension reduction, $\Gamma$-convergence, quasistatic evolution, rate-independent processes, evolutionary $\Gamma$-convergence}

\subjclass[2020]{74F15, 74K20, 74H80}

\vskip .2truecm
\begin{abstract}
We investigate the problem of dimension reduction for plates in nonlinear magnetoelasticity. The model features a mixed Eulerian-Lagrangian formulation, as magnetizations are defined on the deformed set in the actual space.
We consider low-energy configurations by rescaling the elastic energy according to the linearized von K\'{a}rm\'{a}n regime.
First, we identify a reduced model by computing the $\Gamma$-limit of the magnetoelastic energy, as the thickness of the plate goes to zero. This extends a previous result obtained by the first author in the incompressible case to the compressible one. Then, we introduce applied loads given by mechanical forces and external magnetic fields and we prove that  sequences of almost minimizers of the total energy converge to minimizers of the corresponding energy in the reduced model. 
Subsequently, we study quasistatic evolutions driven by time-dependent applied loads and a rate-independent dissipation. We prove that {\MMM energetic solutions for the bulk model converge to energetic solutions for the reduced model and we establish a similar result for} 
solutions of the approximate incremental minimization problem. Both these results provide a further justification of the reduced model in the spirit of the evolutionary $\Gamma$-convergence.
\end{abstract}
\maketitle

\section{Introduction}
\label{sec:intro}

Magnetoelasticity concerns the interaction between magnetic fields and deformable solids \cite{brown,dorfmann.ogden,maugin}.
Indeed, it is known that magnetic materials can change their strain upon the application magnetic fields and this behaviour is termed magnetostriction. Conversely, it is possible to modify the magnetic response of such materials by means of mechanical loads. {\MMM Both these phenomena are of great interest in engineering since they constitute the basic operating principle of many technological devices such as sensors and actuators.}

Internally, magnetic  materials are subdivided into regions of uniform magnetization called magnetic domains \cite{hubert.schaefer}. {\MMM This structure originates from the competition of two effects: the magnetocrystalline anisotropy, that is, the existence of preferred magnetization directions, called easy axes, determined by the underlying crystal lattice; and the long-range interactions between magnetic dipoles which favour configurations with divergence-free  magnetization throughout the specimen.}
When a magnetic field is applied, a reorganization of the domain structure is observed: the boundaries between these domains shift and the domains themselves rotate. This is  mainly due to the more favourable orientation of certain easy axes with respect to the direction of the external field \cite{kruzik.prohl}. As a result, these movements produce a macroscopic strain and, in turn, lead to the deformation of the specimen.

According to the variational theory of Brown \cite{brown}, the magnetoelastic energy is a function of deformations and magnetizations, and
equilibrium states correspond to minima of the energy functional.
The model contemplates finite strains. Therefore, while deformations are defined  on the reference configuration (Lagrangian), magnetizations are defined on the deformed set in the actual space (Eulerian). 

Apart form magnetoelasticity \cite{barchiesi.henao.moracorral,bresciani.davoli.kruzik,kruzik.stefanelli.zeman}, mixed Eulerian-Lagrangian formulations appear also in other contexts, such as the theory of liquid crystals \cite{barchiesi.desimone,henao.stroffolini}, phase transitions \cite{grandi.kruzik.mainini.stefanelli,silhavy} and finite plasticity \cite{kruzik.melching.stefanelli,stefanelli}. From the mathematical point of view, the analysis of such models is very challenging. Indeed, several standard techniques are no longer available in this setting, so that novel strategies are required. For these reasons,  in recent years, mixed Eulerian-Lagrangian variational problems got the attention of the mathematical community.

Rigorously derived lower-dimensional models of continuum mechanics play an important role in applications because they preserve the main features of the bulk model but they are usually simpler from the computational point of view \cite{liakhova.luskin.zhang,luskin.zhang}.  Fundamental results obtained in  \cite{friesecke.james.mueller1,friesecke.james.mueller2}  have initiated a remarkable progress in this area and have established the prominent role of $\Gamma$-convergence \cite{braides} in the validation of reduced models for thin structures. For micromagnetics, among others, important results have been achieved in \cite{carbou,gioia.james}. 
However, in the case of magnetoelasticity, few rigorous results are available.
Two-dimensional models were first derived in \cite{kruzik.stefanelli.zanini} for Kirchhoff-Love plates starting from linearized magnetoelasticity, and then in \cite{davoli.kruzik.piovano.stefanelli} for non-simple materials in the fully nonlinear membrane regime. In the first case, rate-independent evolutions were also studied.

In this contribution, we derive a reduced model for plates in the
linearized von K\'{a}rm\'{a}n regime starting from nonlinear magnetoelasticity. Our results develop  the investigations initiated in \cite{bresciani} for incompressible materials to various extents. We consider compressible materials and we make more realistic assumptions on the elastic energy density. Also, we include applied loads given by mechanical forces and external magnetic fields. Unlike \cite{bresciani}, our analysis covers both the static the quasistatic setting. In the first case, we employ $\Gamma$-convergence techniques to study the asymptotic behaviour of minimizers of the magnetoelastic energy, as the thickness of the plate goes to zero. 
In the latter one, we investigate the dimension reduction in the framework of evolutionary $\Gamma$-convergence \cite{mielke.roubicek.stefanelli}. 

Let $h>0$ denote the thickness of a thin magnetoelastic plate $\Omega_h\coloneqq S \times hI\subset \R^3$, where $S \subset \R^2$ represents the section and $I\coloneqq (-1/2,1/2)$. Deformations are maps $\boldsymbol{\phi}\colon \Omega_h \to \R^3$ while magnetizations are given by maps $\boldsymbol{m}\colon \boldsymbol{\phi}(\Omega_h)\to \rt$.  Deformations are assumed to be injective and orientation-preserving in order to exclude the interpenetration of matter, and satisfy clamped boundary conditions \cite{lecumberry.mueller}. 
Also, for sufficiently low constant temperature, {\MMM deformations and magnetizations are subject to the magnetic saturation constraint \cite{brown,james.kinderlehrer,rybka.luskin} which, up to normalization, reads
\begin{equation*}
    \text{$|\boldsymbol{m}\circ \boldsymbol{\phi}|\det \nabla \boldsymbol{\phi}=1$ in $\Omega_h$.}
\end{equation*}
The magnetoelastic energy per unit volume is accounted by the functional 
\begin{equation}
\label{eqn:intro-energy}
    (\boldsymbol{\phi},\boldsymbol{m})\mapsto \frac{1}{h^{\beta+1}}\int_{\Omega_h} W_h(\nabla \boldsymbol{\phi},\boldsymbol{m}\circ \boldsymbol{\phi})\,\d\boldsymbol{X}+\frac{1}{h}\int_{\boldsymbol{\phi}(\Omega_h)}|\nabla \boldsymbol{m}|^2\,\d\boldsymbol{\xi}+\frac{1}{2h}\int_{\rt} |\nabla \psi_{\boldsymbol{m}}|^2\,\d\boldsymbol{\xi}.
\end{equation}
This consists of three contributions: the elastic energy, which is rescaled by $h^{\beta+1}$ with $\beta>6$ and depends on the elastic density $W_h$; the exchange energy, that penalizes spatial changes of magnetizations; and the magnetostatic energy, which involves the stray field potential $\psi_{\boldsymbol{m}}\colon \R^3 \to \R^3$ given by a solution of  Maxwell equations:
\begin{equation*}
    \text{$\Delta \psi_{\boldsymbol{m}}=\div \left (\chi_{\boldsymbol{\phi}(\Omega_h)}\boldsymbol{m}   \right )$ in $\rt$.}
\end{equation*}
In particular, we specify the structure of the elastic energy density $W_h$ in \eqref{eqn:intro-energy}, which is assumed to take the form 
\begin{equation}
    \label{eqn:intro-elastic-density}
    W_h(\boldsymbol{F},\boldsymbol{\lambda})\coloneqq \Phi \left ( \Big (\boldsymbol{I}+c_h \boldsymbol{\lambda}(\det \boldsymbol{F})\otimes \boldsymbol{\lambda}(\det \boldsymbol{F}) \Big )^{-1} \boldsymbol{F}   \right ),
\end{equation}
where $\Phi$ is a frame indifferent energy density satisfying physical growth conditions and $c_h\sim h^{\beta/2}$ is a positive parameter. Similar expressions, with the nematic director in place of the magnetization, are widely accepted in the context of liquid crystals \cite{desimone.teresi} and we refer to \cite{agostiniani.desimone} for a specific example in the case of plates. Expression of the elastic energy that are formally analogous to the one in \eqref{eqn:intro-elastic-density} have already been considered in the dimension reduction of prestrained materials \cite{lewicka1,lewicka2} and heterogeneous multilayers \cite{schmidt1,schmidt2}.

Assuming that $\Phi$ is minimized at the identity, formula \eqref{eqn:intro-elastic-density} induces the competition of deformation gradient and magnetization in the minimization of the elastic energy which characterizes magnetostrictive effects. Therefore, the constitutive assumption in \eqref{eqn:intro-elastic-density} makes this setting more realistic compared with the one in \cite{bresciani}, where the elastic energy is minimized at the identity independently on the magnetization. }

Our main results are contained in Theorem \ref{thm:gamma-conv} and Theorem \ref{thm:convergence-almost-minimizers} for the static setting, and in Theorem \ref{thm:evolutionary-gamma-convergence} and Theorem \ref{thm:convergence-AIMP} for the quasistatic setting. The enunciation of these results requires the specification of the setting and the introduction of a considerable amount of notation. Therefore, we limit ourselves to briefly describe them and we postpone the precise statements to Sections \ref{sec:static-setting} and \ref{sec:quasistatic-setting}. 

In Theorem \ref{thm:gamma-conv}, we compute the $\Gamma$-limit of the magnetoelastic energy in \eqref{eqn:intro-energy}, as $h\to 0^+$. This is computed with respect to the convergence of the averaged displacements \cite{friesecke.james.mueller2}, and of the composition of magnetizations with deformations.
The limiting energy that we obtain is purely Lagrangian and is naturally given by integrals on the section $S$. The elastic term in the reduced model is obtained by the linearization of $\Phi$ at the identity similarly to \cite{friesecke.james.mueller2}. In contrast with \cite{bresciani}, this term exhibits a strong coupling between elastic and magnetic variables {\MMM in agreement with models of linearized magnetoelasticity \cite{desimone.james}}. 
Also, as in \cite{carbou,gioia.james}, the  magnetostatic term simplifies substantially in the reduced model.

In Theorem \ref{thm:convergence-almost-minimizers}, we consider applied loads given by mechanical forces and external magnetic fields, all dependent on the thickness of the plate. In particular, the energy contribution determined by the external magnetic field, usually called Zeeman energy, is of Eulerian type. 
The total energy is given by the difference between the magnetoelastic energy and the work of applied loads. {\MMM Having prescribed the asymptotic behaviour of the applied loads,} we prove that sequence of almost minimizers of the total energy converge, as $h \to 0^+$, to minimizers of the corresponding energy functional in the reduced model. Because of the rescaling of the elastic energy, the analysis in quite involved since the coercivity of the total energy functional is not immediate.  

In the quasistatic setting we adopt the framework of  rate-independent processes with the notion of energetic solution \cite{mielke.roubicek}. We consider time-dependent applied loads and {\MMM we introduce the dissipation distance
\begin{equation*}
    \left((\boldsymbol{\phi},\boldsymbol{m}),(\widehat{\boldsymbol{\phi}},\widehat{\boldsymbol{m}})\right )\mapsto \frac{1}{h}\int_{\Omega_h} |\boldsymbol{m}\circ \boldsymbol{\phi}\det\nabla\boldsymbol{\phi}-\widehat{\boldsymbol{m}}\circ \widehat{\boldsymbol{\phi}}\det\nabla\widehat{\boldsymbol{\phi}}|\,\d\boldsymbol{X}.
\end{equation*}

Our results in the quasistatic setting are in the spirit of evolutionary $\Gamma$-convergence \cite{mielke.roubicek.stefanelli}. The first one, namely Theorem \ref{thm:evolutionary-gamma-convergence}, states the convergence energetic solutions for the bulk model to energetic solutions of the reduced model, as $h\to 0^+$. Here,  the existence of energetic solutions for the bulk model is part of the assumptions, while the existence for the reduced model follows as a byproduct. However, our setting is compatible with the existence of energetic solutions for the bulk model in the sense that, under additional assumption on the density $\Phi$,  this can be ensured. We refer to  Remark \ref{rem:ex-ensol-bulk} for more details.  
}

Subsequently, we present a variant of the previous convergence result. For every $h>0$, we consider the approximate incremental minimization problem \cite{mielke.roubicek.stefanelli}, a relaxed version of incremental minimization problem that has been introduced in order to cope with the possible lack of minimizers of energy functionals. Indeed, this approximate problem always admits solutions. 
In Theorem \ref{thm:convergence-AIMP}, we show that, given a sequence of partitions of the time interval whose size vanish together with some tolerances, as $h \to 0^+$, the piecewise-constant interpolants corresponding to solutions of the approximate incremental minimization problems for suitably well prepared initial data converge, as $h \to 0^+$, to an energetic solution of the reduced model. 

We emphasize that all the results in this paper are achieved without resorting on any regularization of the energy. However, our argument to prove the compactness of magnetizations works only under some restriction on  the scalings. Precisely, the scaling of the elastic energy in \eqref{eqn:intro-energy} has to satisfy the condition $\beta>6 \vee p$, where $p>3$ is the integrability exponent of deformations, while the linearized von K\'{a}rm\'{a}n regime corresponds to $\beta>4$. {\MMM Note that the restriction $p>3$ is merely technical. Indeed, although some techniques to tackle the case $p>2$ have been developed in the literature \cite{barchiesi.henao.moracorral}, these are not sufficient for our arguments which rely on explicit estimates on the rate of convergence of the deformation towards the identity.}

The paper is structured as follows. In Section \ref{sec:basic},  we introduce the mathematical model and we list all the assumptions. In Section \ref{sec:static-setting}, we address the static setting: Theorem \ref{thm:gamma-conv} and Theorem \ref{thm:convergence-almost-minimizers} are stated and proved in Subsection \ref{subsec:gamma-convergence} and Subsection \ref{subsec:cam}, respectively. Finally, Section \ref{sec:quasistatic-setting} is devoted to the quasistatic setting.  Theorem \ref{thm:evolutionary-gamma-convergence} and Theorem \ref{thm:convergence-AIMP} are presented in Subsections \ref{subsec:egamma} and Subsection \ref{subsec:conv-AIMP}, respectively. We conclude with Subsection \ref{subsec:alternative-dissipation} by briefly mentioning an alternative choice for the dissipation distance.

\subsection*{Notation} For scalars $a,b\in \R$, we use the notation $a \wedge b\coloneqq \min \{a,b\}$ and $a \vee b \coloneqq \max \{a,b\}$. 
Given $\boldsymbol{a}=(a^1,a^2,a^3)^\top\in \R^3$, we set $\boldsymbol{a}'\coloneqq (a^1,a^2)^\top\in \R^2$. The null vector in $\R^3$ is denoted by $\boldsymbol{0}$, so that $\boldsymbol{0}'$ is the null vector in $\R^2$. The same notation applies also to space variables, and $\nabla'$ and $(\nabla')^2$ denote the gradient and the Hessian with respect to the first two variables, respectively. Given $\boldsymbol{A}=(A^i_j)^{i=1,2,3}_{j=1,2,3}\in \rtt$, we set $\boldsymbol{A}''\coloneqq (A^i_j)^{i=1,2}_{j=1,2}\in \rtwtw$. The null matrix and the identity matrix in $\rtt$ are denoted by $\boldsymbol{O}$ and $\boldsymbol{I}$, thus $\boldsymbol{O}''$ and $\boldsymbol{I}''$ are the corresponding matrices in $\rtwtw$. The tensor product of $\boldsymbol{a},\boldsymbol{b}\in \R^3$ is given by $\boldsymbol{a}\otimes \boldsymbol{b}\in \rtt$ where $(\boldsymbol{a}\otimes \boldsymbol{b})^i_j\coloneqq a^ib^j$ for every $i,j\in \{1,2,3\}$. The identity map on $\rt$ is denoted by $\boldsymbol{id}$. 

We denote general points in the physical (unscaled) space, in the reference space and in the actual space by $\boldsymbol{X}$, $\boldsymbol{x}$ and $\boldsymbol{\xi}$, respectively. Accordingly, the integration with respect to the three-dimensional Lebesgue measure will be denoted by $\d\boldsymbol{X}$, $\d \boldsymbol{x}$ and $\d \boldsymbol{\xi}$, respectively. The integration with respect to the one and the two-dimensional Hausdorff measure in the reference space will be denoted by $\d\boldsymbol{l}$ and $\d\boldsymbol{a}$, respectively.
We denote by $\chi_A$ the characteristic function of a set $A\subset \R^N$, where $N\in\{1,2,3\}$.
We will use standard notation for Lebesgue, Sobolev and Bochner spaces, and for spaces of functions of bounded variation. 
Given $S \subset \R^2$ open  and an embedded submanifold $\mathcal{M}\subset \R^M$, where $M \in \N$, we denote by $W^{1,q}(S;\mathcal{M})$, where $1 \leq q <\infty$, the set of maps $\boldsymbol{\eta}\in W^{1,q}(S;\R^M)$ such that $\boldsymbol{\eta}(\boldsymbol{x}')\in \mathcal{M}$ for almost every  $\boldsymbol{x}'\in S$.
 In the following, $\mathcal{M}$ will be either  the unit sphere $\S^2 \subset\R^3$ or the special orthogonal group  $SO(3)\subset \rtt$.
Finally, the topological degree of a map $\boldsymbol{y}\in C^0(\closure{\Omega};\R^3)$, where $\Omega\subset \R^3$ is open and bounded, on $\Omega$ at $\boldsymbol{\xi}\in \R^3\setminus \boldsymbol{y}(\partial \Omega)$ will be denoted by $\deg(\boldsymbol{y},\Omega,\boldsymbol{\xi})$.

We make use of the Landau symbols `$o$' and `$O$'. When referred to vectors or matrices, these are to
be understood with respect to the maximum of their components. We will adopt the common convention
of denoting by $C, C_1, C_2 \dots$  positive constants that can change from line to line. We will identify functions defined on the plane with functions defined on the three-dimensional space that are independent on the third variable. In general, we will think at the parameter $h>0$ as varying along a sequence even if this is not mentioned. The particular sequence of thicknesses considered will be specified only in a few circumstances, when this is particularly important for the understanding. 

\section{Basic setting}
\label{sec:basic}

In this section we describe the general setting of the paper. First, in Subsection \ref{subsec:mechanical-model}, we introduce the mechanical model and we list all the assumptions. Then, in Subsection \ref{subsec:cov-rescaling}, we perform a standard change of variables in order to work on a fixed domain.

\subsection{The static model}
\label{subsec:mechanical-model}
Let $\Omega_h\coloneqq S \times hI$  represent a thin magnetoelastic plate in its reference configuration. The section $S \subset \R^2$ is a bounded connected Lipschitz domain, while the parameter $h>0$ gives the thickness of the plate and $I\coloneqq (-1/2,1/2)$. 

The plate experiences elastic deformations given by maps $\boldsymbol{\phi}\in W^{1,p}(\Omega_h;\R^3)$ for some fixed $p>3$. By the Morrey embedding, any such map admits a continuous representative with whom it is systematically identified. Every deformation $\boldsymbol{\phi}$ is required to be {orientation-preserving}, namely to satisfy the constraint $\det \nabla \boldsymbol{\phi}>0$ almost everywhere in $\Omega_h$, and to be {almost everywhere injective}. 
This means that there exists a set $X\subset \Omega_h$ with $\leb(X)=0$ such that $\boldsymbol{\phi}\restr{\Omega_h \setminus X}$ is injective.
Recall that any such map $\boldsymbol{\phi}$ has both {Lusin properties} (N) and (N${}^{-1}$), that is, $\leb(\boldsymbol{\phi}(X))=0$ for every $X \subset \Omega_h$ with $\leb(X)=0$ and $\leb(\boldsymbol{\phi}^{-1}(Y))=0$ for every $Y \subset \R^3$ with $\leb(Y)=0$. Also,  the area formula and the change-of-variable formula hold for such a map \cite{marcus.mizel}. We impose clamped boundary conditions by requiring each deformation $\boldsymbol{\phi}$ to satisfy
\begin{equation}
\label{eqn:clamped-unscaled}
    \text{$\boldsymbol{\phi}=\boldsymbol{id}$ on $\partial S \times h I$.}
\end{equation}

Given a deformation $\boldsymbol{\phi}$, we define the corresponding deformed configuration as $\Omega_h^{\boldsymbol{\phi}}\coloneqq \boldsymbol{\phi}({\Omega}_h)\setminus \boldsymbol{\phi}(\partial \Omega_h)$. This set is open \cite[Lemma 2.1]{bresciani.davoli.kruzik} and we have $\leb(\boldsymbol{\phi}(\Omega_h)\setminus \Omega_h^{\boldsymbol{\phi}})=0$ thanks the Lusin property (N). {\MMM Magnetizations are defined as maps $\boldsymbol{m}\in W^{1,2}(\Omega_h^{\boldsymbol{\phi}};\R^3)$ subject to the saturation constraint:
\begin{equation}
\label{eqn:saturation-unscaled}
    \text{$|\boldsymbol{m}\circ \boldsymbol{\phi}|\det \nabla \boldsymbol{\phi}=1$ a.e. in $\Omega_h$.}
\end{equation}
}

Neglecting the material parameters, the energy corresponding to a deformation $\boldsymbol{\phi}\in W^{1,p}(\Omega_h;\R^3)$ and a magnetization $\boldsymbol{m}\in W^{1,2}(\Omega_h^{\boldsymbol{\phi}};\R^3)$,  is given by
\begin{equation}
    \label{eqn:energy-G_h}
    G_h(\boldsymbol{\phi},\boldsymbol{m})\coloneqq \frac{1}{h^{\beta}}\int_{\Omega_h} W_h(\nabla \boldsymbol{\phi},\boldsymbol{m}\circ \boldsymbol{\phi})\,\d \boldsymbol{X}+ \int_{\Omega_h^{\boldsymbol{\phi}}} |\nabla \boldsymbol{m}|^2\,\d \boldsymbol{\xi}+\frac{1}{2} \int_{\R^3} |\nabla\psi_{\boldsymbol{m}}|^2\,\d \boldsymbol{\xi}.
\end{equation}
The first term in \eqref{eqn:energy-G_h} represents the {elastic energy} and it is rescaled according to the linearized von K\'{a}rm\'{a}n regime \cite{friesecke.james.mueller2}. Precisely, we assume 
\begin{equation*}
    \beta>6 \vee p.
\end{equation*}
Note that, by the Lusin property (N${}^{-1}$), the composition $\boldsymbol{m}\circ \boldsymbol{\phi}$ is measurable and its equivalence class does not depend on the choice of the representative of $\boldsymbol{m}$.

{\MMM
The elastic energy density $W_h \colon \mathbb{X} \to [0,+\infty)$, where we set
\begin{equation*}
    \mathbb{X}\coloneqq \left \{(\boldsymbol{F},\boldsymbol{\lambda})\in \rtt_+ \times \rt:\:\: (\det\boldsymbol{F})|\boldsymbol{\lambda}|=1  \right \},
\end{equation*}
is continuous and takes the form
\begin{equation}
    \label{eqn:density-W_h}
    W_h(\boldsymbol{F},\boldsymbol{\lambda})\coloneqq \Phi \left(\boldsymbol{L}_h((\det\boldsymbol{F})\boldsymbol{\lambda})^{-1}\boldsymbol{F}\right)
\end{equation}
for some function $\Phi\colon \rtt_+ \to [0,+\infty)$.
In \eqref{eqn:density-W_h},  we define $\boldsymbol{L}_h\colon \mathbb{S}^2\to\rtt_+$ by setting 
\begin{equation}
    \label{eqn:prestrain}
    \boldsymbol{L}_h(\boldsymbol{z})\coloneqq \boldsymbol{I}+c_h \boldsymbol{z}\otimes \boldsymbol{z},
\end{equation}
where $c_h>0$. Regarding the asymptotic behavior of $(c_h)$, we assume the existence of the limit
\begin{equation}
\label{eqn:ch-limit}
    c_0\coloneqq \lim_{h \to 0^+} \frac{c_h}{h^{\beta/2}}>0.
\end{equation}
A direct computation shows that
\begin{equation}
\label{eqn:prestrain-determinant}
    \det \boldsymbol{L}_h(\boldsymbol{z})=1+c_h
\end{equation}
for every $\boldsymbol{z}\in \S^2$. In particular, the matrix $\boldsymbol{L}_h(\boldsymbol{z})$ is invertible and its inverse is given by
\begin{equation}
    \label{eqn:prestrain-inverse}
    \boldsymbol{L}_h(\boldsymbol{z})^{-1}= \boldsymbol{I}-\frac{c_h}{1+c_h} \boldsymbol{z}\otimes \boldsymbol{z}.
\end{equation}
}

The function $\Phi$ is assumed to have the following properties:
\begin{enumerate}[\bf (i)]
	\item \textbf{Normalization:} 
	\begin{equation}
		\label{eqn:normalization-Phi}
		\Phi(\boldsymbol{I})=0=\min\,\Phi,
	\end{equation}
	\item \textbf{Frame indifference:}
	\begin{equation}
		\label{eqn:frame-indifference-Phi}
		\forall\,\boldsymbol{R}\in SO(3),\:\forall\,\boldsymbol{\Xi}\in \rtt_+, \quad \Phi(\boldsymbol{R}\,\boldsymbol{\Xi})=\Phi(\boldsymbol{\Xi}),
	\end{equation}
	\item \textbf{Growth:} there exist $C_0,C_1,C_2>0$ and {\MMM $a> 1$} such that
	\begin{equation}
	\label{eqn:coercivity-Phi}
	\forall\,\boldsymbol{\Xi}\in \rtt_+, \quad \Phi(\boldsymbol{\Xi})\geq C_0\, \dist^2(\boldsymbol{\Xi};SO(3))\vee \dist^p(\boldsymbol{\Xi};SO(3)),
	\end{equation}
	and
	\begin{equation}
	\label{eqn:coercivity-Phi-det}
		{\MMM \forall\,\boldsymbol{\Xi}\in \rtt_+, \quad \Phi(\boldsymbol{\Xi})\geq \frac{C_1}{(\det\boldsymbol{\Xi})^a}-C_2,}
	\end{equation}
	\item \textbf{Regularity:}
	\begin{equation}
	\label{eqn:regularity-Phi}
	\text{$\Phi$ is continuous and of class $C^2$ in a neighborhood  of $SO(3)$},
	\end{equation}
\end{enumerate}

Given \eqref{eqn:prestrain-inverse}, we have
\begin{equation}
	\label{eqn:frame-indifference-prestrain}
	\forall\,\boldsymbol{R}\in SO(3),\:\forall \,\boldsymbol{z}\in\S^2,\quad \boldsymbol{L}_h(\boldsymbol{R}\,\boldsymbol{z})^{-1}=\boldsymbol{R}\,\boldsymbol{L}_h(\boldsymbol{z})^{-1}\boldsymbol{R}^\top.
\end{equation}
This, together with \eqref{eqn:frame-indifference-Phi}, yields the frame-indifference of $W_h$, namely
\begin{equation}
\label{eqn:frame-indifference}
	\forall\,\boldsymbol{R}\in SO(3),\:\forall\,(\boldsymbol{F},\boldsymbol{\lambda})\in\mathbb{X},\quad W_h(\boldsymbol{R}\boldsymbol{F},\boldsymbol{R}\boldsymbol{\lambda})=W_h(\boldsymbol{F},\boldsymbol{\lambda}).
\end{equation}
From \eqref{eqn:normalization-Phi} and \eqref{eqn:frame-indifference}, we realize that the function  $W_h$ is minimized on the set
\begin{equation*}
\left \{(\boldsymbol{R}\boldsymbol{F},\boldsymbol{R}\boldsymbol{\lambda}):\:\boldsymbol{R}\in SO(3),\:(\boldsymbol{F},\boldsymbol{\lambda})\in \mathbb{X},\: \boldsymbol{F}=\boldsymbol{L}_h((\det \boldsymbol{F})\boldsymbol{\lambda})\right \}  .
\end{equation*}
By \eqref{eqn:coercivity-Phi}, the map $\Phi$ has global $p$-growth and quadratic growth close to $SO(3)$. In particular, there exist $C_1,C_2>0$ such that, for $q\in \{2,p\}$, there holds
\begin{equation*}
\forall\,\boldsymbol{\Xi}\in \rtt_+,\quad \Phi(\boldsymbol{\Xi})\geq C_1\,|\boldsymbol{\Xi}|^q-C_2.
\end{equation*}
The specific form of the growth  condition of $\Phi$ with respect to the determinant in \eqref{eqn:coercivity-Phi-det} is assumed just for simplicity. Actually, because of the rescaling of the elastic energy, we could also assume  $a=1$. More generally, assumption \eqref{eqn:coercivity-Phi-det} can be replaced by the requirement
\begin{equation*}
	\forall\,\boldsymbol{\Xi}\in\rtt_+,\quad \Phi(\boldsymbol{\Xi})\geq \gamma(\det \boldsymbol{\Xi}),
\end{equation*} 
where $\gamma \colon (0,+\infty)\to [0,+\infty]$ is continuous and satisfies 
\begin{equation*}
    \gamma(1)=0=\min\gamma, \qquad \lim_{h \to 0^+}h\gamma(h)=+\infty.
\end{equation*}
Assumptions \eqref{eqn:normalization-Phi} and \eqref{eqn:regularity-Phi} justify  the  second-order Taylor expansion of $\Phi$ close to the identity. Precisely, we have the following:
\begin{equation}
    \label{eqn:Taylor-Phi}
    \forall\, \boldsymbol{\Xi}\in \rtt:\:|\boldsymbol{\Xi}|\ll 1,\quad \Phi(\boldsymbol{I}+\boldsymbol{\Xi})=\frac{1}{2} Q(\boldsymbol{\Xi})+\omega(\boldsymbol{\Xi}).
\end{equation}
The quadratic form $Q$ is defined by $Q(\boldsymbol{\Xi})\coloneqq D^2\Phi(\boldsymbol{I})(\boldsymbol{\Xi},\boldsymbol{\Xi})$, while  $\omega(\boldsymbol{\Xi})=o(|\boldsymbol{\Xi}|^2)$, as $|\boldsymbol{\Xi}|\to 0^+$. 
Note that $Q$ is positive semidefinite and, in turn, convex by \eqref{eqn:normalization-Phi}. Additionally, exploiting \eqref{eqn:coercivity-Phi} and \eqref{eqn:Taylor-Phi}, by arguing as in \cite[p. 927]{mielke.stefanelli} one can show that $Q$ is positive definite on symmetric matrices, that is, there exists $C>0$ such that
\begin{equation}
\label{eqn:Q-coe}
    \forall\,\boldsymbol{\Xi}\in \rtt,\quad Q(\boldsymbol{\Xi})=Q(\sym \boldsymbol{\Xi})\geq C |\sym\,\boldsymbol{\Xi}|^2.
\end{equation}
The last two terms in \eqref{eqn:energy-G_h} are of Eulerian type. The second one is the {exchange energy}, while the third one is the {magnetostatic energy}. This last term involves the stray-field potential $\psi_{\boldsymbol{m}}\colon \R^3 \to \R^3$ which is a weak solution of the {magnetostatic Maxwell equation}:
\begin{equation}
    \label{eqn:Maxwell}
    \Delta \psi_{\boldsymbol{m}}= \mathrm{div}\,(\chi_{\Omega_h^{\boldsymbol{\phi}}}\boldsymbol{m})\quad \text{in $\R^3$.}
\end{equation}
It is proved that weak solutions of \eqref{eqn:Maxwell} exist and are unique up to additive constants \cite[Proposition 8.8]{barchiesi.henao.moracorral}. Therefore, the magnetostatic energy is well defined.

We mention that the magnetostatic term usually comprises other terms such as the anisotropy energy \cite{kruzik.prohl} that here, for simplicity, we are neglecting.

\subsection{Change of variables and rescaling}
\label{subsec:cov-rescaling}
For $h>0$, we introduce the rescaling map $\boldsymbol{\pi}_h$ defined by $\boldsymbol{\pi}_h(\boldsymbol{x})\coloneqq ((\boldsymbol{x}')^\top,h x_3)^\top$ for every $\boldsymbol{x}\in \R^3$. Set $\Omega \coloneqq S \times I$. Given any  $\boldsymbol{\phi}\in W^{1,p}(\Omega_h; \R^3)$, we consider the map $\boldsymbol{y}\coloneqq \boldsymbol{\phi}\circ \boldsymbol{\pi}_h\restr{\Omega} \in W^{1,p}(\Omega;\R^3)$. In view of \eqref{eqn:clamped-unscaled}, this map satisfies
\begin{equation}
\label{eqn:y-cl}
    \text{$\boldsymbol{y}=\boldsymbol{\pi}_h$ on $\partial S \times I$.}
\end{equation}
Also, given the set  $\Omega^{\boldsymbol{y}}\coloneqq \boldsymbol{y}(\Omega) \setminus \boldsymbol{y}(\partial \Omega)$, there holds $\Omega^{\boldsymbol{y}}=\Omega_h^{\boldsymbol{\phi}}$ and, in view of \eqref{eqn:saturation-unscaled}, each magnetization $\boldsymbol{m}\in W^{1,2}(\Omega^{\boldsymbol{y}};\R^3)$ satisfies
\begin{equation}
    \label{eqn:saturation}
    \text{$|\boldsymbol{m}\circ \boldsymbol{y}|\det \nabla \boldsymbol{y}=h$ a.e. in $\Omega$.}
\end{equation}
Therefore, we define the class of admissible states as
\begin{equation}
    \label{eqn:class-Qh}
    \begin{split}
        \mathcal{Q}_h:= \Big \{(\boldsymbol{y},\boldsymbol{m}):\:\boldsymbol{y}\in W^{1,p}(\Omega;\R^3),\:\text{$\det \nabla \boldsymbol{y}>0$ a.e.},\:\text{$\boldsymbol{y}$ a.e. injective,}\:\text{$\boldsymbol{y}=\boldsymbol{\pi}_h$ on $\partial S \times I$,}&\\
    \boldsymbol{m}\in W^{1,2}(\Omega^{\boldsymbol{y}};\R^3),\:\text{$|\boldsymbol{m}\circ \boldsymbol{y}|\det \nabla \boldsymbol{y}=h$ a.e. in $\Omega$} & \Big \}.
    \end{split}
\end{equation}

Recalling \eqref{eqn:energy-G_h} and applying the change-of-variable formula, we obtain
\begin{equation*}
    \label{eqn:energy-rescaling}
    \frac{1}{h}G_h(\boldsymbol{\phi},\boldsymbol{m})=\frac{1}{h^\beta} \int_\Omega W_h(\nabla_h\boldsymbol{y},\boldsymbol{m}\circ\boldsymbol{y})\,\d \boldsymbol{x}+\frac{1}{h}\int_{\Omega^{\boldsymbol{y}}} |\nabla \boldsymbol{m}|^2\,\d \boldsymbol{\xi}+\frac{1}{2h}\int_{\R^3}|\nabla \psi_{\boldsymbol{m}}|^2\,\d \boldsymbol{\xi},
\end{equation*}
where the scaled gradient is defined as $\nabla_h\coloneqq(\nabla',h^{-1}\partial_3)$. 
Hence, we define the energy functional $E_h \colon \mathcal{Q}_h\to [0,+\infty)$  by setting
\begin{equation}
    \label{eqn:energy-E_h}
    \begin{split}
    E_h(\boldsymbol{q}) &\coloneqq \frac{1}{h^\beta} \int_\Omega W_h(\nabla_h\boldsymbol{y},\boldsymbol{m}\circ\boldsymbol{y})\,\d \boldsymbol{x}+\frac{1}{h}\int_{\Omega^{\boldsymbol{y}}} |\nabla \boldsymbol{m}|^2\,\d \boldsymbol{\xi}+\frac{1}{2h}\int_{\R^3}|\nabla \psi_{\boldsymbol{m}}|^2\,\d \boldsymbol{\xi},
    \end{split}
\end{equation}
where $\boldsymbol{q}=(\boldsymbol{y},\boldsymbol{m})$. 
We denote the three terms on the right-hand side  by $E_h^{\mathrm{el}}(\boldsymbol{q})$, $E_h^{\mathrm{exc}}(\boldsymbol{q})$ and $E_h^{\mathrm{mag}}(\boldsymbol{q})$, respectively.
Given \eqref{eqn:Maxwell}, the function $\psi_{\boldsymbol{m}}$ in \eqref{eqn:energy-E_h} is a weak solution of the equation
\begin{equation*}
    \Delta \psi_{\boldsymbol{m}}= \mathrm{div}\,(\chi_{\Omega^{\boldsymbol{y}}}\boldsymbol{m})\quad \text{in $\R^3$.}
\end{equation*}
More explicitly, this means that $\psi_{\boldsymbol{m}}\in V^{1,2}(\R^3)$ and satisfies
\begin{equation}
    \label{eqn:Maxwell-y-weak}
    \forall\,\varphi\in V^{1,2}(\R^3),\quad \int_{\R^3} \nabla \psi_{\boldsymbol{m}}\cdot\nabla\varphi\,\d\boldsymbol{\xi}=\int_{\R^3} \chi_{\Omega^{\boldsymbol{y}}}\boldsymbol{m} \cdot\nabla\varphi\,\d\boldsymbol{\xi}.
\end{equation}
Here, we adopt the same notation in \cite[Subsection 2.7.3]{necas} and we set
\begin{equation}
    \label{eqn:necas}
    V^{1,2}(\R^3)\coloneqq \left \{ \varphi\in L^2_{\mathrm{loc}}(\R^3):\:\nabla \varphi\in L^2(\R^3;\R^3)\right \}.
\end{equation}
In particular, testing \eqref{eqn:Maxwell-y-weak} with $\varphi=\psi_{\boldsymbol{m}}$ and applying the H\"{o}lder inequality, we obtain
\begin{equation}
    \label{eqn:maxwell-stability}
    \|\nabla \psi_{\boldsymbol{m}}\|_{L^2(\R^3;\R^3)}\leq \|\chi_{\Omega^{\boldsymbol{y}}}\boldsymbol{m}\|_{L^2(\R^3;\R^3)}.
\end{equation}

\begin{remark}[Invariance with respect to rigid motions]
\label{rem:invariance-rigid-motions}
The functional $E_h$ is invariant with respect to rigid motions. Let $\boldsymbol{q}=(\boldsymbol{y},\boldsymbol{m})\in\mathcal{Q}_h$ and let $\boldsymbol{T}$ be a rigid motion of the form $\boldsymbol{T}(\boldsymbol{\xi})\coloneqq \boldsymbol{Q}\boldsymbol{\xi}+\boldsymbol{c}$ for every $\boldsymbol{\xi}\in\R^3$, where $\boldsymbol{Q}\in SO(3)$ and $\boldsymbol{c}\in\R^3$. If we set $\widetilde{\boldsymbol{q}}=(\widetilde{\boldsymbol{y}},\widetilde{\boldsymbol{m}})\in\mathcal{Q}_h$ with $\widetilde{\boldsymbol{y}}\coloneqq \boldsymbol{T}\circ\boldsymbol{y}$ and $\widetilde{\boldsymbol{m}}\coloneqq \boldsymbol{Q}\boldsymbol{m}\circ \boldsymbol{T}^{-1}$, then there holds $E_h(\widetilde{\boldsymbol{q}})=E_h(\boldsymbol{q})$. Indeed, by \eqref{eqn:frame-indifference}, $E_h^{\mathrm{el}}(\widetilde{\boldsymbol{q}})=E_h^{\mathrm{el}}(\boldsymbol{q})$ and, by the change-of-variable formula, $E_h^{\mathrm{exc}}(\widetilde{\boldsymbol{q}})=E_h^{\mathrm{exc}}(\boldsymbol{q})$. Moreover, if $\psi_{\boldsymbol{m}}$ is a stray field potential corresponding to $\boldsymbol{q}$, then we check that $\psi_{\boldsymbol{m}}\circ \boldsymbol{T}^{-1}$ is a stray field potential corresponding to $\widetilde{\boldsymbol{q}}$. Clearly, this yields $E_h^{\mathrm{mag}}(\widetilde{\boldsymbol{q}})=E_h^{\mathrm{mag}}(\boldsymbol{q})$.
\end{remark}

\begin{remark}[Existence of minimizers for the bulk model]
\label{rem:existence-min-bulk}
In the present work, we do not deal with the problem of the existence of minimizers and we do not even specify the topology on $\mathcal{Q}_h$.
We just mention that, without further assumptions on the elastic energy density $W_h$, the functional $E_h$ in \eqref{eqn:energy-E_h} does not necessarily attain its infimum.
However, {\MMM if the function $\Phi$ in \eqref{eqn:density-W_h} satisfies feasible polyconvexity assumptions, then the existence of minimizers of $E_h$  can be proved \cite{bresciani.quasistatic,bresciani.davoli.kruzik}.}
\end{remark}

\section{Static setting}
\label{sec:static-setting}
In this section we study the asymptotic behavior of the energy $E_h$ in \eqref{eqn:energy-E_h}, as $h \to 0^+$, in the static case. First, in Subsection \ref{subsec:gamma-convergence}, we compute the $\Gamma$-limit the sequence $(E_h)$. Then, in Subsection \ref{subsec:cam}, we consider applied loads and we prove that sequences of almost minimizers of the total energy converge to minimizers of the corresponding energy in the reduced model. 

\subsection{\texorpdfstring{Static $\boldsymbol{\Gamma}$}{gamma}-convergence}
\label{subsec:gamma-convergence}
We introduce some notation that is going to be employed in the rest of the paper. For $h>0$ and $\boldsymbol{q}=(\boldsymbol{y},\boldsymbol{m})\in \mathcal{Q}_h$,  we define the {(scaled) horizontal} and {vertical averaged displacements} and the {(scaled) first moment}, respectively
\begin{equation*}
    \mathcal{U}_h(\boldsymbol{q})\colon S \to \R^2,\qquad \mathcal{V}_h(\boldsymbol{q})\colon S \to \R, \qquad \mathcal{W}_h(\boldsymbol{q})\colon S \to \R^3, 
\end{equation*}
by setting
\begin{align*}
    \mathcal{U}_h(\boldsymbol{q})(\boldsymbol{x}')&\coloneqq \frac{1}{h^{\beta/2}}\int_I \big ( \boldsymbol{y}'(\boldsymbol{x}',x_3)-\boldsymbol{x}'\big ) \,\d x_3,\\
    \mathcal{V}_h(\boldsymbol{q})(\boldsymbol{x}')&\coloneqq \frac{1}{h^{\beta/2-1}}\int_I y^3(\boldsymbol{x}',x_3)\,\d x_3,\\
    \mathcal{W}_h(\boldsymbol{q})(\boldsymbol{x}')&\coloneqq\frac{1}{h^{\beta/2}} \int_I x_3 \big ( \boldsymbol{y}(\boldsymbol{x}',x_3)-\boldsymbol{\pi}_h(\boldsymbol{x}',x_3)\big )\,\d x_3,
\end{align*}
for every $\boldsymbol{x}'\in S$. 
Furthermore, {\MMM we define
\begin{equation*}
	\mathcal{M}_h(\boldsymbol{q})\colon \Omega \to \R^3, \qquad \mathcal{N}_h(\boldsymbol{q})\colon \Omega \to \rtt, \qquad \mathcal{Z}_h(\boldsymbol{q})\colon \Omega \to \S^2,
\end{equation*}
by setting
\begin{align}
	\label{eqn:notation-mu}
	\mathcal{M}_h(\boldsymbol{q})&\coloneqq \left(\chi_{\Omega^{\boldsymbol{y}}}\boldsymbol{m}  \right)\circ \boldsymbol{\pi}_h,\\
	\label{eqn:notation-nu}
	\mathcal{N}_h(\boldsymbol{q})&\coloneqq \left(\chi_{\Omega^{\boldsymbol{y}}}\nabla\boldsymbol{m}  \right)\circ \boldsymbol{\pi}_h,\\
	\label{eqn:lagrangian-magnetization}
	\mathcal{Z}_h(\boldsymbol{q})&\coloneqq \boldsymbol{m}\circ \boldsymbol{y}\det \nabla_h\boldsymbol{y}.
\end{align}
We stress that the map $\mathcal{Z}_h(\boldsymbol{q})$ is sphere-valued as a consequence of \eqref{eqn:saturation}.
}

Recall the quadratic form $Q$ in \eqref{eqn:Taylor-Phi}. As in \cite{friesecke.james.mueller2}, the reduced quadratic form is defined  by
\begin{equation}
    \label{eqn:Q-red}
    Q_{\mathrm{red}}(\boldsymbol{\Sigma})\coloneqq \min \Bigg \{  Q \left( \left(\renewcommand\arraystretch{1.2} 
    \begin{array}{@{}c|c@{}}   \boldsymbol{\Sigma}  & \boldsymbol{0}'\\ \hline  (\boldsymbol{0}')^{\top} & 0 \end{array} \right)+\boldsymbol{c}\otimes \boldsymbol{e}_3+\boldsymbol{e}_3 \otimes \boldsymbol{c} \right): \:\:\boldsymbol{c} \in \R^3 \Bigg \}
\end{equation}
for every $\boldsymbol{\Sigma}\in \rtwtw$. The  positive definiteness and the convexity of $Q_{\mathrm{red}}$ follow from that of $Q$. Moreover, from \eqref{eqn:Q-coe}, we deduce that $Q_{\mathrm{red}}$ is also positive definite on symmetric matrices, namely, there exists $C>0$ such that
\begin{equation}
\label{eqn:Q-red-coe}
    \forall\,\boldsymbol{\Sigma}\in \rtwtw,\quad Q_{\mathrm{red}}(\boldsymbol{\Sigma})=Q_{\rm red}(\sym \boldsymbol{\Sigma})\geq C |\sym\,\boldsymbol{\Sigma}|^2.
\end{equation}

Set 
\begin{equation}
\label{eqn:class-Q0}
    \mathcal{Q}_0\coloneqq W^{1,2}_0(S;\R^2)\times W^{2,2}_0(S)\times W^{1,2}(S;\S^2).
\end{equation}
The $\Gamma$-limit of the functionals $(E_h)$ in \eqref{eqn:energy-E_h}, as $h \to 0^+$, is given by $E_0\colon \mathcal{Q}_0\to [0,+\infty)$
defined as
\begin{equation}
    \label{eqn:energy-E_0}
        \begin{split}
        E_0(\boldsymbol{q}_0) &\coloneqq \frac{1}{2}\int_S Q_{\mathrm{red}}(\sym\nabla'\boldsymbol{u}-c_0\boldsymbol{\zeta}'\otimes\boldsymbol{\zeta}')\,\d\boldsymbol{x}'+
        \frac{1}{24}\int_S Q_{\mathrm{red}}((\nabla')^2 v)\,\d\boldsymbol{x}'\\
        &\;+\int_S |\nabla'\boldsymbol{\zeta}|^2\,\d\boldsymbol{x}'+\frac{1}{2}\int_S |\zeta^3|^2\,\d \boldsymbol{x}',
        \end{split}
\end{equation}
where $\boldsymbol{q}_0=(\boldsymbol{u},v,\boldsymbol{\zeta})$.
We denote the sum of the first two terms on the right-hand side by $E_0^{\mathrm{el}}(\boldsymbol{q}_0)$ and the last two terms on the right-hand side  by $E_0^{\mathrm{exc}}(\boldsymbol{q}_0)$ and $E_0^{\mathrm{mag}}(\boldsymbol{q}_0)$, respectively.
Note that the limiting functional $E_0$ is purely Lagrangian and that it trivially admits minimizers.

Our first main result claims the $\Gamma$-convergence of $(E_h)$ to $E_0$, as $h \to 0^+$, and the equi-coercivity of the sequence $(E_h)$. 

\begin{theorem}[Static $\boldsymbol{\Gamma}$-convergence]
\label{thm:gamma-conv}
Assume $p>3$ and $\beta>6\vee p$. Suppose that the elastic energy density $W_h$ has the form in \eqref{eqn:density-W_h}, where the function $\Phi$ satisfies \eqref{eqn:normalization-Phi}--\eqref{eqn:regularity-Phi}. 
\begin{enumerate}[(i)]
    \item \textbf{(Compactness and lower bound).} Let $(\boldsymbol{q}_h)$ with $\boldsymbol{q}_h=(\boldsymbol{y}_h,\boldsymbol{m}_h)\in \mathcal{Q}_h$ be such that 
    \begin{equation}
    \label{eqn:be}
        \sup_{h>0} E_h(\boldsymbol{q}_h)\leq C.
    \end{equation}
    Then, there exists $\boldsymbol{q}_0=(\boldsymbol{u},v,\boldsymbol{\zeta})\in \mathcal{Q}_0$ such that, up to subsequences, the following convergences hold,  as $h \to 0^+$:
    \begin{align}
        \label{eqn:gamma-horizontal}
        \text{${\boldsymbol{u}}_h\coloneqq \mathcal{U}_h({\boldsymbol{q}}_h)$}&\text{$\wk {\boldsymbol{u}}$ in $W^{1,2}(S;\R^2)$,}\\
        \label{eqn:gamma-vertical}
        \text{${v}_h\coloneqq \mathcal{V}_h({\boldsymbol{q}}_h)$}&\text{$\to {v}$ in $W^{1,2}(S)$,}\\
        \label{eqn:gamma-lagrangian}
        \text{${\boldsymbol{z}}_h\coloneqq \mathcal{Z}_h({\boldsymbol{q}}_h)$}&\text{$\to {\boldsymbol{\zeta}}$ in $L^1(\Omega;\R^3)$.}
    \end{align}
    Moreover, the following inequality holds:
    \begin{equation}
    \label{eqn:gamma-liminf}
    E_0({\boldsymbol{q}}_0) \leq \liminf\limits_{h \to 0^+}E_h(\boldsymbol{q}_h).
    \end{equation}
    \item \textbf{(Optimality of the lower bound).} For every $\widehat{\boldsymbol{q}}_0=(\widehat{\boldsymbol{u}}, \widehat{v},\widehat{\boldsymbol{\zeta}})\in \mathcal{Q}_0$, there exists  $(\widehat{\boldsymbol{q}}_h)$ with $\widehat{\boldsymbol{q}}_h\in \mathcal{Q}_h$ such that the following convergences hold, as $h \to 0^+$:
    \begin{align*}
        \text{$\widehat{\boldsymbol{u}}_h\coloneqq \mathcal{U}_h(\widehat{\boldsymbol{q}}_h)$}&\text{$\to \widehat{\boldsymbol{u}}$ in $W^{1,2}(S;\R^2)$,}\\
        \text{$\widehat{v}_h\coloneqq \mathcal{V}_h(\widehat{\boldsymbol{q}}_h)$}&\text{$\to \widehat{v}$ in $W^{1,2}(S)$,}\\
        \text{$\widehat{\boldsymbol{z}}_h\coloneqq \mathcal{Z}_h(\widehat{\boldsymbol{q}}_h)$}&\text{$\to \widehat{\boldsymbol{\zeta}}$ in $L^1(\Omega;\R^3)$.}
    \end{align*}
    Moreover,  the following equality holds:
    \begin{equation}
        \label{eqn:opt-lb-limit}
         E_0(\widehat{\boldsymbol{u}},\widehat{v},\widehat{\boldsymbol{\zeta}}) = \lim_{h \to 0^+} E_h(\widehat{\boldsymbol{q}}_h).
    \end{equation} 
\end{enumerate}
\end{theorem}

Note that Theorem \ref{thm:gamma-conv} is not a proper $\Gamma$-convergence statement in the sense of the abstract definition \cite{braides} since the functionals $E_h$ and $E_0$ are defined on different spaces. However,  Theorem \ref{thm:gamma-conv} can be reformulated as a rigorous $\Gamma$-convergence statement similarly to \cite[Corollary 3.4]{bresciani}.

\begin{remark}[More general boundary conditions]
\label{rem:dirichlet-bc}
More general Dirichlet boundary conditions, like the ones in \cite{lecumberry.mueller}, can be considered in Theorem \ref{thm:gamma-conv}. Precisely, let $\overline{\boldsymbol{u}}\in W^{2,\infty}(S;\R^2)$ and $\overline{v}\in W^{3,\infty}(S)$. For $h>0$, let the deformation $\overline{\boldsymbol{y}}_h\in W^{1,\infty}(\Omega;\R^3)$ be defined as
\begin{equation*}
    \overline{\boldsymbol{y}}_h\coloneqq \boldsymbol{\pi}_h + h^{\beta/2} \left ( \begin{matrix}  \overline{\boldsymbol{u}}  \\ 0\end{matrix}   \right )+ h^{\beta/2-1} \left ( \begin{matrix}  \boldsymbol{0}'  \\ \overline{v}\end{matrix}   \right )-h^{\beta/2} x_3 \left ( \begin{matrix}  \nabla'\overline{v}  \\ 0\end{matrix}   \right ).
\end{equation*}
If $\Gamma\subset \partial S$ is given by a finite union of closed connected subsets of $\partial S$ with nonempty interior in the relative topology, then Theorem \ref{thm:gamma-conv} still holds true if we replace the boundary condition in \eqref{eqn:y-cl} with 
\begin{equation*}
    \text{$\boldsymbol{y}=\overline{\boldsymbol{y}}_h$ on $\Gamma\times I$}.
\end{equation*}
Accordingly, the limiting class $\mathcal{Q}_0$ in \eqref{eqn:class-Q0} needs to be replaced by the set
\begin{equation*}
    \left \{(\boldsymbol{u},v,\boldsymbol{\zeta})\in W^{1,2}(S;\R^2)\times W^{2,2}(S)\times W^{1,2}(S;\S^2):\:\text{$\boldsymbol{u}=\overline{\boldsymbol{u}}$ on $\Gamma$, $v=\overline{v}$ on $\Gamma$, $\nabla'v=\nabla'\overline{v}$ on $\Gamma$} \right  \}.
\end{equation*}
The main changes concern the construction of recovery sequences, as we need to approximate the limiting averaged displacements $\boldsymbol{u}$ and $v$ with regular maps satisfying the boundary conditions above. This is achieved by employing \cite[Proposition A.2]{friesecke.james.mueller1}, which requires the above mentioned regularity of $\Gamma$. For more details, we refer to \cite{bresciani.thesis}.
\end{remark}

The remainder of the subsection is devoted to the proof of Theorem \ref{thm:gamma-conv}. 

\subsubsection{Compactness}

For future reference, we start by collecting some preliminary compactness results which we present in a more self-contained form.
The  compactness of deformations is proved by adapting the techniques in \cite{friesecke.james.mueller2} to our setting. A fundamental tool in these arguments is the celebrated rigidity estimate \cite[Theorem 3.1]{friesecke.james.mueller1}. For convenience, given  $h>0$ and $\boldsymbol{y}\in W^{1,p}(\Omega;\R^3)$, we set
\begin{equation}
\label{eqn:rig}
    \mathcal{R}_h(\boldsymbol{y})\coloneqq \int_\Omega \dist^2(\nabla_h \boldsymbol{y};SO(3))\vee \dist^p(\nabla_h \boldsymbol{y};SO(3))\,\d \boldsymbol{x}.
\end{equation}
We will use the following slight modification of \cite[Theorem 6]{friesecke.james.mueller2} which was given in \cite[Lemma 4.1]{bresciani}.

\begin{lemma}[Approximation by rotations]
\label{lem:ar}
Let $\boldsymbol{y}\in W^{1,p}(\Omega;\R^3)$.
For every $h>0$, set $r_h\coloneqq \mathcal{R}_h(\boldsymbol{y})$ and $\boldsymbol{F}_h \coloneqq \nabla_h \boldsymbol{y}$. Suppose that $r_h/h^2\to 0$, as $h \to 0^+$.
Then, for $h\ll 1$, there exist $\boldsymbol{R}_h \in W^{1,p}(S;SO(3))$ and  $\boldsymbol{Q}_h \in SO(3)$ such that, for $q \in \{2,p\}$, the following estimates hold:
\begin{align*}
\|\boldsymbol{F}_h-\boldsymbol{R}_h\|_{L^q(\Omega;\rtt)}&\leq C r_h^{1/q}, \hspace{11mm} \|\nabla'\boldsymbol{R}_h\|_{L^q(S;\R^{3 \times 3\times 3})}\leq C h^{-1}r_h^{1/q},\\
\|\boldsymbol{R}_h-\boldsymbol{Q}_h\|_{L^q(S;\rtt)}&\leq C h^{-1}r_h^{1/q}, \quad \|\boldsymbol{F}_h-\boldsymbol{Q}_h\|_{L^q(\Omega;\rtt)}\leq C h^{-1}r_h^{1/q}.
\end{align*}
\end{lemma}

The next proposition provides a simple reformulation of the compactness results in \cite{friesecke.james.mueller2}. Henceforth, $\boldsymbol{\pi}_0$ denotes projection map defined  by $\boldsymbol{\pi}_0(\boldsymbol{x})\coloneqq((\boldsymbol{x}')^\top,0)^\top$ for every $\boldsymbol{x}\in \R^3$.

\begin{proposition}[Compactness of deformations]
\label{prop:cd}
Let $(\widehat{\boldsymbol{y}}_h)\subset W^{1,2}(\Omega;\rt)$ and let $(e_h)\subset \R$ with $e_h>0$ be such that $e_h/h^2\to0$, as $h\to 0^+$. Set $r_h\coloneqq\mathcal{R}_h(\widehat{\boldsymbol{y}}_h)$ and suppose that $r_h\leq Ce_h$ for every $h>0$. Also, set $\widehat{\boldsymbol{F}}_h\coloneqq \nabla_h\widehat{\boldsymbol{y}}_h$ and suppose that there exists $(\widehat{\boldsymbol{R}}_h)\subset W^{1,2}(S;SO(3))$ such that, for every $h>0$, the following estimates hold:
	\begin{align*}
	\|\widehat{\boldsymbol{F}}_h-\widehat{\boldsymbol{R}}_h\|_{L^2(\Omega;\rtt)}&\leq C \sqrt{r_h},\qquad \hspace{6mm} \|\nabla'\widehat{\boldsymbol{R}}_h\|_{L^2(S;\R^{3\times 3 \times 3})}\leq C h^{-1}\sqrt{r_h}\\
	\|\widehat{\boldsymbol{R}}_h-\boldsymbol{I}\|_{L^2(S;\rtt)}&\leq C h^{-1}\sqrt{r_h},\qquad \|\widehat{\boldsymbol{F}}_h-\boldsymbol{I}\|_{L^2(S;\rtt)}\leq C h^{-1}\sqrt{r_h}.
	\end{align*}
Also, for every $h>0$, assume the following:
\begin{equation}
\label{eqn:cd-assumption}
    \text{either $\widehat{\boldsymbol{y}}_h-\boldsymbol{\pi}_h$ has null average over $\Omega$ or   $\widehat{\boldsymbol{y}}_h=\boldsymbol{\pi}_h$ on $\partial S \times I$.}
\end{equation}
Define $\widehat{\boldsymbol{u}}_h\colon S \to \R^2$, $\widehat{v}_h \colon S \to \R$ and $\widehat{\boldsymbol{w}}_h\colon S \to \R^3$ by setting
\begin{align*}
    \widehat{\boldsymbol{u}}_h(\boldsymbol{x}')&\coloneqq \frac{h^2}{e_h}\wedge \frac{1}{\sqrt{e_h}}\int_I \big (\widehat{\boldsymbol{y}}_h'(\boldsymbol{x}',x_3)-\boldsymbol{x}'\big )\,\d x_3,\\
    \widehat{v}_h(\boldsymbol{x}')&\coloneqq \frac{h}{\sqrt{e_h}}\int_I \widehat{y}_h^{\,3}(\boldsymbol{x}',x_3)\,\d x_3,\\
     \widehat{\boldsymbol{w}}_h(\boldsymbol{x}')&\coloneqq \frac{1}{\sqrt{e_h}}\int_I x_3\big (\widehat{\boldsymbol{y}}_h(\boldsymbol{x}',x_3)-\boldsymbol{\pi}_h(\boldsymbol{x}',x_3)\big )\,\d x_3,
\end{align*}
for every $\boldsymbol{x}'\in S$. Then, the following estimates hold:
	\begin{align}
			\label{eqn:u-bdd}
			\|\widehat{\boldsymbol{u}}_h\|_{W^{1,2}(S;\R^2)}&\leq C \left(  \sqrt{\frac{r_h}{e_h}}+\frac{r_h}{e_h} \wedge \frac{r_h}{h^2\sqrt{e_h}}  \right),\\
		\label{eqn:v-bdd} 
		\|\widehat{v}_h\|_{L^2(S;\rtt)}&\leq C \sqrt{\frac{r_h}{e_h}},\\
			\label{eqn:w-bdd}
			\|\widehat{\boldsymbol{w}}_h\|_{L^2(S;\rtt)}&\leq C    \sqrt{\frac{r_h}{e_h}}.
	\end{align}
Moreover,  there exist $\widehat{\boldsymbol{u}}\in W^{1,2}(S;\R^2)$ and $\widehat{v}\in W^{2,2}(S)$ such that, up to subsequences, the following convergences hold, as $h \to 0^+$:
\begin{align}
    \label{eqn:cd-horizontal}
    \text{$\widehat{\boldsymbol{u}}_h$}&\text{$\wk\widehat{\boldsymbol{u}}$ in $W^{1,2}(S;\R^2)$,}\\
    \label{eqn:cd-vertical}
    \text{$\widehat{v}_h$}&\text{$\to \widehat{v}$ in $W^{1,2}(S)$,}\\
    \label{eqn:cd-momentum}
    \text{$\widehat{\boldsymbol{w}}_h$}&\text{$\wk -\frac{1}{12}\left ( \begin{array}{cc}
         \nabla' \widehat{v}  \\
         0
    \end{array}\right)$ in $W^{1,2}(S;\R^3)$.}
\end{align}
\end{proposition}
\begin{proof}
{\MMM
The convergences in \eqref{eqn:cd-horizontal}--\eqref{eqn:cd-momentum}  have been proved in \cite[Lemma 1]{friesecke.james.mueller2} and \cite[Corollary 1]{friesecke.james.mueller2}. Also the estimates in \eqref{eqn:u-bdd}--\eqref{eqn:w-bdd} are implicitly established in these results.
}
Note that assumption \eqref{eqn:cd-assumption} is needed in order to apply Poincar\'{e} and Korn inequalities. Indeed, if $\widehat{\boldsymbol{y}}_h-\boldsymbol{\pi}_h$ has null average over $\Omega$, then the same property holds for $\widehat{\boldsymbol{u}}_h$ and $\widehat{v}_h$, while, if $\widehat{\boldsymbol{y}}_h=\boldsymbol{\pi}_h$ on $\partial S$, then $\widehat{\boldsymbol{u}}_h$ and $\widehat{v}_h$ satisfy homogeneous Dirichlet boundary conditions.
\end{proof}

{\MMM 
In the next result, we show how the clamped boundary conditions can be exploited to establish the convergence of the sequence of constant rotations provided by Lemma \ref{lem:ar} towards the identity matrix. The arguments are adapted from \cite{lecumberry.mueller}.
}

\begin{lemma}[Clamped boundary conditions]
	\label{lem:clamped}
	Let $(\boldsymbol{y}_h)\subset W^{1,2}(\Omega;\rt)$ and let $(e_h)\subset \R$ with $e_h>0$ be such that $e_h/h^2\to 0$, as $h \to 0^+$. 
	Set $r_h\coloneqq \mathcal{R}_h(\boldsymbol{y}_h)$ and suppose that $r_h\leq C e_h$ for every $h>0$. Also, set $\boldsymbol{F}_h\coloneqq \nabla_h\boldsymbol{y}_h$ and suppose that there exist $(\boldsymbol{R}_h)\subset W^{1,2}(S;SO(3))$ and $(\boldsymbol{Q}_h)\subset SO(3)$ such that, for every $h>0$, the following estimates hold:
	\begin{align}
	\label{eqn:clamped-estimates}
	\|\boldsymbol{F}_h-\boldsymbol{R}_h\|_{L^2(\Omega;\rtt)}&\leq C \sqrt{r_h},\qquad \hspace{9mm}\|\nabla'\boldsymbol{R}_h\|_{L^2(S;\R^{3\times 3 \times 3})}\leq C h^{-1}\sqrt{r_h},\\
	\label{eqn:clamped-estimates-bis}
	\|\boldsymbol{R}_h-\boldsymbol{Q}_h\|_{L^2(S;\rtt)}&\leq C h^{-1}\sqrt{r_h},\qquad \|\boldsymbol{F}_h-\boldsymbol{Q}_h\|_{L^2(S;\rtt)}\leq C h^{-1}\sqrt{r_h}.    
	\end{align}
	Additionally, suppose that $\boldsymbol{y}_h=\boldsymbol{\pi}_h$ on $\partial S \times I$ for every $h>0$. 
	Then, for $h \ll 1$, there holds
	\begin{equation}
	\label{eqn:clamped-rotation}
	|\boldsymbol{Q}_h-\boldsymbol{I}|\leq C  \frac{\sqrt{r_h}}{h}.
	\end{equation}
	Moreover, denoting by $\boldsymbol{c}_h\in \rt$ the average of $\boldsymbol{Q}_h^\top \boldsymbol{y}_h-\boldsymbol{\pi}_h$ over $\Omega$, there holds
	\begin{equation}
	\label{eqn:clamped-translation}
	|\boldsymbol{c}_h|\leq C  \frac{\sqrt{r_h}}{h}.
	\end{equation}
\end{lemma}
\begin{proof}
	We first prove \eqref{eqn:clamped-rotation}. 
	Define  $\widetilde{\boldsymbol{y}}_h\coloneqq \boldsymbol{Q}_h^\top \boldsymbol{y}_h-\boldsymbol{c}_h$ with $\boldsymbol{c}_h \in \rt$ chosen so that $\widetilde{\boldsymbol{y}}_h-\boldsymbol{\pi}_h$ has null average over $\Omega$. Set $\widetilde{\boldsymbol{F}}_h\coloneqq \nabla_h \widetilde{\boldsymbol{y}}_h$ and $\widetilde{\boldsymbol{R}}_h\coloneqq \boldsymbol{Q}_h^\top \boldsymbol{R}_h$. Assumptions \eqref{eqn:clamped-estimates}--\eqref{eqn:clamped-estimates-bis}  immediately yield
	\begin{align*}
	\|\widetilde{\boldsymbol{F}}_h-\widetilde{\boldsymbol{R}}_h\|_{L^2(\Omega;\rtt)}&\leq C \sqrt{r_h},\qquad \hspace*{6mm} \|\nabla'\widetilde{\boldsymbol{R}}_h\|_{L^2(S;\R^{3\times 3 \times 3})}\leq C h^{-1}\sqrt{r_h},\\
	\|\widetilde{\boldsymbol{R}}_h-\boldsymbol{I}\|_{L^2(S;\rtt)}&\leq C h^{-1}\sqrt{r_h},\qquad \|\widetilde{\boldsymbol{F}}_h-\boldsymbol{I}\|_{L^2(S;\rtt)}\leq C h^{-1}\sqrt{r_h}.
	\end{align*}
	Define $\widetilde{\boldsymbol{u}}_h\colon S \to \R^2$, $\widetilde{v}_h:S \to \R$ and $\widetilde{\boldsymbol{w}}_h\colon S \to \R^3$  by setting
	\begin{align*}
	\widetilde{\boldsymbol{u}}_h(\boldsymbol{x}')&\coloneqq  \left(\frac{h^2}{e_h}\wedge \frac{1}{\sqrt{e_h}}\right)\int_I \left (\widetilde{\boldsymbol{y}}_h'(\boldsymbol{x}',x_3)-\boldsymbol{x}'\right )\,\d x_3,\\
	\widetilde{v}_h(\boldsymbol{x}')&\coloneqq \frac{h}{\sqrt{e_h}}\int_I \widetilde{y}_h^{\,3}(\boldsymbol{x}',x_3)\,\d x_3,\\
	\widetilde{\boldsymbol{w}}_h(\boldsymbol{x}')&\coloneqq \frac{1}{\sqrt{e_h}}\int_I x_3\left (\widetilde{\boldsymbol{y}}_h-\boldsymbol{\pi}_h\right )\,\d x_3.
	\end{align*}
	Applying Proposition \ref{prop:cd} to $\widehat{\boldsymbol{y}}_h=\widetilde{\boldsymbol{y}}_h$ and then the trace inequality, we obtain the estimates 
	\begin{align}
		\label{eqn:trace-u}
		\|\widetilde{ \boldsymbol{u}}_h\|_{L^2(\partial S;\R^2)}&\leq C \left(\sqrt{\frac{r_h}{e_h}}+ \frac{r_h}{e_h} \wedge \frac{r_h}{h^2\sqrt{e_h}} \right),\\
		\label{eqn:trace-v}
		\|\widetilde{ v}_h\|_{L^2(\partial S)}&\leq C  \sqrt{\frac{r_h}{e_h}},\\
		\label{eqn:trace-w}
		\|\widetilde{ \boldsymbol{w}}_h\|_{L^2(\partial S;\rt)}&\leq C   \sqrt{\frac{r_h}{e_h}}.
	\end{align}
	Analogously, define ${\boldsymbol{u}}_h\colon S \to \R^2$, ${v}_h:S \to \R$ and ${\boldsymbol{w}}_h\colon S \to \rt$  by setting
	\begin{align*}
	{\boldsymbol{u}}_h(\boldsymbol{x}')&\coloneqq  \left(\frac{h^2}{e_h}\wedge \frac{1}{\sqrt{e_h}}\right)\int_I \left ({\boldsymbol{y}}_h'(\boldsymbol{x}',x_3)-\boldsymbol{x}'\right)\,\d x_3,\\
	{v}_h(\boldsymbol{x}')&\coloneqq \frac{h}{\sqrt{e_h}}\int_I {y}_h^{\,3}(\boldsymbol{x}',x_3)\,\d x_3,\\
	{\boldsymbol{w}}_h(\boldsymbol{x}')&\coloneqq \frac{1}{\sqrt{e_h}}\int_I x_3\left ({\boldsymbol{y}}_h-\boldsymbol{\pi}_h\right )\,\d x_3.
	\end{align*}
	In view of the clamped boundary condition satisfied by $\boldsymbol{y}_h$, there hold
	\begin{equation}
	    \label{eqn:trace-clamped}
	    \text{$\boldsymbol{u}_h=\boldsymbol{0}'$ on $\partial S$, \qquad $v_h=0$ on $\partial S$, \qquad $\boldsymbol{w}_h=\boldsymbol{0}$ on $\partial S$.}
	\end{equation}
	For simplicity, set $\boldsymbol{d}_h\coloneqq \boldsymbol{Q}_h \boldsymbol{c}_h$.
	Then
	\begin{align}
	\label{eqn:clamped1}
	\left ( \begin{array}{cc}
	h^{-2}e_h\vee \sqrt{e_h}\,\boldsymbol{u}_h \\
	h^{-1}\sqrt{e_h} \,v_h
	\end{array}\right)&=(\boldsymbol{Q}_h-\boldsymbol{I})\, \boldsymbol{\pi}_0+\boldsymbol{Q}_h\left ( \begin{array}{cc}
	h^{-2}e_h\vee \sqrt{e_h}\,\widetilde{\boldsymbol{u}}_h \\
	h^{-1}\sqrt{e_h} \,\widetilde{v}_h
	\end{array}\right)+\boldsymbol{d}_h,\\
	\label{eqn:clamped2}
	\sqrt{e_h}\boldsymbol{w}_h&=\frac{h}{12}(\boldsymbol{Q}_h-\boldsymbol{I})\boldsymbol{e}_3+\sqrt{e_h}\boldsymbol{Q}_h\,\widetilde{\boldsymbol{w}}_h.
	\end{align}
	From \eqref{eqn:clamped2}, given \eqref{eqn:trace-w}--\eqref{eqn:trace-clamped}, we obtain
	\begin{equation}
	\label{eqn:clamped-rot}
		|(\boldsymbol{Q}_h-\boldsymbol{I})\boldsymbol{e}_3|\leq C \frac{\sqrt{e_h}}{h} \|\widetilde{ \boldsymbol{w}}_h\|_{L^2(\partial S;\rt)}
		\leq C \frac{\sqrt{r_h}}{h}.
	\end{equation}
	In turn, we also have
	\begin{equation}
	\label{eqn:clamped-rot-bis}
	|(\boldsymbol{Q}_h^\top-\boldsymbol{I})\boldsymbol{e}_3|\leq C \frac{\sqrt{r_h}}{h}.
	\end{equation}
	Looking at the first two components of \eqref{eqn:clamped1}, we estimate
	\begin{equation}
	\label{eqn:clamped-norm}
	\begin{split}
		\|(\boldsymbol{Q}_h''-\boldsymbol{I}'')\boldsymbol{x}'+\boldsymbol{d}_h'\|_{L^2(\partial S;\R^2)} &\leq C  \left( \frac{e_h}{h^2}\vee  \sqrt{e_h} \right)  	\|\widetilde{ \boldsymbol{u}}_h\|_{L^2(\partial S;\R^2)}
		+  C \frac{\sqrt{e_h}}{h} \|\widetilde{ v}_h\|_{L^2(\partial S)}\\
		&\leq C \left (\frac{\sqrt{r_h e_h}}{h^2}+\sqrt{r_h}+\frac{r_h}{h^2} \right )\leq C \frac{\sqrt{r_h}}{h}.
	\end{split}
	\end{equation}
	{\MMM Here, in the last inequality, we exploited the two conditions $r_h\leq C e_h$ and $e_h/h^2\to 0$, as $h\to 0^+$.}
	
	Up to translations, we can assume that $$\int_{\partial S} \boldsymbol{x}'\,\d \boldsymbol{l}=\boldsymbol{0}'.$$ In this case, \eqref{eqn:clamped-norm} yields
	\begin{equation}
	\label{eqn:clamped-est}
		|\boldsymbol{Q}_h''-\boldsymbol{I}''|+|\boldsymbol{d}_h'|\leq C  \frac{\sqrt{r_h}}{h}.
	\end{equation}
	Combining \eqref{eqn:clamped-rot}--\eqref{eqn:clamped-rot-bis} with \eqref{eqn:clamped-est}, we establish \eqref{eqn:clamped-rotation}.
	
	Similarly, looking at the third component of \eqref{eqn:clamped1} and exploiting \eqref{eqn:trace-u}--\eqref{eqn:trace-v}, \eqref{eqn:trace-clamped} and \eqref{eqn:clamped-rot-bis}, we  estimate
	\begin{equation}
	\label{eqn:clamped-trasl}
		|d_h^{\,3}|\leq C \left( \frac{e_h}{h^2}\vee \sqrt{e_h} \right) \|\widetilde{ \boldsymbol{u}}_h\|_{L^2(\partial S;\R^2)}+C \frac{\sqrt{e_h}}{h}\|\widetilde{v}_h\|_{L^2(\partial S)}+C |(\boldsymbol{Q}_h^\top-\boldsymbol{I})\boldsymbol{e}_3|\leq C \frac{\sqrt{r_h}}{h}.
	\end{equation}
	Thus, \eqref{eqn:clamped-translation} follows from   \eqref{eqn:clamped-est}--\eqref{eqn:clamped-trasl}.
\end{proof}

{\MMM To identify the limiting strain,} we employ the following result given by \cite[Lemma 2]{friesecke.james.mueller2}.

\begin{lemma}[Identification of the limiting strain]
\label{lem:limiting-strain}
Let $(\widehat{\boldsymbol{y}}_h)\subset W^{1,2}(\Omega;\rt)$ and let $(e_h) \subset \R$ with $e_h>0$ be such that  $e_h/h^2\to 0$, as $h \to 0^+$. Set $r_h\coloneqq \mathcal{R}_h(\widehat{\boldsymbol{y}}_h)$ and suppose that $r_h\leq C e_h$ for every $h>0$. Also, set  $\widehat{\boldsymbol{F}}_h\coloneqq \nabla_h \widehat{\boldsymbol{y}}_h$ and suppose that  there exists $(\widehat{\boldsymbol{R}}_h)\subset W^{1,2}(S;SO(3))$  such that, for every $h>0$, the following estimate holds:
	\begin{equation*}
	\|\widehat{\boldsymbol{F}}_h-\widehat{\boldsymbol{R}}_h\|_{L^2(\Omega;\rtt)}\leq C \sqrt{r_h}.
	\end{equation*}
	Define $\widehat{\boldsymbol{u}}_h\colon S \to \R^2$ and $\widehat{v}_h \colon S \to \R$ by setting
	\begin{align*}
	\widehat{\boldsymbol{u}}_h(\boldsymbol{x}')&\coloneqq \frac{h^2}{e_h}\wedge \frac{1}{\sqrt{e_h}}\int_I \left (\widehat{\boldsymbol{y}}_h'(\boldsymbol{x}',x_3)-\boldsymbol{x}'\right )\,\d x_3,\\
	\widehat{v}_h(\boldsymbol{x}')&\coloneqq \frac{h}{\sqrt{e_h}}\int_I \widehat{y}_h^{\,3}(\boldsymbol{x}',x_3)\,\d x_3,
	\end{align*}
	for every $\boldsymbol{x}'\in S$, and assume that there exist $\widehat{\boldsymbol{u}}\in W^{1,2}(S;\R^2)$ and $\widehat{v}\in W^{2,2}(S)$ such that the following convergences hold:
	\begin{equation*}
	\text{$\widehat{\boldsymbol{u}}_h\wk\widehat{\boldsymbol{u}}$ in $W^{1,2}(S;\R^2)$,}\qquad 
	\text{$\widehat{v}_h\to \widehat{v}$ in $W^{1,2}(S)$.}
	\end{equation*}
	Then, there exists $\widehat{\boldsymbol{G}}\in L^2(\Omega;\rtt)$ such that
	\begin{equation*}
	\text{$\widehat{\boldsymbol{G}}_h\coloneqq \frac{1}{\sqrt{e_h}}(\widehat{\boldsymbol{R}}_h^\top \widehat{\boldsymbol{F}}_h-\boldsymbol{I})\wk \widehat{\boldsymbol{G}}$ in $L^2(\Omega;\rtt)$.}
	\end{equation*}
	Furthermore, there exists $\widehat{\boldsymbol{\Sigma}}\in L^2(\Omega;\rtwtw)$ such that, for almost every $\boldsymbol{x}\in \Omega$, there holds
	\begin{equation*}
	\label{eqn:cd-G-structure}
	\text{$\widehat{\boldsymbol{G}}''(\boldsymbol{x}',x_3)=\widehat{\boldsymbol{\Sigma}}(\boldsymbol{x}')-(\nabla')^2\widehat{v}(\boldsymbol{x}')x_3$.}
	\end{equation*}
	Eventually, if $e_h/h^4\to 0$, as $h \to 0^+$, then $\sym\, \widehat{\boldsymbol{\Sigma}}=\sym \nabla'\widehat{\boldsymbol{u}}$.
\end{lemma}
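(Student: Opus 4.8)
The statement coincides, up to the rescalings introduced in \eqref{eqn:horizontal-displacement}--\eqref{eqn:vertical-displacement}, with \cite[Lemma~2]{friesecke.james.mueller2}, and the plan is to carry over that proof. Three points have to be checked: the weak $L^2$-compactness of $(\widehat{\boldsymbol G}_h)$, the affine dependence of $\widehat{\boldsymbol G}''$ on $x_3$, and, under the stronger scaling, the identity $\sym\widehat{\boldsymbol\Sigma}=\sym\nabla'\widehat{\boldsymbol u}$. Compactness is immediate: since $\widehat{\boldsymbol R}_h\in SO(3)$ a.e.\ one has $\|\widehat{\boldsymbol G}_h\|_{L^2(\Omega;\rtt)}=e_h^{-1/2}\|\widehat{\boldsymbol F}_h-\widehat{\boldsymbol R}_h\|_{L^2(\Omega;\rtt)}\le C$, whence $\widehat{\boldsymbol G}_h\wk\widehat{\boldsymbol G}$ in $L^2$ along a (not relabelled) subsequence and $\widehat{\boldsymbol F}_h-\widehat{\boldsymbol R}_h=\sqrt{e_h}\,\widehat{\boldsymbol R}_h\widehat{\boldsymbol G}_h\to\boldsymbol O$ in $L^2$. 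I would then extract from the hypotheses two auxiliary facts: $\widehat{\boldsymbol R}_h\to\boldsymbol I$ in $L^q(S)$ for every $q<\infty$, and $\tfrac{h}{\sqrt{e_h}}\big((\widehat{\boldsymbol R}_h-\boldsymbol I)^3_1,(\widehat{\boldsymbol R}_h-\boldsymbol I)^3_2\big)\to\nabla'\widehat v$ in $L^2(S;\R^2)$. Both follow by inserting $\widehat{\boldsymbol F}_h=\widehat{\boldsymbol R}_h+\sqrt{e_h}\,\widehat{\boldsymbol R}_h\widehat{\boldsymbol G}_h$ into the definitions of $\widehat v_h$ and $\widehat{\boldsymbol u}_h$, using that $\widehat{\boldsymbol R}_h$ is $x_3$-independent, discarding the $O(1)$-in-$L^2$ remainders multiplied by the vanishing factors $h$, resp.\ $h^2/\sqrt{e_h}\le 1$, and using that $\widehat{\boldsymbol R}_h\in SO(3)$ makes both the diagonal of $\widehat{\boldsymbol R}_h-\boldsymbol I$ and the discrepancy between its $(3,\alpha)$ and $(\alpha,3)$ entries quadratic in $\widehat{\boldsymbol R}_h-\boldsymbol I$, hence of order $\sqrt{e_h}/h=o(1)$ by $e_h/h^2\to0$.

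For the affine structure I would again use that $\widehat{\boldsymbol R}_h$ does not depend on $x_3$: from $\widehat{\boldsymbol F}_h=\widehat{\boldsymbol R}_h(\boldsymbol I+\sqrt{e_h}\,\widehat{\boldsymbol G}_h)$ one gets $\partial_3\widehat{\boldsymbol F}_h=\sqrt{e_h}\,\widehat{\boldsymbol R}_h\,\partial_3\widehat{\boldsymbol G}_h$, while the equality of mixed second derivatives of $\widehat{\boldsymbol y}_h$ gives $\partial_3(\widehat{\boldsymbol F}_h\boldsymbol e_\alpha)=h\,\partial_\alpha(\widehat{\boldsymbol F}_h\boldsymbol e_3)$ for $\alpha\in\{1,2\}$. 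Equating, $\partial_3(\widehat{\boldsymbol R}_h\widehat{\boldsymbol G}_h)\boldsymbol e_\alpha=\widehat{\boldsymbol R}_h(\partial_3\widehat{\boldsymbol G}_h)\boldsymbol e_\alpha=\tfrac{h}{\sqrt{e_h}}\,\partial_\alpha(\widehat{\boldsymbol F}_h\boldsymbol e_3)$ in $\mathcal{D}'(\Omega)$. Writing $\widehat{\boldsymbol F}_h\boldsymbol e_3=\boldsymbol e_3+(\widehat{\boldsymbol R}_h-\boldsymbol I)\boldsymbol e_3+\sqrt{e_h}\,\widehat{\boldsymbol R}_h\widehat{\boldsymbol G}_h\boldsymbol e_3$, the constant drops under $\partial_\alpha$, the last summand is $O(h)$ in $L^2$ and disappears, and the middle one satisfies $\tfrac{h}{\sqrt{e_h}}(\widehat{\boldsymbol R}_h-\boldsymbol I)\boldsymbol e_3\to(-\partial_1\widehat v,-\partial_2\widehat v,0)^\top$ in $L^2(S;\R^3)$ by the auxiliary facts and the $SO(3)$-constraint. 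On the left-hand side $\widehat{\boldsymbol R}_h\widehat{\boldsymbol G}_h\wk\widehat{\boldsymbol G}$ in $L^2$ (strong times weak), so $\partial_3(\widehat{\boldsymbol R}_h\widehat{\boldsymbol G}_h)\boldsymbol e_\alpha\to(\partial_3\widehat{\boldsymbol G})\boldsymbol e_\alpha$ in $\mathcal{D}'$. Hence $\partial_3\widehat{\boldsymbol G}''=-(\nabla')^2\widehat v$ in $\mathcal{D}'(\Omega)$; being $x_3$-independent and in $L^2$, this forces $\widehat{\boldsymbol G}''(\boldsymbol x',x_3)=\widehat{\boldsymbol\Sigma}(\boldsymbol x')-x_3\,(\nabla')^2\widehat v(\boldsymbol x')$ with $\widehat{\boldsymbol\Sigma}:=\int_I\widehat{\boldsymbol G}''\,\d x_3\in L^2(S;\rtwtw)$.

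For the last assertion, when $e_h/h^4\to0$ the minimum in the definition of $\widehat{\boldsymbol u}_h$ equals $1/\sqrt{e_h}$ for small $h$, so $\nabla'\widehat{\boldsymbol u}_h=\tfrac1{\sqrt{e_h}}(\widehat{\boldsymbol R}_h''-\boldsymbol I'')+\int_I(\widehat{\boldsymbol R}_h\widehat{\boldsymbol G}_h)''\,\d x_3$. Taking symmetric parts, using $\sym(\widehat{\boldsymbol R}_h-\boldsymbol I)=-\tfrac12(\widehat{\boldsymbol R}_h-\boldsymbol I)^\top(\widehat{\boldsymbol R}_h-\boldsymbol I)$ together with the $L^2(S)$-bounds $(\widehat{\boldsymbol R}_h-\boldsymbol I)^3_\alpha=O(\sqrt{e_h}/h)$ (from $\widehat v_h$) and $\widehat{\boldsymbol R}_h''-\boldsymbol I''=O(\sqrt{e_h})$ (from boundedness of $\nabla'\widehat{\boldsymbol u}_h$), the term $\tfrac1{\sqrt{e_h}}\sym(\widehat{\boldsymbol R}_h''-\boldsymbol I'')$ is $O(\sqrt{e_h}/h^2)+O(\sqrt{e_h})\to0$ in $L^1(S)$ — this is precisely where $e_h/h^4\to0$ enters. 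Passing to the limit (weak $L^2$ for the last summand, $L^1$ hence distributional for the first) with $\widehat{\boldsymbol R}_h\to\boldsymbol I$ strongly, and recalling $\int_I\widehat{\boldsymbol G}''\,\d x_3=\widehat{\boldsymbol\Sigma}$, gives $\sym\nabla'\widehat{\boldsymbol u}=\sym\widehat{\boldsymbol\Sigma}$.

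The step I expect to be the main obstacle is the second one, and more generally the bookkeeping of powers of $h$ throughout: the hypotheses provide only the rigidity-type bound $\|\widehat{\boldsymbol F}_h-\widehat{\boldsymbol R}_h\|_{L^2}\le C\sqrt{e_h}$ and the two displacement convergences, so every bit of $L^2$-control on $\widehat{\boldsymbol R}_h-\boldsymbol I$ needed to discard the quadratic-in-rotation errors and to justify the distributional passages to the limit must be squeezed out of $\widehat v_h\to\widehat v$, $\widehat{\boldsymbol u}_h\wk\widehat{\boldsymbol u}$ and the constraint $\widehat{\boldsymbol R}_h\in SO(3)$, keeping careful track of which scaling ($e_h/h^2\to0$ or $e_h/h^4\to0$) is used where, exactly as in \cite{friesecke.james.mueller2}.
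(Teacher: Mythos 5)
Your proposal is correct and takes essentially the same route as the paper, which in fact offers no proof of this lemma at all but simply invokes \cite[Lemma 2]{friesecke.james.mueller2}; what you have written is a faithful reconstruction of that argument (commuting $\partial_3$ and $\partial_\alpha$, squeezing the needed $L^2$-bounds on $\widehat{\boldsymbol{R}}_h-\boldsymbol{I}$ out of the displacement convergences and the $SO(3)$ constraint, and using $e_h/h^4\to 0$ exactly to discard $e_h^{-1/2}\,\sym(\widehat{\boldsymbol{R}}_h''-\boldsymbol{I}'')$). I see no gaps beyond the harmless fact that some of the quadratic-in-$(\widehat{\boldsymbol{R}}_h-\boldsymbol{I})$ error terms vanish only in $L^1$ rather than $L^2$, which suffices for the distributional limits you take.
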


The compactness of magnetizations is established by refining the techniques introduced in \cite[Proposition 4.3]{bresciani}. We will use the following notation. Given $\eta>0$, we set $S^\eta \coloneqq \{\boldsymbol{x}'\in S:\:\dist(\boldsymbol{x}';\partial S)<\eta\}$ and $S^{-\eta}\coloneqq \{\boldsymbol{x}'\in \R^3:\:\dist(x;S)<\eta\}$. Moreover, for $l>0$, we set $\Omega^\eta_l\coloneqq S^\eta\times lI$ and $\Omega^{-\eta}_l\coloneqq S^{-\eta}\times lI$. Recall the notation in \eqref{eqn:notation-mu}--\eqref{eqn:lagrangian-magnetization}.

\begin{proposition}[Compactness of magnetizations]
\label{prop:cm}
	Let $(\widehat{\boldsymbol{q}}_h)$ with $\widehat{\boldsymbol{q}}_h=(\widehat{\boldsymbol{y}}_h,\widehat{\boldsymbol{m}}_h)\in\mathcal{Q}_h$ be such that
	\begin{equation}
	\label{eqn:exc-bdd}
	\sup_{h>0} \left \{ E^{\mathrm{el}}_h(\widehat{\boldsymbol{q}}_h)+ E^{\mathrm{exc}}_h(\widehat{\boldsymbol{q}}_h) \right\}\leq C.
	\end{equation}
	Suppose that there exists $\alpha>1$ such that, for every $h>0$,  there holds
	\begin{align}
		\label{eqn:cm-estimate}
		\|\widehat{\boldsymbol{y}}_h-\boldsymbol{\pi}_h\|_{C^0(\closure{\Omega};\R^3)}&\leq Ch^\alpha,\\
		\label{eqn:cm-gradient-estimate}
		\|\widehat{\boldsymbol{F}}_h-\boldsymbol{I}\|_{L^p(\Omega;\R^3)}&\leq Ch^{\beta/p-1}.
	\end{align}
	Then, there exist $\widehat{\boldsymbol{\zeta}}\in W^{1,2}(S;\S^2)$ and $\widehat{\boldsymbol{\chi}}\in L^2(\R^3;\R^3)$ such that, up to subsequences, the following convergences hold, as $h \to 0^+$:
	\begin{align}
	\label{eqn:cm-eta}
	\text{$\widehat{\boldsymbol{\mu}}_h\coloneqq \mathcal{M}_h(\widehat{\boldsymbol{q}}_h)$}&\text{$\to \chi_\Omega \widehat{\boldsymbol{\zeta}}$ in $L^2(\R^3;\rtt)$,}\\
	\label{eqn:cm-H}
	\text{$\widehat{\boldsymbol{N}}_h\coloneqq \mathcal{N}_h(\widehat{\boldsymbol{q}}_h)$}&\text{$\wk \chi_\Omega (\nabla'\boldsymbol{\widehat{\zeta}}\vert \widehat{\boldsymbol{\chi}})$ in $L^2(\R^3;\rtt)$, }\\
	\label{eqn:cm-composition}
	\text{$\widehat{\boldsymbol{m}}_h\circ\widehat{\boldsymbol{y}}_h$}&\text{$\to \widehat{\boldsymbol{\zeta}}$ in $L^1(\Omega;\rt)$,}\\
	\label{eqn:cm-lagrangian}
	\text{$\widehat{\boldsymbol{z}}_h\coloneqq \mathcal{Z}_h(\widehat{\boldsymbol{q}}_h)$}&\text{$\to \widehat{\boldsymbol{\zeta}}$ in $L^1(\Omega;\rtt)$.}
	\end{align}
\end{proposition}
\begin{proof}
For convenience of the reader, the proof is subdivided into three steps.

\textbf{Step 1 (Approximation of the deformed configuration).} The estimate \eqref{eqn:cm-estimate} entails the following two statements:
\begin{equation}
    \label{eqn:cm-subcylinder}
    \forall\, \eta>0,\:\forall\, 0<\tht<1,\:\:\exists\, \bar{h}(\eta,\tht)>0:\:\:\forall\, 0<h\leq \bar{h}(\eta,\tht), \:\:\Omega^\eta_{\tht h} \subset \Omega^{\widehat{\boldsymbol{y}}_h},
\end{equation}
\begin{equation}
    \label{eqn:cm-supercylinder}
    \hspace*{7mm}\forall\, \eta>0,\:\forall\, \ell>1,\:\:\exists\, \underline{h}(\eta,\ell)>0:\:\:\forall\, 0<h\leq \underline{h}(\eta,\ell), \:\: \Omega^{\widehat{\boldsymbol{y}}_h}\subset \Omega^{-\eta}_{\ell h}.
\end{equation}
To see \eqref{eqn:cm-subcylinder}, fix $\eta>0$ and $0<\vartheta<1$. Let $\boldsymbol{\xi}\in \Omega^{\eta}_{\vartheta h}$. As $\alpha>1$, there exists $\bar{h}(\eta,\vartheta)>0$ such that for every $h\leq \bar{h}(\eta,\vartheta)$ there holds
\begin{equation*}
    \dist(\boldsymbol{\xi};\partial \Omega_h)\geq \eta \wedge (1-\tht)h/2 > C h^\alpha
\end{equation*}
so that, by \eqref{eqn:cm-estimate}, we obtain
\begin{equation*}
    \|\widehat{\boldsymbol{y}}_h-\boldsymbol{\pi}_h\|_{C^0(\closure{\Omega};\R^3)}< \dist(\boldsymbol{\xi};\partial \Omega_h)=\dist(\boldsymbol{\xi};\boldsymbol{\pi}_h(\partial \Omega)).
\end{equation*}
By the stability property of the topological degree \cite[Theorem 2.3, Claim (1)]{fonseca.gangbo}, this entails $\boldsymbol{\xi}\notin \widehat{\boldsymbol{y}}_h(\partial \Omega)$ and $\deg(\widehat{\boldsymbol{y}}_h,\Omega,\boldsymbol{\xi})=\deg(\boldsymbol{\pi}_h,\Omega,\boldsymbol{\xi})=1$. Then, by the solvability property of the topological degree \cite[Theorem 2.1]{fonseca.gangbo}, we deduce $\boldsymbol{\xi}\in \Omega^{\widehat{\boldsymbol{y}}_h}$. As $\boldsymbol{\xi}\in \Omega^\eta_{\vartheta h}$ was arbitrary, this proves \eqref{eqn:cm-subcylinder}. 

To see \eqref{eqn:cm-supercylinder}, fix $\eta>0$ and $\ell>1$. 
Again, as $\alpha>1$, there exists $\underline{h}(\eta,\ell)>0$ such that for every $h \leq \underline{h}(\eta,\ell)$ there holds
\begin{equation*}
    \dist(\closure{\Omega}_h; \partial \Omega^{-\eta}_{\ell h})\geq \eta \wedge (\ell-1)h/2 > C h^\alpha.
\end{equation*}
Thus, by \eqref{eqn:cm-estimate}, we have
\begin{equation*}
    \Omega^{\widehat{\boldsymbol{y}}_h}\subset \Omega_h + B(\boldsymbol{0},Ch^\alpha)\subset \Omega^{-\eta}_{\ell h}.
\end{equation*}

{\MMM
	Similarly to \eqref{eqn:cm-subcylinder}, we also have the following:
	\begin{equation}
		\label{eqn:cm-subcylinder-variant}
		\begin{split}
			&\forall\,0<\eta_0<\eta,\:\forall\,0<\vartheta<\vartheta_0<1, \: \exists\,\widetilde{h}(\eta,\eta_0,\vartheta,\vartheta_0)>0:\\
			&\hspace*{8mm}\forall\,0<h<\widetilde{h}(\eta,\eta_0,\vartheta,\vartheta_0),\quad \Omega^\eta_{\vartheta h} \subset \widehat{\boldsymbol{y}}_h(\Omega^{\eta_0}_{\vartheta_0}).
		\end{split}
	\end{equation}
}

\textbf{Step 2 {\MMM (Identification of the limiting magnetization)}.}
{\MMM
Employing the notation in \eqref{eqn:prestrain-inverse} and \eqref{eqn:lagrangian-magnetization}, we set $\boldsymbol{F}_h\coloneqq \nabla_h \widehat{\boldsymbol{y}}_h$ and $\widehat{\boldsymbol{\Xi}}_h\coloneqq \boldsymbol{L}_h(\mathcal{Z}_h(\widehat{\boldsymbol{q}}_h))^{-1}\boldsymbol{F}_h$. Thanks to \eqref{eqn:prestrain-determinant} and \eqref{eqn:coercivity-Phi-det}, we have
	\begin{equation}
	\label{eqn:a2}
	\begin{split}
	\int_\Omega  \frac{1}{(\det \nabla_h\widehat{\boldsymbol{y}}_h)^a}\,\d\boldsymbol{x}&=(1+c_h)^{-a} \int_\Omega  \frac{1}{(\det \widehat{\boldsymbol{\Xi}}_h)^a}\,\d\boldsymbol{x}\\
	&\leq (1+c_h)^{-a} \left \{\int_\Omega \Phi(\widehat{\boldsymbol{\Xi}}_h)\,\d\boldsymbol{x} +C\right\}\\
	&\leq (1+c_h)^{-a} \left \{h^\beta E_h^{\rm el}(\widehat{\boldsymbol{q}}_h) +C\right\}\leq C,
	\end{split}
	\end{equation} 
	where in the last line we used \eqref{eqn:ch-limit},  \eqref{eqn:coercivity-Phi-det} and \eqref{eqn:exc-bdd}. 
	As $a>1$, we infer that $(1/\det D_h\widehat{\boldsymbol{y}}_h)$ is equi-integrable by the de la Vall\'{e}e-Poussin Criterion \cite[Theorem 2.29]{fonseca.leoni}. 

Let $\eta>0$ and $0<\tht<1$. By \eqref{eqn:cm-subcylinder}, for $h\leq \bar{h}(\eta,\tht)$, the composition  $\widehat{\boldsymbol{\zeta}}_h\coloneqq \widehat{\boldsymbol{m}}_h\circ \boldsymbol{\pi}_h\restr{\Omega^\eta_\vartheta}$ is well defined  as a map in $W^{1,2}(\Omega^\eta_\vartheta;\rt)$. Then, employing the change-of-variable formula,  we estimate
	\begin{equation}
	\label{eqn:a1}
	\begin{split}
	\int_{\Omega^\eta_\vartheta} |\widehat{\boldsymbol{\zeta}}_h|^2 \,\d\boldsymbol{x}&=\int_{\Omega^\eta_\vartheta}|\widehat{\boldsymbol{m}}_h\circ \boldsymbol{\pi}_h|^2\,\d\boldsymbol{x}=\frac{1}{h}\int_{\Omega^\eta_{\vartheta h}}|\widehat{\boldsymbol{m}}_h|^2\,\d\boldsymbol{\xi}\\
	&\leq \frac{1}{h}\int_{\Omega^{\widehat{{\boldsymbol{y}}}_h}} |\widehat{\boldsymbol{m}}_h|^2\,\d\boldsymbol{\xi}\leq \frac{1}{h}\int_\Omega |\widehat{\boldsymbol{m}}\circ \widehat{\boldsymbol{y}}_h|^2\det \nabla \widehat{\boldsymbol{y}}_h\,\d\boldsymbol{x}\\
	&=\int_\Omega |\widehat{\boldsymbol{m}}\circ \widehat{\boldsymbol{y}}_h|^2\det \nabla_h \widehat{\boldsymbol{y}}_h\,\d\boldsymbol{x}=\int_\Omega  \frac{1}{\det \nabla_h\widehat{\boldsymbol{y}}_h}\,\d\boldsymbol{x},
	\end{split}
	\end{equation}
	where in the last line we exploited the magnetic saturation constraint
	\begin{equation}
	    \label{eqn:saturation-sequence}
	    \text{$|\widehat{\boldsymbol{m}}_h\circ \widehat{\boldsymbol{y}}_h|\det \nabla_h\widehat{\boldsymbol{y}}_h=1$ a.e. in $\Omega$.}
	\end{equation}
    Thus, by \eqref{eqn:a2}, the sequence $(\widehat{\boldsymbol{\zeta}}_h)$ is bounded in $L^2(\Omega^\eta_\vartheta;\rt)$.
	
	From \eqref{eqn:exc-bdd} and \eqref{eqn:cm-subcylinder}, employing again
    the change-of-variable formula, we obtain
	\begin{equation}
	\label{eqn:exc-bdd2}
	\begin{split}
	\int_{\Omega^\eta_\vartheta} \left| \nabla_h \widehat{\boldsymbol{\zeta}}_h \right|^2\,\d \boldsymbol{x}&=\int_{\Omega^\eta_\vartheta} |\nabla\widehat{\boldsymbol{m}}_h\circ \boldsymbol{\pi}_h|^2\,\d\boldsymbol{x}=\frac{1}{h}\int_{\Omega^\eta_{\vartheta h}}|\nabla \widehat{\boldsymbol{m}}_h|^2\,\d \boldsymbol{\xi}\\
	 &\leq \frac{1}{h}\int_{\Omega^{\widehat{\boldsymbol{y}}_h}}|D \widehat{\boldsymbol{m}}_h|^2\,\d \boldsymbol{\xi}=E^{\text{exc}}_h(\widehat{\boldsymbol{q}}_h)\leq C.
	\end{split}
	\end{equation}
	
	Therefore $(\widehat{\boldsymbol{\zeta}}_h)$ is bounded in $W^{1,2}(\Omega^\eta_\vartheta;\rt)$, so that
	there exists $\widehat{\boldsymbol{\zeta}}\in W^{1,2}(\Omega^\eta_\vartheta;\rt)$ such that, up to subsequences,  $\widehat{\boldsymbol{\zeta}}_h\wk\widehat{\boldsymbol{\zeta}}$ in $W^{1,2}(\Omega^\eta_\vartheta;\rt)$. Also, by \eqref{eqn:exc-bdd2}, there exists a map $\widehat{\boldsymbol{\chi}}\in L^2(\Omega^\eta_\vartheta;\rt)$ such that,
	up to subsequences,  $h^{-1}\partial_3\widehat{\boldsymbol{\zeta}}_h \wk \widehat{\boldsymbol{\chi}}$ in $L^2(\Omega^\eta_\vartheta;\rt)$, as $h \to 0^+$. This yields $\partial_3\widehat{\boldsymbol{\zeta}}=\boldsymbol{0}$, so that $\widehat{\boldsymbol{\zeta}}\in W^{1,2}(S^\eta;\rt)$.
	
	In principle, both the subsequences and the weak limits $\widehat{\boldsymbol{\zeta}}$ and $\widehat{\boldsymbol{\chi}}$ depend on the parameters $\eta$ and $\vartheta$. However, by means of a diagonal argument, we can assume that $\widehat{\boldsymbol{\zeta}}\in W^{1,2}_\loc(S;\rt)$ and $\widehat{\boldsymbol{\chi}}\in L^2_\loc(\Omega;\rt)$, and that, for a not relabeled subsequence, the following holds:
	\begin{equation}
	\label{eqn:cm-diagonal}
		\forall\,\eta>0, \forall\,0<\tht<1, \quad \text{$\widehat{\boldsymbol{\zeta}}_h\wk \widehat{\boldsymbol{\zeta}}$ in $W^{1,2}(\Omega^\eta_\vartheta;\rt)$ and a.e. in $\Omega^\eta_\vartheta$,}\quad
		\text{$h^{-1}\partial_3 \widehat{\boldsymbol{\zeta}}_h\wk\widehat{\boldsymbol{\chi}}$ in $L^2(\Omega^\eta_\vartheta;\rt)$.}
	\end{equation}
	Here, we exploited the Sobolev embedding, which is applicable since $\Omega^\eta_\vartheta$ is a Lipschitz domain, at least for $\eta\ll 1$ and $1-\vartheta \ll 1$.
	Note that the sequences in  \eqref{eqn:cm-diagonal} are defined only for $h \ll 1$ depending on $\eta$ and $\vartheta$.
	In view of \eqref{eqn:a2}--\eqref{eqn:a1} and \eqref{eqn:cm-diagonal}, by lower semicontinuity, for every $\eta>0$ and $0<\vartheta<1$, there holds 
	\begin{equation}
		\label{eqn:cm-b1}
		\vartheta\int_{S^{\eta}} |\widehat{\boldsymbol{\zeta}}|^2\,\d\boldsymbol{x}'=  \int_{\Omega^\eta_\vartheta} |\widehat{\boldsymbol{\zeta}}|^2\,\d\boldsymbol{x}\leq \liminf_{h \to 0^+} \int_{\Omega^\eta_\vartheta} |\widehat{\boldsymbol{\zeta}}_h|^2\,\d\boldsymbol{x}\leq C.
	\end{equation}
	Similarly, by
	\eqref{eqn:exc-bdd2} and \eqref{eqn:cm-diagonal}, for every $\eta>0$ and $0<\vartheta<1$, there holds 
	\begin{equation}
		\label{eqn:cm-b2}
	\begin{split}
	\vartheta \int_{S^{\eta}} |\nabla'\widehat{\boldsymbol{\zeta}}|^2\,\d \boldsymbol{x}+\int_{\Omega^{\eta}_{\vartheta}} |\widehat{\boldsymbol{\chi}}|^2\,\d \boldsymbol{x}&=\int_{\Omega^\eta_\vartheta} |\nabla\widehat{\boldsymbol{\zeta}}|^2\,\d\boldsymbol{x}+\int_{\Omega^{\eta}_{\vartheta}} |\widehat{\boldsymbol{\chi}}|^2\,\d \boldsymbol{x}\\
	&\leq \liminf_{h \to 0^+} \int_{\Omega^{\eta}_{\vartheta}}\left|\nabla_h \widehat{\boldsymbol{\zeta}}_h\right|^2\,\d\boldsymbol{x} \leq C.
	\end{split}
	\end{equation}
	Letting $\eta\to0^+$ and $\vartheta \to 1^-$ in \eqref{eqn:cm-b1}--\eqref{eqn:cm-b2}, we deduce that $\widehat{\boldsymbol{\zeta}}\in W^{1,2}(S;\rt)$ and $\widehat{\boldsymbol{\chi}}\in L^2(\Omega;\rt)$.
	
		Set $\widehat{\boldsymbol{\mu}}_h\coloneqq \mathcal{M}_h(\widehat{\boldsymbol{q}}_h)$ and $\widehat{\boldsymbol{N}}_h\coloneqq \mathcal{N}_h(\widehat{\boldsymbol{q}}_h)$. Note that these two maps are defined on the whole space for every $h>0$. We claim that $(\widehat{\boldsymbol{\mu}}_h)$ is bounded in $L^2(\rt;\rt)$. To see this, exploiting \eqref{eqn:saturation-sequence} and applying the change-of-variable formula, we compute
	\begin{equation}
	\label{eqn:a3}
		\begin{split}
		\int_{\rt} |\widehat{\boldsymbol{\mu}}_h|^2\,\d\boldsymbol{x}&=\int_{\boldsymbol{\pi}_h^{-1}(\Omega^{\widehat{\boldsymbol{y}}_h})} |\widehat{\boldsymbol{m}}_h\circ \boldsymbol{\pi}_h|^2\,\d\boldsymbol{x}=\frac{1}{h}\int_{\Omega^{\widehat{\boldsymbol{y}}_h}} |\widehat{\boldsymbol{m}}_h|^2\,\d\boldsymbol{\xi}\\
		&=\frac{1}{h}\int_\Omega |\widehat{\boldsymbol{m}}_h\circ\widehat{\boldsymbol{y}}_h|^2\,\det \nabla\widehat{\boldsymbol{y}}_h\,\d\boldsymbol{x}=\int_\Omega |\widehat{\boldsymbol{m}}_h\circ\widehat{\boldsymbol{y}}_h|^2\,\det \nabla_h\widehat{\boldsymbol{y}}_h\,\d\boldsymbol{x}\\
		&=\int_\Omega |\widehat{\boldsymbol{m}}_h\circ\widehat{\boldsymbol{y}}_h|\,\d\boldsymbol{x}=\int_\Omega \frac{1}{\det \nabla_h\widehat{\boldsymbol{y}}_h}\,\d\boldsymbol{x}.
		\end{split}
	\end{equation}
	Then, the claim follows from \eqref{eqn:a2}. We deduce the existence of $\widehat{\boldsymbol{\mu}}\in L^2(\rt;\rt)$ such that, up to subsequences,  $\widehat{\boldsymbol{\mu}}_h\wk \widehat{\boldsymbol{\mu}}$ in $L^2(\rt;\rt)$. However, 
	by \eqref{eqn:cm-subcylinder}--\eqref{eqn:cm-supercylinder}, there holds $\chi_{\boldsymbol{\pi}_h^{-1}(\Omega^{\widehat{\boldsymbol{y}}_h})}\to \chi_\Omega$ almost everywhere which, together with \eqref{eqn:cm-diagonal}, yields $\widehat{\boldsymbol{\mu}}_h\to \chi_\Omega \widehat{\boldsymbol{\zeta}}$ almost everywhere in $\rt$, as $h \to 0^+$. Also, by \eqref{eqn:cm-supercylinder}, the maps $\widehat{\boldsymbol{\mu}}_h$ are supported in a common compact set containing $\Omega$ for $h \ll 1$, so that $\widehat{\boldsymbol{\mu}}_h\to \chi_\Omega \widehat{\boldsymbol{\zeta}}$ in $L^1(\rt;\rt)$ by the Vitali Convergence Theorem. Thus, $\widehat{\boldsymbol{\mu}}=\chi_\Omega \widehat{\boldsymbol{\zeta}}$ and this establishes the weak convergence in 
	\eqref{eqn:cm-eta}.

	To prove \eqref{eqn:cm-H}, we observe that
	\begin{equation}
	\label{eqn:cm-H-norm}
	E_h^{\mathrm{exc}}(\widehat{\boldsymbol{q}}_h)=\frac{1}{h}\int_{\Omega^{\widehat{\boldsymbol{y}}_h}} |\nabla \widehat{\boldsymbol{m}}_h|^2\,\d\boldsymbol{\xi}=\int_{\boldsymbol{\pi}_h^{-1}(\Omega^{\widehat{\boldsymbol{y}}_h})} |\nabla \widehat{\boldsymbol{m}}_h|^2\circ \boldsymbol{\pi}_h\,\d\boldsymbol{x}=\int_{\rt} |\widehat{\boldsymbol{N}}_h|^2\,\d\boldsymbol{x},
	\end{equation}
	where we employed the change-of-variable formula. In view of \eqref{eqn:exc-bdd}, this shows the boundedness of $(\widehat{\boldsymbol{N}}_h)$ in $L^2(\rt;\rtt)$. 
	To check \eqref{eqn:cm-H}, let $\boldsymbol{\Phi}\in L^2(\rt;\rtt)$ and set $\widehat{\boldsymbol{N}}\coloneqq \chi_\Omega (\nabla'\widehat{\boldsymbol{\zeta}}\vert \widehat{\boldsymbol{\chi}})$. Given $\eta>0$ and $0<\vartheta<1$, we write
	\begin{equation}
	\label{eqn:cm-H-weak}
	\int_{\rt} (\widehat{\boldsymbol{N}}_h-\widehat{\boldsymbol{N}}):\boldsymbol{\Phi}\,\d \boldsymbol{x}=\int_{\Omega^\eta_\vartheta} (\widehat{\boldsymbol{N}}_h-\widehat{\boldsymbol{N}}):\boldsymbol{\Phi}\,\d \boldsymbol{x}+\int_{\rt \setminus \Omega^\eta_\vartheta} (\widehat{\boldsymbol{N}}_h-\widehat{\boldsymbol{N}}):\boldsymbol{\Phi}\,\d \boldsymbol{x}.
	\end{equation}
	The first integral on the right-hand side of \eqref{eqn:cm-H-weak} goes to zero, as $h \to 0^+$. Indeed, by \eqref{eqn:cm-subcylinder}, for every  $h \leq \bar{h}(\eta,\vartheta)$, we have
	\begin{equation*}
	\begin{split}
			\int_{\Omega^\eta_\vartheta} \left (\widehat{\boldsymbol{N}}_h-\widehat{\boldsymbol{N}} \right):\boldsymbol{\Phi}\,\d\boldsymbol{x}&=\int_{\Omega^\eta_\vartheta} \left (\nabla \widehat{\boldsymbol{m}}_h \circ \boldsymbol{\pi}_h-(\nabla'\widehat{\boldsymbol{\zeta}}|\widehat{\boldsymbol{\chi}})\right):\boldsymbol{\Phi}\,\d\boldsymbol{x}
			=\int_{\Omega^\eta_\vartheta} \left (\nabla_h\widehat{\boldsymbol{\zeta}}_h-(\nabla'\widehat{\boldsymbol{\zeta}}|\widehat{\boldsymbol{\chi}})\right):\boldsymbol{\Phi}\,\d\boldsymbol{x},
	\end{split}
	\end{equation*}
	where the right-hand side goes to zero, as $h \to 0^+$, by \eqref{eqn:cm-diagonal}. The second integral on the right-hand side of \eqref{eqn:cm-H-weak} goes as well to zero, as $h \to 0^+$. Indeed, by \eqref{eqn:cm-supercylinder} and by the boundedness of $(\widehat{\boldsymbol{N}}_h)$ in $L^2(\rt;\rtt)$, for every $\ell>1$ and for every  $h\leq \underline{h}(\eta,\ell)$, there holds
	\begin{equation*}
	\begin{split}
	\left| \int_{\rt\setminus \Omega^\eta_\vartheta}\left (\widehat{\boldsymbol{N}}_h-\widehat{\boldsymbol{N}}\right):\boldsymbol{\Phi}\,\d\boldsymbol{x}\right |&=\left| \int_{\Omega^{-\eta}_\ell\setminus\Omega^\eta_\vartheta} \left (\widehat{\boldsymbol{N}}_h- \widehat{\boldsymbol{N}}\right):\boldsymbol{\Phi}\,\d\boldsymbol{x}\right |\leq C\,||\boldsymbol{\Phi}||_{L^2(\Omega^{-\eta}_\ell\setminus\Omega^\eta_\vartheta;\rtt)}.
	\end{split}
	\end{equation*}
	As the right-hand side can be made arbitrarily small by properly choosing $\eta$, $\vartheta$ and $\ell$ according to $\boldsymbol{\Phi}$ only, this establishes \eqref{eqn:cm-H}.
}

\textbf{Step 3 (Convergence of compositions).} 
{\MMM
We now prove  \eqref{eqn:cm-composition}. Recall that $(1/\det \nabla_h\widehat{\boldsymbol{y}}_h)$ is equi-integrable thanks to \eqref{eqn:a2}. Thus, in view of \eqref{eqn:saturation-sequence}, so is $(\widehat{{\boldsymbol{m}}}_h\circ \widehat{\boldsymbol{y}}_h)$. For this reason, in order to prove the claim, it is sufficient to show that, for every $\eta>0$ and $0<\vartheta<1$, we have $\widehat{\boldsymbol{m}}_h \circ \widehat{\boldsymbol{y}}_h\to\widehat{\boldsymbol{\zeta}}$ in $L^1(\Omega^\eta_\vartheta;\rt)$.

}

Fix $\eta>0$ and $0<\vartheta<1$. Recall \eqref{eqn:cm-subcylinder} and consider  $h\leq \bar{h}(\eta,\vartheta)$. We have
\begin{equation}
\label{eqn:cm-split}
    \int_{\Omega^\eta_\vartheta} |\widehat{\boldsymbol{m}}_h\circ \widehat{\boldsymbol{y}}_h-\widehat{\boldsymbol{\zeta}}|^2\,\d\boldsymbol{x}\leq \int_{\Omega^\eta_\vartheta} |\widehat{\boldsymbol{m}}_h\circ \widehat{\boldsymbol{y}}_h-\widehat{\boldsymbol{\zeta}}_h|^2\,\d\boldsymbol{x}+\int_{\Omega^\eta_\vartheta} |\widehat{\boldsymbol{\zeta}}_h-\widehat{\boldsymbol{\zeta}}|^2\,\d\boldsymbol{x}.
\end{equation}
By \eqref{eqn:cm-diagonal}, up to subsequences, the second integral on the right-hand side of \eqref{eqn:cm-split} goes to zero, as $h \to 0^+$. Thus, we focus on the first one and we show that, up to subsequences, it goes as well to zero, as $h \to 0^+$.

{\MMM 
Let $\varepsilon>0$ be arbitrary. Let also $0<\eta_1<\eta$ and $0<\vartheta<\vartheta_1<1$. Observe that, for  $h\leq \overline{h}(\eta_1,\vartheta_1)$, the sequence $(\widehat{\boldsymbol{\zeta}}_h)$  is equi-integrable on $\Omega^{\eta_1}_{\vartheta_1}$ by \eqref{eqn:cm-diagonal}. 
	Therefore, there exists $\delta(\eta_1,\vartheta_1,\varepsilon)>0$ such that the following property holds:
	\begin{equation}
		\label{eqn:cylinder-equi-integrability}
			\forall\, A \subset \Omega^{\eta_1}_{\vartheta_1}\:\text{measurable}:\,\leb(A)<\delta(\eta_1,\vartheta_1,\varepsilon),
			\quad \sup_{h\leq \overline{h}(\eta_1,\vartheta_1)} \int_A |\widehat{\boldsymbol{m}}_h\circ \widehat{\boldsymbol{y}}_h-\widehat{\boldsymbol{\zeta}}_h|\,\d\boldsymbol{x}<\varepsilon.
	\end{equation}
	 Set $A^\eta_{\vartheta,h}\coloneqq \widehat{\boldsymbol{y}}_h^{-1}(\Omega^\eta_{\vartheta h})$. Let $\eta_1<\eta_0<\eta$ and $\vartheta<\vartheta_0<\vartheta_1$ be such that
	 \begin{equation}
	 	\label{eqn:cylinder-choice}
	 	\lebt(\Omega^{\eta_0}_{\vartheta_0}\setminus \Omega^\eta_\vartheta)<\delta(\eta_1,\vartheta_1,\varepsilon).
	 \end{equation}	 
	 By \eqref{eqn:cm-subcylinder-variant}, for every $h \leq \widetilde{h}(\eta,\eta_0,\vartheta,\vartheta_0)$, we have $A^\eta_{\vartheta,h}\subset \Omega^{\eta_0}_{\vartheta_0}$ and 
	we write
	\begin{equation}
		\label{eqn:cm-A}
		\begin{split}
			\int_{\Omega^\eta_\vartheta} |\widehat{\boldsymbol{m}}_h\circ \widehat{\boldsymbol{y}}_h-\widehat{\boldsymbol{\zeta}}_h|\,\d\boldsymbol{x}=\int_{\Omega^\eta_\vartheta \cap A^\eta_{\vartheta,h}} |\widehat{\boldsymbol{m}}_h\circ \widehat{\boldsymbol{y}}_h-\widehat{\boldsymbol{\zeta}}_h|\,\d\boldsymbol{x}
			+\int_{\Omega^\eta_\vartheta \setminus A^\eta_{\vartheta,h}} |\widehat{\boldsymbol{m}}_h\circ \widehat{\boldsymbol{y}}_h-\widehat{\boldsymbol{\zeta}}_h|\,\d\boldsymbol{x}.
		\end{split}
	\end{equation}
	We focus on the second integral on the right-hand side of the previous equation. For convenience, set $\sigma_h\coloneqq \det \nabla_h \widehat{\boldsymbol{y}}_h-1$. Thus, $\sigma_h\to 0$ in $L^1(\Omega)$ by \eqref{eqn:cm-gradient-estimate} since $\beta>p>3$. Using the change-of-variable formula, we compute
	\begin{equation*}
		\lebt(\Omega^\eta_{\vartheta h})=\lebt(\widehat{\boldsymbol{y}}_h(A^\eta_{\vartheta,h}))=\int_{A^\eta_{\vartheta,h}} \det \nabla \widehat{\boldsymbol{y}}_h\,\d\boldsymbol{x}=h\lebt(A^\eta_{\vartheta,h})+h \int_{A^\eta_{\vartheta,h}} \sigma_h\,\d\boldsymbol{x}.
	\end{equation*}
	Then
	\begin{equation*}
		\lebt(A^\eta_{\vartheta,h})=\lebt(\Omega^\eta_\vartheta)-\int_{A^\eta_{\vartheta,h}} \sigma_h\,\d\boldsymbol{x},
	\end{equation*}
	so that
	\begin{equation*}
		\begin{split}
			\lebt(\Omega^\eta_\vartheta \setminus A^\eta_{\vartheta,h})&\leq \lebt(\Omega^{\eta_0}_{\vartheta_0}\setminus A^\eta_{\vartheta,h})=\lebt(\Omega^{\eta_0}_{\vartheta_0})-\lebt(A^\eta_{\vartheta,h})
			=\lebt(\Omega^{\eta_0}_{\vartheta_0})-\lebt(\Omega^\eta_\vartheta)+\int_{A^\eta_{\vartheta,h}} \sigma_h\,\d\boldsymbol{x}.
		\end{split}
	\end{equation*}
	Here, we exploited the inclusion $A^\eta_{\vartheta,h}\subset \Omega^{\eta_0}_{\vartheta_0}$.  
	This yields
	\begin{equation*}
		\limsup_{h \to 0^+} \lebt(\Omega^\eta_\vartheta \setminus A^\eta_{\vartheta,h})\leq \lebt(\Omega^{\eta_0}_{\vartheta_0}\setminus \Omega^\eta_\vartheta)<\delta(\eta_1,\vartheta_1,\varepsilon),
	\end{equation*}
	where we used \eqref{eqn:cylinder-choice}.
	Therefore, from \eqref{eqn:cylinder-equi-integrability}, we obtain
	\begin{equation}
		\label{eqn:bb1}
		\limsup_{h \to 0^+} \int_{\Omega^\eta_\vartheta \setminus A^\eta_{\vartheta,h}} |\widehat{\boldsymbol{m}}_{h}\circ \widehat{\boldsymbol{y}}_{h}-\widehat{\boldsymbol{\zeta}}_{h}|\leq \varepsilon.
	\end{equation}
	
	To estimate the first integral on the right-hand side of \eqref{eqn:cm-A}, we proceed as follows.
	Without loss of generality, we can assume that $\Omega^{\eta}_{\vartheta}$ is a Lipschitz domain. In this case, the map
}
$\widehat{\boldsymbol{\zeta}}_h\in W^{1,2}(\Omega^\varepsilon_\vartheta;\S^2)$ admits an extension $\widehat{\boldsymbol{Z}}_h\in W^{1,2}(\R^3;\R^3)$, possibly dependent on $\varepsilon$ and $\vartheta$, which satisfies
\begin{equation*}
    \|\widehat{\boldsymbol{Z}}_h\|_{W^{1,2}(\R^3;\R^3)}\leq C(\eta,\vartheta)\,\|\widehat{\boldsymbol{\zeta}}_h\|_{W^{1,2}(\Omega^\eta_\vartheta;\R^3)}.
\end{equation*}
In particular, recalling \eqref{eqn:exc-bdd2}, we have
\begin{equation}
    \label{eqn:cm-nabla-Z}
    \|\nabla \widehat{\boldsymbol{Z}}_h\|_{L^2(\R^3;\rtt)}\leq C(\eta,\tht) \left ( \int_{\Omega^\eta\vartheta} |\widehat{\boldsymbol{\zeta}}_h|^2\,\d\boldsymbol{x}+\int_{\Omega^\eta_\vartheta} |\nabla\widehat{\boldsymbol{\zeta}}_h|^2\,\d\boldsymbol{x}\right)\leq C(\eta,\vartheta).
\end{equation}
Define $\widehat{\boldsymbol{M}}_h\coloneqq \widehat{\boldsymbol{Z}}_h\circ \boldsymbol{\pi}_h^{-1}$. By construction, $\widehat{\boldsymbol{M}}_h\restr{\Omega^\eta_{\vartheta h}}=\widehat{\boldsymbol{m}}_h\restr{\Omega^\eta_{\vartheta h}}$ and, by \eqref{eqn:cm-nabla-Z} and the change-of-variable formula, there holds
\begin{equation}
    \label{eqn:cm-nabla-M}
    \begin{split}
        \int_{\R^3} |\nabla \widehat{\boldsymbol{M}}_h|^2\,\d\boldsymbol{\xi}&=\int_{\R^3} |\nabla_h \widehat{\boldsymbol{Z}}_h\circ \boldsymbol{\pi}_h^{-1}|^2\,\d\boldsymbol{\xi}\leq \frac{1}{h^2} \int_{\R^3} |\nabla \widehat{\boldsymbol{Z}}_h\circ \boldsymbol{\pi}_h^{-1}|^2\,\d\boldsymbol{\xi}\\
        &=\frac{1}{h} \int_{\R^3} |\nabla \widehat{\boldsymbol{Z}}_h|^2\,\d \boldsymbol{x}\leq \frac{C(\eta,\vartheta)}{h}.
    \end{split}
\end{equation}
Let $\lambda>0$. By a Lusin-type property of Sobolev maps \cite{acerbi.fusco}, there exists a measurable set $F_{\lambda,h}\subset \R^3$ such that $\widehat{\boldsymbol{M}}_h\restr{F_{\lambda,h}}$ is Lipschitz continuous with constant $C\lambda>0$, that is
\begin{equation}
    \label{eqn:cm-lipschitz}
    \forall\,\boldsymbol{\xi},\widehat{\boldsymbol{\xi}}\in F_{\lambda,h}, \quad |\widehat{\boldsymbol{M}}_h(\boldsymbol{\xi})-\widehat{\boldsymbol{M}}_h(\widehat{\boldsymbol{\xi}})|\leq C\lambda\,|\boldsymbol{\xi}-\widehat{\boldsymbol{\xi}}|.
\end{equation}
Moreover, thanks to \eqref{eqn:cm-nabla-M}, the measure of the complement of the set $F_{\lambda,h}$ is controlled as follows
\begin{equation}
    \label{eqn:cm-complement}
    \leb(\R^3\setminus F_{\lambda,h})\leq \frac{C}{\lambda^2}\int_{\{|\nabla \widehat{\boldsymbol{M}}_h|\geq \lambda/2\}} |\nabla \widehat{\boldsymbol{M}}_h|^2\,\d \boldsymbol{\xi}\leq \frac{C(\eta,\vartheta)}{\lambda^2h},
\end{equation}
where we used \eqref{eqn:cm-nabla-M}. 

Going back to the first integral on the right-hand side of \eqref{eqn:cm-split}, {\MMM
setting
	\begin{equation*}
		X_{\lambda,h}\coloneqq \widehat{\boldsymbol{y}}_h^{-1}(F_{\lambda,h}), \qquad Y_{\lambda,h}\coloneqq \boldsymbol{\pi}_h^{-1}(F_{\lambda,h}),
	\end{equation*}
	we split it as
	\begin{equation}
	\label{eqn:cm-split-bis}
	\begin{split}
	\int_{\Omega^\eta_\vartheta \cap A^\eta_{\vartheta,h}} |\widehat{\boldsymbol{m}}_h\circ \widehat{\boldsymbol{y}}_h-\widehat{\boldsymbol{\zeta}}_h|\,\d\boldsymbol{x}&=\int_{(\Omega^\eta_\vartheta \cap A^\eta_{\vartheta,h})\cap (X_{\lambda,h}\cap Y_{\lambda,h})} |\widehat{\boldsymbol{m}}_h\circ \widehat{\boldsymbol{y}}_h-\widehat{\boldsymbol{\zeta}}_h|\,\d\boldsymbol{x}\\
	&+\int_{(\Omega^\eta_\vartheta \cap A^\eta_{\vartheta,h})\setminus (X_{\lambda,h}\cap Y_{\lambda,h})} |\widehat{\boldsymbol{m}}_h\circ \widehat{\boldsymbol{y}}_h-\widehat{\boldsymbol{\zeta}}_h|\,\d\boldsymbol{x}.
	\end{split}
	\end{equation}
	For the first integral on the right-hand side of \eqref{eqn:cm-split-bis}, note that, for every $\boldsymbol{x}\in (\Omega^\eta_\vartheta \cap A^\eta_{\vartheta,h})\cap (X_{\lambda,h}\cap Y_{\lambda,h}),$
	both $\widehat{\boldsymbol{y}}_h(\boldsymbol{x})$ and $\boldsymbol{\pi}_h(\boldsymbol{x})$ belong to $\Omega^\eta_{\vartheta h}\cap F_{\lambda,h}$. Thus, by \eqref{eqn:cm-estimate} and \eqref{eqn:cm-lipschitz}, we have
	\begin{equation}
		\label{eqn:bb2}
		\begin{split}
			\int_{(\Omega^\eta_\vartheta \cap A^\eta_{\vartheta,h})\cap (X_{\lambda,h}\cap Y_{\lambda,h})} |\widehat{\boldsymbol{m}}_h\circ \widehat{\boldsymbol{y}}_h-\widehat{\boldsymbol{\zeta}}_h|\,\d\boldsymbol{x}&=\int_{(\Omega^\eta_\vartheta \cap A^\eta_{\vartheta,h})\cap (X_{\lambda,h}\cap Y_{\lambda,h})} |\widehat{\boldsymbol{M}}_h\circ \widehat{\boldsymbol{y}}_h-\widehat{\boldsymbol{M}}_h\circ \boldsymbol{\pi}_h|\,\d\boldsymbol{x}\\
			&\leq C\lambda \|\widehat{\boldsymbol{y}}_h-\boldsymbol{\pi}_h\|_{C^0(\closure{\Omega};\rt)}\leq C\lambda h^\alpha.
		\end{split}
	\end{equation}
	For the second integral on the right-hand side of \eqref{eqn:cm-split-bis}, observe that 
	\begin{equation}
		\label{eqn:b3}
		(\Omega^\eta_\vartheta \cap A^\eta_{\vartheta,h}) \setminus (X_{\lambda,h} \cap Y_{\lambda,h})\subset  \left (\Omega^\eta_{\vartheta }  \setminus X_{\lambda,h} \right ) \cup \left (\Omega^\eta_{\vartheta}  \setminus Y_{\lambda,h} \right ).
	\end{equation}
	By the change-of-variable formula,  there holds
	\begin{equation*}
		\begin{split}
			\lebt(\widehat{\boldsymbol{y}}_h(\Omega^\eta_\vartheta)\setminus F_{\lambda,h})&=\lebt(\widehat{\boldsymbol{y}}_h(\Omega^\eta_\vartheta \setminus X_{\lambda,h}))=\int_{\Omega^\eta_\vartheta \setminus X_{\lambda,h}} \det \nabla \widehat{\boldsymbol{y}}_h\,\d\boldsymbol{x}\\
			&=h \lebt(\Omega^\eta_\vartheta \setminus X_{\lambda,h}) + h \int_{\Omega^\eta_\vartheta \setminus X_{\lambda,h}} \sigma_h\,\d\boldsymbol{x}.
		\end{split}
	\end{equation*}
	From this, recalling \eqref{eqn:cm-complement}, we obtain
	\begin{equation}
		\label{eqn:b4}
		\begin{split}
			\lebt(\Omega^\eta_\vartheta \setminus X_{\lambda,h})=\frac{1}{h}\lebt(\widehat{\boldsymbol{y}}_h(\Omega^\eta_\vartheta)\setminus F_{\lambda,h})+\int_{\Omega^\eta_\vartheta \setminus X_{\lambda,h}} \sigma_h\,\d\boldsymbol{x}
			\leq \frac{C(\eta,\vartheta)}{\lambda^2 h^2}+\int_{\Omega^\eta_\vartheta \setminus X_{\lambda,h}} \sigma_h\,\d\boldsymbol{x}. 
		\end{split}
	\end{equation}
	Instead, applying the change-of-variable formula and exploiting \eqref{eqn:cm-complement}, we estimate
	\begin{equation}
		\label{eqn:b5}
		\lebt(\Omega^\eta_\vartheta \setminus Y_{\lambda,h})=\lebt(\boldsymbol{\pi}_h^{-1}(\Omega^\eta_{\vartheta h}\setminus F_{\lambda,h}))=\frac{1}{h}\lebt(\Omega^\eta_{\vartheta h} \setminus F_{\lambda,h})\leq \frac{C(\eta,\vartheta)}{\lambda^2 h^2}.
	\end{equation}
	Now, recalling that $\alpha>1$, we choose $\lambda=h^{-b}$ for some $1<b<\alpha$. In this case, the bound in \eqref{eqn:bb2} yields
	\begin{equation}
		\label{eqn:b6}
		\lim_{h \to 0^+} \int_{(\Omega^\eta_\vartheta \cap A^\eta_{\vartheta,h})\cap (X_{\lambda,h}\cap Y_{\lambda,h})} |\widehat{\boldsymbol{m}}_h\circ \widehat{\boldsymbol{y}}_h-\widehat{\boldsymbol{\zeta}}_h|\,\d\boldsymbol{x}=0.
	\end{equation}
	Also, from \eqref{eqn:b3}--\eqref{eqn:b5}, we obtain
	\begin{equation*}
		\lim_{h \to 0^+} \leb((\Omega^\eta_\vartheta \cap A^\eta_{\vartheta,h}) \setminus (X_{\lambda,h} \cap Y_{\lambda,h}))=0,
	\end{equation*}
	which, by \eqref{eqn:cylinder-equi-integrability}, entails
	\begin{equation}
		\label{eqn:b7}
		\limsup_{h \to 0^+} \int_{(\Omega^\eta_\vartheta \cap A^\eta_{\vartheta,h})\setminus (X_{\lambda,h}\cap Y_{\lambda,h})} |\widehat{\boldsymbol{m}}_h\circ \widehat{\boldsymbol{y}}_h-\widehat{\boldsymbol{\zeta}}_h|\,\d\boldsymbol{x}\leq \varepsilon.	
	\end{equation}
	Therefore, combining  \eqref{eqn:bb1} and \eqref{eqn:b6}--\eqref{eqn:b7}, we obtain
	\begin{equation*}
		\limsup_{h \to 0^+} \int_{\Omega^\eta_\vartheta} |\widehat{\boldsymbol{m}}_h \circ \widehat{\boldsymbol{y}}_h-\widehat{\boldsymbol{\zeta}}_h|\,\d\boldsymbol{x}\leq 2\varepsilon.
	\end{equation*}
	Since $\varepsilon>0$ is arbitrary, this shows that the first integral on the right-hand side of \eqref{eqn:cm-split} goes to zero, as $h \to 0^+$.

	\textbf{Step 4 (Improved convergences).} We are left to prove the strong convergences in \eqref{eqn:cm-eta} and \eqref{eqn:cm-lagrangian}, and that $|\widehat{\boldsymbol{\zeta}}|=1$ almost everywhere in $S$. By \eqref{eqn:cm-gradient-estimate}, since $\beta>p$, there holds $\widehat{\boldsymbol{F}}_h\to \boldsymbol{I}$ in $L^p(\Omega;\rtt)$. Thus, up to subsequences, we have $\det \widehat{\boldsymbol{F}}_h \to 1$ almost everywhere.  By \eqref{eqn:cm-composition}, up to subsequences, we also have $\widehat{\boldsymbol{m}}_h\circ\widehat{\boldsymbol{y}}_h \to \widehat{\boldsymbol{\zeta}}$ almost everywhere. Taking into account \eqref{eqn:saturation-sequence}, this gives \eqref{eqn:cm-lagrangian} by the dominated convergence theorem. From \eqref{eqn:saturation-sequence}, passing to the limit, as $h\to 0^+$, we deduce that $|\widehat{\boldsymbol{\zeta}}|=1$ almost everywhere in $\Omega$. Equivalently, $\widehat{\boldsymbol{\zeta}}\in W^{1,2}(S;\S^2)$. Finally, passing to the limit, as $h\to 0^+$, in \eqref{eqn:a3} , we obtain
	\begin{equation*}
		\lim_{h \to 0^+} \int_{\rt} |\widehat{\boldsymbol{\mu}}_h|^2\,\d\boldsymbol{\xi}=\lebt(\Omega)=\int_{\rt} |\chi_\Omega \widehat{\boldsymbol{\zeta}}|^2\,\d\boldsymbol{\xi}.
	\end{equation*}
	Since we already proved the weak convergence, this proves the strong convergence in \eqref{eqn:cm-eta}.
}
\end{proof}

{\MMM The compactness properties of sequences of admissible states with equi-bounded energy deduced from the previous results are summarized in the next proposition.}

\begin{proposition}[Compactness]
\label{prop:compactness}
Let $(\boldsymbol{q}_h)$ with $\boldsymbol{q}_h=(\boldsymbol{y}_h,\boldsymbol{m}_h)\in {\mathcal{Q}}_h$ be such that
\begin{equation}
	\label{eqn:bddc}
	\sup_{h>0} \left \{ E_h^{\mathrm{el}}(\boldsymbol{q}_h)+E_h^{\mathrm{mag}}(\boldsymbol{q}_h)\right\}\leq C.
\end{equation}
Then,  there exist $\boldsymbol{q}_0=(\boldsymbol{u},v,\boldsymbol{\zeta})\in{\mathcal{Q}}_0$ and $\boldsymbol{\chi}\in L^2(\rt;\rt)$ such that, up to subsequences, the following convergences hold, as $h \to 0^+$:
\begin{align}
\label{eqn:compactness-clamped-horizontal}
	\text{$\boldsymbol{u}_h\coloneqq \mathcal{U}_h(\boldsymbol{q}_h)$}&\text{$\wk \boldsymbol{u}$ in $W^{1,2}(S;\R^2)$,}\\
	\label{eqn:compactness-clamped-vertical}
	\text{$v_h\coloneqq \mathcal{V}_h(\boldsymbol{q}_h)$}&\text{$\to v$ in $W^{1,2}(S)$,}\\
	\label{eqn:compactness-clamped-moment}
	\text{$\boldsymbol{w}_h\coloneqq \mathcal{W}_h(\boldsymbol{q}_h)$}&\text{$\to -\frac{1}{12}\left ( \begin{array}{cc}
		\nabla' {v}  \\
		0
		\end{array}\right)$ in $W^{1,2}(S;\rt)$,}\\
	\label{eqn:compactness-clamped-eta}
	\text{$\boldsymbol{\mu}_h\coloneqq \mathcal{M}_h(\boldsymbol{q}_h)$}&\text{$\to\chi_\Omega \boldsymbol{\zeta}$ in $L^2(\rt;\rt)$,}\\
	\label{eqn:compactness-clamped-H}
	\text{$\boldsymbol{N}_h\coloneqq \mathcal{N}_h(\boldsymbol{q}_h)$}&\text{$\wk \chi_\Omega \left ( 
		\nabla' \boldsymbol{\zeta}  \vert 
		\boldsymbol{\chi}\right)$ in $L^2(\rt;\rtt)$,}\\
	\label{eqn:compactness-clamped-composition}
	\text{$\boldsymbol{m}_h \circ \boldsymbol{y}_h$}&\text{$\to\boldsymbol{\zeta}$ in $L^1(\Omega;\rt)$,}   \\
	\label{eqn:compactness-clamped-lagrangian}
	\text{$\boldsymbol{z}_h\coloneqq \mathcal{Z}_h(\boldsymbol{q}_h)$}&\text{$\to\boldsymbol{\zeta}$ in $L^1(\Omega;\rt)$.}    
	\end{align}
{\MMM Additionally, there exist $(\boldsymbol{R}_h)\subset W^{1,2}(S;SO(3))$ and $\boldsymbol{G}\in L^2(\Omega;\rtt)$ such that, setting $\boldsymbol{F}_h\coloneqq \nabla_h\boldsymbol{y}_h$, we have
\begin{align}
    \label{eqn:compactness-clamped-R}
    \text{$\boldsymbol{R}_h$}&\text{$\to \boldsymbol{I}$ in $L^2(\Omega;\rtt)$,}\\
    \label{eqn:compactness-clamped-G}
    \text{$\boldsymbol{G}_h\coloneqq h^{-\beta/2}(\boldsymbol{R}_h^\top \boldsymbol{F}_h-\boldsymbol{I})$}&\text{$\wk \boldsymbol{G}$ in $L^2(\Omega;\rtt)$,}
\end{align}
and, for almost every $\boldsymbol{x}\in\Omega$, there holds
\begin{equation}
    \label{eqn:compactness-clamped-G-structure}
    \boldsymbol{G}''(\boldsymbol{x})=\sym \nabla'\boldsymbol{u}(\boldsymbol{x}')-((\nabla')^2\boldsymbol{v}(\boldsymbol{x}'))x_3.
\end{equation}
}
\end{proposition}
\begin{proof}
{\MMM
Recall \eqref{eqn:density-W_h}--\eqref{eqn:prestrain-inverse}. For simplicity, we set 
	\begin{equation*}
	\boldsymbol{F}_h \coloneqq \nabla_h \boldsymbol{y}_h, \quad  \quad    \boldsymbol{L}_h\coloneqq \boldsymbol{L}_h(\boldsymbol{\lambda}_h\,(\det\boldsymbol{F}_h)), \quad \boldsymbol{\Xi}_h\coloneqq \boldsymbol{L}_h^{-1}\boldsymbol{F}_h.
	\end{equation*} 
First, recalling \eqref{eqn:prestrain}--\eqref{eqn:ch-limit}, we write	\begin{equation*}
	\begin{split}
	\dist(\boldsymbol{F}_h;SO(3))&\leq |\boldsymbol{F}_h-\boldsymbol{\Xi}_h|+\dist(\boldsymbol{\Xi}_h;SO(3))
	\leq |\boldsymbol{L}_h-\boldsymbol{I}|\,|\boldsymbol{\Xi}_h|+\dist(\boldsymbol{\Xi}_h;SO(3))\\
	&\leq Ch^{\beta/2}|\boldsymbol{\Xi}_h|+\dist(\boldsymbol{\Xi}_h;SO(3))
	\leq C\left(h^{\beta/2}+ \dist(\boldsymbol{\Xi}_h;SO(3)) \right).
	\end{split}
	\end{equation*}
	Thus, for $q \in \{2,p\}$, we have
	\begin{equation*}
		\begin{split}
		\int_\Omega \dist^q(\boldsymbol{F}_h;SO(3))\,\d\boldsymbol{x}&\leq C \left(h^{(\beta/2)q} +\int_\Omega \dist^q(\boldsymbol{\Xi}_h;SO(3))\,\d\boldsymbol{x}  \right).
		\end{split}
	\end{equation*}
	Recalling \eqref{eqn:coercivity-Phi} and adopting the notation in \eqref{eqn:rig}, this shows that
	\begin{equation}
	\label{eqn:rigidity}
		\mathcal{R}_h(\boldsymbol{y}_h)\leq C h^\beta \left( E_h^{\rm el}(\boldsymbol{q}_h)+1  \right).
	\end{equation}
	}
	
Thus, setting $r_h\coloneqq \mathcal{R}_h(\boldsymbol{y}_h)$, from \eqref{eqn:bddc} we obtain the bound $r_h\leq Ch^\beta$. 

By applying Lemma \ref{lem:ar}, we find $(\boldsymbol{R}_h)\subset W^{1,p}(S;SO(3))$ and $(\boldsymbol{Q}_h)\subset SO(3)$ such that the following estimates hold for $q \in \{2,p\}$:
	{\MMM
	\begin{align}
    \label{eqn:cmpt-est1}
    \|\boldsymbol{F}_h-\boldsymbol{R}_h\|_{L^q(\Omega;\rtt)}&\leq C r_h^{1/q}, \qquad \hspace*{8mm} \|\nabla'\boldsymbol{R}_h\|_{L^q(S;\R^{3 \times 3 \times 3})}\leq C h^{-1} r_h^{1/q},\\
    \label{eqn:cmpt-est2}
    \|\boldsymbol{R}_h-\boldsymbol{Q}_h\|_{L^q(S;\rtt)}&\leq C h^{-1}r_h^{1/q}, \qquad \|\boldsymbol{F}_h-\boldsymbol{Q}_h\|_{L^q(S;\R^{3 \times 3})}\leq C h^{-1}r_h^{1/q}.
\end{align}
	}
	
Then, thanks to Lemma \ref{lem:clamped}, we obtain the estimate
\begin{equation*}
		|\boldsymbol{Q}_h-\boldsymbol{I}|\leq C \frac{\sqrt{r_h}}{h},
\end{equation*}
which, together with \eqref{eqn:cmpt-est2}, yields
\begin{equation}
			\label{eqn:cbcq}
			\|\boldsymbol{R}_h-\boldsymbol{I}\|_{L^q(S;\rtt)}\leq C h^{-1}r_h^{1/q}, \qquad \|\boldsymbol{F}_h-\boldsymbol{I}\|_{L^q(S;\R^{3 \times 3})}\leq C h^{-1}r_h^{1/q}.
\end{equation}
{\MMM At this point, we apply Proposition \ref{prop:cd} to $\widehat{\boldsymbol{y}}=\boldsymbol{y}_h$ with $e_h=h^\beta$, so that \eqref{eqn:compactness-clamped-horizontal}--\eqref{eqn:compactness-clamped-moment} are proved.}

{\MMM
Now, given the assumption $\beta>6 \vee p$, we have $\beta/2-1>0$ and $\beta/p-1>0$.  Let $0<s<1$ and let $2 \leq q_s \leq p$ be such that $1/{q_s}=s/2+(1-s)/p$.
	By the interpolation inequality \cite[Proposition 1.1.14]{grafakos} and  the second estimate in \eqref{eqn:cbcq}, there holds
	\begin{equation}
		\label{eqn:interpolation}
		\|\widetilde{\boldsymbol{F}}_h-\boldsymbol{I}\|_{L^{q_s}(\Omega;\rtt)}\leq \|\widetilde{\boldsymbol{F}}_h-\boldsymbol{I}\|^s_{L^2(\Omega;\rtt)}\|\widetilde{\boldsymbol{F}}_h-\boldsymbol{I}\|^{1-s}_{L^p(\Omega;\rtt)}\leq C h^{\alpha_s},
	\end{equation}
	where we set $\alpha_s\coloneqq s(\beta/2-1)+(1-s)(\beta/p-1)$. We choose $s$ in order to have
	\begin{equation}
		\label{eqn:interpolation-param}
		q_s>3,\qquad \alpha_s>1.
	\end{equation}
	Note that these two conditions are equivalent to
	\begin{equation*}
		s<\frac{2(p-3)}{3(p-2)}, \qquad s>\frac{2(2p-\beta)}{\beta(p-2)},
	\end{equation*}
	respectively. Therefore, as
	\begin{equation*}
		\frac{2(2p-\beta)}{\beta(p-2)}<\frac{2(p-3)}{3(p-2)}
	\end{equation*}
	if and only if $\beta>6$, such a value $0<s<1$ always exists. Hence, from \eqref{eqn:interpolation}, by applying the Poincar\'{e} inequality and the Morrey embedding, we obtain the following estimate:
	\begin{equation}
	    \label{eqn:compactness-uniform-estimate}
	    \|\boldsymbol{y}_h-\boldsymbol{\pi}_h\|_{C^0(\Omega;\R^3)}\leq C h^{\alpha_s}.
	\end{equation}
	}Note that, here, we implicitly exploited the first condition in \eqref{eqn:interpolation-param}. Therefore, given \eqref{eqn:cbcq} and \eqref{eqn:compactness-uniform-estimate}, claims \eqref{eqn:compactness-clamped-eta}--\eqref{eqn:compactness-clamped-lagrangian} follow at once by applying Proposition \ref{prop:cm} to $\widehat{\boldsymbol{q}}_h=\boldsymbol{q}_h$.
	
	{\MMM Eventually, we note that \eqref{eqn:compactness-clamped-R} immediately follows from the first estimate in \eqref{eqn:cbcq}, while \eqref{eqn:compactness-clamped-G}--\eqref{eqn:compactness-clamped-G-structure} are established by applying Lemma \ref{lem:limiting-strain} to $\widehat{\boldsymbol{y}}_h=\boldsymbol{y}_h$ with $e_h=h^\beta$ taking into account the first estimate in \eqref{eqn:cmpt-est1} and \eqref{eqn:compactness-clamped-horizontal}--\eqref{eqn:compactness-clamped-vertical}.
	}
\end{proof}

{\MMM
\begin{remark}[Norm of the averaged displacements]
\label{rem:norm}
With the notation employed in the proof of Proposition \ref{prop:compactness}, by \eqref{eqn:rigidity},  we  have 
\begin{equation*}
    \frac{r_h}{h^\beta}\leq C \left (E_h^{\rm el}(\boldsymbol{q}_h)+1 \right ).
\end{equation*}
Hence,  by applying Proposition \ref{prop:cd} to $\widehat{\boldsymbol{y}}_h=\boldsymbol{y}_h$ with $e_h=h^\beta$ taking into account \eqref{eqn:cmpt-est1} and \eqref{eqn:cbcq},  we obtain the following estimates for $h\ll 1$: 
\begin{align*}
    \|\boldsymbol{u}_h\|_{W^{1,2}(S;\R^2)}&\leq C \left (\sqrt{E_h^{\rm el}(\boldsymbol{q}_h)}+1 \right ),\\
    \|v_h\|_{W^{1,2}(S)}&\leq C \left (\sqrt{E_h^{\rm el}(\boldsymbol{q}_h)}+1 \right ),\\
    \|\boldsymbol{w}_h\|_{W^{1,2}(S;\rt)}&\leq C \left (\sqrt{E_h^{\rm el}(\boldsymbol{q}_h)}+1 \right ).
\end{align*}
The first estimate is justified by the fact that $r_h/h^4\to 0$, as $h\to 0^+$, so that, for $h\ll 1$, there holds
\begin{equation*}
    \frac{r_h}{h^{\beta/2+2}}=\frac{\sqrt{r_h}}{h^2}\,\sqrt{\frac{r_h}{h^\beta}}\leq \sqrt{\frac{r_h}{h^\beta}}.
\end{equation*}
\end{remark}
}

\subsubsection{Lower bound}

We now move to the the proof of the lower bound. For future reference, we highlight the result regarding the convergence of the magnetostatic energy. This has already been proved in \cite[Proposition 4.7]{bresciani} by adapting the results in \cite{carbou,gioia.james}. For convenience of the reader, we briefly sketch the proof and we refer to the first paper for details. Recall the notation in \eqref{eqn:necas} and \eqref{eqn:lagrangian-magnetization}.

\begin{proposition}[Convergence of the magnetostatic energy]
	\label{prop:magnetostatic}
	Let $(\widehat{\boldsymbol{q}}_h)$ with $\widehat{\boldsymbol{q}}_h=(\widehat{\boldsymbol{y}}_h,\widehat{\boldsymbol{m}}_h)\in\mathcal{Q}_h$. Suppose that there exists $\widehat{\boldsymbol{\zeta}}\in W^{1,2}(S;\S^2)$ such that the following convergence holds, as $h \to 0^+$:
	\begin{equation}
	\label{eqn:magnetostatic-eta}
	\text{$\widehat{\boldsymbol{\mu}}_h\coloneqq \mathcal{M}_h(\widehat{\boldsymbol{q}}_h) \to \chi_\Omega \widehat{\boldsymbol{\zeta}}$ in $L^2(\rt;\rt)$.}
	\end{equation}
	Then, the following equality holds:
	\begin{equation}
	\label{eqn:magnetostatic-limit}
	\lim_{h \to 0^+} E_h^{\mathrm{mag}}(\widehat{\boldsymbol{q}}_h)=\frac{1}{2}\int_S |\zeta^3|^2\,\d\boldsymbol{x}'.
	\end{equation}
\end{proposition}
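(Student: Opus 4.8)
The plan is to transport the stray–field potential to the fixed domain $\Omega$, establish uniform bounds on its anisotropically scaled gradient, and then identify the weak limit of that gradient: in the thin-film limit only the out-of-plane component of the magnetization survives as a magnetic charge, and this is exactly what produces $E_0^{\mathrm{mag}}(\widehat{\boldsymbol{\zeta}})=\tfrac12\int_S|\widehat{\zeta}^3|^2\,\d\boldsymbol{x}'$. This is in the spirit of the thin-film arguments in micromagnetics \cite{carbou,gioia.james} and of \cite{bresciani}. Write $\psi_h\coloneqq\psi_{\widehat{\boldsymbol{m}}_h}$ and set $\widetilde{\psi}_h\coloneqq\psi_h\circ\boldsymbol{\pi}_h$. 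Since $\boldsymbol{\pi}_h$ is a linear bijection of $\R^3$ with constant Jacobian $h$, composition with it is a bijection of $V^{1,2}(\R^3)$ onto itself and, by the chain rule, $\nabla_h\widetilde{\psi}_h=(\nabla\psi_h)\circ\boldsymbol{\pi}_h$. Performing the change of variables $\boldsymbol{\xi}=\boldsymbol{\pi}_h(\boldsymbol{x})$ in \eqref{eqn:Maxwell-y-weak} then shows that $\widetilde{\psi}_h\in V^{1,2}(\R^3)$ satisfies
\begin{equation}
    \label{eqn:ms-rescaled}
    \forall\,\phi\in V^{1,2}(\R^3),\qquad \int_{\R^3}\nabla_h\widetilde{\psi}_h\cdot\nabla_h\phi\,\d\boldsymbol{x}=\int_{\R^3}\widehat{\boldsymbol{\eta}}_h\cdot\nabla_h\phi\,\d\boldsymbol{x},
\end{equation}
while the same change of variables gives $E_h^{\mathrm{mag}}(\widehat{\boldsymbol{q}}_h)=\tfrac12\int_{\R^3}|\nabla_h\widetilde{\psi}_h|^2\,\d\boldsymbol{x}$.

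Next I would derive the a priori bounds. Testing \eqref{eqn:ms-rescaled} with $\phi=\widetilde{\psi}_h$ and using the Cauchy--Schwarz inequality yields $\|\nabla_h\widetilde{\psi}_h\|_{L^2(\R^3;\R^3)}\leq\|\widehat{\boldsymbol{\eta}}_h\|_{L^2(\R^3;\R^3)}\leq C$, the last bound coming from \eqref{eqn:magnetostatic-eta}. Hence $\nabla'\widetilde{\psi}_h$ is bounded in $L^2(\R^3;\R^2)$, $h^{-1}\partial_3\widetilde{\psi}_h$ is bounded in $L^2(\R^3)$, and therefore $\partial_3\widetilde{\psi}_h\to0$ in $L^2(\R^3)$. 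Along a subsequence, $\nabla_h\widetilde{\psi}_h\wk\boldsymbol{g}=(\boldsymbol{g}',g^3)$ and $h^{-1}\partial_3\widetilde{\psi}_h\wk g^3$ in $L^2$. Since $\partial_3\widetilde{\psi}_h\to0$ in $L^2$ and $\partial_3(\nabla'\widetilde{\psi}_h)=\nabla'(\partial_3\widetilde{\psi}_h)$, a routine test-function computation gives $\partial_3\boldsymbol{g}'=0$ in $\mathcal{D}'(\R^3)$; being independent of $x_3$ and square-integrable over all of $\R^3$, $\boldsymbol{g}'$ must vanish.

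The crux is the identification of $g^3$, and I expect this — disentangling the weak limit of the anisotropically scaled gradient and showing that the in-plane part carries no energy while the normal part reproduces the magnetization — to be the only genuinely delicate step, the rest being soft. Fix $\phi\in C_c^\infty(\R^3)$, split both sides of \eqref{eqn:ms-rescaled} into in-plane and out-of-plane contributions, and multiply by $h$:
\begin{equation*}
    h\int_{\R^3}\nabla'\widetilde{\psi}_h\cdot\nabla'\phi\,\d\boldsymbol{x}+\frac1h\int_{\R^3}\partial_3\widetilde{\psi}_h\,\partial_3\phi\,\d\boldsymbol{x}=h\int_{\R^3}\widehat{\boldsymbol{\eta}}_h'\cdot\nabla'\phi\,\d\boldsymbol{x}+\int_{\R^3}\widehat{\eta}_h^3\,\partial_3\phi\,\d\boldsymbol{x}.
\end{equation*}
Letting $h\to0^+$, the two terms with prefactor $h$ vanish by the $L^2$-bounds, $\widehat{\eta}_h^3\to\chi_\Omega\widehat{\zeta}^3$ strongly by \eqref{eqn:magnetostatic-eta}, and $h^{-1}\partial_3\widetilde{\psi}_h\wk g^3$; hence $\int_{\R^3}g^3\,\partial_3\phi\,\d\boldsymbol{x}=\int_{\R^3}\chi_\Omega\widehat{\zeta}^3\,\partial_3\phi\,\d\boldsymbol{x}$ for every $\phi\in C_c^\infty(\R^3)$, i.e. $\partial_3(g^3-\chi_\Omega\widehat{\zeta}^3)=0$ in $\mathcal{D}'(\R^3)$. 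As before, a function of $\boldsymbol{x}'$ alone that lies in $L^2(\R^3)$ is zero, so $g^3=\chi_\Omega\widehat{\zeta}^3$ and $\boldsymbol{g}=\chi_\Omega(\boldsymbol{0}',\widehat{\zeta}^3)$.

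Finally I would pass to the limit in the energy identity obtained by testing \eqref{eqn:ms-rescaled} with $\widetilde{\psi}_h$: using the strong convergence $\widehat{\boldsymbol{\eta}}_h\to\chi_\Omega\widehat{\boldsymbol{\zeta}}$ together with $\nabla_h\widetilde{\psi}_h\wk\boldsymbol{g}$,
\begin{equation*}
    \int_{\R^3}|\nabla_h\widetilde{\psi}_h|^2\,\d\boldsymbol{x}=\int_{\R^3}\widehat{\boldsymbol{\eta}}_h\cdot\nabla_h\widetilde{\psi}_h\,\d\boldsymbol{x}\;\longrightarrow\;\int_{\R^3}\chi_\Omega\widehat{\boldsymbol{\zeta}}\cdot\boldsymbol{g}\,\d\boldsymbol{x}=\int_\Omega|\widehat{\zeta}^3|^2\,\d\boldsymbol{x}=\int_S|\widehat{\zeta}^3|^2\,\d\boldsymbol{x}',
\end{equation*}
the last equality because $\widehat{\boldsymbol{\zeta}}$ does not depend on $x_3$ and $I$ has unit length. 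Therefore $E_h^{\mathrm{mag}}(\widehat{\boldsymbol{q}}_h)\to\tfrac12\int_S|\widehat{\zeta}^3|^2\,\d\boldsymbol{x}'=E_0^{\mathrm{mag}}(\widehat{\boldsymbol{\zeta}})$ along the subsequence; since the limit is independent of the subsequence, the whole sequence converges, which is \eqref{eqn:magnetostatic-limit}. As a by-product one even gets $\nabla_h\widetilde{\psi}_h\to\boldsymbol{g}$ strongly in $L^2(\R^3;\R^3)$. Note that only the convergence \eqref{eqn:magnetostatic-eta} is used, no control on $\Omega^{\widehat{\boldsymbol{y}}_h}$ being needed here.
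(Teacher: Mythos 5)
Your proof is correct and follows essentially the same route as the paper's: rescale the stray-field potential to the fixed domain, obtain the uniform bound on its scaled gradient from the stability estimate, identify the vertical component of the weak limit as $\chi_\Omega\widehat{\zeta}^3$ by multiplying the weak formulation by $h$, and conclude via the strong--weak pairing in the energy identity. The only (cosmetic) difference is that you kill the in-plane part of the limit by commuting $\partial_3$ with $\nabla'$ distributionally, whereas the paper first extracts a limiting potential $\widehat{\mu}$ via the Hilbert-space structure of $V^{1,2}(\R^3)/\R$ and then argues $\nabla'\widehat{\mu}=\boldsymbol{0}'$.
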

\begin{proof}
	Denote by $\widehat{\psi}_h \in V^{1,2}(\rt)$ a stray field potential corresponding to $\widehat{\boldsymbol{q}}_h$. Thus, we have the following:
	\begin{equation}
	\label{eqn:wk-max}
	\forall\,\varphi \in V^{1,2}(\rt),\quad \int_{\rt} \nabla \widehat{\psi}_h \cdot \nabla \varphi\,\d\boldsymbol{\xi}=\int_{\rt} \chi_{\Omega^{\widehat{\boldsymbol{y}}_h}} \widehat{\boldsymbol{m}}_h \cdot \nabla \varphi\,\d\boldsymbol{\xi}.
	\end{equation}
	By \eqref{eqn:maxwell-stability}, there holds
	\begin{equation*}
	\|\nabla \widehat{\psi}_h\|_{L^2(\rt;\rt)}\leq \|\chi_{\Omega^{\widehat{\boldsymbol{y}}_h}} \widehat{\boldsymbol{m}}_h\|_{L^2(\rt;\rt)}.
	\end{equation*}
	Taking the square at both sides and applying the change-of-variable formula, we estimate
	\begin{equation}
	\label{eqn:lb-magnetostatic-1}
	\int_{\rt} |\nabla \widehat{\psi}_h|^2\,\d\boldsymbol{\xi}\leq \int_{\rt} |\chi_{\Omega^{\widehat{\boldsymbol{y}}_h}} \widehat{\boldsymbol{m}}_h|^2\,\d\boldsymbol{\xi}=\int_{\rt} |\widehat{\boldsymbol{\mu}}_h|^2 \circ \boldsymbol{\pi}_h^{-1}\,\d\boldsymbol{\xi}=h \int_{\rt} |\widehat{\boldsymbol{\mu}}_h|^2\,\d\boldsymbol{\xi}.
	\end{equation}
	Define $\widehat{\rho}_h\coloneqq \widehat{\psi}_h \circ \boldsymbol{\pi}_h \in V^{1,2}(\rt)$. From \eqref{eqn:lb-magnetostatic-1}, using again the change-of-variable formula, we estimate
	\begin{equation*}
	\int_{\rt} |\nabla_h \widehat{\rho}_h|^2\,\d\boldsymbol{x}=\int_{\rt} |\nabla \widehat{\psi}_h|^2\circ \boldsymbol{\pi}_h\,\d\boldsymbol{x}=\frac{1}{h}\int_{\rt} |\nabla \widehat{\psi}_h|^2\,\d\boldsymbol{\xi}\leq \int_{\rt} |\widehat{\boldsymbol{\mu}}_h|^2\,\d\boldsymbol{\xi}.
	\end{equation*}
	As the right-hand side is uniformly bounded in view \eqref{eqn:magnetostatic-eta}, we deduce that $(\nabla_h \widehat{\rho}_h)$ is bounded in $L^2(\rt;\rt)$. From this, we deduce two facts. First, there exists $\widehat{\chi}\in L^2(\rt)$ such that, up to subsequences, $\partial_3 \widehat{\rho}_h/h \wk \widehat{\chi}$ in $L^2(\rt)$ and, in turn, $\partial_3 \widehat{\rho}_h\to 0$ in $L^2(\rt)$, as $h \to 0^+$. Second, exploiting the Hilbert space structure of the quotient $V^{1,2}(\rt)/\R$, we infer the existence of $\widehat{\rho}\in V^{1,2}(\rt)$ such that, up to subsequences, there holds $\nabla \widehat{\rho}_h \wk \nabla \widehat{\rho}$ in $L^2(\rt;\rt)$, as $h \to 0^+$. These two facts together imply that $\partial_3 \widehat{\rho}=0$ almost everywhere which, as $\nabla \widehat{\rho}\in L^2(\rt;\rt)$, yields $\nabla' \widehat{\rho}=\boldsymbol{0}'$ almost everywhere.  
	
	Now, testing  \eqref{eqn:wk-max} with $\varphi=\widehat{\psi}_h$ and applying the change-of-variable formula, we write
	\begin{equation*}
	\begin{split}
	E_h^{\mathrm{mag}}(\widehat{\boldsymbol{q}}_h)&=\frac{1}{2h} \int_{\rt} \chi_{\Omega^{\widehat{\boldsymbol{y}}_h}} \widehat{\boldsymbol{m}}_h \cdot \nabla \widehat{\psi}_h\,\d\boldsymbol{\xi}
	=\frac{1}{2}\int_{\rt} \widehat{\boldsymbol{\mu}}_h \cdot \nabla_h \widehat{\rho}_h\,\d\boldsymbol{x}\\
	&=\frac{1}{2}\int_{\rt} \widehat{\boldsymbol{\mu}}_h' \cdot \nabla' \widehat{\rho}_h\,\d\boldsymbol{x}+\frac{1}{2}\int_{\rt} \widehat{\boldsymbol{\mu}}_h^3 \, \frac{\partial_3 \widehat{\rho}_h}{h}\,\d\boldsymbol{x}.
	\end{split}
	\end{equation*}
	Then, recalling \eqref{eqn:magnetostatic-eta} and passing to the limit, we obtain
	\begin{equation}
	\label{eqn:magnetostatic-l}
	\lim_{h \to 0^+} E_h^{\mathrm{mag}}(\widehat{\boldsymbol{q}}_h)=\frac{1}{2}\int_\Omega \widehat{\zeta}^{\,3}\,\widehat{\chi}\,\d\boldsymbol{x}.
	\end{equation}
	Thus, if we show that $\widehat{\chi}=\chi_\Omega \widehat{\zeta}^{\,3}$ almost everywhere in $\R^3$, then \eqref{eqn:magnetostatic-limit} follows from \eqref{eqn:magnetostatic-l}. To check this, we go back to \eqref{eqn:wk-max}. Using once more the change-of-variable formula, we deduce that $\widehat{\rho}_h$ satisfies the following:
	\begin{equation*}
	\forall\,\varphi\in V^{1,2}(\rt), \qquad \int_{\rt} \nabla_h \widehat{\rho}_h \cdot \nabla_h \varphi\,\d\boldsymbol{x}=\int_{\rt} \widehat{\boldsymbol{\mu}}_h \cdot \nabla_h \varphi\,\d\boldsymbol{x}.
	\end{equation*}
	Multiplying by $h$ and then passing to the limit, as $h \to 0^+$, we obtain
	\begin{equation*}
	\forall\, \varphi \in V^{1,2}(\rt), \qquad \int_{\rt} \left (\widehat{\chi}-\chi_\Omega \widehat{\zeta}^{\, 3}\right )\,\partial_3 \varphi\,\d\boldsymbol{x}=0.
	\end{equation*}
	Given the arbitrariness of $\varphi$, this entails that the function $\widehat{\chi}-\chi_\Omega \widehat{\zeta}^{\,3}$ does not depend on the third variable. However, as this function belongs to $L^2(\rt)$, we necessarily have $\widehat{\chi}-\chi_\Omega \widehat{\zeta}^{\,3}=0$ almost everywhere. 
\end{proof}

The next result asserts the existence of a lower bound and, for future reference, it is presented in a more self-contained form.

\begin{proposition}[Lower bound]
	\label{prop:lb}
	Let $(\widehat{\boldsymbol{q}}_h)$ with $\widehat{\boldsymbol{q}}_h=(\widehat{\boldsymbol{y}}_h,\widehat{\boldsymbol{m}}_h)\in\mathcal{Q}_h$. Set $\widehat{\boldsymbol{F}}_h\coloneqq \nabla_h\widehat{\boldsymbol{y}}_h$. Suppose that there exist $(\widehat{\boldsymbol{R}}_h)\subset W^{1,2}(S;SO(3))$ and also $\widehat{\boldsymbol{G}}\in L^2(\Omega;\rtt)$ and $\widehat{\boldsymbol{q}}_0=(\widehat{\boldsymbol{u}},\widehat{v},\widehat{\boldsymbol{\zeta}})\in\mathcal{Q}_0$ such that, as $h \to 0^+$, there hold
	\begin{equation}
		\label{eqn:lb-R}
		\text{$\widehat{\boldsymbol{R}}_h \to \boldsymbol{I}$ in $L^1(S;\rtt)$,}
	\end{equation}
	\begin{equation}
	\label{eqn:lb-G}
	\text{$\widehat{\boldsymbol{G}}_h\coloneqq h^{-\beta/2} (\widehat{\boldsymbol{R}}_h^\top \widehat{\boldsymbol{F}}_h -\boldsymbol{I})\wk \widehat{\boldsymbol{G}}$ in $L^2(\Omega;\rtt)$,}
	\end{equation}
	and, for almost every $\boldsymbol{x}\in \Omega$, we have
	\begin{equation}
	\label{eqn:lb-G-structure}
	\text{$\widehat{\boldsymbol{G}}''(\boldsymbol{x})=\sym \nabla' \widehat{\boldsymbol{u}}(\boldsymbol{x}')+((\nabla')^2\widehat{v}(\boldsymbol{x}'))\,x_3$.}
	\end{equation}
	Moreover, suppose that there exists  $\widehat{\boldsymbol{\chi}}\in L^2(\rt;\rt)$ such that the following convergences hold, as $h \to 0^+$:
	\begin{align}
	\label{eqn:lb-eta}
	\text{$\widehat{\boldsymbol{\mu}}_h\coloneqq \mathcal{M}_h(\widehat{\boldsymbol{q}}_h)$}&\text{$\to \chi_\Omega \widehat{\boldsymbol{\zeta}}$ in $L^2(\rt;\rt)$,}\\
	\label{eqn:lb-H}
	\text{$\widehat{\boldsymbol{N}}_h\coloneqq \mathcal{N}_h(\widehat{\boldsymbol{q}}_h)$}&\text{$\wk \chi_\Omega (\nabla'\widehat{\boldsymbol{\zeta}}|\widehat{\boldsymbol{\chi}})$ in $L^2(\rt;\rtt)$,}\\
	\label{eqn:lb-lagrangian}
	\text{$\widehat{\boldsymbol{z}}_h\coloneqq \mathcal{Z}_h(\widehat{\boldsymbol{q}}_h)$}&\text{$\to \widehat{\boldsymbol{\zeta}}$ in $L^1(\Omega;\rt)$.}
	\end{align}
	Then, the following inequality holds:
	\begin{equation}
	\label{eqn:lb-liminf-inequality}
	E_0(\widehat{\boldsymbol{q}}_0)\leq \liminf_{h \to 0^+} E_h(\widehat{\boldsymbol{q}}_h).
	\end{equation}
\end{proposition}
\begin{proof}
	It is sufficient to prove the following:
	\begin{align}
	\label{eqn:lb-elastic}
	E_0^{\mathrm{el}}(\widehat{\boldsymbol{q}}_0)&\leq \liminf_{h \to 0^+} E_h^{\mathrm{el}}(\widehat{\boldsymbol{q}}_h),\\
	\label{eqn:lb-exchange}
	E_0^{\mathrm{exc}}(\widehat{\boldsymbol{q}}_0)&\leq \liminf_{h \to 0^+} E_h^{\mathrm{exc}}(\widehat{\boldsymbol{q}}_h).
	\end{align}
	Indeed, thanks to \eqref{eqn:lb-eta}, the limit in \eqref{eqn:magnetostatic-limit} holds by Proposition \ref{prop:magnetostatic}. Thus, combining \eqref{eqn:lb-elastic}--\eqref{eqn:lb-exchange} with \eqref{eqn:magnetostatic-limit}, we obtain \eqref{eqn:lb-liminf-inequality}.
	
	Recalling  \eqref{eqn:cm-H-norm}, claim \eqref{eqn:lb-exchange}  follows immediately from \eqref{eqn:lb-H}. Indeed, 
	by lower semicontinuity, we have
	\begin{equation*}
	\liminf_{h \to 0^+} \int_{\rt} |\widehat{\boldsymbol{N}}_h|^2\,\d\boldsymbol{x}
		\geq \int_\Omega |\nabla'\widehat{\boldsymbol{\zeta}}|^2\,\d\boldsymbol{x}+\int_\Omega |\widehat{\boldsymbol{\chi}}|^2\,\d\boldsymbol{x}
		\geq \int_S |\nabla'\widehat{\boldsymbol{\zeta}}|^2\,\d\boldsymbol{x}'.
	\end{equation*} 
	
	We thus focus on \eqref{eqn:lb-elastic}.
	This is proved similarly to \cite[Corollary 2]{friesecke.james.mueller2}. 
	Recall \eqref{eqn:density-W_h}--\eqref{eqn:prestrain-inverse}.  
	For simplicity, set $\widehat{\boldsymbol{\lambda}}_h\coloneqq \widehat{\boldsymbol{m}}_h \circ \widehat{\boldsymbol{y}}_h$and $\widehat{\boldsymbol{L}}_h\coloneqq \boldsymbol{L}_h(\widehat{\boldsymbol{R}}_h \hspace*{-4pt}{}^\top\widehat{\boldsymbol{z}}_h)$.
	By \eqref{eqn:lb-R} and \eqref{eqn:lb-lagrangian}, recalling \eqref{eqn:ch-limit} and \eqref{eqn:saturation}, we have
	\begin{equation}
	\label{eqn:lb-L}
	\text{$\widehat{\boldsymbol{K}}_h\coloneqq h^{-\beta/2}(\boldsymbol{I}-\widehat{\boldsymbol{L}}_h^{-1})\to c_0\widehat{\boldsymbol{\zeta}}\otimes \widehat{\boldsymbol{\zeta}}$ in $L^1(\Omega;\rtt)$.}
	\end{equation}
	Define $A_h \coloneqq \{|\widehat{\boldsymbol{G}}_h|\leq h^{-\beta/4}\}$, so that, by \eqref{eqn:lb-G}, $\chi_{A_h} \to 1$ in $L^1(\Omega)$.
	Note that, on $A_h$, there holds
	\begin{equation}
	\label{eqn:lb-nonlinear}
	\widehat{\boldsymbol{L}}_h^{-1}\widehat{\boldsymbol{R}}_h^\top\widehat{\boldsymbol{F}}_h=\boldsymbol{I}+h^{\beta/2}(\widehat{\boldsymbol{G}}_h-\widehat{\boldsymbol{K}}_h)+O(h^{3/4\beta}).
	\end{equation}
	Recalling \eqref{eqn:Taylor-Phi}, for $h \ll1$ we have
	$|\widehat{\boldsymbol{L}}_h^{-1}\widehat{\boldsymbol{R}}_h^\top\widehat{\boldsymbol{F}}_h-\boldsymbol{I}|\ll 1$ on $A_h$. Then, exploiting \eqref{eqn:frame-indifference} and \eqref{eqn:lb-nonlinear}, we write
	\begin{equation*}
	\begin{split}
	\int_\Omega W_h(\widehat{\boldsymbol{F}}_h,\widehat{\boldsymbol{\lambda}}_h)\,\d\boldsymbol{x} &=\int_\Omega W_h(\widehat{\boldsymbol{R}}_h^\top \widehat{\boldsymbol{F}}_h,\widehat{\boldsymbol{R}}_h^\top \widehat{\boldsymbol{\lambda}}_h)\,\d\boldsymbol{x}=\int_\Omega \Phi(\widehat{\boldsymbol{L}}_h^{-1}\widehat{\boldsymbol{R}}_h^\top \widehat{\boldsymbol{F}}_h)\,\d\boldsymbol{x}
	 \geq \int_\Omega \chi_{A_h} \Phi(\widehat{\boldsymbol{L}}_h^{-1}\widehat{\boldsymbol{R}}_h^\top \widehat{\boldsymbol{F}}_h)\,\d\boldsymbol{x}\\
	&= \int_\Omega \chi_{A_h}  \Phi \left ( \boldsymbol{I}+h^{\beta/2}(\widehat{\boldsymbol{G}}_h-\widehat{\boldsymbol{K}}_h)+O(h^{3\beta/4}) \right)\,\d\boldsymbol{x}\\
	&=\frac{h^\beta}{2}\int_\Omega Q \left(\chi_{A_h} (\widehat{\boldsymbol{G}}_h-\widehat{\boldsymbol{K}}_h) +O(h^{3\beta/4})\right)\,\d\boldsymbol{x}\\
	&+\int_\Omega \chi_{A_h} \omega\left(h^{\beta/2}(\widehat{\boldsymbol{G}}_h-\widehat{\boldsymbol{K}}_h)+ O(h^{3\beta/4}) \right)\,\d\boldsymbol{x}. 
	\end{split}
	\end{equation*}
	Thus
	\begin{equation}
	\begin{split}
	\label{eqn:lb-el}
	E_h^{\mathrm{el}}(\widehat{\boldsymbol{q}}_h)&\geq \frac{1}{2}\int_\Omega Q \left(\chi_{A_h} (\sym\,\widehat{\boldsymbol{G}}_h-\widehat{\boldsymbol{K}}_h) +O(h^{3\beta/4})\right)\,\d\boldsymbol{x}\\
	&+\frac{1}{h^\beta}\int_\Omega \chi_{A_h} \omega\left(h^{\beta/2}(\sym\,\widehat{\boldsymbol{G}}_h-\widehat{\boldsymbol{K}}_h)+ O(h^{3\beta/4}) \right)\,\d\boldsymbol{x}. 
	\end{split}
	\end{equation}
	To provide a lower bound for the first integral on the right-hand side of \eqref{eqn:lb-el}, we exploit the convexity of $Q$. By \eqref{eqn:lb-G} and \eqref{eqn:lb-L}, we have $\chi_{A_h}\widehat{\boldsymbol{G}}_h \wk \widehat{\boldsymbol{G}}$ in $L^2(\Omega;\rtt)$ and $\chi_{A_h}\widehat{\boldsymbol{K}}_h \to c_0 \widehat{\boldsymbol{\zeta}}\otimes\widehat{\boldsymbol{\zeta}}$ in $L^1(\Omega;\rtt)$, as $h \to 0^+$. Thus, by lower semicontinuity, we get
	\begin{equation*}
	\begin{split}
	\liminf_{h \to 0^+}\int_\Omega Q \left(\chi_{A_h} (\widehat{\boldsymbol{G}}_h-\widehat{\boldsymbol{K}}_h) +O(h^{3\beta/4})\right)\,\d\boldsymbol{x} &\geq \int_\Omega Q(\widehat{\boldsymbol{G}}-c_0 \widehat{\boldsymbol{\zeta}}\otimes\widehat{\boldsymbol{\zeta}})\,\d\boldsymbol{x}\\
	&\geq \int_\Omega Q_{\mathrm{red}}(\widehat{\boldsymbol{G}}''-c_0 \widehat{\boldsymbol{\zeta}}'\otimes\widehat{\boldsymbol{\zeta}}')\,\d\boldsymbol{x},
	\end{split}
	\end{equation*}
	where the last inequality is justified by the definition of $Q_{\rm red}$ in \eqref{eqn:Q-red}.
	Given  \eqref{eqn:lb-G-structure}, this proves \eqref{eqn:lb-elastic} once we show that the second integral on the right-hand side of \eqref{eqn:lb-el} goes to zero, as $h \to 0^+$. To prove this, for every $s>0$, we set
	\begin{equation*}
		\overline{\omega}(s)\coloneqq \sup \left \{  \frac{|\omega(\boldsymbol{F})|}{|\boldsymbol{F}|^2}:\:\boldsymbol{F}\in\rtt, \:\:|\boldsymbol{F}|\leq s \right \}.
	\end{equation*}
	By definition, the function $\overline{\omega}$ is decreasing and satisfies $\overline{\omega}(s)\to0$, as $s\to 0^+$. Then, recalling the definition of $A_h$, we have
	\begin{equation*}
	\begin{split}
	\frac{1}{h^\beta}&\int_\Omega \chi_{A_h} \left | \omega\left(h^{\beta/2}(\widehat{\boldsymbol{G}}_h-\widehat{\boldsymbol{K}}_h)+ O(h^{3\beta/4}) \right)\right |\,\d\boldsymbol{x} \\
	&\leq  \int_\Omega \chi_{A_h}\overline{\omega} \left( h^{\beta/2}|\widehat{\boldsymbol{G}}_h-\widehat{\boldsymbol{K}}_h|+ O(h^{3\beta/4}) \right)\frac{\left | h^{\beta/2}(\widehat{\boldsymbol{G}}_h-\widehat{\boldsymbol{K}}_h)+ O(h^{3\beta/4}) \right|^2}{h^\beta}\,\d\boldsymbol{x}\\
	&\leq \overline{\omega}(O(h^{\beta/4})) \int_\Omega \left | \widehat{\boldsymbol{G}}_h-\widehat{\boldsymbol{K}}_h+O(h^{\beta/4})\right|^2\,\d\boldsymbol{x},
	\end{split}
	\end{equation*}
	where the right-hand side goes to zero, as $h \to 0^+$. 
\end{proof}

\subsubsection{Optimality of the lower bound}

The following result ensures the existence of recovery sequences.

\begin{proposition}[Recovery sequences]
\label{prop:recovery}
Let $\widehat{\boldsymbol{q}}_0=(\widehat{\boldsymbol{u}},\widehat{v},\widehat{\boldsymbol{\zeta}})\in\mathcal{Q}_0$. Then, there exists $(\widehat{\boldsymbol{q}}_h)$ with $\widehat{\boldsymbol{q}}_h=(\widehat{\boldsymbol{y}}_h,\widehat{\boldsymbol{m}}_h)\in\mathcal{Q}_h$  such that the following convergences hold, as $h \to 0^+$:
\begin{align}
    \label{eqn:recovery-horizontal}
    \text{$\widehat{\boldsymbol{u}}_h\coloneqq \mathcal{U}_h(\widehat{\boldsymbol{q}}_h)$}&\text{$\to\widehat{\boldsymbol{u}}$ in $W^{1,2}(S;\R^3)$,}\\
    \label{eqn:recovery-vertical}
    \text{$\widehat{v}_h\coloneqq \mathcal{V}_h(\widehat{\boldsymbol{q}}_h)$}&\text{$\to \widehat{v}$ in $W^{2,2}(S)$,}\\
    \label{eqn:recovery-moment}
    \text{$\widehat{\boldsymbol{w}}_h\coloneqq \mathcal{W}_h(\widehat{\boldsymbol{q}}_h)$}&\text{$\to -\frac{1}{12}\left(\begin{array}{c}
         \nabla'\widehat{v}  \\
         0
    \end{array} \right)$ in $W^{1,2}(S;\rt)$,}\\
    \label{eqn:recovery-eta}
    \text{$\widehat{\boldsymbol{\mu}}_h\coloneqq \mathcal{M}_h(\widehat{\boldsymbol{q}}_h)$}&\text{$\to \chi_\Omega \widehat{\boldsymbol{\zeta}}$ in $L^2(\R^3;\R^3)$,}\\
    \label{eqn:recovery-H}
    \text{$\widehat{\boldsymbol{N}}_h\coloneqq\mathcal{N}_h(\widehat{\boldsymbol{q}}_h)$}&\text{$\to \chi_\Omega (\nabla'\widehat{\boldsymbol{\zeta}}\vert\boldsymbol{0})$ in $L^2(\R^3;\rtt)$,}\\
    \label{eqn:recovery-composition}
    \text{$\widehat{\boldsymbol{m}}_h \circ \widehat{\boldsymbol{y}}_h$}&\text{$\to\widehat{\boldsymbol{\zeta}}$ in $L^1(\Omega;\R^3)$,}\\
    \label{eqn:recovery-lagrangian-magnetization}
    \text{$\widehat{\boldsymbol{z}}_h\coloneqq \mathcal{Z}_h(\widehat{\boldsymbol{q}}_h)$}&\text{$\to \widehat{\boldsymbol{\zeta}}$ in $L^1(\Omega;\rtt)$.}
\end{align}
Moreover, the following equality holds:
\begin{equation}
    \label{eqn:recovery-limit}
    E_0(\widehat{\boldsymbol{q}}_0)=\lim_{h\to 0^+}E_h(\widehat{\boldsymbol{q}}_h).
\end{equation}
\end{proposition}
\begin{proof}
For convenience of the reader, the proof is subdivided into three steps.

\textbf{Step 1 (Construction of the recovery sequence).} First, we additionally assume that $\widehat{\boldsymbol{u}}\in C^2_{\rm c}(S;\R^2)$, $\widehat{v}\in C^3_{\rm c}(S)$ and $\widehat{\boldsymbol{\zeta}}\in C^1(\closure{S};\S^2)$. Let $\widehat{\boldsymbol{a}},\widehat{\boldsymbol{b}}\in C^2_{\rm c}(S;\rt)$.
Deformations of the recovery sequence are constructed according to the classical ansatz for the linearized von
K\'{a}rm\'{a}n regime \cite{friesecke.james.mueller2}. Thus, for every $h>0$, we define
\begin{equation*}
\begin{split}
	\widehat{\boldsymbol{y}}_h\coloneqq  \,&\boldsymbol{\pi}_h + h^{\beta/2} \left ( \begin{matrix}  \widehat{\boldsymbol{u}}  \\ 0\end{matrix}   \right )+ h^{\beta/2-1} \left ( \begin{matrix}  \boldsymbol{0}'  \\ \widehat{v}\end{matrix}   \right )-h^{\beta/2} x_3 \left ( \begin{matrix}  \nabla'\widehat{v} \\ 0\end{matrix}   \right )+ 2 h^{\beta/2+1} x_3 \widehat{\boldsymbol{a}}+h^{\beta/2+1} x_3^2 \widehat{\boldsymbol{b}}.
\end{split}
\end{equation*}
Observe that $\widehat{\boldsymbol{y}}_h\in C^2(\closure{\Omega};\rt)$ and there holds
\begin{equation}
\label{eqn:recovery-key}
\|\widehat{\boldsymbol{y}}_h-\boldsymbol{\pi}_h\|_{C^0(\closure{\Omega};\rt)}\leq C h^{\beta/2-1}.
\end{equation}
We compute
\begin{equation}
\label{eqn:recovery-gradient}
\begin{split}
\widehat{\boldsymbol{F}}_h&= \boldsymbol{I}+ h^{\beta/2}\left( \renewcommand\arraystretch{1.3} \begin{array}{@{}c|c@{}}   \nabla'\widehat{\boldsymbol{u}}  & \boldsymbol{0}'\\ \hline  (\boldsymbol{0}')^\top & 0 \end{array} \right)+  h^{\beta/2-1} \left( \renewcommand\arraystretch{1.3} \begin{array}{@{}c|c@{}}   \mathbf{O}''  & -\nabla' \widehat{v}\\ \hline  (\nabla'v)^\top & 0 \end{array} \right)-h^{\beta/2} x_3 \left( \renewcommand\arraystretch{1.3} \begin{array}{@{}c|c@{}}   (\nabla')^2 \widehat{v}  & \boldsymbol{0}'\\ \hline  (\boldsymbol{0}')^\top & 0 \end{array} \right) \\
&+  2h^{\beta/2}\, \widehat{\boldsymbol{a}} \otimes \boldsymbol{e}_3+2h^{\beta/2} \,x_3 \,\widehat{\boldsymbol{b}} \otimes \boldsymbol{e}_3 + O(h^{\beta/2+1}),
\end{split}   
\end{equation}
so that we immediately deduce
\begin{equation}
\label{eqn:recovery-gradient-uniform}
	\|\widehat{\boldsymbol{F}}_h-\boldsymbol{I}\|_{C^0(\closure{\Omega};\rtt)}\leq C h^{\beta/2-1}.
\end{equation}
Recall the identity $(\boldsymbol{I}+\boldsymbol{F})^\top (\boldsymbol{I}+\boldsymbol{F})=\boldsymbol{I}+2\,\sym\,\boldsymbol{F} + \boldsymbol{F}^\top \boldsymbol{F}$ for every $\boldsymbol{F}\in \rtt$. Thanks to the assumption $\beta>6$, we have
\begin{equation}
\label{eqn:recovery-gradient-2}
\sqrt{\Big (\widehat{\boldsymbol{F}}_h\Big)^\top \widehat{\boldsymbol{F}}_h}= \boldsymbol{I}+h^{\beta/2}(\widehat{\boldsymbol{A}}+x_3\,\widehat{\boldsymbol{B}})+O(h^{\beta/2+1}),
\end{equation}
where
\begin{equation*}
\begin{split}
	&\widehat{\boldsymbol{A}} \coloneqq  \left( \renewcommand\arraystretch{1.3} \begin{array}{@{}c|c@{}}    \sym \nabla'\widehat{\boldsymbol{u}}  & \boldsymbol{0}'\\ \hline  (\boldsymbol{0}')^\top & 0 \end{array} \right) + \widehat{\boldsymbol{a}} \otimes \boldsymbol{e}_3 + \boldsymbol{e}_3 \otimes \widehat{\boldsymbol{a}},\\
	&\widehat{\boldsymbol{B}} \coloneqq - \left( \renewcommand\arraystretch{1.3} \begin{array}{@{}c|c@{}}   (\nabla')^2 \widehat{v}  & \boldsymbol{0}'\\ \hline  (\boldsymbol{0}')^\top & 0 \end{array} \right) + \widehat{\boldsymbol{b}} \otimes \boldsymbol{e}_3 + \boldsymbol{e}_3 \otimes \widehat{\boldsymbol{b}}.
\end{split}
\end{equation*}
Using the identity $\det (\boldsymbol{I}+\boldsymbol{F})=1+\tr\boldsymbol{F}+\tr\,(\cof\boldsymbol{F})+\det \boldsymbol{F}$ for every $\boldsymbol{F}\in\rtt$, we obtain 
\begin{equation}
\label{eqn:recovery-determinant-bdd-bis}
\det \widehat{\boldsymbol{F}}_h=1+O(h^{\beta/2}), \qquad \nabla(\det \widehat{\boldsymbol{F}}_h)=O(h^{\beta/2}).
\end{equation}
This ensures that 
\begin{equation}
\label{eqn:recovery-determinant-bdd}
	\text{$1/2<\det \widehat{\boldsymbol{F}}_h<3/2$  in $\Omega$,}
\end{equation}
at least for $h \ll1$. In particular, {\MMM by the inverse function theorem,  $\widehat{\boldsymbol{y}}_h$ is a local diffeomorphism of class $C^2$  with $\nabla \widehat{\boldsymbol{y}}_h=(\nabla \widehat{\boldsymbol{y}}_h)^{-1}\circ \widehat{\boldsymbol{y}}_h^{-1}$ in $\Omega^{\widehat{\boldsymbol{y}}_h}$.} 

We claim that the map $\widehat{\boldsymbol{y}}_h$ is injective. To see this, we argue as in \cite[Theorem 5.5-1, Claim (b)]{ciarlet}. 
Fix $\boldsymbol{x},\widehat{\boldsymbol{x}}\in \Omega$ with $\boldsymbol{x}\neq \widehat{\boldsymbol{x}}$. By \cite[Exercise 1.9]{ciarlet}, there exists a finite number of distinct points $\widetilde{\boldsymbol{x}}_1,\dots,\widetilde{\boldsymbol{x}}_m\in\Omega$ with $\widetilde{\boldsymbol{x}}_1=\boldsymbol{x}$ and $\widetilde{\boldsymbol{x}}_m=\widehat{\boldsymbol{x}}$ such that each segment connecting $\widetilde{\boldsymbol{x}}_i$ to $\widetilde{\boldsymbol{x}}_{i+1}$ is entirely contained in $\Omega$ and $\sum_{i=1}^m |\widetilde{\boldsymbol{x}}_{i+1}-\widetilde{\boldsymbol{x}}_i| \leq C |\boldsymbol{x}-\widehat{\boldsymbol{x}}|$ for some constant $C>0$ depending on $\Omega$ only. Then, applying the mean value theorem in combination with \eqref{eqn:recovery-gradient-uniform}, we estimate
\begin{equation*}
\begin{split}
|\widehat{\boldsymbol{y}}_h(\boldsymbol{x})-\widehat{\boldsymbol{y}}_h(\widehat{\boldsymbol{x}})-(\boldsymbol{\pi}_h(\boldsymbol{x})-\boldsymbol{\pi}_h(\widehat{\boldsymbol{x}}))|&
\leq \sum_{i=1}^m|(\widehat{\boldsymbol{y}}_h(\widetilde{\boldsymbol{x}}_{i+1})-\boldsymbol{\pi}_h(\widetilde{\boldsymbol{x}}_{i+1}))-(\widehat{\boldsymbol{y}}_h(\widetilde{\boldsymbol{x}}_{i})-\boldsymbol{\pi}_h(\widetilde{\boldsymbol{x}}_{i}))|\\
&\leq 	\|\nabla\widehat{\boldsymbol{y}}_h-\nabla\boldsymbol{\pi}_h\|_{C^0(\closure{\Omega};\rtt)}\,\sum_{i=1}^m|\widetilde{\boldsymbol{x}}_{i+1}-\widetilde{\boldsymbol{x}}_i|\\
&\leq Ch^{\beta/2-1} |\boldsymbol{x}-\widehat{\boldsymbol{x}}|\leq Ch^{\beta/2-2} |\boldsymbol{\pi}_h(\boldsymbol{x})-\boldsymbol{\pi}_h(\widehat{\boldsymbol{x}})|.
\end{split}
\end{equation*}
Since $\beta>4$, for $h\ll 1$, we obtain
\begin{equation*}
|\widehat{\boldsymbol{y}}_h(\boldsymbol{x})-\widehat{\boldsymbol{y}}_h(\widehat{\boldsymbol{x}})-(\boldsymbol{\pi}_h(\boldsymbol{x})-\boldsymbol{\pi}_h(\widehat{\boldsymbol{x}}))|<|\boldsymbol{\pi}_h(\boldsymbol{x})-\boldsymbol{\pi}_h(\widehat{\boldsymbol{x}})|.
\end{equation*}
As $\boldsymbol{\pi}_h(\boldsymbol{x})\neq\boldsymbol{\pi}_h(\widehat{\boldsymbol{x}})$, we necessarily have $\widehat{\boldsymbol{y}}_h(\boldsymbol{x})\neq\widehat{\boldsymbol{y}}_h(\widehat{\boldsymbol{x}})$, which proves the claim. 

{\MMM
By the invariance of domain theorem, the map $\widehat{\boldsymbol{y}}_h$ is a homeomorphism. {\MMM Hence, we actually have $\widehat{\boldsymbol{y}}_h^{-1}\in  C^2(\Omega^{\widehat{\boldsymbol{y}}_h};\rt)$. Moreover,}
 setting $\boldsymbol{I}_h\coloneqq \nabla\boldsymbol{\pi}_h$, there holds
\begin{equation}
\label{eqn:recovery-gradient-inverse}
\begin{split}
\boldsymbol{I}_h\,\nabla\widehat{\boldsymbol{y}}_h^{-1}&=\boldsymbol{I}_h\,(\nabla\widehat{\boldsymbol{y}}_h)^{-1}\circ \widehat{\boldsymbol{y}}_h^{-1}=\left(\nabla\widehat{\boldsymbol{y}}_h\circ \widehat{\boldsymbol{y}}_h^{-1}\,\boldsymbol{I}_h^{-1} \right)^{-1}=\left(\widehat{\boldsymbol{F}}_h\circ \widehat{\boldsymbol{y}}_h^{-1}\right)^{-1}=\widehat{\boldsymbol{F}}_h^{-1}\circ \widehat{\boldsymbol{y}}_h^{-1}.
\end{split}
\end{equation} 
From \eqref{eqn:recovery-gradient}, by computing the  inverse of $\widehat{\boldsymbol{F}}_h$ using Neumann series, 
we obtain
\begin{equation}
	\label{eqn:recovery-neumann}
	\widehat{\boldsymbol{F}}_h^{-1}=\boldsymbol{I}+O(h^{\beta/2-1}),	
\end{equation}
which, in view of \eqref{eqn:recovery-gradient-inverse}, yields
\begin{equation}
    \label{eqn:gradient-inverse}
    \|\nabla \widehat{\boldsymbol{y}}_h^{-1}\|_{C^0(\closure{\Omega};\R^3)}\leq Ch^{-1}.
\end{equation}

Now,  define $\widehat{\boldsymbol{m}}_h\colon \Omega^{\widehat{\boldsymbol{y}}_h} \to \rt$ by setting
\begin{equation*}
	\widehat{\boldsymbol{m}}_h\coloneqq \frac{\mathstrut \widehat{\boldsymbol{\zeta}}\circ \widehat{\boldsymbol{y}}_h^{-1}}{\det \nabla_h\widehat{\boldsymbol{y}}_h\circ \widehat{\boldsymbol{y}}_h^{-1}}. 
\end{equation*}
Observe that $\widehat{\boldsymbol{m}}_h\in W^{1,2}(\Omega^{\widehat{\boldsymbol{y}}_h};\rt)$ thanks to the regularity of $\widehat{\boldsymbol{y}}_h^{-1}$ and to \eqref{eqn:recovery-determinant-bdd}. Moreover, there holds $\widehat{\boldsymbol{m}}_h\circ \widehat{\boldsymbol{y}}_h\,\det \nabla_h\widehat{\boldsymbol{y}}_h=\widehat{\boldsymbol{\zeta}}$ in $\Omega$, so that $\widehat{\boldsymbol{q}}_h\coloneqq (\widehat{\boldsymbol{y}}_h,\widehat{\boldsymbol{m}}_h)\in\mathcal{Q}_h$.

The convergences in \eqref{eqn:recovery-vertical}--\eqref{eqn:recovery-moment} follow by direct computation, while \eqref{eqn:recovery-lagrangian-magnetization} is trivial. From \eqref{eqn:recovery-determinant-bdd} and \eqref{eqn:recovery-neumann}, we also have
\begin{equation}
\label{eqn:recovery-uu}
	\text{$\det \widehat{\boldsymbol{F}}_h\to 1$ in $C^0(\closure{\Omega})$, \qquad $\frac{\mathstrut 1}{\det \widehat{\boldsymbol{F}}}_h\to 1$  in $C^0(\closure{\Omega})$,}
\end{equation}
and
\begin{equation}
	\label{eqn:recovery-uu2}
	\text{$\nabla(\det \widehat{\boldsymbol{F}}_h)\to \boldsymbol{0}$ in $C^0(\closure{\Omega};\rt)$, \qquad  $\widehat{\boldsymbol{F}}_h^{-1}\to\boldsymbol{I}$ in $C^0(\closure{\Omega};\rtt)$.}
\end{equation}
Thus \eqref{eqn:recovery-composition} follows. 

We prove \eqref{eqn:recovery-eta}. We have
\begin{equation*}
	\widehat{\boldsymbol{\mu}}_h=\chi_{\boldsymbol{\pi}_h^{-1}(\Omega^{\widehat{\boldsymbol{y}}_h})}\frac{\widehat{\boldsymbol{\zeta}}\circ \widehat{{\boldsymbol{y}}}_h^{-1}\circ \boldsymbol{\pi}_h}{\det \widehat{{\boldsymbol{F}}}_h\circ  \widehat{{\boldsymbol{y}}}_h^{-1}\circ \boldsymbol{\pi}_h}.
\end{equation*}
First, as $\beta>4$, from \eqref{eqn:recovery-key} we establish \eqref{eqn:cm-subcylinder}--\eqref{eqn:cm-supercylinder} by arguing as in Step 1 of the proof of Proposition \ref{prop:cm}. 
Let $\boldsymbol{x}\in\rt \setminus \closure{\Omega}$, so  that $\boldsymbol{x}\in \rt \setminus \Omega^{-\eta}_\ell$ for some $\eta>0$ and $\ell >1$. Then, \eqref{eqn:cm-supercylinder} entails that  $\boldsymbol{x}\in \rt \setminus \boldsymbol{\pi}_h^{-1}(\Omega^{\widehat{\boldsymbol{y}}_h})$ for $h\leq \underline{h}(\eta,\ell)$. Instead, let $\boldsymbol{x}\in\Omega$ and let $\eta>0$ and $0<\vartheta<1$ be such that  $\boldsymbol{x}\in\Omega^\eta_\vartheta$. By \eqref{eqn:cm-subcylinder}, we have $\boldsymbol{x}\in \boldsymbol{\pi}_h^{-1}(\Omega^{\widehat{\boldsymbol{y}}_h})$for $h\leq \overline{h}(\eta,\vartheta)$ and, in turn, $\boldsymbol{x}_h\coloneqq \widehat{\boldsymbol{y}}_h^{-1}(\boldsymbol{\pi}_h(\boldsymbol{x}))\in\Omega$. 
From \eqref{eqn:recovery-key}, we see that
\begin{equation*}
	\|\boldsymbol{\pi}_h^{-1}\circ \widehat{\boldsymbol{y}}_h-\boldsymbol{id}\,\|_{C^0(\closure{\Omega};\rt)}\leq \frac{1}{h}  \| \widehat{\boldsymbol{y}}_h-\boldsymbol{\pi}_h\|_{C^0(\closure{\Omega};\rt)}\leq  Ch^{\beta/2-2}.
\end{equation*}
Let $\varepsilon>0$ be arbitrary. For $h\ll 1$, the right-hand side of the previous equation is smaller than $\varepsilon$, so that
\begin{equation*}
	|\boldsymbol{x}-\widehat{\boldsymbol{y}}_h^{-1}(\boldsymbol{\pi}_h(\boldsymbol{x}))|=|\boldsymbol{\pi}_h^{-1}(\widehat{\boldsymbol{y}}_h(\boldsymbol{x}_h))-\boldsymbol{x}_h|\leq \|\boldsymbol{\pi}_h^{-1}\circ \widehat{\boldsymbol{y}}_h-\boldsymbol{id}\,\|_{C^0(\closure{\Omega};\rt)}<\varepsilon.
\end{equation*}
This shows that $\widehat{\boldsymbol{y}}_h^{-1}(\boldsymbol{\pi}_h(\boldsymbol{x}))\to\boldsymbol{x}$, as $h \to 0^+$. Since $\widehat{\boldsymbol{\zeta}}$ is continuous and \eqref{eqn:recovery-uu} holds, we deduce that $\widehat{\boldsymbol{\mu}}_h\to \widehat{\boldsymbol{\mu}}$ almost everywhere in $\rt$, where $\widehat{\boldsymbol{\mu}}\coloneqq \chi_\Omega \widehat{\boldsymbol{\zeta}}$. As $(\widehat{\boldsymbol{\mu}}_h)$ is uniformly bounded and, for $h \ll 1$, supported in a compact set containing $\Omega$  by \eqref{eqn:cm-supercylinder}, the convergence in \eqref{eqn:recovery-eta} follows by applying the dominated convergence theorem.

To prove \eqref{eqn:recovery-H}, first we compute
\begin{equation*}
	\nabla\widehat{\boldsymbol{m}}_h=\frac{\nabla(\widehat{\boldsymbol{\zeta}}\circ\widehat{\boldsymbol{y}}_h^{-1})}{\det \widehat{{\boldsymbol{F}}}_h\circ\widehat{\boldsymbol{y}}_h^{-1}}-\frac{\widehat{\boldsymbol{\zeta}}\circ\widehat{\boldsymbol{y}}_h^{-1}\,\nabla(\det \widehat{{\boldsymbol{F}}}_h\circ\widehat{\boldsymbol{y}}_h^{-1})}{(\det \widehat{{\boldsymbol{F}}}_h\circ\widehat{\boldsymbol{y}}_h^{-1})^2}.
\end{equation*}
Since $\widehat{\boldsymbol{\zeta}}$ is a function of the plane variable only, there holds $\widehat{\boldsymbol{\zeta}}=\widehat{\boldsymbol{\zeta}}\circ\boldsymbol{\pi}_h\restr{\Omega}$. Thus
\begin{equation*}
	\begin{split}
		\nabla(\widehat{\boldsymbol{\zeta}}\circ\widehat{\boldsymbol{y}}_h^{-1})&=\nabla(\widehat{\boldsymbol{\zeta}}\circ \boldsymbol{\pi}_h\circ\widehat{\boldsymbol{y}}_h^{-1})=\nabla\widehat{\boldsymbol{\zeta}}\circ \boldsymbol{\pi}_h\circ \widehat{\boldsymbol{y}}_h^{-1} \boldsymbol{I}_h\,\nabla\widehat{\boldsymbol{y}}_h^{-1}= \nabla\widehat{\boldsymbol{\zeta}}\circ \widehat{\boldsymbol{y}}_h^{-1}\widehat{\boldsymbol{F}}_h^{-1}\circ \widehat{\boldsymbol{y}}_h^{-1},
	\end{split}
\end{equation*}
where, in the last line, we used \eqref{eqn:recovery-gradient-inverse}. Also, we have
\begin{equation*}
\begin{split}
		\left(\nabla(\det \widehat{\boldsymbol{F}}_h\circ\widehat{\boldsymbol{y}}_h^{-1})\right)^\top&=\nabla(\det \widehat{\boldsymbol{F}}_h)\circ\widehat{\boldsymbol{y}}_h^{-1}\nabla\widehat{\boldsymbol{y}}_h^{-1}=\nabla(\det \widehat{\boldsymbol{F}}_h)\circ\widehat{\boldsymbol{y}}_h^{-1} \boldsymbol{I}_h^{-1}\widehat{\boldsymbol{F}}_h^{-1}\circ \widehat{\boldsymbol{y}}_h^{-1},
\end{split}
\end{equation*}
where we employed again \eqref{eqn:recovery-gradient-inverse}.
Thus
\begin{equation*}
	\nabla\widehat{\boldsymbol{m}}_h=\frac{\nabla\widehat{\boldsymbol{\zeta}}\circ\widehat{\boldsymbol{y}}_h^{-1}\widehat{\boldsymbol{F}}_h^{-1}\circ \widehat{\boldsymbol{y}}_h^{-1}}{\det \widehat{{\boldsymbol{F}}}_h\circ\widehat{\boldsymbol{y}}_h^{-1}}-\frac{\widehat{\boldsymbol{\zeta}}\circ\widehat{\boldsymbol{y}}_h^{-1}\,\nabla(\det \widehat{{\boldsymbol{F}}}_h)\circ\widehat{\boldsymbol{y}}_h^{-1}\boldsymbol{I}_h^{-1}\widehat{\boldsymbol{F}}_h^{-1}\circ \widehat{\boldsymbol{y}}_h^{-1}}{(\det \widehat{{\boldsymbol{F}}}_h\circ\widehat{\boldsymbol{y}}_h^{-1})^2},
\end{equation*}
so that
\begin{equation*}
	\begin{split}
		\widehat{\boldsymbol{N}}_h&=\chi_{\boldsymbol{\pi}_h^{-1}(\Omega^{\widehat{\boldsymbol{y}}_h})} \frac{\nabla\widehat{\boldsymbol{\zeta}}\circ\widehat{\boldsymbol{y}}_h^{-1}\circ\boldsymbol{\pi}_h\,\widehat{\boldsymbol{F}}_h^{-1}\circ \widehat{\boldsymbol{y}}_h^{-1}\circ\boldsymbol{\pi}_h}{\det \widehat{{\boldsymbol{F}}}_h\circ\widehat{\boldsymbol{y}}_h^{-1}\circ\boldsymbol{\pi}_h}\\
		&-\chi_{\boldsymbol{\pi}_h^{-1}(\Omega^{\widehat{\boldsymbol{y}}_h})} \frac{\widehat{\boldsymbol{\zeta}}\circ\widehat{\boldsymbol{y}}_h^{-1}\circ\boldsymbol{\pi}_h\,\nabla(\det \widehat{{\boldsymbol{F}}}_h)\circ\widehat{\boldsymbol{y}}_h^{-1}\circ\boldsymbol{\pi}_h\,\boldsymbol{I}_h^{-1}\,\widehat{\boldsymbol{F}}_h^{-1}\circ \widehat{\boldsymbol{y}}_h^{-1}\circ\boldsymbol{\pi}_h}{(\det \widehat{{\boldsymbol{F}}}_h\circ\widehat{\boldsymbol{y}}_h^{-1}\circ\boldsymbol{\pi}_h)^2}.
	\end{split}
\end{equation*}
With arguments similar to the ones previously employed, exploiting the continuity of $\nabla'\widehat{\boldsymbol{\zeta}}$ together with \eqref{eqn:cm-subcylinder}--\eqref{eqn:cm-supercylinder}, \eqref{eqn:recovery-determinant-bdd-bis}--\eqref{eqn:recovery-determinant-bdd} and \eqref{eqn:recovery-uu}--\eqref{eqn:recovery-uu2}, we show that $\widehat{\boldsymbol{N}}_h\to \widehat{\boldsymbol{N}}$ almost everywhere, where $\widehat{\boldsymbol{N}}\coloneqq \chi_\Omega(\nabla'\widehat{\boldsymbol{\zeta}}\vert\boldsymbol{0})$. As both maps are bounded and supported in a compact set containing $\Omega$, claim \eqref{eqn:recovery-H} follows by applying the dominated convergence theorem. }

\textbf{Step 2 (Attainment of the lower bound).}
We now compute the limit of the magnetoelastic energy along the recovery sequence. Recalling \eqref{eqn:cm-H-norm} and Proposition \ref{prop:magnetostatic}, from \eqref{eqn:recovery-eta}--\eqref{eqn:recovery-H} we obtain
\begin{equation}
\label{eqn:recovery-exc-mag}
	\lim_{h\to 0^+} E_h^{\rm exc}(\widehat{{\boldsymbol{q}}}_h)=E_0^{\rm exc}(\widehat{\boldsymbol{q}}_0), \qquad \lim_{h\to 0^+} E_h^{\rm mag}(\widehat{{\boldsymbol{q}}}_h)=E_0^{\rm mag}(\widehat{\boldsymbol{q}}_0).
\end{equation}
We show the convergence of the elastic energy. By the polar decomposition theorem, there exists $(\widehat{\boldsymbol{P}}_h)\subset C^1(\closure{S};SO(3))$ such that $\widehat{\boldsymbol{F}}_h=\widehat{\boldsymbol{P}}_h \left(\widehat{\boldsymbol{F}}_h^\top\widehat{\boldsymbol{F}}_h\right)^{1/2}$. Thanks to \eqref{eqn:recovery-gradient-uniform}--\eqref{eqn:recovery-gradient-2}, passing to the limit, as $h\to 0^+$, in the previous identity, we see that $\widehat{\boldsymbol{P}}_h\to\boldsymbol{I}$ uniformly in $\Omega$. For convenience, set $\widehat{\boldsymbol{L}}_h\coloneqq \boldsymbol{L}_h(\widehat{{\boldsymbol{P}}}_h^\top \widehat{\boldsymbol{z}}_h)$. By \eqref{eqn:ch-limit} and \eqref{eqn:prestrain-inverse}, there holds
\begin{equation}
	\label{eqn:recovery-L}
	\text{$\widehat{{\boldsymbol{K}}}_h\coloneqq h^{-\beta/2}(\boldsymbol{I}-\widehat{\boldsymbol{L}}_h^{-1})\to c_0\,\widehat{\boldsymbol{\zeta}}\otimes \widehat{\boldsymbol{\zeta}}$  in $L^1(\Omega;\rtt)$.}
\end{equation}
Recalling \eqref{eqn:frame-indifference-Phi} and \eqref{eqn:frame-indifference-prestrain}, we write
\begin{equation}
\label{eqn:fir}
	\begin{split}
			W_h(\widehat{\boldsymbol{F}}_h,\widehat{\boldsymbol{\lambda}}_h)&=\Phi \left(\boldsymbol{L}_h(\widehat{\boldsymbol{z}}_h)^{-1}\widehat{{\boldsymbol{F}}}_h  \right)=\Phi \left(\boldsymbol{L}_h(\widehat{\boldsymbol{z}}_h)^{-1}\widehat{{\boldsymbol{P}}}_h\left(\widehat{\boldsymbol{F}}_h^\top\widehat{\boldsymbol{F}}_h\right)^{1/2}  \right)\\
			&=\Phi \left(\widehat{{\boldsymbol{P}}}_h^\top\boldsymbol{L}_h(\widehat{\boldsymbol{z}}_h)^{-1}\widehat{{\boldsymbol{P}}}_h\left(\widehat{\boldsymbol{F}}_h^\top\widehat{\boldsymbol{F}}_h\right)^{1/2}  \right)=\Phi \left(\widehat{{\boldsymbol{L}}}_h^{-1}\left(\widehat{\boldsymbol{F}}_h^\top\widehat{\boldsymbol{F}}_h\right)^{1/2}  \right).
	\end{split}
\end{equation}
Thus, given \eqref{eqn:prestrain-inverse}, we have
\begin{equation}
\label{eqn:fir2}
	\widehat{{\boldsymbol{L}}}_h^{-1}\left(\widehat{\boldsymbol{F}}_h^\top\widehat{\boldsymbol{F}}_h\right)^{1/2}=\boldsymbol{I}+h^{\beta/2}\left(\widehat{\boldsymbol{A}}-\widehat{\boldsymbol{K}}_h+x_3\widehat{\boldsymbol{B}}\right)+O(h^{\beta/2+1}).
\end{equation}
Using \eqref{eqn:Taylor-Phi} and \eqref{eqn:fir}--\eqref{eqn:fir2}, we compute
\begin{equation*}
\begin{split}
E_h^{\mathrm{el}}(\widehat{\boldsymbol{q}}_h)&=\frac{1}{h^\beta}\int_\Omega \Phi \left (\widehat{{\boldsymbol{L}}}_h^{-1}\left(\widehat{\boldsymbol{F}}_h^\top\widehat{\boldsymbol{F}}_h\right)^{1/2} \right)\,\d\boldsymbol{x}\\
&=\frac{1}{2}\int_\Omega Q \left (\widehat{\boldsymbol{A}}-\widehat{\boldsymbol{K}}_h+x_3 \widehat{\boldsymbol{B}}+O(h)\right)\,\d\boldsymbol{x}\\
&+ \frac{1}{h^\beta} \int_\Omega \omega\left (h^{\beta/2}\left (\widehat{\boldsymbol{A}}-\widehat{\boldsymbol{K}}_h+x_3 \widehat{\boldsymbol{B}}\right)+O(h^{\beta/2+1})\right)\,\d\boldsymbol{x}.
\end{split}
\end{equation*}
Then, thanks to \eqref{eqn:recovery-L}, applying the dominated convergence theorem, we obtain
\begin{equation}
\label{eqn:recovery-el}
\lim_{h\to 0^+} E_h^{\mathrm{el}}(\widehat{\boldsymbol{q}}_h)=\frac{1}{2}\int_\Omega Q (\widehat{\boldsymbol{A}}-c_0\widehat{\boldsymbol{
\zeta}}\otimes\widehat{{\boldsymbol{\zeta}}}+x_3\widehat{\boldsymbol{B}})\,\d\boldsymbol{x}
\end{equation}
Therefore, combining \eqref{eqn:recovery-exc-mag} and \eqref{eqn:recovery-el}, we have
\begin{equation*}
	\lim_{h\to 0^+} E_h(\widehat{\boldsymbol{q}}_h)=\frac{1}{2}\int_S Q (\widehat{\boldsymbol{A}}-c_0\widehat{\boldsymbol{\zeta}}\otimes\widehat{\boldsymbol{\zeta}})\,\d\boldsymbol{x}+\frac{1}{24}\int_S Q (\widehat{\boldsymbol{B}})\,\d\boldsymbol{x}+\int_S |\nabla'\widehat{\boldsymbol{\zeta}}|^2\,\d\boldsymbol{x}'+\int_S |\widehat{\zeta}^{\,3}|^2\,\d\boldsymbol{x}'.
\end{equation*}

\textbf{Step 3 (Diagonal argument).} To conclude the proof, we employ a standard diagonal argument. 
By the definition of $Q_{\mathrm{red}}$ in \eqref{eqn:Q-red}, there exist $\widehat{\boldsymbol{a}},\widehat{\boldsymbol{b}}\colon S \to \R^3$ such that
\begin{equation}
\label{eqn:recovery-choice}
    Q_{\mathrm{red}}(\sym\,\nabla'\widehat{\boldsymbol{u}}-\widehat{\boldsymbol{\zeta}}\,'\otimes\widehat{\boldsymbol{\zeta}}\,')=Q(\widehat{\boldsymbol{A}}-\widehat{\boldsymbol{\zeta}}\otimes\widehat{\boldsymbol{\zeta}}),\qquad Q_{\mathrm{red}}((\nabla')^2\widehat{v})=Q(\widehat{\boldsymbol{B}}),
\end{equation}
where we set
\begin{equation*}
	\widehat{\boldsymbol{A}} \coloneqq  \left( \renewcommand\arraystretch{1.2} \begin{array}{@{}c|c@{}}    \sym \nabla'\widehat{\boldsymbol{u}}  & \boldsymbol{0}'\\ \hline  (\boldsymbol{0}')^\top & 0 \end{array} \right) + \widehat{\boldsymbol{a}} \otimes \boldsymbol{e}_3 + \boldsymbol{e}_3 \otimes \widehat{\boldsymbol{a}}, \qquad \widehat{\boldsymbol{B}} \coloneqq - \left( \renewcommand\arraystretch{1.2} \begin{array}{@{}c|c@{}}   (\nabla')^2 \widehat{v}  & \boldsymbol{0}'\\ \hline  (\boldsymbol{0}')^\top & 0 \end{array} \right) + \widehat{\boldsymbol{b}} \otimes \boldsymbol{e}_3 + \boldsymbol{e}_3 \otimes \widehat{\boldsymbol{b}}.
\end{equation*}
In particular, thanks to \eqref{eqn:Q-coe}, we have $\widehat{\boldsymbol{a}},\widehat{\boldsymbol{b}}\in L^2(S;\R^3)$. 
Let $(\widehat{\boldsymbol{u}}_j)\subset C^2_{\rm c}(S;\R^2)$, $(\widehat{v}_j)\subset C^3_{\rm c}(\closure{S})$ and $(\widehat{\boldsymbol{a}}_j),(\widehat{\boldsymbol{b}}_j)\subset C^2_{\rm c}(\closure{S};\R^3)$ be such that the following convergences hold, as $j \to \infty$:
\begin{align}
    \label{eqn:recovery-uv-convergence}
    &\text{$\widehat{\boldsymbol{u}}_j \to \widehat{\boldsymbol{u}}$ in $W^{1,2}(S;\R^2)$,}\qquad \text{$\widehat{v}_j \to \widehat{v}$ in $W^{2,2}(S)$,}\\
    \label{eqn:recovery-ab-convergence}
    &\text{$\widehat{\boldsymbol{a}}_j \to \widehat{\boldsymbol{a}}$ in $L^2(S;\R^3)$,}\qquad \hspace{4mm} \text{$\widehat{\boldsymbol{b}}_j \to \widehat{\boldsymbol{b}}$ in $L^2(S;\R^3)$.} 
\end{align}
If we set
\begin{equation*}
	\widehat{\boldsymbol{A}}_j \coloneqq  \left( \renewcommand\arraystretch{1.2} \begin{array}{@{}c|c@{}}    \sym \nabla'\widehat{\boldsymbol{u}}_j  & \boldsymbol{0}'\\ \hline  (\boldsymbol{0}')^\top & 0 \end{array} \right) + \widehat{\boldsymbol{a}}_j \otimes \boldsymbol{e}_3 + \boldsymbol{e}_3 \otimes \widehat{\boldsymbol{a}}_j, \qquad \widehat{\boldsymbol{B}}_j \coloneqq - \left( \renewcommand\arraystretch{1.2} \begin{array}{@{}c|c@{}}   (\nabla')^2 \widehat{v}_j  & \boldsymbol{0}'\\ \hline  (\boldsymbol{0}')^\top & 0 \end{array} \right) + \widehat{\boldsymbol{b}}_j \otimes \boldsymbol{e}_3 + \boldsymbol{e}_3 \otimes \widehat{\boldsymbol{b}}_j,
\end{equation*}
then, by \eqref{eqn:recovery-uv-convergence}--\eqref{eqn:recovery-ab-convergence}, we immediately have
\begin{equation}
\label{eqn:Lambda-Theta}
    \text{$\widehat{\boldsymbol{A}}_j \to \widehat{\boldsymbol{A}}$ in $L^2(S;\rtt)$, \quad $\widehat{\boldsymbol{B}}_j \to \widehat{\boldsymbol{B}}$ in $L^2(S;\rtt)$,}
\end{equation}
as $j \to \infty$. Also, thanks to \cite[Theorem 2.1]{hajlasz}, there exists  a sequence $(\widehat{\boldsymbol{\zeta}}_j)\subset C^1(\closure{S};\S^2)$ such that
\begin{equation}
\label{eqn:approx-sphere-valued}
    \text{$\widehat{\boldsymbol{\zeta}}_j \to \widehat{\boldsymbol{\zeta}}$ in $W^{1,2}(S;\R^3)$,}
\end{equation}
as $j \to \infty$, and, in turn, there holds
\begin{equation}
\label{eqn:bb}
    \text{$\widehat{\boldsymbol{\zeta}}_j\otimes \widehat{\boldsymbol{\zeta}}_j \to \widehat{\boldsymbol{\zeta}} \otimes \widehat{\boldsymbol{\zeta}}$ in $L^1(S;\rtt)$,}
\end{equation}
as $j \to \infty$.

By Step 1, for every fixed $j\in \N$,  there exists a sequence $(\widehat{\boldsymbol{q}}_h^{(j)})$ with $\widehat{\boldsymbol{q}}_h^{(j)}=(\widehat{\boldsymbol{y}}_h^{(j)},\widehat{\boldsymbol{m}}_h^{(j)})\in\mathcal{Q}_h$ such that the following convergences hold, as $h\to 0^+$:
\begin{align}
\label{eqn:recovery-horizontal-j}
\text{$\widehat{\boldsymbol{u}}_h^{(j)}\coloneqq \mathcal{U}_h(\widehat{\boldsymbol{q}}_h^{(j)})$}&\text{$\to\widehat{\boldsymbol{u}}_j$ in $W^{1,2}(S;\rt)$;}\\
\label{eqn:recovery-vertical-j}
\text{$\widehat{v}_h^{(j)}\coloneqq \mathcal{V}_h(\widehat{\boldsymbol{q}}_h^{(j)})$}&\text{$\to \widehat{v}_j$ in $W^{2,2}(S)$;}\\
\label{eqn:recovery-moment-j}
\text{$\widehat{\boldsymbol{w}}_h^{(j)}\coloneqq \mathcal{W}_h(\widehat{\boldsymbol{q}}_h^{(j)})$}&\text{$\to -\frac{1}{12}\left ( \begin{matrix}  \nabla'\widehat{v}_j \\ 0\end{matrix}   \right )$ in $W^{1,2}(S;\rt)$,}\\
\label{eqn:recovery-eta-j}
\text{$\widehat{\boldsymbol{\mu}}_h^{(j)}\coloneqq \mathcal{M}_h(\widehat{\boldsymbol{q}}_h^{(j)})$}&\text{$\to \chi_\Omega \widehat{\boldsymbol{\zeta}}_j$ in $L^2(\rt;\rt)$,}\\
\label{eqn:recovery-H-j}
\text{$\widehat{\boldsymbol{N}}_h^{(j)}\coloneqq \mathcal{N}_h(\widehat{\boldsymbol{q}}_h^{(j)})$}&\text{$\to \chi_\Omega (\nabla'\widehat{\boldsymbol{\zeta}}_j|\boldsymbol{0})$ in $L^2(\rt;\rtt)$,}\\
\label{eqn:recovery-composition-j}
\text{$\widehat{\boldsymbol{m}}_h^{(j)} \circ \widehat{\boldsymbol{y}}_h^{(j)}$}&\text{$\to\widehat{\boldsymbol{\zeta}}_j$ in $L^1(\Omega;\rt)$,}\\
\label{eqn:recovery-lagrangian-magnetization-j}
\text{$\widehat{\boldsymbol{z}}_h^{(j)}\coloneqq \mathcal{Z}_h(\widehat{\boldsymbol{q}}_h^{(j)})$}&\text{$\to \widehat{\boldsymbol{\zeta}}_j$ in $L^1(\Omega;\rt)$.}
\end{align}
Moreover, we also have
\begin{equation}
\label{eqn:recovery-ernregy-j}
\begin{split}
\lim_{h\to 0^+} E_h(\widehat{\boldsymbol{q}}_h^{(j)})=\frac{1}{2}&\int_S Q (\widehat{\boldsymbol{A}}_j-c_0\widehat{\boldsymbol{\zeta}}_j\otimes\widehat{\boldsymbol{\zeta}}_j)\,\d\boldsymbol{x}+\frac{1}{24}\int_S Q (\widehat{\boldsymbol{B}}_j)\,\d\boldsymbol{x}\\
+&\int_S |\nabla'\widehat{\boldsymbol{\zeta}}_j|^2\,\d\boldsymbol{x}'+\frac{1}{2}\int_S |\widehat{\zeta}^{\,3}_j|^2\,\d\boldsymbol{x}'.
\end{split}
\end{equation}
In view of \eqref{eqn:recovery-uv-convergence}--\eqref{eqn:recovery-ernregy-j}, we select a subsequence $(h_j)$ such that, setting $\widehat{\boldsymbol{q}}_{h_j}\coloneqq(\widehat{\boldsymbol{y}}_{h_j},\widehat{\boldsymbol{m}}_{h_j})\in\mathcal{Q}_{h_j}$ with $\widehat{\boldsymbol{y}}_{h_j}\coloneqq \widehat{\boldsymbol{y}}_{h_j}^{(j)}$ and $\widehat{\boldsymbol{m}}_{h_j}\coloneqq \widehat{\boldsymbol{m}}_{h_j}^{(j)}$, the convergences in \eqref{eqn:recovery-horizontal}--\eqref{eqn:recovery-lagrangian-magnetization} hold for $h=h_j$, as $j \to \infty$, as well as
\begin{equation}
\label{eqn:diag3}
\begin{split}
	\lim_{j \to \infty} E_{h_j}^{\mathrm{el}}(\widehat{\boldsymbol{q}}_{h_j})=\frac{1}{2}&\int_S Q(\widehat{\boldsymbol{A}}-\widehat{\boldsymbol{\zeta}}\otimes \widehat{\boldsymbol{\zeta}})\,\d\boldsymbol{x}'+\frac{1}{24}\int_S Q(\widehat{\boldsymbol{B}})\,\d\boldsymbol{x}'+\int_S |\nabla'\,\widehat{\boldsymbol{\zeta}}|^2\,\d\boldsymbol{x}'+\frac{1}{2}\int_S |\widehat{\zeta}^{\,3}|^2\,\d\boldsymbol{x}'.
\end{split}
\end{equation}
As the right-hand side of \eqref{eqn:diag3} equals $E_0^{\mathrm{el}}(\widehat{\boldsymbol{q}}_0)$ by \eqref{eqn:recovery-choice}, this establishes \eqref{eqn:recovery-limit} for the subsequence indexed by $(h_j)$.
\end{proof}

{\MMM At this point, our first main result has been basically proved.}

\begin{proof}[Proof of Theorem \ref{thm:gamma-conv}]
{\MMM Since claim (ii) has already been proved in Proposition \ref{prop:recovery}, we only have to show claim (i). Suppose \eqref{eqn:be}.
By  applying Proposition \ref{prop:compactness}, we find $\boldsymbol{q}_0=(\boldsymbol{u},v,\boldsymbol{\zeta})\in\mathcal{Q}_0$ such that, up to subsequences, the convergences in   \eqref{eqn:compactness-clamped-horizontal}--\eqref{eqn:compactness-clamped-G} as well as \eqref{eqn:compactness-clamped-G-structure} hold true. In particular, this shows \eqref{eqn:gamma-horizontal}--\eqref{eqn:gamma-lagrangian}. Then, given \eqref{eqn:compactness-clamped-eta}--\eqref{eqn:compactness-clamped-H}, \eqref{eqn:compactness-clamped-lagrangian} and \eqref{eqn:compactness-clamped-R}--\eqref{eqn:compactness-clamped-G-structure}, by Proposition \ref{prop:lb}, we establish \eqref{eqn:gamma-liminf}.}
\end{proof}

\subsection{Convergence of almost minimizers}
\label{subsec:cam}
Henceforth, we consider applied loads determined by body forces and 
by  external magnetic fields. For simplicity, applied surface forces are not considered but these can be easily included in the analysis. According to the assumption of {dead loads}, the work of mechanical forces is described by a Lagrangian term. Conversely, the energy contribution corresponding to the external magnetic field, usually called {Zeeman energy}, is of Eulerian type. 

Given $h>0$, let $\boldsymbol{f}_h\in L^2(S;\R^2)$, $\boldsymbol{g}_h \in L^2(S)$ and $\boldsymbol{h}_h\in L^2(\R^3;\R^3)$ represent a horizontal force, a vertical force and an external magnetic field, respectively. The work of applied loads is determined by the functional $L_h \colon \mathcal{Q}_h\to\R$ defined by
\begin{equation*}
	L_h(\boldsymbol{q})\coloneqq \frac{1}{h^\beta} \int_{\Omega} \boldsymbol{f}_h \cdot (\boldsymbol{y}'-\boldsymbol{x}')\,\d \boldsymbol{x}+\frac{1}{h^\beta}\int_\Omega g_h\,y^3\,\d\boldsymbol{x}+\frac{1}{h}\int_{\Omega^{\boldsymbol{y}}} \boldsymbol{h}_h \cdot \boldsymbol{m}\,\d\boldsymbol{\xi},
\end{equation*}
where $\boldsymbol{q}=(\boldsymbol{y},\boldsymbol{m})$. Thus, the total energy  $F_h\colon \mathcal{Q}_h\to \R$ reads
\begin{equation}
    \label{eqn:energy-F_h}
	F_h(\boldsymbol{q})\coloneqq E_h(\boldsymbol{q})-L_h(\boldsymbol{q}),
\end{equation}
where we recall \eqref{eqn:energy-E_h}. 
Regarding the asymptotic behavior of the applied loads, we assume 
that there exist $\boldsymbol{f}_0\in L^2(S;\R^2)$, $g_0\in L^2(S)$ and $\boldsymbol{h}_0\in L^2(\R^2;\R^3)$ such that
the following convergences hold, as $h \to 0^+$:
\begin{align}
    \label{eqn:load-f}
    \text{${h^{-\beta/2}}\boldsymbol{f}_h$}&\text{$\wk \boldsymbol{f}_0$ in $L^2(S;\R^2)$,}\\
    \label{eqn:load-g}
    \text{${h^{-\beta/2-1}}g_h$}&\text{$\wk g_0$ in $L^2(S)$,}\\
    \label{eqn:load-h}
    \text{$\boldsymbol{h}_h\circ \boldsymbol{\pi}_h$}&\text{$\wk \chi_I \boldsymbol{h}_0$ in $L^2(\R^3;\R^3)$.}
\end{align}
We stress that the limiting magnetic field $\boldsymbol{h}$ is assumed not to depend on the variable $x_3$.

The limiting total energy $F_0\colon \mathcal{Q}_0\to\R$ is defined as 
\begin{equation}
    \label{eqn:energy-F}
	F_0(\boldsymbol{q}_0)\coloneqq E_0(\boldsymbol{q}_0)-L_0(\boldsymbol{q}_0).
\end{equation}
Here, the functional $L_0 \colon \mathcal{Q}_0\to \R$ is given by
\begin{equation*}
	L_0(\boldsymbol{q}_0)\coloneqq \int_S \boldsymbol{f}_0\cdot \boldsymbol{u}\,\d\boldsymbol{x}'+\int_S g_0\,v\,\d\boldsymbol{x}'+\int_S \boldsymbol{h}_0\cdot \boldsymbol{\zeta}\,\d \boldsymbol{x}',
\end{equation*}
where $\boldsymbol{q}_0=(\boldsymbol{u},v,\boldsymbol{\zeta})$, and we recall \eqref{eqn:energy-E_0}.
Observe that the limiting total energy is purely Lagrangian.

Our second main result claims that sequences of almost minimizers of  $(F_h)$ in \eqref{eqn:energy-F_h} converge, as $h \to 0^+$, to minimizers of the energy $F_0$ in \eqref{eqn:energy-F}. 

\begin{theorem}[Convergence of almost minimizers]
	\label{thm:convergence-almost-minimizers}
	Assume $p>3$ and $\beta>6\vee p$. Suppose that the elastic energy density $W_h$ has the form in \eqref{eqn:density-W_h}, where the function $\Phi$ satisfies \eqref{eqn:normalization-Phi}--\eqref{eqn:regularity-Phi}, and that the applied loads satisfy \eqref{eqn:load-f}-\eqref{eqn:load-h}. Let  $(\boldsymbol{q}_h)$ with $\boldsymbol{q}_h=(\boldsymbol{y}_h,\boldsymbol{m}_h)\in\mathcal{Q}_h$ for every $h>0$ be such that 
	\begin{equation}
	    \label{eqn:almost-minimizer}
		\lim_{h \to 0^+} \left \{F_h(\boldsymbol{q}_h)-\inf_{\mathcal{Q}_h}F_h  \right\}=0.
	\end{equation}
	Then, there exist $\boldsymbol{q}_0=(\boldsymbol{u},v,\boldsymbol{\zeta})\in \mathcal{Q}_0$ such that, up to subsequences, the following convergences hold, as $h \to 0^+$:
	\begin{align}
	    \label{eqn:cam-horizontal}
		\text{$\boldsymbol{u}_h\coloneqq \mathcal{U}_h(\boldsymbol{q}_h)$}&\text{$\wk \boldsymbol{u}$ in $W^{1,2}(S;\R^3)$,}\\
		\label{eqn:cam-vertical}
		\text{$v_h\coloneqq \mathcal{V}_h(\boldsymbol{q}_h)$}&\text{$\to v$ in $W^{1,2}(S)$,}\\
		\label{eqn:cam-lagrangian}
		\text{$\boldsymbol{z}_h\coloneqq\mathcal{Z}_h(\boldsymbol{q}_h)$}&\text{$\to \boldsymbol{\zeta}$ in $L^1(\Omega;\R^3)$.}
	\end{align}
	Moreover, $\boldsymbol{q}_0\in \mathcal{Q}_0$ is a minimizer of $F_0$ in $\mathcal{Q}_0$. 
\end{theorem}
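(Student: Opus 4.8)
The plan follows the standard pattern for convergence of almost minimizers under $\Gamma$-convergence with applied loads; the only genuinely delicate point, as anticipated in the introduction, is the equi-coercivity of the total energies $F_h$, which is obstructed by the $h^{-\beta}$ rescaling of the elastic term. The heart of the proof is therefore the a priori bound $\sup_{h>0}E_h(\boldsymbol q_h)\le C$. To obtain it, fix a constant $\boldsymbol\zeta_0\in W^{1,2}(S;\S^2)$ and, via Remark \ref{rem:recovery-clamped}, a recovery sequence $(\boldsymbol q_h^0)\subset\mathcal Q_h$ for $(\boldsymbol 0,0,\boldsymbol\zeta_0)$; using the convergences of Proposition \ref{prop:recovery} together with \eqref{eqn:load-f}--\eqref{eqn:load-h} one checks $F_h(\boldsymbol q_h^0)\to F_0(\boldsymbol 0,0,\boldsymbol\zeta_0)$, which with \eqref{eqn:almost-minimizer} gives $\limsup_h F_h(\boldsymbol q_h)<\infty$ and hence $E_h(\boldsymbol q_h)\le C+L_h(\boldsymbol q_h)$. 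It then remains to absorb the work of loads. Writing $\mathcal E_h\coloneqq E_h(\boldsymbol q_h)$ and $\boldsymbol F_h\coloneqq\nabla_h\boldsymbol y_h$: the $p$-growth \eqref{eqn:growth-Phi1} gives $\|\boldsymbol F_h\|_{L^p(\Omega)}^p\le C(1+h^\beta\mathcal E_h)$, and arguing as for \eqref{eqn:dist}--\eqref{eqn:dist-bis} yields $R_h(\boldsymbol y_h)\le Ch^\beta(1+\mathcal E_h)$. Feeding this into the rigidity estimate (Lemma \ref{lem:ar}) and into the argument behind Proposition \ref{prop:cd} — the clamped condition \eqref{eqn:dirichlet} forcing the associated rotation field to converge to $\boldsymbol I$, as in \cite{friesecke.james.mueller2}, and Korn and Poincar\'e being applicable since $\boldsymbol u_h,v_h$ have null trace on $\partial S$ — one obtains $\|v_h\|_{W^{1,2}(S)}\le C\sqrt{1+\mathcal E_h}$ and $\|\boldsymbol u_h\|_{W^{1,2}(S;\R^2)}\le C\sqrt{1+\mathcal E_h}+Ch^{\beta/2-2}(1+\mathcal E_h)$, the last term coming from the identity $\sym(\boldsymbol R-\boldsymbol I)=-\tfrac12(\boldsymbol R-\boldsymbol I)^\top(\boldsymbol R-\boldsymbol I)$ on $SO(3)$. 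For the Zeeman term, the saturation constraint $|\boldsymbol m_h|=1$, the change-of-variable formula and \eqref{eqn:load-h} give $\big|\tfrac1h\int_{\Omega^{\boldsymbol y_h}}\boldsymbol h_h\cdot\boldsymbol m_h\,\d\boldsymbol\xi\big|\le Ch^{-1/2}|\Omega^{\boldsymbol y_h}|^{1/2}$, whereas the area formula and the $L^p$-bound on $\boldsymbol F_h$ give $|\Omega^{\boldsymbol y_h}|\le h\int_\Omega\det\boldsymbol F_h\,\d\boldsymbol x\le Ch\,(1+(h^\beta\mathcal E_h)^{3/p})$; since $p>3$ this forces $\big|\tfrac1h\int_{\Omega^{\boldsymbol y_h}}\boldsymbol h_h\cdot\boldsymbol m_h\big|\le C+C\mathcal E_h^{3/(2p)}\le C+C\sqrt{1+\mathcal E_h}$. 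Collecting everything, $\mathcal E_h\le C+C\sqrt{1+\mathcal E_h}+Ch^{\beta/2-2}(1+\mathcal E_h)$, and since $\beta>6>4$ the coefficient $h^{\beta/2-2}$ vanishes, so for $h$ small the last term is absorbed into the left-hand side, leaving $\mathcal E_h\le C+C\sqrt{1+\mathcal E_h}$ and hence $\sup_h E_h(\boldsymbol q_h)\le C$.

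With the uniform energy bound secured we have $R_h(\boldsymbol y_h)\le Ch^\beta$, so Lemma \ref{lem:ar}, Proposition \ref{prop:cd}, Lemma \ref{lem:limiting-strain} and Proposition \ref{prop:cm} now apply to $\boldsymbol q_h$ exactly as to the sequences in the proof of Theorem \ref{thm:gamma-conv} — and, again, without composing with rigid motions, the clamped condition supplying the normalization. This produces $\boldsymbol u\in W^{1,2}_0(S;\R^2)$, $v\in W^{2,2}_0(S)$ — the homogeneous boundary conditions persisting in the limit because $\boldsymbol u_h,v_h$ have null trace on $\partial S$ — and $\boldsymbol\zeta\in W^{1,2}(S;\S^2)$, together with the convergences \eqref{eqn:cam-horizontal}--\eqref{eqn:cam-lagrangian}, a rotation field $\widehat{\boldsymbol R}_h\to\boldsymbol I$, a limiting strain $\widehat{\boldsymbol G}\in L^2(\Omega;\rtt)$ obeying \eqref{eqn:lb-G}--\eqref{eqn:lb-G-structure} (here $\beta>4$ enters through Lemma \ref{lem:limiting-strain}), and the magnetization convergences \eqref{eqn:lb-eta}--\eqref{eqn:lb-lagrangian}; in particular $(\boldsymbol u,v,\boldsymbol\zeta)\in\mathcal Q_0$.

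All hypotheses of Proposition \ref{prop:lb} now hold with $\widehat{\boldsymbol q}_h=\boldsymbol q_h$, whence $E_0(\boldsymbol u,v,\boldsymbol\zeta)\le\liminf_h E_h(\boldsymbol q_h)$. Moreover $L_h(\boldsymbol q_h)\to L_0(\boldsymbol u,v,\boldsymbol\zeta)$: the two mechanical terms pair the weakly convergent rescaled loads \eqref{eqn:load-f}--\eqref{eqn:load-g} with $\boldsymbol u_h\to\boldsymbol u$ and $v_h\to v$ \emph{strongly} in $L^2(S)$ (the first by Rellich, the second by \eqref{eqn:cam-vertical}), and the Zeeman term equals $\int_{\R^3}(\boldsymbol h_h\circ\boldsymbol\pi_h)\cdot\boldsymbol\eta_h\,\d\boldsymbol x$ with $\boldsymbol\eta_h\coloneqq(\chi_{\Omega^{\boldsymbol y_h}}\boldsymbol m_h)\circ\boldsymbol\pi_h$, where $\boldsymbol h_h\circ\boldsymbol\pi_h\wk\chi_I\boldsymbol h$ and, by \eqref{eqn:cm-eta}, $\boldsymbol\eta_h\to\chi_\Omega\boldsymbol\zeta$ strongly in $L^2(\R^3)$, so it tends to $\int_S\boldsymbol h\cdot\boldsymbol\zeta\,\d\boldsymbol x'$ ($\boldsymbol h$ being independent of $x_3$). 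Hence $F_0(\boldsymbol u,v,\boldsymbol\zeta)\le\liminf_h F_h(\boldsymbol q_h)$. Conversely, for any $(\boldsymbol u^\ast,v^\ast,\boldsymbol\zeta^\ast)\in\mathcal Q_0$, Remark \ref{rem:recovery-clamped} supplies $(\boldsymbol q_h^\ast)\subset\mathcal Q_h$ with $E_h(\boldsymbol q_h^\ast)\to E_0(\boldsymbol u^\ast,v^\ast,\boldsymbol\zeta^\ast)$ and, by the same pairing argument, $L_h(\boldsymbol q_h^\ast)\to L_0(\boldsymbol u^\ast,v^\ast,\boldsymbol\zeta^\ast)$, so $F_h(\boldsymbol q_h^\ast)\to F_0(\boldsymbol u^\ast,v^\ast,\boldsymbol\zeta^\ast)$. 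Then \eqref{eqn:almost-minimizer} and $\boldsymbol q_h^\ast\in\mathcal Q_h$ give $F_0(\boldsymbol u,v,\boldsymbol\zeta)\le\liminf_h F_h(\boldsymbol q_h)=\liminf_h\big(\inf_{\mathcal Q_h}F_h\big)\le\lim_h F_h(\boldsymbol q_h^\ast)=F_0(\boldsymbol u^\ast,v^\ast,\boldsymbol\zeta^\ast)$, and by arbitrariness $(\boldsymbol u,v,\boldsymbol\zeta)$ minimizes $F_0$ over $\mathcal Q_0$.

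The only real difficulty I expect is the equi-coercivity step. The subtlety is twofold: the averaged in-plane displacement is controlled by $\sqrt{1+E_h(\boldsymbol q_h)}$ only up to an error of order $h^{\beta/2-2}(1+E_h(\boldsymbol q_h))$ which, because $\sym(\boldsymbol R-\boldsymbol I)$ is quadratic in $|\boldsymbol R-\boldsymbol I|$ on $SO(3)$, is negligible precisely when $\beta>4$; and the Eulerian Zeeman energy is controlled by $|\Omega^{\boldsymbol y_h}|^{1/2}$, hence — thanks to $p>3$ — by a sublinear power of $E_h(\boldsymbol q_h)$. Only the conjunction of these two facts, which is where the standing restriction $\beta>6\vee p$ is genuinely used, permits the absorption that closes the a priori estimate; the remainder is a routine adaptation of the $\Gamma$-convergence machinery developed for Theorem \ref{thm:gamma-conv}.
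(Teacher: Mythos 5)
Your architecture coincides with the paper's: a uniform bound on $\inf_{\mathcal{Q}_h}F_h$ via an explicit competitor, coercivity of $F_h$, compactness without composing with rigid motions thanks to the clamped condition, the liminf inequality from Proposition \ref{prop:lb}, recovery sequences from Remark \ref{rem:recovery-clamped}, and the sandwich argument for minimality. Everything after the coercivity step is correct (and your explicit verification that $L_h(\boldsymbol{q}_h)\to L_0(\boldsymbol{u},v,\boldsymbol{\zeta})$ is needed and only implicit in the paper). The genuine problem is in the coercivity step, and it is a circularity rather than a wrong idea. Writing $\mathcal{E}_h\coloneqq E_h(\boldsymbol{q}_h)$ and $r_h\coloneqq R_h(\boldsymbol{y}_h)\le Ch^\beta(1+\mathcal{E}_h)$, your refined bounds $\|v_h\|_{W^{1,2}(S)}\le C\sqrt{1+\mathcal{E}_h}$ and $\|\boldsymbol{u}_h\|_{W^{1,2}(S;\R^2)}\le C\sqrt{1+\mathcal{E}_h}+Ch^{\beta/2-2}(1+\mathcal{E}_h)$ presuppose $\|\boldsymbol{F}_h-\boldsymbol{I}\|_{L^2}\le Ch^{-1}\sqrt{r_h}$ and $\|\boldsymbol{R}_h-\boldsymbol{I}\|_{L^2}\le Ch^{-1}\sqrt{r_h}$, i.e.\ the conclusion of Lemma \ref{lem:clamped}: the rigidity estimate alone only pins $\boldsymbol{F}_h$ to some rotation $\boldsymbol{Q}_h$, and the quantitative identification $|\boldsymbol{Q}_h-\boldsymbol{I}|\le Ch^{-1}\sqrt{r_h}$ from the boundary condition is obtained there by a trace argument that genuinely uses the hypotheses $r_h\le Ch$ and $r_h/h^2\to 0$ (to pass from \eqref{eqn:clamped-norm} to \eqref{eqn:clamped-est} one needs $h^{-2}r_h\vee\sqrt{r_h}\le Ch^{-1}\sqrt{r_h}$, which forces $r_h\le Ch^2$). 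With $r_h\le Ch^\beta(1+\mathcal{E}_h)$ and $\mathcal{E}_h$ the very quantity you are bounding, these hypotheses cannot be checked, so the step ``feeding this into Lemma \ref{lem:clamped} and Proposition \ref{prop:cd}'' is not justified as written.

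The repair is exactly the first pass of the paper's Lemma \ref{lem:energy-scaling}, which your plan omits: before invoking any rigidity, estimate the force terms using only the Poincar\'e inequality, $\|\boldsymbol{y}_h-\boldsymbol{\pi}_h\|_{L^2}\le C(\|\nabla_h\boldsymbol{y}_h\|_{L^2}+1)\le C(\sqrt{I_h}+1)$ with $I_h\coloneqq h^\beta E_h^{\mathrm{el}}(\boldsymbol{q}_h)$, and the Zeeman term as you do; this yields the crude bound $I_h\le Ch^{\beta/2}$, hence $r_h\le Ch^{\beta/2}=o(h^2)$ because $\beta>4$, and only then do Lemma \ref{lem:clamped} and Proposition \ref{prop:cd} apply. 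From that point your direct absorption does close the estimate and is a legitimate alternative to what the paper actually does, namely a contradiction argument in the spirit of Lecumberry--M\"{u}ller (assume $I_h/h^\beta\to\infty$ along a subsequence, renormalize by $e_h=I_h$, show the work of the loads is $o(e_h)$, and derive $1\le 0$). Your route, once the preliminary pass is inserted, is more quantitative and somewhat shorter; the paper's route avoids tracking the precise dependence of the displacement bounds on $\mathcal{E}_h$. Either way the two-pass structure (crude bound first, refined bound second) is unavoidable, and it is the ingredient missing from your proposal.
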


We mention that the weak convergence in \eqref{eqn:cam-horizontal} can be improved to strong convergence by arguing as in \cite[Subsection 7.2]{friesecke.james.mueller2}. {\MMM Also, more general boundary conditions as in Remark \ref{rem:dirichlet-bc} can be imposed.}

\begin{remark}[Existence of minimizers for the reduced model]
\label{rem:existence-min-reduced}
The existence of minimizer of $F_0$ in $\mathcal{Q}_0$ is a consequence of Theorem \ref{thm:convergence-almost-minimizers}. However, under our assumptions, this can be established directly.
First, note that the functional $F_0$ is lower semicontinuous with respect to the product weak topology in view of the convexity of $Q_{\mathrm{red}}$. Thus, in order to apply the Direct Method, one only has to show that the functional $F_0$ is coercive on $\mathcal{Q}_0$. 
This is done by exploiting the positive definiteness of  $Q_{\mathrm{red}}$ on symmetric matrices in \eqref{eqn:Q-red-coe} and applying Korn and Poincaré inequalities in view of the homogeneous boundary conditions in \eqref{eqn:class-Q0}.
\end{remark}

The major difficulty in proving Theorem \ref{thm:convergence-almost-minimizers} is to deduce the equi-boundedness of the elastic energy starting from the equi-boundedness of the total energy. This is accomplished by arguing by contradiction, similarly to \cite[Theorem 4]{lecumberry.mueller}.

{\MMM
\begin{lemma}[Energy scaling]
	\label{lem:energy-scaling}
	Let $M>0$ and let $(\boldsymbol{q}_h)$ with $\boldsymbol{q}_h\in{\mathcal{Q}}_h$ be such that
	\begin{equation}
	\label{eqn:bddf}
	\sup_{h>0} F_h(\boldsymbol{q}_h)\leq M.
	\end{equation}
	Then, there exist $C(M)>0$ and $\overline{h}>0$, where the former does not depend on  $(\boldsymbol{q}_h)$, such that
	\begin{equation}
	\label{eqn:bdde}
	\sup_{h\leq \overline{h}} E_h(\boldsymbol{q}_h)\leq C(M).
	\end{equation}
\end{lemma}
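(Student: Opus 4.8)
The plan is to argue by contradiction. Since \eqref{eqn:bddf} and the nonnegativity of $E_h^{\mathrm{exc}}$ and $E_h^{\mathrm{mag}}$ give $E_h^{\mathrm{el}}(\boldsymbol{q}_h)\le F_h(\boldsymbol{q}_h)+L_h(\boldsymbol{q}_h)\le C+L_h(\boldsymbol{q}_h)$, it is enough to prove that $\sup_{h>0}E_h^{\mathrm{el}}(\boldsymbol{q}_h)\le C$: indeed, the estimates below then yield $\sup_{h>0}L_h(\boldsymbol{q}_h)\le C$, whence $E_h(\boldsymbol{q}_h)=F_h(\boldsymbol{q}_h)+L_h(\boldsymbol{q}_h)\le C$. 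So I suppose, for contradiction, that along a subsequence (not relabeled) $E_h^{\mathrm{el}}(\boldsymbol{q}_h)\to+\infty$, write $\boldsymbol{q}_h=(\boldsymbol{y}_h,\boldsymbol{m}_h)$ and $\boldsymbol{F}_h\coloneqq\nabla_h\boldsymbol{y}_h$, and try to absorb each of the three terms of $L_h(\boldsymbol{q}_h)$ in \eqref{eqn:energy-L_h} into (a fraction of) $E_h^{\mathrm{el}}(\boldsymbol{q}_h)$ plus a constant. Arguing as in \eqref{eqn:bdd}--\eqref{eqn:dist-bis}, the coercivity \eqref{eqn:coercivity-Phi}--\eqref{eqn:growth-Phi1} of $\Phi$ gives, unconditionally, $\int_\Omega|\boldsymbol{F}_h|^p\,\d\boldsymbol{x}\le Ch^\beta E_h^{\mathrm{el}}(\boldsymbol{q}_h)+C$ and $R_h(\boldsymbol{y}_h)\le Ch^\beta(1+E_h^{\mathrm{el}}(\boldsymbol{q}_h))$, which will be used repeatedly.

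For the Zeeman term, $|\boldsymbol{m}_h|=1$ and the area formula give $\leb(\Omega^{\boldsymbol{y}_h})=h\int_\Omega\det\boldsymbol{F}_h\,\d\boldsymbol{x}\le Ch(1+h^\beta E_h^{\mathrm{el}}(\boldsymbol{q}_h))$ since $\det\boldsymbol{F}_h\le C(1+|\boldsymbol{F}_h|^p)$; moreover, by \eqref{eqn:load-h} and a change of variables, $\|\boldsymbol{h}_h\|_{L^2(\R^3;\R^3)}\le C\sqrt h$. Hence, by the Cauchy--Schwarz and Young inequalities,
\begin{equation*}
    \Big|\tfrac1h\int_{\Omega^{\boldsymbol{y}_h}}\boldsymbol{h}_h\cdot\boldsymbol{m}_h\,\d\boldsymbol{\xi}\Big|\le\tfrac1h\,\|\boldsymbol{h}_h\|_{L^2(\R^3;\R^3)}\,\leb(\Omega^{\boldsymbol{y}_h})^{1/2}\le C\sqrt{1+h^\beta E_h^{\mathrm{el}}(\boldsymbol{q}_h)}\le\tfrac14 E_h^{\mathrm{el}}(\boldsymbol{q}_h)+C
\end{equation*}
for $h$ small. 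For the two mechanical terms, Cauchy--Schwarz together with \eqref{eqn:load-f}--\eqref{eqn:load-g} bounds them by $C\big(\|\boldsymbol{u}_h\|_{L^2(S;\R^2)}+\|v_h\|_{L^2(S)}\big)$, where $\boldsymbol{u}_h\coloneqq\mathcal{U}_h(\boldsymbol{q}_h)$ and $v_h\coloneqq\mathcal{V}_h(\boldsymbol{q}_h)$. The clamped condition $\boldsymbol{y}_h=\boldsymbol{\pi}_h$ on $\partial S\times I$ forces $\boldsymbol{u}_h\in W^{1,2}_0(S;\R^2)$ and $v_h\in W^{1,2}_0(S)$, so by the Korn and Poincaré inequalities it remains to control $\|\sym\nabla'\boldsymbol{u}_h\|_{L^2(S)}=h^{-\beta/2}\big\|\int_I\sym((\boldsymbol{F}_h)''-\boldsymbol{I}'')\,\d x_3\big\|_{L^2(S)}$ and $\|v_h\|_{L^2(S)}\le Ch^{1-\beta/2}\|\boldsymbol{F}_h-\boldsymbol{I}\|_{L^2(\Omega;\rtt)}$ (the latter by slicewise Poincaré, using that $y_h^3-hx_3$ vanishes on $\partial S\times I$ and $\int_I hx_3\,\d x_3=0$).

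The heart of the argument is then a two-step estimate of these quantities. The \emph{a priori step}: the crude slicewise Poincaré bound $\|\boldsymbol{y}_h'-\boldsymbol{x}'\|_{L^2(\Omega;\R^2)}\le C\|\boldsymbol{F}_h-\boldsymbol{I}\|_{L^2(\Omega;\rtt)}$ combined with $\|\boldsymbol{F}_h-\boldsymbol{I}\|_{L^2(\Omega;\rtt)}\le C\|\boldsymbol{F}_h\|_{L^p(\Omega;\rtt)}+C\le C(h^\beta E_h^{\mathrm{el}}(\boldsymbol{q}_h))^{1/p}+C$ gives $\|\boldsymbol{u}_h\|_{L^2(S;\R^2)}+\|v_h\|_{L^2(S)}\le Ch^{-\beta/2}\big(h^{\beta/p}(E_h^{\mathrm{el}}(\boldsymbol{q}_h))^{1/p}+1\big)$; inserting this into $E_h^{\mathrm{el}}(\boldsymbol{q}_h)\le C+L_h(\boldsymbol{q}_h)$ and absorbing by Young's inequality yields the non‑optimal but decisive a priori bound $E_h^{\mathrm{el}}(\boldsymbol{q}_h)\le Ch^{-\beta/2}$. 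Since $\beta>6>4$, this forces $R_h(\boldsymbol{y}_h)\le Ch^{\beta/2}$, which is $\le Ch$ and $o(h^2)$ as $h\to0^+$, so Lemma \ref{lem:ar} and Lemma \ref{lem:clamped} apply and produce $\boldsymbol{R}_h\in W^{1,p}(S;SO(3))$ and $\boldsymbol{Q}_h\in SO(3)$ with
\begin{equation*}
    \|\boldsymbol{F}_h-\boldsymbol{R}_h\|_{L^2(\Omega;\rtt)}+h\,\|\nabla'\boldsymbol{R}_h\|_{L^2(S;\R^{3\times3\times3})}+h\,\|\boldsymbol{R}_h-\boldsymbol{Q}_h\|_{L^2(S;\rtt)}+h\,|\boldsymbol{Q}_h-\boldsymbol{I}|\le C\sqrt{R_h(\boldsymbol{y}_h)}.
\end{equation*}

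For the \emph{bootstrap step}, the Sobolev embedding $W^{1,2}(S)\hookrightarrow L^4(S)$ upgrades these to $\|\boldsymbol{R}_h-\boldsymbol{I}\|_{L^4(S;\rtt)}+\|\boldsymbol{F}_h-\boldsymbol{I}\|_{L^2(\Omega;\rtt)}\le Ch^{-1}\sqrt{R_h(\boldsymbol{y}_h)}$, and since $\boldsymbol{R}_h\in SO(3)$ gives $|\sym((\boldsymbol{F}_h)''-\boldsymbol{I}'')|\le|\boldsymbol{F}_h-\boldsymbol{R}_h|+\tfrac12|\boldsymbol{R}_h-\boldsymbol{I}|^2$, one obtains, using $R_h(\boldsymbol{y}_h)\le Ch^\beta(1+E_h^{\mathrm{el}}(\boldsymbol{q}_h))$ and $h^{\beta/2-2}\to0$ (here $\beta>4$ is used again),
\begin{equation*}
    \|\boldsymbol{u}_h\|_{L^2(S;\R^2)}+\|v_h\|_{L^2(S)}\le Ch^{-\beta/2}\sqrt{R_h(\boldsymbol{y}_h)}+Ch^{-\beta/2-2}R_h(\boldsymbol{y}_h)\le C\sqrt{1+E_h^{\mathrm{el}}(\boldsymbol{q}_h)}+o(1)\,E_h^{\mathrm{el}}(\boldsymbol{q}_h).
\end{equation*}
Feeding the Zeeman bound and this estimate into $E_h^{\mathrm{el}}(\boldsymbol{q}_h)\le C+L_h(\boldsymbol{q}_h)$ and absorbing the terms carrying $\tfrac14$ or $o(1)$ in front of $E_h^{\mathrm{el}}(\boldsymbol{q}_h)$ leaves $\tfrac12 E_h^{\mathrm{el}}(\boldsymbol{q}_h)\le C\sqrt{1+E_h^{\mathrm{el}}(\boldsymbol{q}_h)}+C$, hence $E_h^{\mathrm{el}}(\boldsymbol{q}_h)\le C$, contradicting $E_h^{\mathrm{el}}(\boldsymbol{q}_h)\to+\infty$. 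Therefore $\sup_{h>0}E_h^{\mathrm{el}}(\boldsymbol{q}_h)\le C$; the same bounds then give $\sup_{h>0}L_h(\boldsymbol{q}_h)\le C$, and \eqref{eqn:bdde} follows. I expect the a priori step to be the main obstacle: it is exactly what makes $R_h(\boldsymbol{y}_h)$ small enough for Lemma \ref{lem:clamped} to apply and for the rotation field $\boldsymbol{R}_h$ to become $L^4$‑close to $\boldsymbol{I}$, which is what produces the sharp $W^{1,2}$ bounds on the displacements needed for the final absorption; the hypothesis $\beta>4$ (guaranteed by $\beta>6\vee p$) enters crucially both there and in the bootstrap.
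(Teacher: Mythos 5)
Your proof is correct, and its skeleton coincides with the paper's: first a crude a priori bound $E_h^{\rm el}(\boldsymbol{q}_h)\leq Ch^{-\beta/2}$ (equivalently $I_h(\boldsymbol{q}_h)\leq Ch^{\beta/2}$ in the paper's notation) obtained by slicewise Poincar\'e and Young, which is exactly what makes $R_h(\boldsymbol{y}_h)\leq Ch^{\beta/2}=o(h^2)$ and hence unlocks Lemma \ref{lem:ar} and Lemma \ref{lem:clamped}; then a refined bound using the rotation field. The one genuine difference is in how the refined bound is closed. The paper normalizes by $e_h\coloneqq I_h(\boldsymbol{q}_h)$, assumes for contradiction that $e_h/h^\beta$ is unbounded, invokes Proposition \ref{prop:cd} to get boundedness of the rescaled displacements $\boldsymbol{U}_h,V_h$, and shows the load work normalized by $e_h$ vanishes, forcing $1=\lim I_h/e_h=\lim J_h/e_h=0$. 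You instead keep everything quantitative: via Korn, the Sobolev embedding $W^{1,2}(S)\hookrightarrow L^4(S)$ and the identity $\sym(\boldsymbol{R}_h-\boldsymbol{I})=-\tfrac12(\boldsymbol{R}_h-\boldsymbol{I})^\top(\boldsymbol{R}_h-\boldsymbol{I})$ you bound $\|\boldsymbol{u}_h\|_{W^{1,2}}+\|v_h\|_{W^{1,2}}\leq C\sqrt{1+E_h^{\rm el}}+o(1)E_h^{\rm el}$ and absorb. Both routes are sound; yours is more self-contained and avoids the normalization/compactness detour (and in fact needs no contradiction at all, since every estimate is unconditional), while the paper's mirrors the Lecumberry--M\"uller scheme and reuses Proposition \ref{prop:cd} as a black box --- indeed your displacement estimates are essentially a re-derivation of \eqref{eqn:adm-est} with $r_h=R_h(\boldsymbol{y}_h)$ and $e_h=h^\beta$, which you could have cited once $R_h(\boldsymbol{y}_h)\leq Ch^{\beta}(1+E_h^{\rm el}(\boldsymbol{q}_h))$ is in hand. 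The treatment of the Zeeman term and the final step $E_h=F_h+L_h\leq C$ agree with the paper.
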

\begin{proof}
	We introduce some further notation. Given $h>0$ and $\boldsymbol{q}=(\boldsymbol{y},\boldsymbol{m})\in{\mathcal{Q}}_h$, we set
	\begin{equation}
	\label{eqn:Ih}
	I_h(\boldsymbol{q})\coloneqq \int_\Omega W_h(\nabla_h\boldsymbol{y},\boldsymbol{m}\circ\boldsymbol{y})\,\d \boldsymbol{x},
	\end{equation}
	\begin{equation*}
	J_h(\boldsymbol{q})=I_h(\boldsymbol{q})-\int_{\Omega} \boldsymbol{f}_h \cdot (\boldsymbol{y}'-\boldsymbol{x}')\,\d \boldsymbol{x}+\int_\Omega g_h\,y^3\,\d\boldsymbol{x}.
	\end{equation*}
	For convenience of the reader, the proof is subdivided into three steps.
	
	\textbf{Step 1 (Preliminary estimates).}
	First, we check that
	\begin{equation}
		\label{eqn:Ih-est}
		I_h(\boldsymbol{q}_h)\leq C(M)h^{\beta/2}.
	\end{equation}
	 Let $\boldsymbol{q}_h=(\boldsymbol{y}_h,\boldsymbol{m}_h)$ and, for simplicity, set
	\begin{equation*}
	\boldsymbol{F}_h\coloneqq \nabla_h \boldsymbol{y}_h, \quad \boldsymbol{\lambda}_h\coloneqq \boldsymbol{m}_h \circ \boldsymbol{y}_h, \quad \boldsymbol{L}_h\coloneqq \boldsymbol{L}_h(\boldsymbol{\lambda}_h(\det\boldsymbol{F}_h)),\quad \boldsymbol{\Xi}_h\coloneqq \boldsymbol{L}_h^{-1}\boldsymbol{F}_h.
	\end{equation*}
	As in \eqref{eqn:rigidity}, for every $h>0$, we have
	\begin{equation}
		\label{eqn:rigidity-bis}
		r_h\coloneqq\mathcal{R}_h(\boldsymbol{y}_h)\leq C h^\beta+I_h(\boldsymbol{q}_h),
	\end{equation}
	which yields 
	\begin{equation*}
	\begin{split}
		\|\boldsymbol{F}_h\|_{L^2(\Omega;\rtt)}&\leq C \left(\|\dist(\boldsymbol{F}_h;SO(3))\|_{L^2(\Omega)}+1\right)\\
		&\leq C \left(\sqrt{r_h}+1    \right) \\
		&\leq C \left (\sqrt{I_h(\boldsymbol{q}_h)}+1\right ).
	\end{split}
	\end{equation*}
	Then, using  the Poincar\'{e} inequality with trace term, we obtain
	\begin{equation}
	\label{eqn:en-scal-estimate}
	\|\boldsymbol{y}_h-\boldsymbol{\pi}_h\|_{L^2(\Omega;\rt)}\leq C  \|\boldsymbol{F}_h-\boldsymbol{I}\|_{L^2(\Omega;\rtt)}\leq C \left( \|\boldsymbol{F}_h\|_{L^2(\Omega;\rtt)}+1\right)\leq C \left(\sqrt{I_h(\boldsymbol{q}_h)} +1 \right).
	\end{equation}
	Given \eqref{eqn:load-f} and \eqref{eqn:en-scal-estimate}, applying the H\"{o}lder inequality, we estimate
	\begin{equation}
	\label{eqn:scaling-f}
	\begin{split}
	\left | \frac{1}{h^\beta}\int_\Omega \boldsymbol{f}_h\cdot (\boldsymbol{y}_h'-\boldsymbol{x}')\,\d\boldsymbol{x} \right |
	&\leq \frac{1}{h^\beta}\|\boldsymbol{f}_h\|_{L^2(S;\R^2)}\,\|\boldsymbol{y}_h'-\boldsymbol{x}'\|_{L^2(\Omega;\R^2)}\\
	&\leq \frac{C}{h^{\beta/2}} \,\|\boldsymbol{y}_h'-\boldsymbol{x}'\|_{L^2(\Omega;\R^2)}\\
	&\leq \frac{C}{h^{\beta/2}}\left (\sqrt{I_h(\boldsymbol{q}_h)}+1\right).
	\end{split}
	\end{equation}
	Similarly, exploiting \eqref{eqn:load-g}, we obtain
	\begin{equation}
	\label{eqn:scaling-g}
	\begin{split}
	\left |\frac{1}{h^\beta}\int_\Omega g_h\cdot y_h^3\,\d\boldsymbol{x} \right | &\leq \frac{1}{h^\beta}\|g_h\|_{L^2(S)}\,\|y_h^3\|_{L^2(\Omega)}\\
	&\leq \frac{C}{h^{\beta/2-1}} \left( \|y_h^3-h x_3\|_{L^2(\Omega)}+1\right) \\  
	&\leq \frac{C}{h^{\beta/2-1}}\left (\sqrt{I_h(\boldsymbol{q}_h)}+1\right).
	\end{split}
	\end{equation}
	Set $\boldsymbol{\mu}_h\coloneqq \mathcal{M}_h(\boldsymbol{q}_h)$. Recalling \eqref{eqn:load-h} and using  the change-of-variable formula, we estimate
	\begin{equation}
	\label{eqn:scaling-h}
	\begin{split}
	\left |\frac{1}{h}\int_{\Omega^{\boldsymbol{y}_h}}\boldsymbol{h}_h\cdot \boldsymbol{m}_h\,\d\boldsymbol{\xi} \right |&= \left |\int_{\rt} \boldsymbol{h}_h\circ \boldsymbol{\pi}_h\cdot \boldsymbol{\mu}_h\,\d\boldsymbol{x}\right |\\
	&\leq \|\boldsymbol{h}_h\circ \boldsymbol{\pi}_h\|_{L^2(\rt;\rt)}\,\|\boldsymbol{\mu}_h\|_{L^2(\rt;\rt)}\\
	&\leq C\left(\sqrt{I_h(\boldsymbol{q}_h)}+1\right).
	\end{split}
	\end{equation}
	Indeed, as in \eqref{eqn:a2}--\eqref{eqn:a3}, there holds
	\begin{equation*}
	\int_{\rt}|\boldsymbol{\mu}_h|^2\,\d\boldsymbol{x}=\int_\Omega \frac{1}{\det\boldsymbol{F}_h}\d\boldsymbol{x}\leq C \int_\Omega \frac{1}{|\det\boldsymbol{F}_h|^a}\d\boldsymbol{x}\leq C(I_h(\boldsymbol{q}_h)+1).
	\end{equation*}
	
	The combination of \eqref{eqn:scaling-f}--\eqref{eqn:scaling-h} yields
	\begin{equation}
		\label{eqn:load-bdd}
		\left |L_h(\boldsymbol{q}_h)\right | \leq \frac{C}{h^{\beta/2}}\left (\sqrt{I_h(\boldsymbol{q}_h)}+1\right),
	\end{equation}
	so that, applying the Young inequality, we obtain
	\begin{equation*}
	\label{eqn:tot-en-below}
	F_h(\boldsymbol{q}_h)\geq \frac{1}{h^\beta} I_h(\boldsymbol{q}_h)-\frac{C}{h^{\beta/2}} \left (\sqrt{I_h(\boldsymbol{q}_h)}+1\right)\geq \frac{C}{h^\beta}  I_h(\boldsymbol{q}_h) - \frac{C'}{h^{\beta/2}}.
	\end{equation*}
	This entails \eqref{eqn:Ih-est} thanks to \eqref{eqn:bddf}. From \eqref{eqn:rigidity-bis}, we immediately get
	\begin{equation}
		\label{eqn:R_h}
		r_h\leq C(M) h^{\beta/2}.
	\end{equation}
	Also, we obtain
	\begin{equation}
		\label{eqn:Jh-est}
		J_h(\boldsymbol{q}_h)\leq C(M)h^\beta,
	\end{equation}
	since
	\begin{equation*}
	h^{-\beta}J_h(\boldsymbol{q}_h)\leq F_h(\boldsymbol{q}_h)+\frac{1}{h}\int_{\Omega^{\boldsymbol{y}_h}}\boldsymbol{h}_h\cdot\boldsymbol{m}_h\,\d\boldsymbol{\xi}\leq M+ C\left(\sqrt{I_h(\boldsymbol{q}_h)}+1\right)\leq C(M),
	\end{equation*}
	thanks to \eqref{eqn:bddf}, \eqref{eqn:Ih-est} and \eqref{eqn:scaling-h}.
	
	\vspace{2mm}
	
	\textbf{Step 2 (Improved estimates).}
	We claim that $I_h(\boldsymbol{q}_h)$ is actually of order $h^\beta$. By contradiction, suppose  that 
	\begin{equation}
	\label{eqn:contradiction}
	\limsup_{h\to 0^+} \frac{1}{h^\beta}I_h(\boldsymbol{q}_h)=+\infty.
	\end{equation}
	Note that $r_h/h^2\to 0$, as $h\to 0^+$ by \eqref{eqn:R_h} since $\beta>4$.
	By means of Lemma \ref{lem:ar}, we find $(\boldsymbol{R}_h)\subset W^{1,p}(S;SO(3))$ and $(\boldsymbol{Q}_h)\subset SO(3)$ such that the following estimates hold:
	\begin{align*}
	\|{\boldsymbol{F}}_h-{\boldsymbol{R}}_h\|_{L^2(\Omega;\rtt)}&\leq C \sqrt{r_h}, \qquad   \hspace{4,5mm}\|\nabla'{\boldsymbol{R}}_h\|_{L^2(S;\R^{3 \times 3 \times 3})}
	\leq C h^{-1}\sqrt{r_h},\\
	\|{\boldsymbol{R}}_h-{\boldsymbol{Q}}_h\|_{L^2(S;\rtt)}&\leq C h^{-1}\sqrt{r_h}, \quad  \|{\boldsymbol{F}}_h-{\boldsymbol{Q}}_h\|_{L^2(\Omega;\rtt)}\leq C h^{-1}\sqrt{r_h}.
	\end{align*}
	Let $e_h= I_h(\boldsymbol{q}_h)$. Given \eqref{eqn:contradiction}, up to subsequence, we have 
	\begin{equation}
	\label{eqn:contradiction-bis}
		\lim_{h \to 0^+} \frac{h^\beta}{e_h}=0,
	\end{equation}
	which, together with \eqref{eqn:rigidity-bis}, yields
	\begin{equation}
	\label{eqn:contradiction-tris}
		\limsup_{h \to 0^+} \frac{r_h}{e_h}\leq 1.
	\end{equation}
	Also, applying Lemma \ref{lem:clamped}, we deduce 
	\begin{equation*}
		|\boldsymbol{Q}_h-\boldsymbol{I}|\leq C   \frac{\sqrt{r_h}}{h}.
	\end{equation*}
	Now, define $\boldsymbol{U}_h\colon S \to \R^2$ and $V_h\colon S \to \R$ by setting
	\begin{align*}
	\boldsymbol{U}_h(\boldsymbol{x}')&\coloneqq\frac{h^2}{e_h}\wedge \frac{1}{\sqrt{e_h}}\int_I \left(\boldsymbol{y}_h'(\boldsymbol{x}',x_3)-\boldsymbol{x}' \right)\,\d x_3,\\
	V_h(\boldsymbol{x}')&\coloneqq\frac{h}{\sqrt{e_h}}\int_I y_h^3(\boldsymbol{x}',x_3)\,\d x_3.
	\end{align*}
	As the sequence  $(r_h/e_h)$ is bounded because of  \eqref{eqn:contradiction-tris}, Proposition \ref{prop:cd} entails that 	 $(\boldsymbol{U}_h)$ and $(V_h)$ are bounded in $W^{1,2}(S;\R^2)$ and $W^{1,2}(S)$, respectively, for $h\ll 1$. Hence, 
	taking into account \eqref{eqn:load-f}--\eqref{eqn:load-g} and \eqref{eqn:contradiction-bis}, we compute
	\begin{equation*}
	\begin{split}
	&\lim_{h\to 0^+} \left \{ \frac{1}{e_h}\int_\Omega \boldsymbol{f}_h\cdot (\boldsymbol{y}_h'-\boldsymbol{x}')\,\d\boldsymbol{x}+\frac{1}{e_h}\int_\Omega g_h y_h^3\,\d\boldsymbol{x}  \right\}\\
	&=\lim_{h\to 0^+} \left \{ \left ( h^{\beta/2-2}\vee \frac{h^{\beta/2}}{\sqrt{e_h}} \right) \int_S h^{-\beta/2}\boldsymbol{f}_h \cdot \boldsymbol{U}_h\,\d\boldsymbol{x}'+\frac{h^{\beta/2}}{\sqrt{e_h}}\int_S h^{-\beta/2-1}g_h\,V_h\,\d\boldsymbol{x}'\right \}=0.
	\end{split}
	\end{equation*}
	This, in turn, gives
	\begin{equation*}
	\begin{split}
	1&=\lim_{h\to 0^+} \frac{1}{e_h} I_h(\boldsymbol{q}_h)\\
	&= \lim_{h\to 0^+} \left \{\frac{1}{e_h} J_h(\boldsymbol{q}_h)+ \frac{1}{e_h}\int_\Omega \boldsymbol{f}_h\cdot (\boldsymbol{y}_h'-\boldsymbol{x}')\,\d\boldsymbol{x}+\frac{1}{e_h}\int_\Omega g_h y_h^3\,\d\boldsymbol{x}  \right\}\\
	&=\lim_{h\to 0^+} \frac{1}{e_h} J_h(\boldsymbol{q}_h)\leq C(M) \lim_{h\to 0^+} \frac{h^\beta}{e_h}=0,
	\end{split}
	\end{equation*}
	where in the last line we exploited \eqref{eqn:Jh-est}  and \eqref{eqn:contradiction-bis}. 
	This provides a contradiction and, in turn, we necessarily have 
	\begin{equation}
	\label{eqn:L-limsup}
		L\coloneqq \limsup_{h \to 0^+} \frac{1}{h^\beta} I_h(\boldsymbol{q}_h)<+\infty.
	\end{equation}
	
	\textbf{Step 3 (Bound on the constant).}
	Clearly, the constant $L\geq 0$ in \eqref{eqn:L-limsup} depends on the sequence $(\boldsymbol{q}_h)$. We claim that 
	\begin{equation}
		\label{eqn:bound-constant}
		L\leq C(M)
	\end{equation}
	for some constant $C(M)>0$ depending on $M$ but not on $(\boldsymbol{q}_h)$. 
	First, observe that, in view of \eqref{eqn:rigidity-bis} and \eqref{eqn:L-limsup}, there hold
	\begin{equation}
	\label{eqn:ratio-est}
	\limsup_{h \to 0^+} \frac{r_h}{h^\beta}\leq C + L,  \qquad \lim_{h \to 0^+} \frac{r_h}{h^4}=0. 
	\end{equation}
	Set $\boldsymbol{u}_h\coloneqq\mathcal{U}_h(\boldsymbol{q}_h)$and $v_h\coloneqq \mathcal{V}_h(\boldsymbol{q}_h)$.
	By applying Proposition \ref{prop:cd} to $\widehat{\boldsymbol{y}}_h=\boldsymbol{y}_h$ with $e_h=h^\beta$, we deduce the two estimates:
	\begin{align}
		\label{eqn:u-L}
			\|\boldsymbol{u}_h\|_{W^{1,2}(S;\R^2)}&\leq C \left(  \frac{\sqrt{r_h}}{h^{\beta/2}}+\frac{r_h}{h^{\beta/2+2}}\right),
		\\
		\label{eqn:v-L}
		\|v_h\|_{W^{1,2}(S)}&\leq C  \frac{\sqrt{ r_h}}{h^{\beta/2}}.
	\end{align}
	Then, recalling \eqref{eqn:load-f}--\eqref{eqn:load-g} and \eqref{eqn:Jh-est} and employing the H\"{o}lder inequality, we estimate
	\begin{equation*}
	\label{eqn:bound-L}
		\begin{split}
			\frac{1}{h^\beta}I_h(\boldsymbol{q}_h)&=\frac{1}{h^\beta}J_h(\boldsymbol{q}_h)+\int_S h^{-\beta/2}\boldsymbol{f}_h\cdot \boldsymbol{u}_h\,\d\boldsymbol{x}'+\int_S h^{-\beta/2-1}g_h\,v_h\,\d\boldsymbol{x}'\\
			&\leq C(M)+ C \left( \|\boldsymbol{u}_h\|_{L^2(S;\R^2)} + \|v_h\|_{L^2(S)} \right).
		\end{split}
	\end{equation*}
	Exploiting \eqref{eqn:u-L}--\eqref{eqn:v-L}, we take the superior limit, as $h\to 0^+$, at both sides of the previous inequality with the aid of \eqref{eqn:L-limsup} and \eqref{eqn:ratio-est}. This yields
	\begin{equation*}
		L \leq C(M)+ C \sqrt{L},
	\end{equation*}
	which entails \eqref{eqn:bound-constant} by applying the Young inequality.
	 
	Therefore, in view of \eqref{eqn:L-limsup}, we have proved that
	\begin{equation*}
		I_h(\boldsymbol{q}_h)\leq C(M)h^\beta
	\end{equation*}
	for $h\ll 1$ depending on $(\boldsymbol{q}_h)$. 
	At this point, from \eqref{eqn:load-bdd},  we infer $L_h(\boldsymbol{q}_h)\leq C(M)$ and, recalling \eqref{eqn:bddf}, also \eqref{eqn:bdde}.
\end{proof}
}

We are now ready to prove our second main result.

\begin{proof}[Proof of Theorem \ref{thm:convergence-almost-minimizers}]
Given $h>0$, let $\boldsymbol{n}_h\colon \Omega_h \to \S^2$ be constantly equal to some fixed $\boldsymbol{e}\in\S^2$. In this case $\overline{\boldsymbol{q}}_h\coloneqq(\boldsymbol{\pi}_h,\boldsymbol{n}_h)\in\mathcal{Q}_h$. We claim that $F_h(\overline{\boldsymbol{q}}_h)\leq C$ and, in turn, $\inf_{\mathcal{Q}_h}F_h\leq C$. To see this, using \eqref{eqn:Taylor-Phi}, we compute
\begin{equation*}
\begin{split}
    E_h^{\rm el}(\overline{\boldsymbol{q}}_h)&=\frac{1}{h^\beta}\int_\Omega \Phi \left( \boldsymbol{I}-\frac{h^{\beta/2}}{1+h^{\beta/2}}\boldsymbol{e}\otimes\boldsymbol{e} \right)\d\,\boldsymbol{x}\\
    &=\frac{1}{2}\int_\Omega Q \left (-\frac{1}{1+h^{\beta/2}}\boldsymbol{e}\otimes\boldsymbol{e}\right)\d\,\boldsymbol{x}+\frac{1}{h^\beta} \int_\Omega \omega \left (-\frac{h^{\beta/2}}{1+h^{\beta/2}}\boldsymbol{e}\otimes\boldsymbol{e} \right)\d\,\boldsymbol{x}\leq C.
\end{split}
\end{equation*}
Denote by $\overline{\psi}_h$ the stray-field potential corresponding to $\overline{\boldsymbol{q}}_h$. By \eqref{eqn:maxwell-stability} and the change-of-variable formula, we have
\begin{equation*}
    E_h^{\rm mag}(\overline{\boldsymbol{q}}_h)=\frac{1}{h}\int_{\Omega_h} |\nabla \,\overline{\psi}_h|^2\,\boldsymbol{\xi}\leq h^{-1}\leb(\Omega_h)=\lebt(S).
\end{equation*}
Thus, $E_h(\overline{\boldsymbol{q}}_h)\leq C$. Moreover, by \eqref{eqn:load-h} and the change-of-variable formula, there holds
\begin{equation*}
    \left |L_h(\overline{\boldsymbol{q}}_h)\right |=\left |\frac{1}{h}\int_{\Omega_h}\boldsymbol{h}_h\cdot\boldsymbol{e}\,\d\boldsymbol{\xi}\right |=\int_\Omega |\boldsymbol{h}_h\circ\boldsymbol{\pi}_h|\,\d\boldsymbol{x}\leq C\,||\boldsymbol{h}_h\circ\boldsymbol{\pi}_h||_{L^2(\R^3;\R^3)}\leq C.
\end{equation*}
Therefore, the claim is proved.

At this point, \eqref{eqn:almost-minimizer} yields the bound $\sup_{h>0}F_h(\boldsymbol{q}_h)\leq C$. Then, by Lemma \ref{lem:energy-scaling}, there holds $\sup_{h \ll 1}E_h(\boldsymbol{q}_h)\leq C$. Thanks to Proposition \ref{prop:compactness}, we find $\boldsymbol{q}_0=(\boldsymbol{u},v,\boldsymbol{\zeta})\in \mathcal{Q}_0$ and $\boldsymbol{\chi}\in L^2(\rt;\rt)$ such that  \eqref{eqn:compactness-clamped-horizontal}--\eqref{eqn:compactness-clamped-G-structure} hold true. In particular, this shows \eqref{eqn:cam-horizontal}--\eqref{eqn:cam-lagrangian}.

Given \eqref{eqn:compactness-clamped-eta}--\eqref{eqn:compactness-clamped-lagrangian} and  \eqref{eqn:compactness-clamped-R}--\eqref{eqn:compactness-clamped-G-structure}, we apply Proposition \ref{prop:lb} to $\widehat{\boldsymbol{q}}_h=\boldsymbol{q}_h$ and we conclude that
\begin{equation*}
    E_0(\boldsymbol{q}_0)\leq \liminf_{h\to0^+} E_h(\boldsymbol{q}_h), 
\end{equation*}
while exploiting \eqref{eqn:compactness-clamped-horizontal}--\eqref{eqn:compactness-clamped-vertical} and \eqref{eqn:compactness-clamped-eta}, and applying the change-of-variable formula, we see that
\begin{equation*}
    L_0(\boldsymbol{q}_0)=\lim_{h\to 0^+} L_h(\boldsymbol{q}_h).
\end{equation*}
These two facts together clearly entail
\begin{equation}
\label{eqn:cam-liminf}
    F_0(\boldsymbol{q}_0)\leq \liminf_{h\to0^+} F_h(\boldsymbol{q}_h).
\end{equation}
Now, let $\widehat{\boldsymbol{q}}_0=(\widehat{\boldsymbol{u}},\widehat{v},\widehat{\boldsymbol{\zeta}})\in \mathcal{Q}_0$. By Proposition \ref{prop:recovery}, there exists $(\widehat{\boldsymbol{q}}_h)$ with $\widehat{\boldsymbol{q}}_h\in \mathcal{Q}_h$ such that \eqref{eqn:recovery-horizontal}--\eqref{eqn:recovery-vertical}, \eqref{eqn:recovery-eta} and \eqref{eqn:recovery-limit} hold true. Hence, we have
\begin{equation}
\label{eqn:cam-recovery}
    F_0(\widehat{\boldsymbol{q}}_0)=\lim_{h\to 0^+} F_h(\widehat{\boldsymbol{q}}_h).
\end{equation}
Eventually, combining  \eqref{eqn:almost-minimizer} with \eqref{eqn:cam-liminf} and \eqref{eqn:cam-recovery}, we obtain
\begin{equation*}
    F_0(\boldsymbol{q}_0)\leq \liminf_{h \to 0^+} F_h(\boldsymbol{q}_h)\leq \liminf_{h \to 0^+} F_h(\widehat{\boldsymbol{q}}_h)=F_0(\widehat{\boldsymbol{q}}_0).
\end{equation*}
Since $\widehat{\boldsymbol{q}}_0$ is arbitrary, this shows that $\boldsymbol{q}_0$ is a minimizer of $F_0$ on $\mathcal{Q}_0$. 
\end{proof}

\section{Quasistatic setting}
\label{sec:quasistatic-setting}
In this last section, we study the variational model under quasistatic evolution in presence of dissipative effects. The evolution is driven by time-dependent applied loads and our framework is the theory of {rate-independent systems} \cite{mielke.roubicek}.

\subsection{The quasistatic model}
We start by describing the setting. Let $T>0$ be the time horizon. Given $h>0$, let 
\begin{equation*}
    \boldsymbol{f}_h\in W^{1,1}(0,T;L^2(S;\R^2)), \quad g_h \in W^{1,1}(0,T;L^2(S)), \quad \boldsymbol{h}_h\in W^{1,1}(0,T;L^2(\R^3;\R^3)),
\end{equation*}
represent a time-dependent horizontal force, vertical force and external magnetic field, respectively. Without loss of generality, we assume that all these functions are absolutely continuous in time.
The corresponding functional $\mathcal{L}_h\colon [0,T]\times \mathcal{Q}_h\to \R$ is defined by
\begin{equation*}
    \mathcal{L}_h(t,\boldsymbol{q})\coloneqq 
    \frac{1}{h^\beta}\int_\Omega \boldsymbol{f}_h(t)\cdot (\boldsymbol{y}'-\boldsymbol{x}')\,\d\boldsymbol{x}+\frac{1}{h^\beta}\int_\Omega g_h(t)\,y^3\,\d\boldsymbol{x}+
    \frac{1}{h}\int_{\Omega^{\boldsymbol{y}}} \boldsymbol{h}_h(t)\cdot \boldsymbol{m}\,\d\boldsymbol{\xi},
\end{equation*}
where $\boldsymbol{q}=(\boldsymbol{y},\boldsymbol{m})$, 
and the total energy $\mathcal{F}_h \colon [0,T]\times \mathcal{Q}_h\to \R$ reads
\begin{equation}
\label{eqn:F_h-time}
    \mathcal{F}_h(t,\boldsymbol{q})=E_h(\boldsymbol{q})-\mathcal{L}_h(t,\boldsymbol{q}).
\end{equation}
Using the notation introduced in \eqref{eqn:lagrangian-magnetization}, we define the dissipation distance  $\mathcal{D}_h\colon \mathcal{Q}_h\times\mathcal{Q}_h\to [0,+\infty)$ by setting
\begin{equation}
    \label{eqn:D_h}
    \mathcal{D}_h(\boldsymbol{q},\widehat{\boldsymbol{q}})\coloneqq \int_\Omega |\mathcal{Z}_h(\boldsymbol{q})-\mathcal{Z}_h(\widehat{\boldsymbol{q}})|\,\d\boldsymbol{x}.
\end{equation}
Thus, the energy dissipated along an evolution $\boldsymbol{q}\colon [0,T]\to \mathcal{Q}_h$ within the time interval $[r,s]\subset [0,T]$ is given by
\begin{equation*}
    \mathrm{Var}_{\mathcal{D}_h}(\boldsymbol{q};[r,s])\coloneqq \sup \left \{\sum_{i=1}^N \mathcal{D}_h(\boldsymbol{q}(t^i),\boldsymbol{q}(t^{i-1})):\:\text{$\Pi=(t^0,\dots,t^{N})$ partition of $[r,s]$} \right\}.
\end{equation*}
By partition of the time interval $[r,s]\subset [0,T]$ we mean any finite ordered set $\Pi=(t^0,t^1,\dots,t^N)\subset [r,s]^N$ with $r=t^0<t^1<\dots<t^N=s$. Also,  the size of the partition is given by $$|\Pi|\coloneqq \max\{t^i-t^{i-1}:\:i=1,\dots,N\}.$$

For the reduced model, we also have an evolution driven by time-dependent applied loads. Precisely, we assume that there exist
\begin{equation*}
    \boldsymbol{f}_0\in W^{1,1}(0,T;L^2(S;\R^2)), \quad  g_0\in W^{1,1}(0,T;L^2(S)), \quad \boldsymbol{h}_0\in W^{1,1}(0,T;L^2(\R^2;\R^3)),
\end{equation*}
such that, as $h \to 0^+$, the following convergences hold:
\begin{align}
    \label{eqn:load-f-time}
    \text{$h^{-\beta/2}\boldsymbol{f}_h$}&\text{$\to \boldsymbol{f}_0$ in $W^{1,1}(0,T;L^2(S;\R^2))$,}\\
    \label{eqn:load-g-time}
    \text{$h^{-\beta/2-1}g_h$}&\text{$\to g_0$ in $W^{1,1}(0,T;L^2(S))$,}\\
    \label{eqn:load-h-time}
    \text{$\boldsymbol{h}_h\circ \boldsymbol{\pi}_h$}&\text{$\to \chi_I\boldsymbol{h}_0$ in $W^{1,1}(0,T;L^2(\R^3;\R^3))$.}
\end{align}
Also here, we assume that the functions $\boldsymbol{f}_0, g_0$ and $\boldsymbol{h}_0$ are all absolutely continuous in time.
In \eqref{eqn:load-h-time}, we trivially set $\boldsymbol{h}_h \circ \boldsymbol{\pi}_h(t)\coloneqq \boldsymbol{h}_h(t)\circ \boldsymbol{\pi}_h$ for every $t\in [0,T]$. This gives a map in $W^{1,1}(0,T;L^2(\R^3;\R^3))$  whose time derivative is given by $\dot{\boldsymbol{h}}_h \circ \boldsymbol{\pi}_h(t)\coloneqq \dot{\boldsymbol{h}}_h(t) \circ \boldsymbol{\pi}_h$ for every $t \in (0,T)$. Note that the limiting magnetic field $\boldsymbol{h}$ is a priori assumed to be independent on $x_3$.

We define the functional  $\mathcal{L}_0\colon [0,T]\times \mathcal{Q}_0\to \R$  by setting
\begin{equation*}
    \mathcal{L}_0(t,\boldsymbol{q})\coloneqq \int_S \boldsymbol{f}_0(t)\cdot \boldsymbol{u}\,\d\boldsymbol{x}'+\int_S g_0(t)\,v\,\d\boldsymbol{x}'+\int_S \boldsymbol{h}_0(t)\cdot \boldsymbol{\zeta}\,\d\boldsymbol{x},
\end{equation*}
where $\boldsymbol{q}_0=(\boldsymbol{u,v,\boldsymbol{\zeta}})$,
so that the limiting total energy $\mathcal{F}_0\colon [0,T]\times \mathcal{Q}_0\to \R$ reads
\begin{equation}
\label{eqn:F_0-time}
    \hspace*{5mm}\mathcal{F}_0(t,\boldsymbol{q}_0)\coloneqq E_0(\boldsymbol{q}_0)-\mathcal{L}_0(t,\boldsymbol{q}_0).
\end{equation}
The dissipation distance $\mathcal{D}_0\colon \mathcal{Q}_0 \times \mathcal{Q}_0\to [0,+\infty)$ for the reduced model is defined as 
\begin{equation}
\label{eqn:D_0}
    \mathcal{D}_0(\boldsymbol{q}_0,\widehat{\boldsymbol{q}}_0)=\int_S |\boldsymbol{\zeta}-\widehat{\boldsymbol{\zeta}}|\,\d\boldsymbol{x}',
\end{equation}
where $\boldsymbol{q}_0=(\boldsymbol{u},v,\boldsymbol{\zeta})$ and $\widehat{\boldsymbol{q}}_0=(\widehat{\boldsymbol{u}},\widehat{v},\widehat{\boldsymbol{\zeta}})$. Therefore, the energy dissipated by an evolution $\boldsymbol{q}_0\colon [0,T]\to\mathcal{Q}_0$, where $\boldsymbol{q}_0(t)=(\boldsymbol{u}(t),v(t),\boldsymbol{\zeta}(t))$, within the time interval $[r,s]\subset[0,T]$ is given by
\begin{equation*}
    \mathrm{Var}_{\mathcal{D}_0}(\boldsymbol{q}_0;[r,s])\coloneqq \sup \left \{\sum_{i=1}^N \mathcal{D}_0(\boldsymbol{q}_0(t^i),\boldsymbol{q}_0(t^{i-1})):\:\text{$\Pi=(t^0,\dots,t^N)$ partition of $[r,s]$} \right\}.
\end{equation*}

In the theory of rate-independent systems, one defines {energetic solutions} as time evolutions satisfying two requirements: a global stability condition and an energy-dissipation balance \cite[Definition 2.1.2]{mielke.roubicek}.
{\MMM We recall below the definition for both the bulk model and the reduced model.

\begin{definition}[Energetic solution to the bulk quasistatic model]
	\label{def:energetic-solution-bulk}
	A function $\boldsymbol{q}_h\colon[0,T]\to {\mathcal{Q}}_h$ is termed an energetic solution to the bulk quasistatic model at thickness $h>0$ if  $t \mapsto \partial_t\mathcal{F}_h(t,\boldsymbol{q}_h(t))$ 
	belongs to $L^1(0,T)$ and, for every $t\in[0,T]$, the following two conditions are satisfied:
	\begin{enumerate}[\bf (i)]
		\item \textbf{Bulk global stability at thickness $\boldsymbol{h}$:}
		\begin{equation}
		\label{eqn:bulk-stability}
		\forall\, \widehat{\boldsymbol{q}}_h\in{\mathcal{Q}}_h,\quad \mathcal{F}_h(t,\boldsymbol{q}_h(t))\leq \mathcal{F}_h(t,\widehat{\boldsymbol{q}}_h)+\mathcal{D}_h(\boldsymbol{q}_h(t),\widehat{\boldsymbol{q}}_h);
		\end{equation}
		\item \textbf{Bulk energy-dissipation balance at thickness $\boldsymbol{h}$:}
		\begin{equation}
		\label{eqn:bulk-energy-balance}
		\mathcal{F}_h(t,\boldsymbol{q}_h(t))+\text{Var}_{\mathcal{D}_h}(\boldsymbol{q}_h;[0,t])=\mathcal{F}_h(0,\boldsymbol{q}_h(0))+\int_0^t \partial_t \mathcal{F}_h(\tau,\boldsymbol{q}_h(\tau))\,\d\tau.
		\end{equation}
	\end{enumerate}
\end{definition}

\begin{definition}[Energetic solution to the reduced quasistatic model]
	\label{def:energetic-solution-reduced}
	A function  $\boldsymbol{q}_0\colon[0,T]\to {\mathcal{Q}}_0$ is termed an energetic solution to the reduced quasistatic model if  $t \mapsto \partial_t\mathcal{F}_0(t,\boldsymbol{q}_0(t))$ 
	belongs to $L^1(0,T)$ and, for every $t\in[0,T]$, the following two conditions are satisfied:
	\begin{enumerate}[\bf (i)]
		\item \textbf{Reduced global stability:}
		\begin{equation}
		\label{eqn:reduced-stability}
		\forall\, \widehat{\boldsymbol{q}}_0\in {\mathcal{Q}}_0,\quad \mathcal{F}_0(t,\boldsymbol{q}_0(t))\leq \mathcal{F}_0(t,\widehat{\boldsymbol{q}}_0)+\mathcal{D}_0(\boldsymbol{q}_0(t),\widehat{\boldsymbol{q}}_0);
		\end{equation}
		\item \textbf{Reduced energy-dissipation balance:}
		\begin{equation}
		\label{eqn:reduced-energy-balance}
		\mathcal{F}_0(t,\boldsymbol{q}_0(t))+\text{Var}_{\mathcal{D}_0}(\boldsymbol{q}_0;[0,t])=\mathcal{F}_0(0,\boldsymbol{q}_0(0))+\int_0^t \partial_t \mathcal{F}_0(\tau,\boldsymbol{q}_0(\tau))\,\d\tau.
		\end{equation}
	\end{enumerate}
\end{definition}

The main results of the section are Theorem \ref{thm:evolutionary-gamma-convergence} and Theorem \ref{thm:convergence-AIMP} which represent the counterparts of Theorem \ref{thm:gamma-conv} and Theorem \ref{thm:convergence-almost-minimizers} in the quasistatic setting. These  will be presented in the next two subsections.
}

\subsection{\texorpdfstring{Evolutionary $\boldsymbol{\Gamma}$}{gamma}-convergence}
\label{subsec:egamma}
{\MMM
Our third main result states the evolutionary $\Gamma$-convergence of the sequence $(\mathcal{F}_h)$ to the functional $\mathcal{F}_0$, as $h \to 0^+$. More explicitly, we prove that sequences of energetic solutions to the bulk model converge to energetic solutions to the reduced model.

\begin{theorem}[Evolutionary $\boldsymbol{\Gamma}$-convergence]
	\label{thm:evolutionary-gamma-convergence}
Let $(\boldsymbol{q}_h^0)$ with  $\boldsymbol{q}_h^0=(\boldsymbol{y}_h^0,\boldsymbol{m}_h^0)\in{\mathcal{Q}}_h$. Suppose that, for every $h>0$, the following condition is satisfied:  
\begin{equation}
\label{eqn:egamma-id-stability}
\forall\, \widehat{\boldsymbol{q}}_h \in \mathcal{Q}_h,\quad \mathcal{F}_h(0,\boldsymbol{q}_h^0)\leq \mathcal{F}_h(0,\widehat{\boldsymbol{q}}_h)+\mathcal{D}_h(\boldsymbol{q}_h^0,\widehat{\boldsymbol{q}}_h).
\end{equation}
Also, assume the existence of $\boldsymbol{q}^0_0=(\boldsymbol{u}^0,v^0,\boldsymbol{\zeta}^0) \in {\mathcal{Q}}_0$ such that the following convergences hold, as $h \to 0^+$:
\begin{align}
\label{eqn:egamma-datum-horizonal}
\text{$\boldsymbol{u}_h^0\coloneqq\mathcal{U}_h(\boldsymbol{q}_h^0)$}&\text{$\wk\boldsymbol{u}^0$ in $W^{1,2}(S;\R^2)$,}\\
\label{eqn:egamma-datum-vertical}
\text{$v_h^0\coloneqq \mathcal{V}_h(\boldsymbol{q}_h^0)$}&\text{$\to v^0$ in $W^{1,2}(S)$,}\\
\label{eqn:egamma-datum-lagrangian}
\text{$\boldsymbol{z}_h^0\coloneqq \mathcal{Z}_h(\boldsymbol{q}_h^0)$}&\text{$\to\boldsymbol{\zeta}^0$ in $L^1(\Omega;\rt)$,}\\
\label{eqn:egamma-datum-energy}
\text{$\mathcal{F}_h(0,\boldsymbol{q}_h^0)$}&\text{$\to \mathcal{F}_0(0,\boldsymbol{q}_0^0)$.}
\end{align}
Let $(\boldsymbol{q}_h)$ with $\boldsymbol{q}_h\colon [0,T]\to{\mathcal{Q}}_h$ an energetic solution to the bulk quasistatic model at thickness $h>0$.

Then, there exists a function $\boldsymbol{q}_0\colon [0,T]\to {\mathcal{Q}}_0$ with $\boldsymbol{q}_0(t)=(\boldsymbol{u}(t),v(t),\boldsymbol{\zeta}(t))$ for every $t\in [0,T]$ satisfying the initial condition $\boldsymbol{q}_0(0)=\boldsymbol{q}_0^0$ which is an energetic solution to the reduced quasistatic model. 
Moreover, up to subsequences, the following convergences hold, as $h\to 0^+$:
\begin{align}
\label{eqn:egamma-lagrangian-magnetization}
\text{$\forall\,t\in [0,T],\quad \boldsymbol{z}_h(t)\coloneqq \mathcal{Z}_h(\boldsymbol{q}_h(t))$}&\text{$\to \boldsymbol{\zeta}(t)$ in $L^1(\Omega;\rt)$,}\\
\label{eqn:egamma-dissipation}
\text{$\forall\, t\in[0,T],\quad \mathrm{Var}_{\mathcal{D}_h}(\boldsymbol{q}_h;[0,t])$}&\text{$\to\mathrm{Var}_{\mathcal{D}_0}(\boldsymbol{q}_0;[0,t])$,}\\
\label{eqn:egamma-energy}
\text{$\forall\,t\in[0,T],\quad \mathcal{F}_h(t,\boldsymbol{q}_h(t))$}&\text{$\to\mathcal{F}_0(t,\boldsymbol{q}_0(t))$,}\\
\label{eqn:egamma-power}
\text{$\partial_t \mathcal{F}_h(\cdot,\boldsymbol{q}_h)$}&\text{$\to\partial_t\mathcal{F}_0(\cdot,\boldsymbol{q}_0)$ in $L^1(0,T)$.}
\end{align}
In particular, the function $\boldsymbol{q}_0$ is measurable and bounded. Also,   $t\mapsto \boldsymbol{\zeta}(t)$ belongs to $BV([0,T];L^1(\Omega;\rt))$, while  $t\mapsto \mathcal{F}_0(t,\boldsymbol{q}_0(t))$ belongs to $BV([0,T])$.
\end{theorem}

In the previous theorem, the measurability of $\boldsymbol{q}_0$ is meant with respect to the Borel sets of the space $W^{1,2}_0(S;\R^2)\times W^{2,2}_0(S)\times W^{1,2}(S;\rt)$ equipped with the weak product topology. Similarly, the boundedness of the map $\boldsymbol{q}_0$ is understood as the one of the function
\begin{equation*}
	t\mapsto \|\boldsymbol{u}(t)\|_{W^{1,2}(S;\R^2)}+\|v(t)\|_{W^{2,2}(S)}+\|\boldsymbol{\zeta}(t)\|_{W^{1,2}(S;\rt)}.
\end{equation*}

\begin{remark}[Existence of energetic solutions to the bulk model]
\label{rem:ex-ensol-bulk}
We stress that Theorem \ref{thm:evolutionary-gamma-convergence} is just a convergence result: the existence of energetic solutions to the bulk quasistatic model is part of the assumptions. However, our setting is compatible with the existence of energetic solutions. Indeed, recalling Remark \ref{rem:existence-min-bulk}, if the function $\Phi$ in \eqref{eqn:density-W_h} satisfies a feasible polyconvexity assumption, then the existence of energetic solution to the bulk model can be established \cite{bresciani.quasistatic,bresciani.davoli.kruzik}.   
\end{remark}

\begin{remark}[Time-dependent boundary conditions]
\label{rem:time-dependent-bc}
So far, we are not able to treat time-dependent Dirichlet boundary conditions in the proof of Theorem \ref{thm:convergence-AIMP}. Indeed, given the Eulerian character of some energy terms, the approach developed in \cite[Section 4]{francfort.mielke} seems not to be applicable in our setting.
Still, time-dependent boundary conditions like the ones in Remark \ref{rem:dirichlet-bc} can be enforced in a relaxed form as we will briefly discuss.

Let $\overline{\boldsymbol{u}}\in W^{1,1}(0,T;W^{1,\infty}(S;\R^2))$ and $\overline{v}\in W^{1,1}(0,T;W^{2,\infty}(S))$. For every $h>0$, we define the time-dependent deformation $\overline{\boldsymbol{y}}_h\in W^{1,1}(0,T;W^{1,\infty}(\Omega;\R^3))$ by setting
\begin{equation*}
    \overline{\boldsymbol{y}}_h(t)\coloneqq \boldsymbol{\pi}_h + h^{\beta/2} \left ( \begin{matrix}  \overline{\boldsymbol{u}}(t)  \\ 0\end{matrix}   \right )+ h^{\beta/2-1} \left ( \begin{matrix}  \boldsymbol{0}'  \\ \overline{v}(t)\end{matrix}   \right )-h^{\beta/2} x_3 \left ( \begin{matrix}  \nabla'\overline{v}(t)  \\ 0\end{matrix}   \right ).
\end{equation*}
Let $\Gamma \subset \partial S$ be measurable with respect to the one-dimensional Hausdorff measure with $\mathscr{H}^1(\Gamma)>0$. For every $h>0$, the boundary condition 
\begin{equation*}
    \text{$\forall\,t \in [0,T],\quad \boldsymbol{y}=\overline{\boldsymbol{y}}_h(t)$ on $\Gamma \times I$}
\end{equation*}
on admissible deformations is imposed in a relaxed form by augmenting the energy $\mathcal{F}_h$ by the term
\begin{equation*}
    (t,\boldsymbol{q})\mapsto \frac{1}{h^{\beta/2}} \int_{\Gamma \times I} |\boldsymbol{y}'-\overline{\boldsymbol{y}}_h'(t)|\,\d\boldsymbol{a}+\frac{1}{h^{\beta/2-1}} \int_{\Gamma \times I} |\boldsymbol{y}^3-\overline{\boldsymbol{y}}_h^3(t)|\,\d\boldsymbol{a},
\end{equation*}
where $\boldsymbol{q}=(\boldsymbol{y},\boldsymbol{m})$. Of course, in this case, we remove the clamped boundary conditions from the definition of $\mathcal{Q}_h$ in \eqref{eqn:class-Qh}.  
The scalings are chosen in such a way that the corresponding term in the reduced model, which has to be added to $\mathcal{F}_0$, is given by
\begin{equation*}
    (t,\boldsymbol{q}_0)\mapsto \int_{\Gamma \times I} |\boldsymbol{u}-\overline{\boldsymbol{u}}(t)+x_3(\nabla'v-\nabla'\,\overline{v}(t))|\,\d\boldsymbol{a}+\int_\Gamma |v-\overline{v}(t)|\,\d\boldsymbol{l},
\end{equation*}
where $\boldsymbol{q}_0=(\boldsymbol{u},v,\boldsymbol{\zeta})$.
This latter term imposes in a relaxed form  the following limiting boundary conditions:
\begin{equation*}
    \text{$\forall\,t\in [0,T],$ \quad $\boldsymbol{u}=\overline{\boldsymbol{u}}(t)$ on $\Gamma$, \quad $v=\overline{v}(t)$ on $\Gamma$, \quad $\nabla'v=\nabla'\,\overline{v}(t)$ on $\Gamma$.}
\end{equation*}
Clearly, Lemma \ref{lem:clamped} must be suitably modified. 
Note that, contrary to Remark \ref{rem:dirichlet-bc}, no particular assumption on  $\Gamma$ is required in this case.
\end{remark}

The rest of the subsection is devoted to the proof of Theorem \ref{thm:evolutionary-gamma-convergence}. We begin with some preliminary results. The first one constitutes the analogue of Lemma \ref{lem:energy-scaling} for the quasistatic setting.

\begin{lemma}[Energy scaling]
	\label{lem:time-energy-scaling}
	Let $M>0$ and let $(\boldsymbol{q}_h)$ with $\boldsymbol{q}_h\colon [0,T]\to {\mathcal{Q}}_h$   satisfy
	\begin{equation}
		\label{eqn:tot-en-bdd-above}
		\sup_{h>0} \, \sup_{t\in [0,T] \vphantom{h>0}} \mathcal{F}_h(t,\boldsymbol{q}_h(t))\leq M.
	\end{equation}
	Then, there exist $C(M)>0$ and $\overline{h}>0$, where the former does not depend on $(\boldsymbol{q}_h)$, such that
	\begin{align}
		\label{eqn:en-bdd-above}
		\sup_{h \leq \overline{h}}\,\sup_{t\in [0,T] \vphantom{h\leq\overline{h}}} E_h(\boldsymbol{q}_h(t))&\leq C(M).
	\end{align}
\end{lemma}
\begin{proof}
The proof of Lemma \ref{lem:time-energy-scaling} works like the one of Lemma \ref{lem:energy-scaling}. In this case, we consider 
\begin{equation*}
    r_h\coloneqq \sup_{t\in [0,T]} \mathcal{R}_h(\boldsymbol{y}_h(t)), \qquad e_h\coloneqq \sup_{t\in [0,T]} I_h(\boldsymbol{q}_h(t)),
\end{equation*}
where $\boldsymbol{q}_h(t)=(\boldsymbol{y}_h(t),\boldsymbol{m}_h(t))$. Here, we employ the notation in \eqref{eqn:rig} and \eqref{eqn:Ih}. Then, we follow the same strategy of Lemma \ref{lem:energy-scaling}.
\end{proof}

The second preliminary result shows that the sequence of functionals $(\mathcal{F}_h)$ satisfy suitable controls with respect to time. These  represent one of the main assumptions of the theory of evolutionary $\Gamma$-convergence for rate-independent processes \cite{mielke.roubicek.stefanelli}. For convenience, we denote by $Z\subset (0,T)$ the complement of the set of times at which all the functions $\boldsymbol{f}_h$, $g_h$ and $\boldsymbol{h}_h$ for every $h>0$ as well as the functions $\boldsymbol{f}_0$, $g_0$ and $\boldsymbol{h}_0$ are differentiable. Thus, there holds $\mathscr{L}^1(Z)=0$.  

\begin{lemma}[Time-control of the total energy]
	\label{lem:time-control}
Let $M>0$. Then, there exist two constants $C(M),L(M)>0$ such that, defining $\kappa_h \in L^1(0,T)$ by setting
\begin{equation*}
	\kappa_h(t)\coloneqq C(M) \left(h^{-\beta/2} \|\dot{\boldsymbol{f}}_h(t)\|_{L^2(S;\R^2)}\hspace*{-2pt}+\hspace*{-2pt}h^{-\beta/2-1}\|\dot{g}_h(t)\|_{L^2(S)}\hspace*{-2pt} +\hspace*{-2pt} \|\dot{\boldsymbol{h}}_h\circ \boldsymbol{\pi}_h(t)\|_{L^2(\rt;\rt)} \right),
\end{equation*}
we have that, for every $(\widehat{\boldsymbol{q}}_h)$ with $\widehat{\boldsymbol{q}}_h\in {\mathcal{Q}}_h$  satisfying
\begin{equation}
	\label{eqn:bounded-energy}
	\sup_{h>0} \,  		  E_h(\widehat{\boldsymbol{q}}_h)\leq M,	
\end{equation}
there exists $\overline{h}>0$ such that the following estimates hold for every $h\leq \overline{h}$: 
\begin{align}
	\label{eqn:dr-gronwall1}
	\forall\, t\in(0,T)\setminus Z, \quad   |\partial_t \mathcal{F}_h(t,\widehat{\boldsymbol{q}}_h)|&\leq \kappa_h(t) \left( \mathcal{F}_h(t,\widehat{\boldsymbol{q}}_h) + L(M) \right),\\
	\label{eqn:dr-gronwall2}
	 \forall s,t\in[0,T],\quad \mathcal{F}_h(t,\widehat{\boldsymbol{q}}_h)+L(M)&\leq  \left(\mathcal{F}_h(s,\widehat{\boldsymbol{q}}_h)+L(M)\right) \mathrm{e}^{|K_h(t)-K_h(s)|},\\
	 	\label{eqn:dr-gronwall3}
		\forall t\in (0,T)\setminus Z,\,\forall s\in [0,T], \quad 
		|\partial_t \mathcal{F}_h(t,\widehat{\boldsymbol{q}}_h)|&\leq \kappa_h(t) \left(\mathcal{F}_h(s,\widehat{\boldsymbol{q}}_h)+L(M)\right) \mathrm{e}^{|K_h(t)-K_h(s)|}.
\end{align}
In the previous estimates,  we define $K_h\in AC([0,T])$ by setting
\begin{equation*}
	K_h(t)\coloneqq \int_0^t K_h(\tau)\,\d\tau.
\end{equation*}
In particular, the constant $L(M)>0$ satisfies
\begin{equation}
	\label{eqn:L-below}
	\inf_{\vphantom{t \in [0,T]}h>0} \inf_{t \in [0,T]} \mathcal{F}_h(t,\widehat{\boldsymbol{q}}_h)\geq -L(M).
\end{equation}
Eventually, defining $\kappa_0\in L^1(0,T)$ and $K_0\in AC([0,T])$ as 
\begin{align*}
\kappa_0(t)&\coloneqq C(M) \left( \|\dot{\boldsymbol{f}}_0(t)\|_{L^2(S;\R^2)}+\|\dot{g}_0(t)\|_{L^2(S)} + \|\chi_I\dot{\boldsymbol{h}}_0(t)\|_{L^2(\rt;\rt)} \right),\\
K_0(t)&\coloneqq \int_0^t \kappa_0(\tau)\,\d\tau,
\end{align*}
 there hold:
\begin{align}
	\label{eqn:kh-conv}
	\text{$\kappa_h$}&\text{$\to \kappa_0$ in $L^1(0,T)$,}\\
	\label{eqn:Kh-conv}
	K_h&\text{$\to K_0$ uniformly in $[0,T]$.}
\end{align}
\end{lemma}
We stress that, in the previous statement, $\overline{h}$ depends on $(\widehat{\boldsymbol{q}}_h)$ while the constants $C(M)$ and $L(M)$ depend only on $M$.
\begin{proof}
First of all, \eqref{eqn:kh-conv}--\eqref{eqn:Kh-conv} come directly from \eqref{eqn:load-f-time}--\eqref{eqn:load-h-time}.  By \eqref{eqn:bounded-energy} and Remark \ref{rem:norm}, setting $\widehat{\boldsymbol{u}}_h\coloneqq \mathcal{U}_h(\widehat{\boldsymbol{q}}_h)$ and $\widehat{v}_h\coloneqq \mathcal{V}_h(\widehat{\boldsymbol{q}}_h)$, we have 
\begin{equation}
	\label{eqn:n1}
	\|\widehat{\boldsymbol{u}}_h\|_{W^{1,2}(S;\R^2)}+\|\widehat{v}_h\|_{W^{1,2}(S)}\leq C(M) \left (\sqrt{E_h^{\rm el}(\widehat{\boldsymbol{q}}_h)}+1 \right).
\end{equation}
Let $\widehat{\boldsymbol{q}}_h=(\widehat{\boldsymbol{y}}_h,\widehat{\boldsymbol{m}}_h)$ and set $\widehat{\boldsymbol{\mu}}_h\coloneqq \mathcal{M}_h(\widehat{\boldsymbol{q}}_h)$.
Recalling \eqref{eqn:prestrain-determinant}, \eqref{eqn:coercivity-Phi-det} and \eqref{eqn:a3},   we estimate
\begin{equation*}
		\int_{\rt} |\widehat{\boldsymbol{\mu}}_h|^2\,\d\boldsymbol{x}=\int_{\Omega} \frac{1}{\det \nabla_h\widehat{\boldsymbol{y}}_h}\,\d\boldsymbol{x}\leq C \int_{\Omega} \frac{1}{(\det \nabla_h\widehat{\boldsymbol{y}}_h)^a}\,\d\boldsymbol{x}\leq C \left(h^\beta  E_h^{\rm el}(\widehat{\boldsymbol{q}}_h) +1 \right),
\end{equation*}
so that
\begin{equation}
	\label{eqn:n2}
	\|\widehat{\boldsymbol{\mu}}_h\|_{L^2(\rt;\rt)}\leq C \left( \sqrt{E_h^{\rm el}(\widehat{\boldsymbol{q}}_h)}+1  \right).
\end{equation}
Using \eqref{eqn:n1}--\eqref{eqn:n2} and the H\"{o}lder inequality, we control the work of applied loads. For $t\in (0,T)\setminus Z$, we have 
\begin{equation*}
	\begin{split}
		|\mathcal{L}_h(t,\widehat{\boldsymbol{q}}_h)|&\leq \left | \int_S h^{-\beta/2}\boldsymbol{f}_h(t)\cdot \widehat{\boldsymbol{u}}_h\,\d \boldsymbol{x}' \right |+ \left |  \int_S h^{-\beta/2-1}g_h(t)\,\widehat{v}_h\,\d\boldsymbol{x}' \right |
		+\left |\int_{\rt} \boldsymbol{h}_h \circ \boldsymbol{\pi}_h(t)\cdot \widehat{\boldsymbol{\mu}}_h\,\d\boldsymbol{x} \right |\\
		&\leq C(M) \left( \|\widehat{\boldsymbol{u}}_h\|_{W^{1,2}(S;\R^2)}+\|\widehat{v}_h\|_{W^{1,2}(S)} + \|\widehat{\boldsymbol{\mu}}_h\|_{L^2(\rt;\rt)}   \right)
		\leq C(M) \left(\sqrt{E_h^{\rm el}(\widehat{\boldsymbol{q}}_h)}+1    \right).
	\end{split}
\end{equation*}
Here, we exploited the uniform boundedness of $(h^{-\beta/2}\boldsymbol{f}_h)$, $(h^{-\beta/2-1}g_h)$ and $(\boldsymbol{h}_h \circ \boldsymbol{\pi}_h)$ with respect to time which follows from \eqref{eqn:load-f-time}--\eqref{eqn:load-h-time} by the Morrey embedding. Then, using the Young inequality, we obtain
\begin{equation}
	\label{eqn:sr1}
	\begin{split}
		\mathcal{F}_h(t,\widehat{\boldsymbol{q}}_h)&\geq E_h(\widehat{\boldsymbol{q}}_h)-|\mathcal{L}_h(t,\widehat{\boldsymbol{q}}_h)|\geq E_h(\widehat{\boldsymbol{q}}_h) - C(M)\left(\sqrt{E_h^{\rm el}(\widehat{\boldsymbol{q}}_h)}+1    \right)
		\geq C(M) E_h(\widehat{\boldsymbol{q}}_h)-L(M),
	\end{split}
\end{equation}
for some constants $C(M), L(M)>0$. 

Now, for convenience, set 
\begin{equation*}
	\widetilde{\kappa}_h(t)\coloneqq h^{-\beta/2}\|\dot{\boldsymbol{f}}_h(t)\|_{L^2(S;\R^2)}+h^{-\beta/2-1}\|\dot{g}_h(t)\|_{L^2(S)}+\|\dot{\boldsymbol{h}}_h\circ \boldsymbol{\pi}_h(t)\|_{L^2(\rt;\rt)}.     
\end{equation*}
Making use of the H\"{o}lder inequality together with \eqref{eqn:n1}--\eqref{eqn:n2}, we estimate
\begin{equation*}
	\begin{split}
		|\partial_t \mathcal{F}_h(t,\widehat{\boldsymbol{q}}_h)|&\leq \left | \int_S h^{-\beta/2}\dot{\boldsymbol{f}}_h(t)\cdot \widehat{\boldsymbol{u}}_h\,\d \boldsymbol{x}' \right |+ \left |  \int_S h^{-\beta/2-1}\dot{g}_h(t)\,\widehat{v}_h\,\d\boldsymbol{x}' \right |
		+\left |\int_{\rt} \dot{\boldsymbol{h}}_h \circ \boldsymbol{\pi}_h(t)\cdot \widehat{\boldsymbol{\mu}}_h\,\d\boldsymbol{x} \right |\\
		&\leq  \widetilde{\kappa}_h(t) \left( \|\widehat{\boldsymbol{u}}_h\|_{W^{1,2}(S;\R^2)}+\|\widehat{v}_h\|_{W^{1,2}(S)} + \|\widehat{\boldsymbol{\mu}}_h\|_{L^2(\rt;\rt)}   \right)\\
		&\leq C(M) \widetilde{\kappa}_h(t) \left(\sqrt{E_h^{\rm el}(\widehat{\boldsymbol{q}}_h)}+1    \right).
	\end{split}
\end{equation*}
Combining this with \eqref{eqn:sr1}, we obtain
\begin{equation*}
	|\partial_t \mathcal{F}_h(t,\widehat{\boldsymbol{q}}_h)|\leq C(M) \widetilde{\kappa}_h(t) \left( \mathcal{F}_h(t,\widehat{\boldsymbol{q}}_h)+L(M)   \right),
\end{equation*}
which proves \eqref{eqn:dr-gronwall1}. 
Note that \eqref{eqn:L-below} holds in view of \eqref{eqn:sr1}. From \eqref{eqn:dr-gronwall1}, we deduce \eqref{eqn:dr-gronwall2} thanks to the Gronwall inequality. Eventually, \eqref{eqn:dr-gronwall3} follows.
\end{proof}

We now proceed with the proof of Theorem \ref{thm:evolutionary-gamma-convergence}.

\begin{proof}[Proof of Theorem \ref{thm:evolutionary-gamma-convergence}]
We rigorously follow the scheme in \cite{mielke.roubicek.stefanelli}. Therefore, 
 the proof is subdivided into six steps.

\textbf{Step 1 (A priori estimates).} Let $h>0$ and $t \in [0,T]$. Let $\overline{\boldsymbol{q}}_h\in\mathcal{Q}_h$ be defined as in the proof of Theorem  \ref{thm:convergence-almost-minimizers}. 
Testing \eqref{eqn:bulk-stability} with $\widehat{\boldsymbol{q}}_h=\overline{\boldsymbol{q}}_h$,  we obtain
\begin{equation}
\label{eqn:egamma-apriori1}
	\mathcal{F}_h(t,\boldsymbol{q}_h(t))\leq \mathcal{F}_h(t,\overline{\boldsymbol{q}}_h)+\mathcal{D}_h(\boldsymbol{q}_h(t),\overline{\boldsymbol{q}}_h).
\end{equation}
As the applied loads are uniformly bounded with respect to $h>0$ and $t \in [0,T]$ thanks to \eqref{eqn:load-f-time}--\eqref{eqn:load-h-time} and the Morrey embedding, we check that the first term on the right-hand side  of \eqref{eqn:egamma-apriori1} is uniformly bounded with respect to both $h>0$ and $t\in [0,T]$ by arguing as in the proof of Theorem  \ref{thm:convergence-almost-minimizers}.  The second term on the right-hand side  of \eqref{eqn:egamma-apriori1} is also uniformly bounded with respect to $h>0$ and $t\in [0,T]$ because both $\boldsymbol{q}_h(t)$ and $\overline{\boldsymbol{q}}_h$ satisfy the magnetic saturation constrain. Therefore, we deduce \eqref{eqn:tot-en-bdd-above} for some $M>0$ and, by applying Lemma \ref{lem:time-energy-scaling}, also \eqref{eqn:en-bdd-above} for some $\overline{h}>0$.

Applying Lemma \ref{lem:time-control} to $\widehat{\boldsymbol{q}}_h=\boldsymbol{q}_h(T)$, we see that
\begin{equation}
	\label{eqn:iL}
	\inf_{h\leq \overline{h}}\mathcal{F}_h(T,\boldsymbol{q}_h(T))\geq -L(M).
\end{equation}
and, for $h \leq \overline{h}$, there holds
\begin{equation}
	\label{eqn:ii}
	\forall\,t \in (0,T)\setminus Z, \quad |\partial_t \mathcal{F}_h(t,\boldsymbol{q}_h(t))\leq \kappa_h(t) \left( \mathcal{F}_h(t,\boldsymbol{q}_h(t))+L(M)\right).
\end{equation}
By \eqref{eqn:bulk-energy-balance},  we have
\begin{equation}
	\label{eqn:egamma-apriori2}
	\mathrm{Var}_{\mathcal{D}_h}(\boldsymbol{q}_h;[0,T])=\mathcal{F}_h(0,\boldsymbol{q}_h^0)-\mathcal{F}_h(T,\boldsymbol{q}_h(T))+\int_0^T \partial_t \mathcal{F}_h(\tau,\boldsymbol{q}_h(\tau))\,\d\tau.
\end{equation}
The first term on the right-hand side of \eqref{eqn:egamma-apriori2} is uniformly bounded  by \eqref{eqn:egamma-datum-energy} and so is the second one because of \eqref{eqn:iL}. Additionally, thanks to \eqref{eqn:ii},  we estimate
\begin{equation}
\label{eqn:power-bdd}
\begin{split}
	\int_0^T \left | \partial_t \mathcal{F}_h(\tau,\boldsymbol{q}_h(\tau)) \right |\,\d\tau &\leq  \int_0^T \kappa_h(\tau) \left( \mathcal{F}_h(\tau,\boldsymbol{q}_h(\tau))+L(M) \right) \,\mathrm{e}^{K_h(\tau)}\d\tau\\
	&\leq  \left(M+L(M) \right) \left( \mathrm{e}^{K_h(T)}-1 \right)\leq C(M,T),
\end{split}
\end{equation}
where in the last line we used \eqref{eqn:tot-en-bdd-above} and the uniform boundedness of   $(K_h)$  which comes from \eqref{eqn:Kh-conv}. Therefore, we obtained the following a priori estimate:
\begin{equation}
	\label{eqn:apriori-estimate}
	\sup_{h>0} \, \sup_{0\leq t\leq T \vphantom{h>0}} \mathcal{F}_h(t,\boldsymbol{q}_h(t)) + \sup_{h\leq \overline{h}} \mathrm{Var}_{\mathcal{D}_h}(\boldsymbol{q}_h;[0,T])\leq C(M).
\end{equation}
For every $h>0$, we define  $f_h\colon [0,T]\to \R$ by setting $f_h(t)\coloneqq \mathcal{F}_h(t,\boldsymbol{q}_h(t))$. In view of the previous estimate, we already know that $(f_h)$ is uniformly bounded.
We claim that this sequence has also uniformly bounded total variation.  Observe that, by \eqref{eqn:bulk-energy-balance}, for every $s,t\in[0,T]$ with $s<t$, there holds
\begin{equation*}
	\mathcal{F}_h(t,\boldsymbol{q}_h(t))+\mathrm{Var}_{\mathcal{D}_h}(\boldsymbol{q}_h;[s,t])=\mathcal{F}_h(s,\boldsymbol{q}_h(s))+\int_s^t \partial_t \mathcal{F}_h(\tau,\boldsymbol{q}_h(\tau))\,\d\tau.
\end{equation*}
Let $\Pi=(t^0,\dots,t^N)$ be a partition of $[0,T]$. In this case, we estimate
\begin{equation*}
	\begin{split}
		\sum_{i=1}^N \left | f_h(t^i)-f_h(t^{i-1})\right |&\leq \sum_{i=1}^N \mathrm{Var}_{\mathcal{D}_h}(\boldsymbol{q}_h;[t^{i-1},t^i])
		+\sum_{i=1}^N \int_{t^{i-1}}^{t^i}\left |\partial_t \mathcal{F}_h(\tau,\boldsymbol{q}_h(\tau)) \right|\,\d\tau\\
		&=\mathrm{Var}_{\mathcal{D}_h}(\boldsymbol{q}_h;[0,T])+\int_0^T |\partial_t \mathcal{F}_h(\tau,\boldsymbol{q}_h(\tau)) |\,\d\tau.
	\end{split}
\end{equation*}
As the right-hand side is uniformly by \eqref{eqn:power-bdd}--\eqref{eqn:apriori-estimate}, given the arbitrariness of $\Pi$, we deduce
\begin{equation}
\label{eqn:tot-en-tot-var}
	\sup_{h \ll 1} \mathrm{Var} \left(f_h;[0,T]  \right)\leq C(M).
\end{equation}

\textbf{Step 2 (Compactness).} For every $h>0$, set
\begin{equation*}
	\mathcal{H}_h\coloneqq \bigcup_{0\leq\widehat{t}\leq T } \left \{ \widehat{\boldsymbol{q}}_h\in{\mathcal{Q}}_h:\:\mathcal{F}_h(\widehat{t},\widehat{\boldsymbol{q}}_h)\leq M      \right \}, \qquad \mathcal{K}_h\coloneqq \left \{\mathcal{Z}_h(\widehat{\boldsymbol{q}}_h):\:\widehat{\boldsymbol{q}}_h\in\mathcal{H}_h     \right \}.
\end{equation*}
Then, define
\begin{equation*}
	\mathcal{H}\coloneqq \bigcup_{h>0} \mathcal{H}_h, \qquad \mathcal{K}\coloneqq \bigcup_{h>0} \mathcal{K}_h.
\end{equation*}
Observe that, by Lemma \ref{lem:time-energy-scaling}, every sequence $(\widehat{\boldsymbol{q}}_h) \subset \mathcal{H}$ satisfies 
\begin{equation*}
	\sup_{h \ll 1}\,\sup_{0\leq t\leq T \vphantom{h>0}} E_h(\widehat{\boldsymbol{q}}_h)\leq C(M).
\end{equation*}
Therefore, by Proposition \ref{prop:compactness},  the sequence $(\widehat{\boldsymbol{z}}_h)$, where $\widehat{\boldsymbol{z}}_h\coloneqq \mathcal{Z}_h(\widehat{\boldsymbol{q}}_h)$ for every $h>0$, is compact in $L^1(\Omega;\rt)$. This shows that $\mathcal{K}$ is a compact subset of $L^1(\Omega;\rt)$. 

Now, in view of \eqref{eqn:tot-en-bdd-above}, the sequence $(\boldsymbol{q}_h)$ takes values in $\mathcal{H}$.
For convenience, for every $h>0$, we define
\begin{equation*}
	\boldsymbol{z}_h\colon [0,T]\to L^1(\Omega;\rt), \quad \delta_h\colon [0,T]\to [0,+\infty),
\end{equation*}
by setting 
\begin{equation*}
	 \boldsymbol{z}_h(t)\coloneqq \mathcal{Z}_h(\boldsymbol{q}_h(t)), \quad \delta_h(t)\coloneqq \mathrm{Var}_{\mathcal{D}_h}(\boldsymbol{q}_h;[0,t]).
\end{equation*}
By \eqref{eqn:apriori-estimate}--\eqref{eqn:tot-en-tot-var}, the sequence $(f_h)$ is uniformly bounded in $BV([0,T])$. 
By construction, $(\boldsymbol{z}_h)$ takes values in $\mathcal{K}$ while, by \eqref{eqn:apriori-estimate}, it has uniformly bounded variation. Eventually, by definition, each map $\delta_h$ is increasing and the sequence $(\delta_h)$ is uniformly bounded by \eqref{eqn:apriori-estimate}. 
At this point, we apply the Helly compactness theorem \cite[Theorem 3.2]{mainik.mielke}. This yields the existence of three maps
\begin{equation*}
	f \colon [0,T]\to [0,+\infty), \quad \boldsymbol{z}\colon [0,T]\to L^1(\Omega;\R^3), \quad \delta\colon [0,T]\to [0,+\infty),
\end{equation*}
with $f\in BV([0,T])$, $\boldsymbol{z}\in BV(0,T;L^1(\Omega;\rt))$ and $\delta$ increasing, such that, up to subsequences, the following convergences hold:
\begin{align}
	\label{eqn:helly-f}
	\forall\, t\in [0,T], \quad f_h(t)&\to f(t),\\
	\label{eqn:helly-z}
	\forall\, t\in [0,T], \quad \boldsymbol{z}_h(t)&\to\text{$\boldsymbol{z}(t)$ in $L^1(\Omega;\rt)$,}\\
	\label{eqn:helly-delta}
	\forall\, t\in [0,T], \quad \delta_h(t)&\to \delta(t).
\end{align}

To construct the candidate solution $\boldsymbol{q}_0\colon [0,T]\to {\mathcal{Q}}_0$ of the reduced model we proceed as follows.
For every $h>0$ , define the functions
\begin{equation*}
	\begin{split}
		&\boldsymbol{u}_h\colon [0,T]\to W^{1,2}(S;\R^2), \quad  v_h\colon [0,T]\to W^{1,2}(S), \quad  \boldsymbol{w}_h\colon [0,T]\to W^{1,2}(S;\rt),\\
		&\hspace{20mm} \boldsymbol{\mu}_h\colon [0,T] \to L^2(\rt;\rt), \quad \boldsymbol{N}_h\colon [0,T]\to L^2(\rt;\rtt).
	\end{split}	
\end{equation*}
by setting
\begin{equation*}
	\begin{split}
		&\boldsymbol{u}_h(t)\coloneqq \mathcal{U}_h(\boldsymbol{q}_h(t)), \quad v_h(t)\coloneqq \mathcal{V}_h(\boldsymbol{q}_h(t)),\quad \boldsymbol{w}_h(t)\coloneqq \mathcal{W}_h(\boldsymbol{q}_h(t)),\\
		&\hspace{20mm}\boldsymbol{\mu}_h(t)\coloneqq \mathcal{M}_h(\boldsymbol{q}_h(t)), \quad \boldsymbol{N}_h(t)\coloneqq \mathcal{N}_h(\boldsymbol{q}_h(t)).
	\end{split}
\end{equation*}
Recalling \eqref{eqn:en-bdd-above}, by Proposition \ref{prop:compactness}, we have that
\begin{equation}
\label{eqn:time-bdd-cmpt}
	\begin{split}
		\sup_{h\leq \overline{h}} \sup_{t\in [0,T]}  \left \{ \|\boldsymbol{u}_h(t)\|_{W^{1,2}(S;\R^2)}+\|v_h(t)\|_{W^{1,2}(S)}+\|\boldsymbol{w}_h(t)\|_{W^{1,2}(S;\rt)} \right \}&\\
		+\sup_{h\leq \overline{h}} \sup_{t\in [0,T]}  \left \{\|\boldsymbol{\mu}_h(t)\|_{L^2(\rt;\rt)}+\|\boldsymbol{N}_h(t)\|_{L^2(\rt;\rtt)} \right \}& \leq C(M).
	\end{split}
\end{equation}
Now, define $\mathcal{X}$ as the set of all quintuplets
\begin{equation*}
	(\widehat{\boldsymbol{u}},\widehat{v},\widehat{\boldsymbol{w}},\widehat{\boldsymbol{\mu}},\widehat{\boldsymbol{N}})\in W^{1,2}_0(S;\R^2)\times W^{1,2}_0(S)\times W^{1,2}_0(S;\rt)\times L^2(\rt;\rt)\times L^2(\rt;\rtt)
\end{equation*}
with
\begin{equation*}
	\begin{split}
		\sup_{h>0}  \left \{        \|\widehat{\boldsymbol{u}}\|_{W^{1,2}(S;\R^2)}+\|\widehat{v}\|_{W^{1,2}(S)}+\|\widehat{\boldsymbol{w}}\|_{W^{1,2}(S;\rt)} + \|\widehat{\boldsymbol{\mu}}\|_{L^2(\rt;\rtt)}+\|\widehat{\boldsymbol{N}}\|_{L^2(\rt;\rtt)}    \right \}&\leq C(M),
	\end{split}
\end{equation*}
where $C(M)>0$ is the same constant in \eqref{eqn:time-bdd-cmpt}. The space $\mathcal{X}$ is endowed with the product weak topology, which makes it a complete and separable metric space. For every $h>0$, we define the set-valued map $\mathfrak{S}_h\colon [0,T]\to \mathcal{P}(\mathcal{X})$ by setting $\mathfrak{S}_h(t)\coloneqq \{\boldsymbol{s}_h(t)\}$, where
\begin{equation*}
	\boldsymbol{s}_h(t)\coloneqq \left(\boldsymbol{u}_h(t),v_h(t),\boldsymbol{w}_h(t),\boldsymbol{\mu}_h(t),\boldsymbol{N}_h(t)    \right).
\end{equation*}
In view of \eqref{eqn:time-bdd-cmpt}, this map takes indeed values in $\mathcal{X}$. Consider $\mathfrak{S}\colon [0,T]\to \mathcal{P}(\mathcal{X})$ with $\mathfrak{S}(t)$ defined as the set of all limit points of the sequence $(\boldsymbol{s}_h(t))$ in $\mathcal{X}$.  By definition,  this is a closed subset of $\mathcal{X}$ for every $t \in [0,T]$, while the  map $\mathfrak{S}$ is measurable thanks to \cite[Theorem 8.2.5]{aubin.frankowska}. From \eqref{eqn:time-bdd-cmpt}, by weak compactness, we deduce that the set $\mathfrak{S}(t)$ is nonempty for every $t\in [0,T]$. Therefore, by the measurable selection theorem \cite[Theorem 8.1.3]{aubin.frankowska}, there exists a measurable map $\boldsymbol{s}\colon [0,T]\to\mathcal{X}$ with $\boldsymbol{s}(t)\in \mathfrak{S}(t)$ for every $t \in [0,T]$. 
For any such $t$, let $\boldsymbol{s}(t)=(\boldsymbol{u}(t),v(t),\boldsymbol{w}(t),\boldsymbol{\mu}(t),\boldsymbol{N}(t))$.  By definition of $\mathfrak{S}(t)$, there exists a subsequence $(h_k)$, possibly depending on $t$, with $h_k\to 0$, as $k\to \infty$,  such that the following convergences hold, as $k\to \infty$:
\begin{align}
	\label{eqn:egamma-cmpt-u}
	\boldsymbol{u}_{h_k}(t)&\text{$\wk \boldsymbol{u}(t)$ in $W^{1,2}(S;\R^2)$,}\\
	\label{eqn:egamma-cmpt-v}
	v_{h_k}(t)&\text{$\wk v(t)$ in $W^{1,2}(S)$,}\\
	\boldsymbol{w}_{h_k}(t)&\text{$\wk \boldsymbol{w}(t)$ in $W^{1,2}(S;\rt)$,}\\
	\label{eqn:egamma-cmpt-mu}
	\boldsymbol{\mu}_{h_k}(t)&\text{$\wk \boldsymbol{\mu}(t)$ in $L^2(\rt;\rt)$,}\\
	\label{eqn:egamma-cmpt-nu}
	\boldsymbol{N}_{h_k}(t)&\text{$\wk \boldsymbol{N}(t)$ in $L^2(\rt;\rtt)$.}
\end{align}
Applying Proposition \ref{prop:compactness} to the sequence $(\boldsymbol{q}_{h_k}(t))$ and appealing to the Urysohn property, we deduce several facts. First of all, 
\begin{equation*}
	\boldsymbol{w}(t)=-\frac{1}{12} \left(\hspace*{-5pt}\begin{array}{cc}
	\nabla' v(t) \\ 0
	\end{array}   \hspace*{-5pt}\right),
\end{equation*}
so that $v\in W^{2,2}_0(S)$.
Second, there exist $\boldsymbol{\zeta}(t)\in W^{1,2}(S;\S^2)$ and $\boldsymbol{\chi}(t)\in L^2(\rt;\rt)$ such that
\begin{equation}
	\label{eqn:egamma-mu-nu-id}
	\boldsymbol{\mu}(t)=\chi_\Omega \boldsymbol{\zeta}(t), \quad \boldsymbol{N}(t)=\chi_\Omega \left(\nabla'\boldsymbol{\zeta}(t)\vert \boldsymbol{\chi}(t)   \right).
\end{equation}
Third, we have
\begin{equation}
    \label{eqn:egamma-cmpt-z}
    \text{$\boldsymbol{z}_{h_k}(t)\to\boldsymbol{\zeta}(t)$ in $L^1(\Omega;\rt),$}
\end{equation}
which, together with \eqref{eqn:helly-z}, entails
\begin{equation}
    \label{eqn:egamma-cmpt-z-id}
	\boldsymbol{z}(t)=\boldsymbol{\zeta}(t).
\end{equation}
Eventually,  Proposition \ref{prop:compactness}  ensures the existence of $(\boldsymbol{R}_{h}(t))\subset W^{1,p}(S;SO(3))$ and $\boldsymbol{G}(t)\in L^2(\Omega;\rtt)$ such that, setting $\boldsymbol{F}_h(t)\coloneqq \nabla_h\boldsymbol{y}_h(t)$, we have
\begin{align}
    \label{eqn:time-R}
    \boldsymbol{R}_h(t)&\text{$\to \boldsymbol{I}$ in $L^2(\Omega;\rtt)$,}\\
    \label{eqn:time-G}
    \boldsymbol{G}_h(t)\coloneqq h^{-\beta/2} \left (\boldsymbol{R}_h(t)^\top \boldsymbol{F}_h(t)-\boldsymbol{I} \right )&\text{$\wk \boldsymbol{G}(t)$ in $L^2(\Omega;\rtt)$,}
\end{align}
and, for almost every $\boldsymbol{x}\in\Omega$, there holds
\begin{equation}
\label{eqn:time-G-structure}
    \boldsymbol{G}''(t,\boldsymbol{x})=\sym \nabla'\boldsymbol{u}(t,\boldsymbol{x}')-((\nabla')^2 v(t,\boldsymbol{x}'))x_3.
\end{equation}

From the measurability of the map $t \mapsto \nabla\boldsymbol{w}(t)$ from $[0,T]$ to $L^2(\rtt)$, we infer the measurability of $v$ as a map from $[0,T]$ to $W^{2,2}(S)$. Similarly, from the measurability of $\boldsymbol{\mu}$ and $\boldsymbol{N}$, we deduce the measurability of $t \mapsto \boldsymbol{\zeta}(t)$ as a map from $[0,T]$ to $W^{1,2}(S;\rt)$. Hence, defining $\boldsymbol{q}_0\colon [0,T]\to {\mathcal{Q}}_0$ by setting $\boldsymbol{q}_0(t)\coloneqq (\boldsymbol{u}(t),v(t),\boldsymbol{\zeta}(t))$, this map results to be measurable. Also, given \eqref{eqn:time-bdd-cmpt}--\eqref{eqn:egamma-cmpt-nu}, by  lower semicontinuity, we see that map $\boldsymbol{q}_0$ is bounded.

For every $h>0$, let $P_h \colon (0,T)\to \R$ be defined by setting $P_h(t)\coloneqq \partial_t \mathcal{F}_h(t,\boldsymbol{q}_h(t))$. By Definition \ref{def:energetic-solution-bulk},  $P_h\in L^1(0,T)$ . Thanks to \eqref{eqn:tot-en-bdd-above} and \eqref{eqn:ii}, for every $t \in (0,T)\setminus Z$, we have
\begin{equation}
	\label{eqn:Ph-control}
	|P_h(t)|\leq \kappa_h(t) \left(\mathcal{F}_h(t,\boldsymbol{q}_h(t)) + L(M)  \right)\leq C(M) \kappa_h(t).
\end{equation} 
In view of the previous inequality, the equi-integrability of $(\kappa_h)$, which comes from \eqref{eqn:kh-conv} by the Vitali convergence theorem, implies the one of $(P_h)$. Thus, by the Dunford-Pettis theorem \cite[Theorem 2.54]{fonseca.leoni}, there exists $P\in L^1(0,T)$ such that, up to subsequences, we have
\begin{equation}
\label{eqn:weak-conv-power}
	\text{$P_h \wk P$ in $L^1(0,T)$.}
\end{equation} 
Define $\widehat{P}\colon (0,T) \to \R$ by setting
\begin{equation*}
	\widehat{P}(t)\coloneqq \limsup_{h\to 0^+} P_h(t).
\end{equation*}
Exploiting \eqref{eqn:kh-conv} and\eqref{eqn:Ph-control}, we check that $\widehat{P} \in L^1(0,T)$. Also, by the reverse Fatou lemma \cite[Corollary 5.35]{wheeden.zygmund}, we see that $P\leq \widehat{P}$ almost everywhere in $(0,T)$.

Define $P_0\colon (0,T)\to\R$ by setting $P_0(t)=\partial_t \mathcal{F}_0(t,\boldsymbol{q}_0(t))$. We claim that $\widehat{P}=P_0$ almost everywhere in $(0,T)$. Recalling \eqref{eqn:load-f-time}--\eqref{eqn:load-h-time}, for almost every $t \in (0,T)\setminus Z$, we have
\begin{align*}
	h^{-\beta/2}\dot{\boldsymbol{f}}_h(t)&\text{$\to \dot{\boldsymbol{f}}_0(t)$ in $L^2(S;\R^2)$,}\\
	h^{-\beta/2-1}\dot{g}_h(t)&\text{$\to \dot{g}_0(t)$ in $L^2(S)$,}\\
	\dot{\boldsymbol{h}}_h\circ \boldsymbol{\pi}_h(t)&\text{$\to \chi_I \dot{\boldsymbol{h}}_0(t)$ in $L^2(\rt;\rt)$,}\\
\end{align*}
and 
\begin{equation*}
\begin{split}
	P_h(t)=&-\frac{1}{h^\beta}\int_\Omega \dot{\boldsymbol{f}}_h(t)\cdot \left(\boldsymbol{y}_h'(t)-\boldsymbol{x}' \right)\,\d\boldsymbol{x}-\frac{1}{h^\beta}\int_\Omega \dot{g}_h(t)\,y_h^3(t)\,\d\boldsymbol{x}'
	-\frac{1}{h}\int_{\Omega^{\boldsymbol{y}_h(t)}} \dot{\boldsymbol{h}}_h(t)\cdot \boldsymbol{m}_h(t)\,\d\boldsymbol{\xi}\\
	=&-\int_S h^{-\beta/2}\dot{\boldsymbol{f}}_h(t) \cdot \boldsymbol{u}_h(t)\,\d\boldsymbol{x}'-\int_S h^{-\beta/2-1}\dot{g}_h(t)\,v_h(t)\,\d\boldsymbol{x}'
	-\int_{\rt} \dot{\boldsymbol{h}}_h \circ \boldsymbol{\pi}_h(t)\cdot \boldsymbol{\mu}_h(t)\,\d\boldsymbol{x}.
\end{split}
\end{equation*}
Without loss of generality, we can assume that the subsequence $(h_k)$ in \eqref{eqn:egamma-cmpt-u}--\eqref{eqn:egamma-cmpt-nu} additionally satisfies $P_{h_k}(t)\to \widehat{P}(t)$, as $k\to \infty$. Thus, taking the limit along the subsequence $(h_k)$, as $k\to \infty$, at both sides of the previous identity, we obtain
\begin{equation*}
	\widehat{P}(t)=-\int_S \dot{\boldsymbol{f}}_0(t)\cdot \boldsymbol{u}(t)\,\d\boldsymbol{x}-\int_S \dot{g}_0(t)\,v(t)\,\d\boldsymbol{x}'-\int_S \dot{\boldsymbol{h}}_0(t)\cdot \boldsymbol{\zeta}(t) \,\d\boldsymbol{x}'.
\end{equation*}
As the right-hand side of the previous equation coincides with $P_0(t)$  for almost every $t\in (0,T)\setminus Z$, this proves the claim.

\textbf{Step 3 (Reduced stability).} Fix $t\in [0,T]$ and consider the subsequence $(h_k)$ in \eqref{eqn:egamma-cmpt-u}--\eqref{eqn:egamma-cmpt-nu}. Let $\widehat{\boldsymbol{q}}_0=(\widehat{\boldsymbol{u}},\widehat{v},\widehat{\boldsymbol{\zeta}})\in{\mathcal{Q}}_0$ and let $(\widehat{\boldsymbol{q}}_h)$ be the sequence provided by Proposition \ref{prop:recovery}. For every $k\in\N$, the global stability condition \eqref{eqn:bulk-stability} gives 
\begin{equation}
\label{eqn:bulk-stab}
	\mathcal{F}_{h_k}(t,\boldsymbol{q}_{h_k}(t))\leq \mathcal{F}_{h_k}(t,\widehat{\boldsymbol{q}}_{h_k})+\mathcal{D}_{h_k}(\boldsymbol{q}_{h_k}(t),\widehat{\boldsymbol{q}}_{h_k}).
\end{equation}
For the left-hand side, in view of \eqref{eqn:load-f-time}--\eqref{eqn:load-h-time},  \eqref{eqn:egamma-cmpt-u}--\eqref{eqn:egamma-cmpt-v} and \eqref{eqn:egamma-cmpt-mu}--\eqref{eqn:time-G-structure}, we obtain
\begin{equation}
	\label{eqn:improved1}
	\mathcal{F}_0(t,\boldsymbol{q}_0(t))\leq \liminf_{k\to\infty} \mathcal{F}_{h_k}(t,\boldsymbol{q}_{h_k}(t))=f(t).
\end{equation} 
Here, we applied Proposition \ref{prop:lb} and we also exploited \eqref{eqn:helly-f}. 
For the right-hand side, given \eqref{eqn:load-f-time}--\eqref{eqn:load-h-time},  \eqref{eqn:recovery-horizontal}--\eqref{eqn:recovery-vertical}, \eqref{eqn:recovery-limit} and \eqref{eqn:recovery-eta}--\eqref{eqn:recovery-H}, there holds
\begin{equation*}
	\lim_{k\to \infty} \mathcal{F}_{h_k}(t,\widehat{\boldsymbol{q}}_{h_k}) = \mathcal{F}_0(t,\widehat{\boldsymbol{q}}_0),
\end{equation*}
while \eqref{eqn:recovery-lagrangian-magnetization},  \eqref{eqn:helly-z} and \eqref{eqn:egamma-cmpt-z-id} yield
\begin{equation*}
	\lim_{k\to \infty} \mathcal{D}_{h_k}(\boldsymbol{q}_{h_k}(t),\widehat{\boldsymbol{q}}_{h_k})=\mathcal{D}_0(\boldsymbol{q}_0(t),\widehat{\boldsymbol{q}}_0).
\end{equation*} 
Therefore, taking the inferior limit, as $k\to\infty$, in \eqref{eqn:bulk-stab}, we obtain
\begin{equation*}
	\mathcal{F}_0(t,\boldsymbol{q}_0(t))\leq \mathcal{F}_0(t,\widehat{\boldsymbol{q}}_0)+\mathcal{D}_0(\boldsymbol{q}_0(t),\widehat{\boldsymbol{q}}_0).
\end{equation*}
As $\widehat{\boldsymbol{q}}_0$ is arbitrary, this proves \eqref{eqn:reduced-stability} for $t$ fixed.

\textbf{Step 4 (Upper reduced energy-dissipation inequality).} Fix $t\in [0,T]$ and let $(h_k)$ be the sequence in \eqref{eqn:egamma-cmpt-u}--\eqref{eqn:egamma-cmpt-nu}. By \eqref{eqn:bulk-energy-balance}, for every $k\in\N$, there holds
\begin{equation}
\label{eqn:bulk-en-bal}
\mathcal{F}_{h_k}(t,\boldsymbol{q}_{h_k}(t))+\mathrm{Var}_{\mathcal{D}_{h_k}}(\boldsymbol{q}_{h_k};[0,t])=\mathcal{F}_{h_k}(0,\boldsymbol{q}_{h_k}^0)+\int_0^t P_{h_k}(\tau)\,\d\tau.
\end{equation}
For the left-hand side, we recall \eqref{eqn:improved1}.
Also,  \eqref{eqn:helly-z}--\eqref{eqn:helly-delta} and \eqref{eqn:egamma-cmpt-z-id}  entail
\begin{equation}
	\label{eqn:improved3}
	\mathrm{Var}_{\mathcal{D}_0}(\boldsymbol{q}_0;[0,t])\leq \liminf_{k\to \infty} \mathrm{Var}_{\mathcal{D}_{h_k}}(\boldsymbol{q}_{h_k};[0,t])=\delta(t).
\end{equation}
For the right-hand side, we have \eqref{eqn:egamma-datum-energy}. Also, given \eqref{eqn:weak-conv-power}, there holds
\begin{equation*}
	\lim_{k\to \infty} \int_0^t P_{h_k}(\tau)\,\d\tau =\int_0^t P(\tau)\,\d\tau\leq \int_{0}^{t} P_0(\tau)\,\d\tau,
\end{equation*}
where we employed the inequality $P\leq P_0$ almost everywhere in $(0,T)$. Therefore, taking the inferior limit, as $k\to \infty$, in \eqref{eqn:bulk-en-bal}, we obtain
\begin{equation}
\label{eqn:improved2}
	\begin{split}
		\mathcal{F}_0(t,{\boldsymbol{q}}_0(t))+\mathrm{Var}_{\mathcal{D}_0}(\boldsymbol{q}_0;[0,t])&\leq f(t) + \delta(t)	\\
		&\leq \mathcal{F}_0(0,\boldsymbol{q}_0^0)+ \int_0^t P(\tau)\,\d\tau
		\leq \mathcal{F}_0(0,\boldsymbol{q}_0^0)+ \int_0^t P_0(\tau)\,\d\tau,
	\end{split}
\end{equation}
which is the upper reduced energy-dissipation inequality for  fixed $t\in [0,T]$.

\textbf{Step 5 (Lower reduced energy-dissipation inequality).} We claim that, for every $t\in [0,T]$, there holds
\begin{equation*}
	\mathcal{F}_0(t,\boldsymbol{q}_0(t))+\mathrm{Var}_{\mathcal{D}_0}(\boldsymbol{q}_0;[0,t])\geq \mathcal{F}_0(0,\boldsymbol{q}_0^0)+\int_0^t \partial_t \mathcal{F}_0(\tau,\boldsymbol{q}_0(\tau))\,\d\tau.
\end{equation*}
Thanks to \eqref{eqn:reduced-stability}, the claims follows by applying \cite[Proposition 2.1.2.3]{mielke.roubicek}. 

\textbf{Step 6 (Improved convergences).} We are left to prove \eqref{eqn:egamma-dissipation}--\eqref{eqn:egamma-power}. First, in view of  \eqref{eqn:reduced-energy-balance} and \eqref{eqn:improved2}, we have $f(t)+\delta(t)=\mathcal{F}_0(t,\boldsymbol{q}_0(t))+\mathrm{Var}_{\mathcal{D}_0}(\boldsymbol{q}_0;[0,t])$ for every $t\in [0,T]$. Recalling \eqref{eqn:improved1} and \eqref{eqn:improved3}, this entails $f(t)=\mathcal{F}_0(t,\boldsymbol{q}_0(t))$ and $\delta(t)=\mathrm{Var}_{\mathcal{D}_0}(\boldsymbol{q}_0;[0,t])$, so that \eqref{eqn:egamma-dissipation}--\eqref{eqn:egamma-energy} are proved. Finally, \eqref{eqn:reduced-energy-balance} and \eqref{eqn:improved2} yield $P=P_0$ almost everywhere on $(0,T)$. Thus, \eqref{eqn:egamma-power} follows by applying \cite[Lemma 3.5]{francfort.mielke}.
\end{proof}

}

\subsection{Convergence of solutions of the approximate incremental minimization problem}
\label{subsec:conv-AIMP}

{\MMM
In order to state our fourth main result, we introduce the approximate incremental minimization problem. This is a relaxed version of the incremental minimization problem that has been introduced in order to cope with the possible lack of energy minimizers \cite{mielke.stefanelli}. This is exactly our  situation since, without further assumptions, minimizers of the total energy do to necessarily exist, see Remark \ref{rem:ex-ensol-bulk}.
}

\begin{definition}[Approximate incremental minimization problem]
\label{def:aimp}
Given $h>0$, let $\Pi_h=(t^0_h,\dots,t^{N_h}_h)$ be a partition of $[0,T]$,  $\alpha_h>0$ and  $\boldsymbol{q}^0_h\in \mathcal{Q}_h$. The {approximate incremental minimization problem} (AIMP) determined by $\Pi_h$ with tolerance $\alpha_h$ and initial datum $\boldsymbol{q}^0_h$ reads as follows: for every $i\in\{1,\dots,N_h\}$, find $\boldsymbol{q}^i_h\in\mathcal{Q}_h$ such that
\begin{equation}
    \label{eqn:aimp}
    \mathcal{F}_h(t^i_h,\boldsymbol{q}^i_h)+\mathcal{D}_h(\boldsymbol{q}^{i-1}_h,\boldsymbol{q}^{i}_h)\leq (t^i_h-t^{i-1}_h)\alpha_h+\inf_{\mathcal{Q}_h} \left \{\mathcal{F}_h(t^i_h,\cdot)+\mathcal{D}_h(\boldsymbol{q}^{i-1}_h,\cdot) \right\}.
\end{equation}
\end{definition}

Our fourth main result claims that, for a sequence of partitions whose sizes vanish jointly with the thickness of the plate together with a sequence of tolerances, solutions to the approximate incremental minimization problem, or better their piecewise-constant interpolants, converge to energetic solutions to the reduced model. 

\begin{theorem}[Convergence of solutions of the AIMP]
\label{thm:convergence-AIMP}
Assume $p>3$ and $\beta>6\vee p$. Suppose that the elastic energy density $W_h$ has the form in \eqref{eqn:density-W_h}, where the function $\Phi$ satisfies \eqref{eqn:normalization-Phi}--\eqref{eqn:regularity-Phi} and that the applied loads  satisfy \eqref{eqn:load-f-time}--\eqref{eqn:load-h-time}. Let $(\Pi_h)$ be a sequence of partitions of $[0,T]$ such that  $|\Pi_h|\to 0$, as $h\to 0^+$, and let $(\alpha_h)\subset\R$ with $\alpha_h>0$ be such that $\alpha_h\to0$, as $h \to 0^+$.
Let $(\boldsymbol{q}_h^0)$ with $\boldsymbol{q}_h^0=(\boldsymbol{y}_h^0,\boldsymbol{m}_h^0)\in\mathcal{Q}_h$  be such that \eqref{eqn:egamma-id-stability} holds.
Moreover, assume that there exist $\boldsymbol{q}_0^0=(\boldsymbol{u}^0,v^0,\boldsymbol{\zeta}^0)\in\mathcal{Q}_0$ such that the convergences in \eqref{eqn:egamma-datum-horizonal}--\eqref{eqn:egamma-datum-energy} hold true, as $h\to 0^+$.
For every $h>0$, consider a solution to the AIMP determined by $\Pi_h$ with tolerance $\alpha_h$ and initial datum $\boldsymbol{q}_h^0$ and denote by $\boldsymbol{q}_h\colon [0,T]\to\mathcal{Q}_h$ its right-continuous piecewise-constant interpolant. 

Then, there exists a measurable function $\boldsymbol{q}_0\colon [0,T]\to\mathcal{Q}_0$ with $\boldsymbol{q}_0(t)=(\boldsymbol{u}(t),v(t),\boldsymbol{\zeta}(t))$ for every $t\in[0,T]$ satisfying the initial condition $\boldsymbol{q}_0(0)=\boldsymbol{q}_0^0$ which is an energetic solution to the reduced quasistatic model. Moreover, up to subsequences, the  convergences in \eqref{eqn:egamma-lagrangian-magnetization}--\eqref{eqn:egamma-power} hold true, as $h \to 0^+$.
In particular, the function $\boldsymbol{q}_0$ is measurable and bounded. Also,  $t\mapsto \boldsymbol{\zeta}(t)$ belongs to $BV([0,T];L^1(\Omega;\rtt))$ while  $t\mapsto \mathcal{F}_0(t,\boldsymbol{q}_0(t))$ belongs to $BV([0,T])$.
\end{theorem}

We mention that also Theorem \ref{thm:convergence-AIMP} can be adapted by  imposing time-dependent boundary conditions as in Remark \ref{rem:time-dependent-bc}. 

\begin{remark}[Existence of energetic solutions for the reduced model]
As a byproduct of Theorem \ref{thm:evolutionary-gamma-convergence}, we obtain the existence of energetic solutions for the reduced quasistatic model. However, under our assumptions, this can be established directly. Indeed, the limiting total energy $\mathcal{F}_0$ satisfies suitable compactness properties in view of the coercivity of $Q_{\rm red}$ noted in \eqref{eqn:Q-red-coe} and
the dissipation distance $\mathcal{D}_0$ is continuous on the sublevels of $\mathcal{F}_0$, so that the existence of energetic solutions for the reduced model can be proved following the usual scheme \cite[Theorem 2.1.6]{mielke.roubicek}.
\end{remark}

We move towards the proof of Theorem \ref{thm:convergence-AIMP}. We begin with a preliminary result concerning solutions to the AIMP.

\begin{proposition}[Solutions of the AIMP]
	\label{lem:prop-am}
Let $(\Pi_h)$ be a sequence of partitions of $[0,T]$ and let $(\alpha_h)\subset \R$ with $\alpha_h>0$  be bounded. Also, let $(\boldsymbol{q}_h^0)$ with $\boldsymbol{q}_h^0\in \mathcal{Q}_h$  be such that
    \begin{equation}
    \label{eqn:aimp-bdd}
    \sup_{h>0} {\mathcal{F}}_h(0,\boldsymbol{q}_h^0)\leq C.
    \end{equation}
For every $h>0$, let $\Pi_h=(t_h^0,\dots,t_h^{N_h})$ and let $(\boldsymbol{q}_h^1,\dots,\boldsymbol{q}_h^{N_h})$ be a solution to the AIMP determined by $\Pi_h$ with tolerance $\alpha_h$ and initial datum $\boldsymbol{q}_h^0$.
Then, there exists $M>0$ such that
	\begin{equation}
	\label{eqn:aimp-sublevel}
	\sup_{h>0} \sup_{i\in\{0,\dots,N_h\}} {\mathcal{F}}_h(t_h^i,\boldsymbol{q}_h^i)\leq M.
	\end{equation}
Moreover, for every $h\ll 1$ and  $i\in\{1,\dots,N_h\}$, the following estimates hold:
\begin{align}
    \label{eqn:aimp1}
		\forall\,\widehat{\boldsymbol{q}}_h\in \mathcal{Q}_h,\quad {\mathcal{F}}_h(t_h^i,\boldsymbol{q}_h^i)&\leq(t_h^i-t_h^{i-1})\alpha_h+{\mathcal{F}}_h(t_h^i,\widehat{\boldsymbol{q}}_h)+\mathcal{D}_h(\boldsymbol{q}_h^i,\widehat{\boldsymbol{q}}_h),\\
	\label{eqn:aimp2}
			{\mathcal{F}}_h(t_h^i,\boldsymbol{q}_h^i)+\mathcal{D}_h(\boldsymbol{q}_h^{i-1},\boldsymbol{q}_h^i)&\leq (t_h^i-t_h^{i-1})\alpha_h  +{\mathcal{F}}_h(t_h^{i-1},\boldsymbol{q}_h^{i-1})	+\int_{t_h^{i-1}}^{t_h^i}\partial_t {\mathcal{F}}_h(\tau,\boldsymbol{q}_h^{i-1})\,\d\tau,\\
	\label{eqn:aimp3}
		\mathcal{F}_h(t_h^i,\boldsymbol{q}_h^i)+L(M)+\sum_{j=1}^i \mathcal{D}_h(\boldsymbol{q}_h^{j-1},\boldsymbol{q}_h^j)&\leq \left (\mathcal{F}_h(0,\boldsymbol{q}_h^0)+L(M)+t_i\alpha_h \right)\mathrm{e}^{K_h(t_h^i)}.
\end{align}
Eventually, if $\boldsymbol{q}_h^0$ satisfies \eqref{eqn:egamma-id-stability} for $h\ll 1$, then, for every $i \in \{1,\dots,N_h\}$, there holds
	\begin{equation}
	\label{eqn:aimp2bis}
	\begin{split}
	|{\mathcal{F}}_h(t_h^i,\boldsymbol{q}_h^i)-{\mathcal{F}}_h(t_h^{i-1},\boldsymbol{q}_h^{i-1})+\mathcal{D}_h(\boldsymbol{q}_h^{i-1},\boldsymbol{q}_h^i)|&\leq \left(t_h^i-t_h^{i-2}\right)\alpha_h\\
	&+ ({\mathcal{F}}_h(t_h^{i-1},\boldsymbol{q}_h^{i-1})+1)\left (\mathrm{e}^{K_h(t_h^i)-K_h(t_h^{i-1})}-1 \right),
	\end{split}
	\end{equation}
where we set $t_h^{-1}\coloneqq0$. 
\end{proposition}
\begin{proof}
Let $h>0$ and $i \in \{1,\dots,N_h\}$. For simplicity, we set $\alpha_h^i\coloneqq (t_h^i-t_h^{i-1})\alpha_h$.  
Given \eqref{eqn:aimp}, for every $\widehat{\boldsymbol{q}}_h\in\mathcal{Q}_h$, we have
\begin{equation}
\begin{split}
{\mathcal{F}}_h(t_h^i,\boldsymbol{q}_h^i)&\leq \alpha_h^i+{\mathcal{F}}_h(t_h^i,\widehat{\boldsymbol{q}}_h)+\mathcal{D}_h(\boldsymbol{q}_h^{i-1},\widehat{\boldsymbol{q}}_h)-\mathcal{D}_h(\boldsymbol{q}_h^{i-1},\boldsymbol{q}_h^i) \\
&\leq \alpha_h^i+{\mathcal{F}}_h(t_h^i,\widehat{\boldsymbol{q}}_h)+\mathcal{D}_h(\boldsymbol{q}_h^i,\widehat{\boldsymbol{q}}_h),
\end{split}
\end{equation}
where, in the last line, we employed the triangle inequality. This shows \eqref{eqn:aimp1}.
	
We  check \eqref{eqn:aimp2}. For simplicity, set $f_h^i\coloneqq {\mathcal{F}}_h(t_h^i,\boldsymbol{q}_h^i)$ and $d_h^i\coloneqq \mathcal{D}_h(\boldsymbol{q}_h^{i-1},\boldsymbol{q}_h^i)$.From \eqref{eqn:aimp}, by applying the fundamental theorem of calculus, we obtain
\begin{equation}
\begin{split}
f_h^i-f_h^{i-1}+d_h^i&\leq \alpha_h^i-f_h^{i-1}+{\mathcal{F}}_h(t_h^i,\boldsymbol{q}_h^{i-1})\\
&=\alpha_h^i + {\mathcal{F}}_h(t_h^i,\boldsymbol{q}_h^{i-1})-{\mathcal{F}}_h(t_h^{i-1},\boldsymbol{q}_h^{i-1})\\
&=\alpha_h^i+\int_{t_h^{i-1}}^{t_h^i}\partial_t {\mathcal{F}}_h(\tau,\boldsymbol{q}_h^{i-1})\,\d\tau,
\end{split}
\end{equation}
which gives \eqref{eqn:aimp2}.
	
Testing \eqref{eqn:aimp1} with $\widehat{\boldsymbol{q}}_h=\overline{\boldsymbol{q}}_h$, where the latter is defined as in the proof of Theorem \ref{thm:convergence-almost-minimizers}, we have
\begin{equation*}
{\mathcal{F}}_h(t_h^i,\boldsymbol{q}_h^i)\leq \alpha_h^i+{\mathcal{F}}_h(t_h^i,\overline{\boldsymbol{q}}_h)+\mathcal{D}_h(\boldsymbol{q}_h^i,\overline{\boldsymbol{q}}_h).
\end{equation*}
Exploiting the uniform boundedness of the applied loads with respect to time, which follows from \eqref{eqn:load-f-time}--\eqref{eqn:load-h-time} by the Morrey embedding, with computations analogous to the one in the proof of Theorem \ref{thm:convergence-almost-minimizers} we check that the right-hand side of the previous inequality is uniformly bounded with respect to $h>0$ and $i\in \{0,\dots,N_h \}$.
Here, we also take advantage of \eqref{eqn:aimp-bdd} and of the boundedness of $(\alpha_h)$. Therefore,  \eqref{eqn:aimp-sublevel} is proved.
	
Now, for the sake of clarity, we specify the sequence $(h_n)$ such that $h_n\to 0^+$, as $n\to \infty$, in place of $h>0$. Thus, $(\boldsymbol{q}_{h_n}^1,\dots,\boldsymbol{q}_{h_n}^{N_{h_n}})$ is a solution to the AIMP determined by $\Pi_{h_n}$ with tolerance $\alpha_{h_n}$ and initial datum $\boldsymbol{q}_{h_n}^0$ for every $n\in \N$. By applying  Lemma \ref{lem:energy-scaling} with $(\widehat{\boldsymbol{q}}_h)$ given by the sequence 
\begin{equation*}
		\boldsymbol{q}_{h_1}^0, \boldsymbol{q}_{h_1}^1, \dots, \boldsymbol{q}_{h_1}^{N_{h_1}},  \boldsymbol{q}_{h_2}^0, \boldsymbol{q}_{h_2}^1, \dots, \boldsymbol{q}_{h_2}^{N_{h_2}}, \dots, \boldsymbol{q}_{h_n}^0, \boldsymbol{q}_{h_n}^1, \dots, \boldsymbol{q}_{h_n}^{N_{h_n}},\dots  
\end{equation*} 
we deduce the existence of $\overline{n}\in\N$ such that
\begin{equation}
    \sup_{n\geq \overline{n}} \sup_{i\in \{0,1,\dots,N_{h_n}\}} E_{h_n}(\boldsymbol{q}_{h_n}^i)\leq C(M). 
\end{equation}
Here, we exploit once more the uniform boundedness of the applied loads. Moreover, by Lemma \ref{lem:time-control}, for every  $n\geq\overline{n}$ and for every $i\in\{0,1,\dots,N_{h_n}\}$, the following estimates hold:
	\begin{align*}
	\forall\, t\in(0,T)\setminus Z, \quad   |\partial_t \mathcal{F}_{h_n}(t,{\boldsymbol{q}}_{h_n}^i)|&\leq \kappa_{h_n}(t) \left( \mathcal{F}_{h_n}(t,{\boldsymbol{q}}_{h_n}^i) + L(M) \right),\\
	\forall s,t\in[0,T],\quad \mathcal{F}_{h_n}(t,{\boldsymbol{q}}_{h_n}^i)+L(M)&\leq  \left(\mathcal{F}_{h_n}(s,{\boldsymbol{q}}_{h_n}^i)+L(M)\right) \mathrm{e}^{|K_{h_n}(t)-K_{h_n}(s)|},\\
	\forall t\in (0,T)\setminus Z,\,\forall s\in [0,T],\quad 
	|\partial_t \mathcal{F}_{h_n}(t,{\boldsymbol{q}}_{h_n}^i)|&\leq \kappa_{h_n}(t) \left(\mathcal{F}_{h_n}(s,{\boldsymbol{q}}_{h_n}^i)+L(M)\right) \mathrm{e}^{|K_{h_n}(t)-K_{h_n}(s)|}.
	\end{align*}
Henceforth, for simplicity, we go back to writing $h$ as a subscript without specifying the sequence of thicknesses and we set $\overline{h}\coloneqq h_{\overline{n}}$. Accordingly,  for every $h\leq\overline{h}$ and $i\in\{0,1,\dots,N_h\}$, we have:
	\begin{align}
	\label{eqn:dr-gronwall1-aimp}
	\forall\, t\in(0,T)\setminus Z, \quad   |\partial_t \mathcal{F}_{h}(t,{\boldsymbol{q}}_{h}^i)|&\leq \kappa_{h}(t) \left( \mathcal{F}_{h}(t,{\boldsymbol{q}}_{h}^i) + L(M) \right),\\
	\label{eqn:dr-gronwall2-aimp}
	\forall s,t\in[0,T],\quad \mathcal{F}_{h}(t,{\boldsymbol{q}}_{h}^i)+L(M)&\leq  \left(\mathcal{F}_{h}(s,{\boldsymbol{q}}_{h}^i)+L(M)\right) \mathrm{e}^{|K_{h}(t)-K_{h}(s)|},\\
	\label{eqn:dr-gronwall3-aimp}
	\forall t\in (0,T)\setminus Z,\,\forall s\in [0,T],\quad 
	|\partial_t \mathcal{F}_{h}(t,{\boldsymbol{q}}_{h}^i)|&\leq \kappa_{h}(t) \left(\mathcal{F}_{h}(s,{\boldsymbol{q}}_{h}^i)+L(M)\right) \mathrm{e}^{|K_{h}(t)-K_{h}(s)|}.
	\end{align}
	Going back to proof of \eqref{eqn:aimp3}, let $h \leq \overline{h}$  and $i \in \{1,\dots,N_h\}$. For convenience, set $K_h^i\coloneqq K_h(t_h^i)$. 
	Combining \eqref{eqn:aimp2} with  \eqref{eqn:dr-gronwall3-aimp}, we compute
	\begin{equation*}
	\begin{split}
		f_h^i&\leq f_h^i+d_h^i\leq \alpha_h^i + f_h^{i-1}+\int_{t_h^{i-1}}^{t_h^i} \partial_t \mathcal{F}_h(\tau,\boldsymbol{q}_h^{i-1})\,\d\tau\\
		&\leq \alpha_h^i+f_h^{i-1}+ \left(f_h^{i-1}+L(M)\right) \int_{t_h^{i-1}}^{t_h^i} \kappa_h(\tau) \mathrm{e}^{K_h(\tau)-K_h^{i-1}}\,\d\tau\\
		&=\alpha_h^i+f_h^{i-1} + \left(f_h^{i-1}+L(M)\right)  \left(\mathrm{e}^{K_h^i-K_h^{i-1}} -1 \right)\\
		&=\alpha_h^i-L(M)+ \left(f_h^{i-1}+L(M)\right)\mathrm{e}^{K_h^i-K_h^{i-1}}.
	\end{split}
	\end{equation*}
	Thus
	\begin{equation*}
		f_h^i+L(M)\leq \alpha_h^i+\left(f_h^{i-1}+L(M)\right)\mathrm{e}^{K_h^i-K_h^{i-1}},
	\end{equation*}
	from which, by induction,  we obtain
	\begin{equation}
	\label{eqn:aimp5}
	\begin{split}
	f_h^i+L(M)\leq \left ( f_h^0+L(M) +\sum_{j=1}^i \mathrm{e}^{-K_h^j} \alpha_h^j \right) \mathrm{e}^{K_h^i}.
	\end{split}    
	\end{equation}
	Here, we set $f_h^0\coloneqq \mathcal{F}_h(0,\boldsymbol{q}_h^0)$.
	From \eqref{eqn:aimp2}, using \eqref{eqn:dr-gronwall3-aimp}, we estimate
	\begin{equation}
	\label{eqn:aimpt1}
	\begin{split}
	f_h^i-f_h^{i-1}+d_h^i &\leq \alpha_h^i+\int_{t_h^{i-1}}^{t_h^i} \partial_h \mathcal{F}_h(\tau,\boldsymbol{q}_h^{i-1})\,\d\tau\\
	&\leq \alpha_h^i+(f_h^{i-1}+L(M))\int_{t_h^{i-1}}^{t_h^i} \kappa_h(\tau) \mathrm{e}^{K_h(\tau)-K_h(t_h^{i-1})}\,\d\tau\\
	&\leq \alpha_h^i + (f_h^{i-1}+L(M))\left(\mathrm{e}^{K_h(t_h^i)-K_h^{i-1}} - 1\right).
	\end{split}
	\end{equation}
	Then, summing \eqref{eqn:aimpt1}, with $j$ in place of $i$, for $j\in\{1,\dots,i\}$ and employing \eqref{eqn:aimp5}, with $j-1$ in place of $i$, we obtain
	\begin{equation*}
	\begin{split}
	f_h^i+\sum_{j=1}^i d_h^j+L(M)&\leq f_h^0+L(M)+\sum_{j=1}^i \alpha_h^j+\sum_{j=1}^i (f_h^{j-1}+L(M)) \left (\mathrm{e}^{K_h^j-K_h^{j-1}} -1\right)\\
	&\leq f_h^0+L(M)+\sum_{j=1}^i \alpha_h^j
	+\sum_{j=1}^i \left (f_h^0+L(M)+\sum_{k=1}^{j-1} \mathrm{e}^{-K_h^k}\alpha_h^k \right) \left (\mathrm{e}^{K_h^j}-\mathrm{e}^{K_h^{j-1}} \right)\\
	&=\sum_{j=1}^i \alpha_h^j+(f_h^0+L(M))\mathrm{e}^{K_h^i}+\sum_{j=1}^i \left (\mathrm{e}^{K_h^j}-\mathrm{e}^{K_h^{j-1}} \right) \sum_{k=1}^{j-1} \mathrm{e}^{-K_h^k}\alpha_h^k\\
	&\leq \sum_{j=1}^i \alpha_h^j+(f_h^0+L(M))\mathrm{e}^{K_h^i}+\sum_{j=1}^i \left (\mathrm{e}^{K_h^j}-\mathrm{e}^{K_h^{j-1}} \right) \sum_{k=1}^{i} \alpha_h^k\\
	&=\left (f_h^0+L(M)+\sum_{j=1}^i \alpha_h^i \right)\mathrm{e}^{K_h^i},
	\end{split}
	\end{equation*}
	which yields \eqref{eqn:aimp3}.
	
	Finally, we prove \eqref{eqn:aimp2bis}. Testing \eqref{eqn:aimp1} for $i-1$ if $i>1$ or \eqref{eqn:egamma-id-stability} if $i=1$ both with $\widehat{\boldsymbol{q}}_h=\boldsymbol{q}_h^i$, we have
	\begin{equation*}
	f_h^{i-1}\leq \alpha_h^{i-1}+\mathcal{F}_h(t_h^{i-1},\boldsymbol{q}_h^i)+d_h^i.
	\end{equation*}
	Here, in the second case, we set $\alpha_h^0\coloneqq 0$.  From this,
	employing the Fundamental Theorem of Calculus and \eqref{eqn:dr-gronwall3-aimp}, we compute
	\begin{equation}
	\label{eqn:aimpt2}
	\begin{split}
	f_h^{i-1}-f_h^i-d_h^i &\leq\alpha_h^{i-1}- \left(\mathcal{F}_h(t_h^i,\boldsymbol{q}_h^i)-\mathcal{F}_h(t_h^{i-1},\boldsymbol{q}_h^i) \right)\\
	&=\alpha_h^{i-1}-\int_{t_h^{i-1}}^{t_h^i} \partial_t \mathcal{F}_h(\tau,\boldsymbol{q}_h^i)\,\d\tau\\
	&\leq \alpha_h^{i-1}+(f_h^{i-1}+L(M)) \int_{t_h^{i-1}}^{t_h^i} \kappa_h(\tau) \mathrm{e}^{K_h(\tau)-K_h^{i-1}}\,\d\tau\\
	&\leq \alpha_h^{i-1}+(f_h^{i-1}+L(M)) \left(\mathrm{e}^{K_h^i-K_h^{i-1}}-1 \right).
	\end{split}
	\end{equation}
	Combining \eqref{eqn:aimpt1}--\eqref{eqn:aimpt2}, 
	we obtain \eqref{eqn:aimp2bis}.
\end{proof}

We now present the proof of our fourth main result.

\begin{proof}[Proof of Theorem \ref{thm:convergence-AIMP}]
Again, the proof follows the well established scheme in \cite{mielke.roubicek.stefanelli} and it is subdivided into six steps. 

\textbf{Step 1 (A priori estimates).} For every $h>0$, let $(\boldsymbol{q}_h^1,\dots,\boldsymbol{q}_h^{N_h})\in \mathcal{Q}_h^{N_h}$ be the solution to the AIMP determined by $\Pi_h=(t_h^0,\dots,t_h^{N_h})$ with tolerance $\alpha_h>0$ and initial datum $\boldsymbol{q}_h^0\in \mathcal{Q}_h$. 
We introduce the  piecewise-constant interpolant $\boldsymbol{q}_h\colon [0,T]\to \mathcal{Q}_h$ by setting
\begin{equation*}
    \boldsymbol{q}_h(t)\coloneqq 
    \begin{cases}
        \boldsymbol{q}_h^{i-1} & \text{if $t_h^{i-1}\leq t < t_h^i$ for some $i \in \{1,\dots,N_h\}$,}\\
        \boldsymbol{q}_h^{N_h} & \text{if $t=T$.}
    \end{cases}
\end{equation*}
Let $t \in [0,T]$. Given $h>0$,  let $i \in \{1,\dots,N_h\}$ be such that $t_h^{i-1}\leq t<t_h^i$. By definition, we have
\begin{equation}
    \label{eqn:aimp-pc-identities}
    \boldsymbol{q}_h(t)=\boldsymbol{q}_h^{i-1}, \qquad \mathrm{Var}_{\mathcal{D}_h}(\boldsymbol{q}_h;[0,t])=\sum_{j=1}^{i-1}\mathcal{D}_h(\boldsymbol{q}_h^{j-1},\boldsymbol{q}_h^j).
\end{equation}
Thus, using \eqref{eqn:aimp3} and \eqref{eqn:dr-gronwall2-aimp}, we infer
\begin{equation*}
\begin{split}
    \mathcal{F}_h(t,\boldsymbol{q}_h(t))+L(M)+\mathrm{Var}_{\mathcal{D}_h}(\boldsymbol{q}_h;[0,t]) &\leq (\mathcal{F}_h(t_h^{i-1},\boldsymbol{q}_h^{i-1})+L(M))\mathrm{e}^{K_h(t)-K_h(t_h^{i-1})}
    +\sum_{j=1}^{i-1}\mathcal{D}_h(\boldsymbol{q}_h^{j-1},\boldsymbol{q}_h^j)\\
    &\leq \left(\mathcal{F}_h(t_h^{i-1},\boldsymbol{q}_h^{i-1})+L(M)+ \sum_{j=1}^{i-1}\mathcal{D}_h(\boldsymbol{q}_h^{j-1},\boldsymbol{q}_h^j)\right)\mathrm{e}^{K_h(t)-K_h(t_h^{i-1})}\\
    &\leq (\mathcal{F}_h(0,\boldsymbol{q}_h^0)+L(M)+t_h^{i-1}\alpha_h) \left( \mathrm{e}^{K_h(t)}-1\right)\\
    &\leq (\mathcal{F}_h(0,\boldsymbol{q}_h^0)+L(M)+T\alpha_h)\left( \mathrm{e}^{K_h(T)}-1\right).
\end{split}
\end{equation*}
Since the sequences $(\mathcal{F}_h(0,\boldsymbol{q}_h^0))$, $(\alpha_h)$ and $(K_h(T))$ are all bounded, we obtain the estimate
\begin{equation}
\label{eqn:a-priori1}
\sup_{h>0} \sup_{t \in [0,T]} \mathcal{F}_h(t,\boldsymbol{q}_h(t))+\sup_{h>0}\mathrm{Var}_{\mathcal{D}_h}(\boldsymbol{q}_h;[0,T]) \leq C(M,T).
\end{equation}

For every $h>0$, define $f_h\colon [0,T]\to\R$ and $\boldsymbol{z}_h\colon [0,T]\to L^1(\Omega;\rt)$ by setting
$f_h(t)\coloneqq \mathcal{F}_h(t,\boldsymbol{q}_h(t))$ and $\boldsymbol{z}_h(t)\coloneqq \mathcal{Z}_h(\boldsymbol{q}_h(t))$. From \eqref{eqn:a-priori1}, we immediately get
\begin{equation}
\label{eqn:a-priori2}
    \sup_{h>0} \sup_{t \in [0,T]} f_h(t)+\sup_{h>0}\mathrm{Var}_{L^1(\Omega;\R^3)}(\boldsymbol{z}_h;[0,T])\leq C.
\end{equation}
We now establish a uniform bound for the total variation of $f_h$.  
For simplicity,  for every $h>0$ and $i \in \{1,\dots,N_h\}$, set 
\begin{equation*}
    f_h^i\coloneqq \mathcal{F}_h(t_h^i,\boldsymbol{q}_h^i), \quad d_h^i\coloneqq \mathcal{D}_h(\boldsymbol{q}_h^{i-1},\boldsymbol{q}_h^i), \quad K_h^i\coloneqq K_h(t_h^i), \quad  \alpha_h^i\coloneqq (t_h^i-t_h^{i-1})\alpha_h.
\end{equation*}
Also, for convenience of notation, we set $t_h^{-1}\coloneqq 0$.
First, denote by $[f_h]^i$ the jump of $f_h$ at time $t_h^i$.
Exploiting the continuity of $t\mapsto \mathcal{F}_h(t,\boldsymbol{q}_h^{i-1})$ and
employing  \eqref{eqn:aimp2bis} and \eqref{eqn:dr-gronwall2-aimp},  we compute
\begin{equation}
\label{eqn:energy-bv1}
\begin{split}
|[f_h]^i|&=\left |\lim_{s \to 0^+} \left \{f_h(t_h^i)-f_h(t_h^i-s) \right\} \right |
=\left |\lim_{s \to 0^+} \left \{ \mathcal{F}_h(t_h^i,\boldsymbol{q}_h^i)-\mathcal{F}_h(t_h^i-s,\boldsymbol{q}_h^{i-1})\right\} \right |\\
&=|f_h^i-f_h^{i-1}-(\mathcal{F}_h(t_h^i,\boldsymbol{q}_h^{i-1})-\mathcal{F}_h(t_h^{i-1},\boldsymbol{q}_h^{i-1}))|
\leq |f_h^i-f_h^{i-1}|+\int_{t_h^{i-1}}^{t_h^i} | \partial_t \mathcal{F}_h(\tau,\boldsymbol{q}_h^{i-1})|\,\d\tau\\
&\leq |f_h^i-f_h^{i-1}| + (f_h^{i-1}+L(M))\int_{t_h^{i-1}}^{t_h^i} \kappa_h(\tau) \mathrm{e}^{K_h(\tau)-K_h^{i-1}}\,\d\tau\\
&\leq |f_h^i-f_h^{i-1}|+(M+L(M))\left (\mathrm{e}^{K_h^i-K_h^{i-1}}-1 \right)\\
&\leq d_h^i+\alpha_h^{i-1}+\alpha_h^i + 2 (M+L(M))\left (\mathrm{e}^{K_h^i-K_h^{i-1}}-1 \right).
\end{split}
\end{equation}
Second, for $t\in (t_h^{i-1},t_h^i)$, we have $f_h(t)=\mathcal{F}_h(t,\boldsymbol{q}_h^{i-1})$ and, in turn, $\dot{f}_h(t)=\partial_t \mathcal{F}_h(t,\boldsymbol{q}_h^{i-1})$. Making once more use of \eqref{eqn:dr-gronwall2-aimp}, we compute
\begin{equation}
\label{eqn:energy-bv2}
\begin{split}
\int_{t_h^i}^{t_h^{i-1}} |\dot{f}_h(\tau)|\,\d\tau &=\int_{t_h^i}^{t_h^{i-1}} |\partial_t \mathcal{F}_h(\tau,\boldsymbol{q}_h^{i-1})|\,\d\tau\\
&\leq (f_h^{i-1}+L(M)) \int_{t_h^i}^{t_h^{i-1}} \kappa_h(\tau) \mathrm{e}^{K_h(\tau)-K_h^{i-1}}\,\d\tau\\
&\leq (M+L(M)) \left (\mathrm{e}^{K_h^i-K_h^{i-1}}-1 \right).
\end{split}
\end{equation}
Thus, combining \eqref{eqn:energy-bv1}--\eqref{eqn:energy-bv2}, we obtain
\begin{equation}
\label{eqn:bvf}
\begin{split}
\mathrm{Var}(f_h;[0,T])&=\sum_{i=1}^{N_h} \left \{|[f_h]^i|+\int_{t_h^i}^{t_h^{i-1}} |\dot{f}_h(\tau)|\,\d\tau \right \}\\
&\leq \sum_{i=1}^{N_h} \left \{d_h^i+\alpha_h^i+\alpha_h^{i-1}+3 (M+L(M)) \left (\mathrm{e}^{K_h^i-K_h^{i-1}}-1 \right) \right\}\\
&\leq \mathrm{Var}_{\mathcal{D}_h}(\boldsymbol{q}_h;[0,T])+2T\alpha_h+3(M+L(M))\left (\mathrm{e}^{K_h(T)}-1 \right),
\end{split}
\end{equation}
where in the last line we computed
\begin{equation*}
\begin{split}
\sum_{i=1}^{N_h}  \left (\mathrm{e}^{K_h^i-K_h^{i-1}}-1 \right)= \sum_{i=1}^{N_h} \mathrm{e}^{-K_h^{i-1}} \left(\mathrm{e}^{K_h^i}-\mathrm{e}^{K_h^{i-1}} \right)
\leq \sum_{i=1}^{N_h}  \left(\mathrm{e}^{K_h^i}-\mathrm{e}^{K_h^{i-1}} \right)
= \mathrm{e}^{K_h(T)}-1.
\end{split}
\end{equation*}
Thus, recalling the boundedness of $(\alpha_h)$ and $(K_h(T))$, where the latter follows from \eqref{eqn:Kh-conv}, the inequality \eqref{eqn:bvf} together with \eqref{eqn:a-priori1}  gives
\begin{equation}
\label{eqn:a-priori3}
\sup_{h>0}\mathrm{Var}(f_h;[0,T])\leq C(M,T).
\end{equation}

\textbf{Step 2 (Compactness).} For every $h>0$, define $\delta_h\colon [0,T]\to [0,+\infty)$ and the functions
\begin{equation*}
\begin{split}
&\boldsymbol{u}_h\colon [0,T]\to W^{1,2}(S;\R^2), \quad  v_h\colon [0,T]\to W^{1,2}(S), \quad  \boldsymbol{w}_h\colon [0,T]\to W^{1,2}(S;\rt),\\
&\hspace{20mm}\boldsymbol{\mu}_h\colon [0,T] \to L^2(\rt;\rt), \quad \boldsymbol{N}_h\colon [0,T]\to L^2(\rt;\rtt).
\end{split}	
\end{equation*}
as in in Step 2 of the proof of Theorem \ref{thm:evolutionary-gamma-convergence}. 
Exploiting the a priori estimates \eqref{eqn:a-priori1}, \eqref{eqn:a-priori2} and \eqref{eqn:a-priori3}, with the aid of the Helly compactness theorem \cite[Theorem 3.2]{mainik.mielke}, 
we establish the existence of functions  $f\in BV([0,T])$,  $\boldsymbol{z}\in BV([0,T];L^1(\Omega;\rt))$ and $\delta \colon [0,T]\to [0,+\infty)$ such that, up to subsequences, the convergences in \eqref{eqn:helly-f}--\eqref{eqn:helly-delta} hold true.
Moreover, employing the measurable selection theorem, we identify  a function $\boldsymbol{q}_0 \colon [0,T]\to {\mathcal{Q}}_0$ with $\boldsymbol{q}_0(t)=(\boldsymbol{u}(t),v(t),\boldsymbol{\zeta}(t))$ which is measurable and bounded. This function is characterized by the property that, for every $t\in [0,T]$, there exists a subsequence $(h_k)$, possibly depending on $t$, such that \eqref{eqn:egamma-cmpt-u}--\eqref{eqn:egamma-cmpt-v} hold true and also
\begin{align*}
	\boldsymbol{\mu}_{h_k}(t)&\text{$\wk \chi_\Omega\boldsymbol{\zeta}(t)$ in $L^2(\rt;\rt)$,}\\
	\boldsymbol{N}_h(t)&\text{$\wk\chi_\Omega(\nabla'\boldsymbol{\zeta}(t)\vert \boldsymbol{\chi}(t))$ in $L^2(\rt;\rtt)$,}
\end{align*}
for some $\boldsymbol{\chi} \colon [0,T]\to L^2(\rt;\rt)$ measurable and bounded. 
Then, for every $h>0$, we define $P_h\colon (0,T)\to \R$ as in Step 2 of the proof of Theorem \ref{thm:evolutionary-gamma-convergence} and, with the same arguments, we prove that \eqref{eqn:weak-conv-power} holds true for some $P\in L^1(0,T)$. Furthermore, defining $P_0\colon (0,T)\to \R$ and $\widehat{P}\colon (0,T)\to \R$ as in Step 2 of the proof of Theorem \ref{thm:evolutionary-gamma-convergence}, we show that $P\leq \widehat{P}$ and  $\widehat{P}= P_0$ almost everywhere in $(0,T)$.

\textbf{Step 3 (Reduced stability).} 
In view of \eqref{eqn:aimp1}, for every $h>0$, there holds
\begin{equation}
\label{eqn:aimp1-bis}
	\forall\,t_h\in\Pi_h,\:\forall\,\widehat{\boldsymbol{q}}_h\in \mathcal{Q}_h,\quad \mathcal{F}_h(t_h,\boldsymbol{q}_h(t_h))\leq |\Pi_h|\alpha_h
	+\mathcal{F}_h(t_h,\widehat{\boldsymbol{q}}_h)+\mathcal{D}_h(\boldsymbol{q}_h(t_h),\widehat{\boldsymbol{q}}_h).
\end{equation}
Define $\tau_h\colon [0,T]\to \Pi_h$ by setting $\tau_h(t)\coloneqq \{s \in \Pi_h:\:s\leq t\}$. Since $|\Pi_h|\to 0$, as $h\to 0^+$, we have $\tau_h(t)\to t$, as $h\to 0^+$.
Fix $t \in [0,T]$ and let $(h_k)$ be the subsequence such that \eqref{eqn:egamma-cmpt-u}--\eqref{eqn:egamma-cmpt-nu}  hold. By definition, we have $\boldsymbol{q}_{h_k}(t)=\boldsymbol{q}_{h_k}(\tau_{h_k}(t))$. Let $\widehat{\boldsymbol{q}}_0\in {\mathcal{Q}}_0$ and let $(\widehat{\boldsymbol{q}}_h)$ a corresponding recovery sequence  given by Proposition \ref{prop:recovery}. By \eqref{eqn:aimp1-bis}, there holds
\begin{equation}
\label{eqn:s1}
\mathcal{F}_{h_k}(\tau_{h_k}(t),\boldsymbol{q}_{h_k}(t))\leq |\Pi_{h_k}|\, \alpha_{h_k}+ \mathcal{F}_{h_k}(\tau_{h_k}(t),\widehat{\boldsymbol{q}}_{h_k})+\mathcal{D}_{h_k}(\boldsymbol{q}_{h_k}(t),\widehat{\boldsymbol{q}}_{h_k}).
\end{equation}
We focus on the left-hand side. First, arguing as in Step 3 of the proof of Theorem \ref{thm:evolutionary-gamma-convergence}, we obtain
\begin{equation}
    \label{eqn:red-stab1}
    \mathcal{F}_0(t,\boldsymbol{q}_0(t))\leq \liminf_{k\to \infty} \mathcal{F}_{h_k}(t,\boldsymbol{q}_{h_k}(t))=f(t).
\end{equation}
Second, applying the fundamental theorem of calculus and employing \eqref{eqn:dr-gronwall3-aimp} and \eqref{eqn:a-priori1}, we estimate
\begin{equation}
\label{eqn:red-stab2}
\begin{split}
|\mathcal{F}_h(t,\boldsymbol{q}_h(t))&-\mathcal{F}_h(\tau_h(t),\boldsymbol{q}_h(\tau_h(t)))|\leq \int_{\tau_h(t)}^t |\partial_t \mathcal{F}_h(\tau,\boldsymbol{q}_h(\tau_h(t)))|\,\d\tau\\
&\leq (\mathcal{F}_h(\tau_h(t),\boldsymbol{q}_h(\tau_h(t)))+L(M)) \int_{\tau_h(t)}^t \kappa_h(\tau) \mathrm{e}^{K_h(\tau)-K_h(\tau_h(t))}\,\d\tau\\
&=(M+L(M)) \left (\mathrm{e}^{K_h(t)-K_h(\tau_h(t))}-1 \right).
\end{split}
\end{equation}
Thanks to the equi-integrability of $(\kappa_h)$, which follows from \eqref{eqn:kh-conv}, the right-hand side of the previous equations goes to zero, as $k\to \infty$. Hence, combining \eqref{eqn:red-stab1}--\eqref{eqn:red-stab2}, we obtain
\begin{equation}
\label{eqn:red-stab3}
\mathcal{F}_0(t,\boldsymbol{q}_0(t))\leq \liminf_{k \to \infty} \mathcal{F}_{h_k}(\tau_{h_k}(t),\boldsymbol{q}_{h_k}(t))=f(t).
\end{equation}
Similarly for the right-hand side of \eqref{eqn:s1}, by arguing as in Step 3 of the proof of Theorem \ref{thm:evolutionary-gamma-convergence} and exploiting the continuity of the applied loads with respect to time, we compute
\begin{equation}
\label{eqn:red-stab4}
\mathcal{F}_0(t,\widehat{\boldsymbol{q}}_0)+\mathcal{D}_0(\boldsymbol{q}_0(t),\widehat{\boldsymbol{q}}_0)=\lim_{k \to \infty} \left \{ \mathcal{F}_{h_k}(\tau_{h_k}(t),\widehat{\boldsymbol{q}}_{h_k})+\mathcal{D}_{h_k}(\boldsymbol{q}_{h_k}(t),\widehat{\boldsymbol{q}}_{h_k}) \right \}.
\end{equation}
Thus, in view of \eqref{eqn:red-stab3}--\eqref{eqn:red-stab4}, taking the inferior limit, as $k\to \infty$, at both sides of \eqref{eqn:s1}, we obtain
\begin{equation*}
\begin{split}
\mathcal{F}_0(t,\boldsymbol{q}_0(t))&\leq \liminf_{k \to \infty} \mathcal{F}_{h_k}(\tau_{h_k}(t),\boldsymbol{q}_{h_k}(t))\\
&\leq \liminf_{k \to \infty} \left \{ |\Pi_{h_k}|\, \alpha_{h_k}+ \mathcal{F}_{h_k}(\tau_{h_k}(t),\widehat{\boldsymbol{q}}_{h_k})+\mathcal{D}_{h_k}(\boldsymbol{q}_{h_k}(t),\widehat{\boldsymbol{q}}_{h_k}) \right\}\\
&=\mathcal{F}_0(t,\widehat{\boldsymbol{q}}_0)+\mathcal{D}_0(\boldsymbol{q}_0(t),\widehat{\boldsymbol{q}}_0),
\end{split}
\end{equation*}
which proves \eqref{eqn:reduced-stability} for $t$ fixed.

\textbf{Step 4 (Upper energy-dissipation inequality).}
We prove the following:
\begin{equation}
\label{eqn:uee}
	\forall\,t \in [0,T], \quad \mathcal{F}_0(t,\boldsymbol{q}_0(t))+\mathrm{Var}_{\mathcal{D}_0}(\boldsymbol{q}_0;[0,t])\leq \mathcal{F}_0(0,\boldsymbol{q}_0^0)	+\int_0^t \partial_t \mathcal{F}_0(\tau,\boldsymbol{q}_0(\tau))\,\d\tau. 
\end{equation}
First, fix $t \in [0,T]$.  Given $h>0$, let $\tau_h(t)=t_h^i$ where $i\in \{1,\dots,N_h\}$ and recall \eqref{eqn:aimp-pc-identities}. Summing the inequality \eqref{eqn:aimp2}, with $j$ in place of $i$, for $j\in \{1, \dots,i-1\}$ we obtain:
\begin{equation}
\label{eqn:aimp2-new}
 \mathcal{F}_h(\tau_h(t),\boldsymbol{q}_h(\tau_h(t)))+\mathrm{Var}_{\mathcal{D}_h}(\boldsymbol{q}_h;[0,\tau_h(t)])\leq \tau_h(t) \alpha_h +\mathcal{F}_h(0,\boldsymbol{q}_h^0)
+\int_{0}^{\tau_h(t)} \partial_t \mathcal{F}_h(\tau,\boldsymbol{q}_h(\tau))\,\d \tau.
\end{equation}
By definition
\begin{equation*}
	\mathrm{Var}_{\mathcal{D}_h}(\boldsymbol{q}_h;[0,t])=\mathrm{Var}_{\mathcal{D}_h}(\boldsymbol{q}_h;[0,\tau_h(t)]).	
\end{equation*}
Thus, \eqref{eqn:red-stab2} and \eqref{eqn:aimp2-new} yield
\begin{equation}
\label{eqn:se}
\begin{split}
\mathcal{F}_h(t,\boldsymbol{q}_h(t))+\mathrm{Var}_{\mathcal{D}_h}(\boldsymbol{q}_h;[0,t])&\leq \mathcal{F}_h(\tau_h(t),\boldsymbol{q}_h(\tau_h(t)))+\mathrm{Var}_{\mathcal{D}_h}(\boldsymbol{q}_h;[0,\tau_h(t)])\\
&+(M+L(M)) \left (\mathrm{e}^{K_h(t)-K_h(\tau_h(t))}-1 \right)\\
&\leq \tau_h(t)\alpha_h+\mathcal{F}_h(0,\boldsymbol{q}_h^0)+\int_0^{\tau_h(t)}P_h(\tau)\,\d \tau\\
&+(M+L(M)) \left (\mathrm{e}^{K_h(t)-K_h(\tau_h(t))}-1 \right).
\end{split}
\end{equation}
Now, let $(h_k)$ be the subsequence such that \eqref{eqn:egamma-cmpt-u}--\eqref{eqn:egamma-cmpt-nu} hold. For the first term on the left-hand side,  we have \eqref{eqn:red-stab1}. For the second one, by lower semicontinuity, \eqref{eqn:helly-z} entails 
\begin{equation*}
\begin{split}
\mathrm{Var}_{\mathcal{D}_0}(\boldsymbol{q}_0;[0,t])&=\mathrm{Var}_{L^1(\Omega;\rt)}(\boldsymbol{z};[0,t])
\leq \liminf_{k \to \infty} \mathrm{Var}_{L^1(\Omega;\rt)}(\boldsymbol{z}_{h_k};[0,t])\\
&=\liminf_{k\to \infty} \mathrm{Var}_{\mathcal{D}_{h_k}}(\boldsymbol{q}_{h_k};[0,t])=\delta(t).
\end{split}
\end{equation*}
Thus, taking the inferior limit along the subsequence $(h_k)$ , as $k \to \infty$, at both sides of \eqref{eqn:se}, we obtain
\begin{equation*}
\begin{split}
\mathcal{F}_0(t,\boldsymbol{q}_0(t))+\mathrm{Var}_{\mathcal{D}_0}(\boldsymbol{q}_0;[0,t])&\leq f(t)+\delta(t)\\
&\leq \mathcal{F}_0(0,\boldsymbol{q}_0^0)+\int_0^t P(\tau)\,\d\tau\\
&\leq \mathcal{F}_0(0,\boldsymbol{q}_0^0)+\int_0^t P_0(\tau)\,\d\tau.
\end{split}
\end{equation*}
This proves \eqref{eqn:uee}.

\textbf{Step 5 (Lower energy-dissipation inequality).} As in Step 5 of the proof of Theorem \ref{thm:evolutionary-gamma-convergence}, the lower energy-dissipation inequality is deduced from the reduced stability \eqref{eqn:reduced-stability} by applying \cite[Proposition 2.1.2.3]{mielke.roubicek}.

\textbf{Step 6 (Improved estimates).} To check \eqref{eqn:egamma-energy}--\eqref{eqn:egamma-dissipation}, we identify the functions $f$ and $V$ in \eqref{eqn:helly-f}--\eqref{eqn:helly-z} by arguing as in Step 6 of the proof of Theorem \ref{thm:evolutionary-gamma-convergence}. Eventually, to show \eqref{eqn:egamma-power}, we check that $P=P_0$ almost everywhere in $(0,T)$ again by arguing as in Step 6 of the proof of Theorem \ref{thm:evolutionary-gamma-convergence}. 
\end{proof}

\subsection{Alternative dissipation}
\label{subsec:alternative-dissipation}
{
\MMM 
We conclude the section by mentioning an alternative notion of dissipation proposed in \cite{bresciani.davoli.kruzik}. In the same paper, it has been observed that the dissipation distance in \eqref{eqn:D_h} allows for the dissipation of energy by means of composition with rigid motions. Although this possibility is discouraged by energy minimization, this fact is certainly questionable from the modeling point of view. 

This observation motivates the introduction of an alternative dissipative variable.
Let $h>0$ and let $\boldsymbol{q}=(\boldsymbol{y},\boldsymbol{m})\in\mathcal{Q}_h$ be an admissible state. Similarly to \eqref{eqn:lagrangian-magnetization}, we define 
\begin{equation}
\label{eqn:Zh-tilde}
    \widetilde{\mathcal{Z}}_h(\boldsymbol{q})\coloneqq(\adj \nabla_h\boldsymbol{y})\boldsymbol{m}\circ \boldsymbol{y},
\end{equation}
where the adjugate matrix simply denotes the transpose of the cofactor matrix.
The quantity in \eqref{eqn:Zh-tilde} constitutes a flux-preserving pull-back of the magnetization $\boldsymbol{m}$ to the reference space and has the appreciable feature of being frame indifferent \cite{bresciani.davoli.kruzik}. 

In view of \eqref{eqn:saturation}, up to assuming $a\geq p/(p-2)$ in  \eqref{eqn:coercivity-Phi-det}, we have  $\widetilde{\mathcal{Z}}_h(\boldsymbol{q})\in L^1(\Omega;\rt)$ for every $\boldsymbol{q}\in \mathcal{Q}_h$ with $E_h(\boldsymbol{q})<+\infty$.  
Therefore, we can define the alternative dissipation distance $\widetilde{\mathcal{D}}_h\colon \mathcal{Q}_h \times \mathcal{Q}_h \to [0,+\infty)$ by setting
\begin{equation}
\label{eqn:Dh-tilde}
    \widetilde{\mathcal{D}}_h(\boldsymbol{q},\widehat{\boldsymbol{q}})\coloneqq \int_\Omega \left | \widetilde{\mathcal{Z}}_h(\boldsymbol{q})- \widetilde{\mathcal{Z}}_h(\widehat{\boldsymbol{q}})  \right |\,\d\boldsymbol{x}.
\end{equation}
Standing the more restrictive growth condition in  \eqref{eqn:coercivity-Phi-det}, it can be shown that the distance $\widetilde{\mathcal{D}}_h$ is lower semicontinuous with respect to the natural topology on $\mathcal{Q}_h$, see \cite{bresciani.quasistatic,bresciani.davoli.kruzik}. However, the existence of energetic solutions to the bulk model is out-of-reach within this framework. This can be achieved by means of a suitable regularization of the energy in the spirit of gradient polyconvexity \cite{benesova.kruzik.schloemerkemper}. We refer to \cite{bresciani.davoli.kruzik} for more details.

Nevertheless, one might aim to prove convergence results analogous to Theorem \ref{thm:evolutionary-gamma-convergence} and Theorem \ref{thm:convergence-AIMP} in this alternative setting. In this case, a substantial obstacle is the absence of a priori bounds on the dissipation distance in \eqref{eqn:Dh-tilde}. Such bounds are crucial in order to be able to establish the compactness of the sequence $(\boldsymbol{q}_h)$ in Theorem \ref{thm:evolutionary-gamma-convergence} and Theorem \ref{thm:convergence-AIMP}. 

A practicable way to overcome this issue is to enforce some uniform bound on the dissipative variable into the definition of the class of admissible states in a form of a locking constraint \cite{benesova.kruzik.schloemerkemper}. This is what is substantially done, in a more implicit way, in \cite{davoli} and \cite{mielke.stefanelli} for the problem of dimension reduction and linearization in finite plasticity, respectively.
Alternatively, one can replace the distance in \eqref{eqn:Dh-tilde} with the function  $\widehat{\mathcal{D}}_h\colon \mathcal{Q}_h \times \mathcal{Q}_h \to [0,+\infty)$ given by
\begin{equation*}
    \widehat{\mathcal{D}}_h(\boldsymbol{q},\widehat{\boldsymbol{q}})\coloneqq \int_\Omega \left |   \frac{\widetilde{\mathcal{Z}}_h(\boldsymbol{q})}{|\widetilde{\mathcal{Z}}_h(\boldsymbol{q})|}- \frac{\widetilde{\mathcal{Z}}_h(\widehat{\boldsymbol{q}})}{|\widetilde{\mathcal{Z}}_h(\widehat{\boldsymbol{q}})|}  \right|\,\d\boldsymbol{x}. 
\end{equation*}
This distance is not lower semicontinuous with respect to the natural topology on $\mathcal{Q}_h$. However, in both cases, convergence results analogous to  Theorem \ref{thm:evolutionary-gamma-convergence} and Theorem \ref{thm:convergence-AIMP} can be established by having $ \widehat{\mathcal{D}}_h$ in place of $\mathcal{D}_h$ without introducing any regularization.

}

\section*{Acknowledgements}
This work has been carried out while the first author was affiliated to TU Wien. 
Both authors are grateful to  Elisa Davoli for helpful discussions during the preparation of this paper. The first author thanks also Maria Giovanna Mora for useful comments about modeling  and for suggesting him the use of the interpolation inequality.  
This work has been supported by the Austrian Science Fund (FWF) and the GA\v{C}R through the  grant I4052-N32/19-29646L, and by the Federal Ministry of Education, Science and Research of Austria (BMBWF) through the OeAD-WTZ project CZ04/2019, the  M\v{S}MT-GA\v{CR} projects 8J19AT013 and 8J22AT017,  and the M\v{S}MT-WTZ project 8J22AT017.

\end{document}